\documentclass{amsart}
\textwidth=125 mm
\textheight=195 mm
\usepackage{amsmath}
\usepackage{amssymb}
\usepackage{amsthm}
\usepackage[margin=1.375in]{geometry}
\usepackage{amscd}
\usepackage{enumerate}
\usepackage{hyperref}
\usepackage[nameinlink]{cleveref}

\usepackage{graphicx}
\usepackage{amsmath,amsthm,amsfonts,amscd,amssymb,comment,eucal,latexsym,mathrsfs}
\usepackage{stmaryrd}
\usepackage[all]{xy}

\usepackage{epsfig}

\usepackage[all]{xy}
\xyoption{poly}
\usepackage{fancyhdr}
\usepackage{wrapfig}
\usepackage{epsfig}

\usepackage{hyperref}
\allowdisplaybreaks



\theoremstyle{plain}
\newtheorem{thm}{Theorem}[section]
\newtheorem{prop}[thm]{Proposition}
\newtheorem{lem}[thm]{Lemma}
\newtheorem{cor}[thm]{Corollary}
\newtheorem{conj}{Conjecture}

\theoremstyle{definition}
\newtheorem{defn}[thm]{Definition}
\newtheorem{rmk}[thm]{Remark}
\theoremstyle{remark}


 \def\C{{\mathbb{C}}}   \def\E{{\mathbb{E}}}\def\N{{\mathbb{N}}} \def\R{{\mathbb{R}}}  



\newcommand\vp{\varphi}


\newcommand\Aut{\operatorname{Aut}}

\newcommand\id{\operatorname{id}}

\newcommand{\LRig}{\operatorname{L^2Rig}}

\newcommand{\MaxR}{\operatorname{MaxRig}}

\renewcommand\Re{\operatorname{Re}}

\newcommand\Tr{\operatorname{Tr}}

\newcommand\Span{\operatorname{span}}


\def\cc{{\curvearrowright}}
\newcommand{\actson}{\curvearrowright}

\newcommand{\ip}[1]{\langle #1 \rangle}
\newcommand{\cal}{\mathcal}

%

\begin{document}

\title{Maximal rigid subalgebras of deformations and $L^{2}$-cohomology}
\date{\today}

\author[R. de Santiago]{Rolando de Santiago}
\address{Department of Mathematics, University of California Los Angeles, Box 951555, 520 Portola Plaza, Los Angeles, CA 90095-1555, USA.}
\email{rdesantiago@math.ucla.edu}
\thanks{R.dS. was partially supported by NSF RTG DMS-1344970 and by the University of California President’s Postdoctoral Fellowship.}

\author[B. Hayes]{Ben Hayes}
\address{Department of Mathematics,
University of Virginia, 141 Cabell Drive, Kerchof Hall, Charlottesville, VA, 22904, USA.}
\email{brh5c@virginia.edu}
\thanks{B.H. was partially supported by NSF Grants DMS-1600802 and DMS-1827376.}

\author[D.J. Hoff]{Daniel J. Hoff}
\address{Department of Mathematics, University of California Los Angeles, Box 951555, 520 Portola Plaza, Los Angeles, CA 90095-1555, USA.}
\email{hoff@math.ucla.edu}
\thanks{D.H. was partially supported by NSF RTG DMS-1344970.}

\author[T. Sinclair]{Thomas Sinclair}
\address{Department of Mathematics, Purdue University, 150 N.~University St., West Lafayette, IN 47907-2067, USA.}
\email{tsincla@purdue.edu}
\thanks{T.S. was partially supported by NSF Grant DMS-1600857}

\subjclass[2010]{46L36; 46L10, 37A55}

\vspace*{-.8cm}
\maketitle

\vspace{-.5cm}
\begin{abstract}
In the past two decades, Sorin Popa's breakthrough deformation/rigidity theory has produced remarkable rigidity results for von Neumann algebras $M$ which can be deformed inside a larger algebra $\widetilde M \supseteq M$ by an action $\alpha: \mathbb{R} \to {\rm Aut}(\widetilde M)$, while simultaneously containing subalgebras $Q$ {\it rigid} with respect to that deformation, that is, such that $\alpha_t \to {\rm id}$ uniformly on the unit ball of $Q$ as $t \to 0$.
    However, it has remained unclear how to exploit the interplay between distinct rigid subalgebras not in specified relative position.

    We show that in fact, any diffuse subalgebra which is rigid with respect to a mixing s-malleable deformation is contained in a subalgebra which is uniquely maximal with respect to being rigid. In particular, the algebra generated by any family of rigid subalgebras that intersect diffusely must itself be rigid with respect to that deformation. The case where this family has two members was the motivation for this work, showing for example that if $G$ is a countable group with $\beta^{1}_{(2)}(G) > 0$, then $L(G)$ cannot be generated by two property $(T)$ subalgebras with diffuse intersection; however, the result is most striking when the family is infinite.
\end{abstract}

\section{Introduction}\label{S:Intro}

Sorin Popa's breakthrough deformation/rigidity theory, initiated in his seminal works \cite{Po01a, PopaL2Betti, PopaStrongRigidity, PopaStrongRigidtyII},
has established the paradigm for how rigidity properties of subalgebras can be exploited in the presence of deformability of an ambient von Neumann algebra, leading to a number of landmark results in directions that had before remained almost completely intractable. We refer the reader to \cite{PopaICM,Va06a,Va10a,Io12b,Io17c} for exposition of this remarkable progress, including the resolution of many long-standing problems.

Given a tracial von Neumann algebra $(M, \tau)$, let $\Aut(M)$ denote the group of trace preserving $*$-automorphisms of $M$, and as usual let $\|x\|_2 = \sqrt{\tau(x^*x})$ for $x \in M$.
We recall the foundational notions of malleable and s-malleable deformations, due to Popa and developed throughout his work in \cite{Po01a, PopaStrongRigidity, PopaStrongRigidtyII, PopaCohomologyOE, PopaCoc}.

\begin{defn}[Popa]
Let $M \subseteq \widetilde M$ be tracial von Neumann algebras.
\begin{enumerate}
\item A {\it malleable deformation $\alpha$ of $M$ inside $\widetilde M$} is an action $\alpha: \mathbb{R} \to \Aut(\widetilde M)$ such that $\|\alpha_t(x) - x\|_2 \to 0$ as $t \to 0$ for each $x \in \widetilde M$.

\item An {\it s-malleable deformation $(\alpha, \beta)$ of $M$ inside $\widetilde M$} is $\alpha$ as above, together with $\beta \in \Aut(\widetilde M)$ satisfying $\beta|_{M} = \id,$ $\beta^{2}=\id,$ and $\beta \alpha_t = \alpha_{-t}\beta$  for all $t \in \mathbb{R}.$

\end{enumerate}
\end{defn}

The power of deformability lies in Popa's profound discovery that it allows one to ``locate" subalgebras which have properties forcing the deformation to converge to the identity uniformly over their unit balls as $t \to 0$.
For example, this applies to all subalgebras with Property (T),
a fact used to great effect from the
beginning of the subject \cite{Po01a, PopaL2Betti}. More generally, given a malleable deformation $\alpha_{t}\colon \widetilde{M}\to \widetilde{M}$ of a tracial von Neumann algebra $(M,\tau),$ a von Neumann subalgebra $Q \subseteq M$ will be said to be {\it rigid with respect to $\alpha$} (or {\it $\alpha$-rigid}) if the above convergence holds; that is, if
    \begin{align*}
        \epsilon_t(Q) \stackrel{{\rm def}}{=} \sup_{x \in (Q)_1}\|\alpha_t(x) - x\|_2 \quad\text{ has }\quad \epsilon_t(Q) \to 0 \quad\text{ as }\quad t \to 0.
        \end{align*}

We will write ${\rm Rig}(\alpha)$ for the collection of all $\alpha$-rigid von Neumann subalgebras of $M$, and ${\rm MaxRig}(\alpha)$ for the collection of $P \in {\rm Rig}(\alpha)$ which are maximal in ${\rm Rig}(\alpha)$ with respect to inclusion.
It is perhaps surprising that this last definition is worth making, as it is not clear that members of ${\rm MaxRig}(\alpha)$ arise in enough situations to be of interest.
The most striking result of this paper, however, is that in the setting of s-malleable deformations this behavior is in fact typical:

\begin{thm}\label{T:main theorem intro}
Let $(\alpha, \beta)$ be an  s-malleable deformation of tracial von Neumann algebras $M \subseteq \widetilde M$.
Then any $Q \in {\rm Rig}(\alpha)$ with $Q' \cap \widetilde M \subseteq M$ is contained in a unique $P \in {\rm MaxRig}(\alpha)$.

\end{thm}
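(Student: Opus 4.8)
The plan is to reduce the whole statement to a single \emph{uniform} estimate: that the rigidity modulus of every rigid subalgebra containing $Q$ is controlled by a fixed function of $\epsilon_t(Q)$ alone. First I would record the reformulation of rigidity afforded by transversality of the s-malleable deformation. Writing $d_t(x) = \alpha_t(x) - E_M(\alpha_t(x))$ for the component of $\alpha_t(x)$ orthogonal to $L^2(M)$, one has $\|d_t(x)\|_2 \le \|\alpha_t(x) - x\|_2 \le 2\|d_{t/2}(x)\|_2$ for $x \in (M)_1$, where the left inequality is just that $x \in L^2(M)$ and the right is Popa's transversality inequality. Hence $P \in \Rig(\alpha)$ if and only if $\sup_{x\in(P)_1}\|d_t(x)\|_2 \to 0$ as $t\to 0$, and the entire problem becomes one of controlling the single $M$-orthogonal quantity $d_t$ over larger and larger unit balls.

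The heart of the argument is a spectral gap extracted from the relative commutant hypothesis. Set $K = L^2(\widetilde M)\ominus L^2(M)$, a $Q$--$Q$ bimodule since $Q \subseteq M$ and $M$ normalizes $L^2(M)$. Because $Q' \cap \widetilde M \subseteq M$, the space of $Q$-central vectors of $K$ is trivial, so the orthogonal projection of any $\eta\in K$ onto the central vectors vanishes; realizing this projection as a norm limit of convex averages $\sum_i\lambda_i\, u_i\eta u_i^*$ over $\mathcal U(Q)$ yields $\|\eta\|_2 \le \sup_{u\in\mathcal U(Q)}\|u\eta - \eta u\|_2$ for all $\eta\in K$. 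I would then upgrade this to a genuine \emph{finite, uniform} spectral gap, namely $\kappa>0$ and a finite $F\subseteq\mathcal U(Q)$ with $\|\eta\|_2^2 \le \kappa^{-1}\sum_{u\in F}\|u\eta-\eta u\|_2^2$ for all $\eta\in K$, using the rigidity of $Q$ to supply the compactness that promotes ``no central vectors'' to ``no almost-central vectors.'' This is the step I expect to be the main obstacle: passing from an average over all of $\mathcal U(Q)$ to a fixed finite $F$ is precisely the nontrivial content of spectral gap, and it is here that rigidity and the relative commutant condition must be used jointly rather than separately.

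With the gap in hand I would establish the uniform modulus estimate: there is a function $h$ with $h(s)\to 0$ as $s\to 0$ such that every $P\in\Rig(\alpha)$ with $Q\subseteq P$ satisfies $\epsilon_t(P)\le h(\epsilon_t(Q))$ for all small $t$, with $h$ \emph{independent of} $P$. One applies the gap inequality to the vectors $\eta = d_t(w)$ as $w$ runs over words in the generators of $P$, using that $E_M\alpha_t$ is approximately $Q$-bimodular with error $O(\epsilon_t(Q))$ (since $\alpha_t(u)\approx u$ for $u\in\mathcal U(Q)$) to replace each commutator $\|u\,d_t(w)-d_t(w)\,u\|_2$ by $\|d_t([u,w])\|_2$ up to controlled error. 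The gap is what prevents these errors from accumulating with the length of $w$; this length-accumulation is exactly the obstruction that makes naive telescoping estimates on a join fail, and is why the hypothesis $Q'\cap\widetilde M\subseteq M$ is essential rather than cosmetic.

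Finally I would deduce existence and uniqueness simultaneously. The family $\mathcal F = \{P\in\Rig(\alpha): Q\subseteq P\}$ is directed under joins, since the two-algebra case of the uniform estimate shows $P_1\vee P_2\in\mathcal F$, so $P_{\max} := \bigvee_{P\in\mathcal F}P$ is the $\sigma$-weak closure of an increasing union of members of $\mathcal F$. By Kaplansky density the $\|\cdot\|_2$-unit ball of $P_{\max}$ is approximated in $\|\cdot\|_2$ by those of the $P\in\mathcal F$, whence $\epsilon_t(P_{\max}) = \sup_{P\in\mathcal F}\epsilon_t(P)\le h(\epsilon_t(Q))\to 0$ and $P_{\max}\in\Rig(\alpha)$. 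By construction $P_{\max}$ contains every rigid subalgebra containing $Q$, so it is the unique maximal such algebra, i.e.\ the unique member of $\MaxR(\alpha)$ containing $Q$.
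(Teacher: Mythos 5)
Your overall architecture is sound at the top and bottom: the reduction to a single uniform estimate $\epsilon_t(P)\le h(\epsilon_t(Q))$ valid for \emph{every} rigid $P\supseteq Q$ is exactly the right intermediate statement (the paper proves $\epsilon_{2t}(P)\le 24\,\epsilon_t(Q)$, its Theorem \ref{T:concrete estimate intro}), and your final directed-join/Kaplansky-density step is essentially the paper's own argument (Corollary \ref{C:existence of maximal subalgebras}). The fatal problem is the middle: the ``finite, uniform spectral gap'' you posit is false in general, and rigidity of $Q$ with respect to $\alpha$ supplies no compactness that could rescue it. Absence of $Q$-central vectors in $K=L^2(\widetilde M)\ominus L^2(M)$ does not upgrade to absence of \emph{almost}-central vectors. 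Concretely, take $G=\mathbb{Z}$, $\pi$ the left regular representation, and $c$ an inner (even zero) cocycle in the Gaussian construction of Section \ref{S:1-cohomology}: then $Q=M=L(\mathbb{Z})$ is diffuse and $\alpha$-rigid, and $Q'\cap\widetilde M\subseteq M$ holds by mixingness, yet $K$ is a mixing bimodule over a diffuse \emph{amenable} algebra, and amenability of $Q$ produces a net of unit vectors in such a bimodule that is asymptotically central for $\mathcal{U}(Q)$. Hence no finite $F\subseteq\mathcal{U}(Q)$ and $\kappa>0$ with $\|\eta\|_2^2\le\kappa^{-1}\sum_{u\in F}\|u\eta-\eta u\|_2^2$ can exist. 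Your hypotheses are fully satisfied in this example, so the step cannot be repaired by using them ``jointly.''

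There is a second, independent gap: even granting the spectral gap, your word-length scheme is circular. For $w\in(P)_1$ and $u\in\mathcal{U}(Q)$, the element $[u,w]$ is again an essentially arbitrary element of $P$ of norm at most $2$, so the inequality bounding $\|d_t(w)\|_2$ in terms of $\|d_t([u,w])\|_2$ never terminates; writing $s=\sup_{w\in(P)_1}\|d_t(w)\|_2$ it yields only $s^2\lesssim(2s+\epsilon_t(Q))^2$, which is vacuous. Spectral-gap arguments of this shape transfer rigidity to the \emph{relative commutant} $Q'\cap M$, where $[u,w]=0$ and the commutator error is genuinely $O(\epsilon_t(Q))$ (compare Theorem \ref{T:amping up to commutants}); they cannot reach arbitrary rigid algebras containing $Q$. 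The paper's mechanism is entirely different and uses no spectral gap: by Popa's convex-hull/minimal-norm argument combined with the symmetry $\beta$ and a telescoping product (Proposition \ref{L:intertwining on big sets} and Proposition \ref{P:two sided estimate intertwiner}), rigidity of $Q$ produces a partial isometry $v\in\widetilde M$ with $\|v-p\|_2\le 6\,\epsilon_t(Q)$ satisfying $vx=\alpha_t(x)v$ for $x\in Q$; for any rigid $P\supseteq Q$ one likewise gets $w$ intertwining $\alpha_t$ on $P$, and then $v^*w\in Q'\cap\widetilde M\subseteq M$ forces $v^*\alpha_t(P)v\subseteq M$, after which the transversality inequality converts ``conjugate into $M$ by a unitary close to $1$'' back into the uniform rigidity estimate $\epsilon_{2t}(P)\le 24\,\epsilon_t(Q)$. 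If you replace your middle section with that intertwining argument, the rest of your proof goes through as written.
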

\noindent Note that by a fundamental insight of Popa \cite[Section 3]{PopaStrongRigidity}, when $L^2(\widetilde M) \ominus L^2(M)$ is a mixing $M$-$M$ bimodule, the condition $Q' \cap \widetilde M \subseteq M$ is automatically satisfied whenever $Q$ is diffuse.

There are two main implications from Theorem \ref{T:main theorem intro}. First, the existence of such $P\in \MaxR(\alpha)$ establishes that the strong operator topology closure of an increasing chain of $\alpha$-rigid von Neumann subalgebras containing $Q$ as above is again $\alpha$-rigid. Second, from the uniqueness of $P$, it follows that if $Q_1, Q_2 \in {\rm Rig}(\alpha)$ contain such a $Q$, then the subalgebra $Q_1 \vee Q_2$ they generate is itself $\alpha$-rigid.
It was in fact this second implication that was the motivation for this work, but the first is perhaps the more surprising of the two. Both are evident from the next result, from which Theorem \ref{T:main theorem intro} follows. We thank Stefaan Vaes for allowing us to use a proof of his which greatly simplified the one given in an earlier draft of this work and improved the estimate below.

\begin{thm}\label{T:concrete estimate intro}
Let $(\alpha, \beta)$ be an s-malleable deformation of tracial von Neumann algebras $M \subseteq \widetilde{M}$.

Then for any $Q_1, Q_2 \in {\rm Rig}(\alpha)$ with $(Q_1 \cap Q_2)' \cap \widetilde M \subseteq M$, and $t \in \mathbb{R}$, we have
\begin{align*}
    \epsilon_{2t}(Q_1 \vee Q_2) \le 24\epsilon_t(Q_1).
\end{align*}
\end{thm}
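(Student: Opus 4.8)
The plan is to reduce the statement, via Popa's transversality inequality, to a bound on the ``off--diagonal'' part of the deformation, and then to control that part by a Hilbert--space convexity argument that feeds on the hypothesis $(Q_1\cap Q_2)'\cap\widetilde M\subseteq M$. First I would record the transversality inequality that is forced by s-malleability: writing $\E_M$ for the trace--preserving conditional expectation onto $M$, the relations $\alpha_{-t}=\beta\alpha_t$ on $M$, $\beta|_M=\id$, $\beta^2=\id$ and $\|\id-\beta\|\le 2$ give, for every $x\in M$, the estimate $\|\alpha_{2t}(x)-x\|_2=\|(\id-\beta)(\alpha_t(x)-\E_M\alpha_t(x))\|_2\le 2\,\|\alpha_t(x)-\E_M\alpha_t(x)\|_2$. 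Applying this with $N:=Q_1\vee Q_2$, it suffices to prove the single off--diagonal estimate $\sup_{x\in(N)_1}\|\alpha_t(x)-\E_M\alpha_t(x)\|_2\le 12\,\epsilon_t(Q_1)$, which then yields $\epsilon_{2t}(N)\le 24\,\epsilon_t(Q_1)$.

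To set up the engine, let $\mathcal H=L^2(\widetilde M)\ominus L^2(M)$, regarded as a Hilbert $M$--$M$-bimodule, let $P_{\mathcal H}$ be the orthogonal projection, and write $\tilde v(x):=P_{\mathcal H}\alpha_t(x)=\alpha_t(x)-\E_M\alpha_t(x)$. Two features drive the argument: first, $P_{\mathcal H}$ is \emph{exactly} $M$-bimodular, so $P_{\mathcal H}(u\eta u^*)=u\,P_{\mathcal H}(\eta)\,u^*$ for $u\in\mathcal U(M)$; second, a unitary $u$ in a rigid subalgebra satisfies $\|\alpha_t(u)-u\|_2\le\epsilon_t(\cdot)$. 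Expanding $\alpha_t(uxu^*)=\alpha_t(u)\,\alpha_t(x)\,\alpha_t(u)^*$, setting $e=\alpha_t(u)-u$ (so $\|e\|_2\le\epsilon_t(Q_1)$ and $\|e\|_\infty\le 2$), and estimating the three cross terms against $\|\alpha_t(x)\|_\infty\le 1$, I obtain the approximate conjugation--equivariance $\|u\,\tilde v(x)\,u^*-\tilde v(uxu^*)\|_2\le 4\,\epsilon_t(Q_1)$ for all $u\in\mathcal U(Q_1)$ and $x\in(N)_1$. Since $\tilde v$ is linear, the same bound propagates to convex combinations of such conjugations.

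Now I would invoke the hypothesis. Because $Q_1\cap Q_2\subseteq Q_1$, it forces $Q_1'\cap\widetilde M\subseteq(Q_1\cap Q_2)'\cap\widetilde M\subseteq M$, so $\mathcal H$ contains \emph{no} nonzero $Q_1$-central vector. Fix $x\in(N)_1$ and let $K_x$ be the $\|\cdot\|_2$-closed convex hull of the orbit $\{u\,\tilde v(x)\,u^*:u\in\mathcal U(Q_1)\}\subseteq\mathcal H$. The unique element of minimal norm in $K_x$ is fixed by every $u(\cdot)u^*$, hence is $Q_1$-central, hence is $0$; thus $0\in K_x$. Combining this with the equivariance of the previous step and the linearity of $\tilde v$, the origin is approximated within $4\epsilon_t(Q_1)$ by vectors of the form $\tilde v(y)$ with $y$ a $Q_1$-conjugation average of $x$, and passing to the minimal--norm average identifies $y$ with the conditional expectation $\E_{Q_1'\cap N}(x)$. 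This pins down the off--diagonal part of that expectation to within a fixed multiple of $\epsilon_t(Q_1)$.

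The genuinely delicate point, and where I expect the main obstacle to lie, is precisely the passage implicit in that last sentence: the circumcenter argument as stated controls only the part of $x$ that is invariant under the conjugation, i.e. the relative commutant component $\E_{Q_1'\cap N}(x)$, whereas the reduction in the first paragraph demands a uniform bound over the \emph{entire} unit ball of $N$. Bridging this gap---promoting control of the relative--commutant part to control of all $x\in(N)_1$, where the naive word--by--word estimate $\|\tilde v(xy)\|_2\le\|\tilde v(x)\|_2+3\|\tilde v(y)\|_2$ degrades exponentially in word length---is the crux. No abstract spectral gap is available here, since rigidity is a property of the pair $(Q_i,\alpha)$ rather than of $Q_i$ alone; instead one must exploit both rigidities together with the s-malleable structure, and it is this combination (following Vaes's argument) that both closes the gap and produces the clean constant $24$.
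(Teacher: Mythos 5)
There is a genuine gap, and you have in fact located it yourself: your convexity engine controls $\|\alpha_t(y)-\E_M(\alpha_t(y))\|_2$ only for $y$ in the range of the $Q_1$-conjugation averaging, i.e.\ for $y\in Q_1'\cap M$, and nothing in the proposal promotes this to a uniform bound over the whole unit ball of $Q_1\vee Q_2$. The obstruction is structural, not technical: because you apply the minimal-norm argument to the off-diagonal vectors $\tilde v(x)$ separately for each $x$, the fixed-point it produces is necessarily $Q_1$-central, so the method can only ever see the relative commutant component of $x$ --- which is exactly the part that carries no information about generic elements of $Q_1\vee Q_2$. A telling symptom that the sketch cannot be completed as written is that the rigidity of $Q_2$ is never used anywhere in it. It must be: if the estimate $\epsilon_{2t}(Q_1\vee Q_2)\le 24\epsilon_t(Q_1)$ held using only rigidity of $Q_1$, then (since rigidity passes to subalgebras) taking $Q_2=M$ with $Q_1$ diffuse and the orthocomplement bimodule mixing would force $M$ to be rigid whenever it contains a single diffuse rigid subalgebra; this fails, e.g.\ for $L(SL_3(\mathbb Z))\subseteq L(SL_3(\mathbb Z)\ast\mathbb Z)$ with the deformation of Section \ref{S:1-cohomology} attached to an unbounded cocycle into the left regular representation.

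The paper closes the gap by working at the level of a single intertwining unitary rather than vector-by-vector. Popa's convex-hull argument (Proposition \ref{L:intertwining on big sets}, quantified in Proposition \ref{P:two sided estimate intertwiner}) is applied not to $\tilde v(x)$ but to $\{\alpha_t(u)u^*:u\in\mathcal U(Q_1)\}$ inside $\widetilde M$: the minimal-norm element $a$ satisfies $\|a-1\|_2\le\epsilon_t(Q_1)$, its polar part intertwines $\alpha_t|_{Q_1}$ with the identity, the involution $\beta$ identifies its right support as $\alpha_t(q_0)$, and patching with an intertwining partial isometry on the complementary projection (again Proposition \ref{L:intertwining on big sets}) yields a unitary $v$ with $vx=\alpha_t(x)v$ on $Q_1$ and $\|v-1\|_2\le 6\epsilon_t(Q_1)$. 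The rigidity of $Q_2$ enters precisely to produce, by the same proposition, an intertwining unitary $w$ for $Q_2$ (with no smallness required); the hypothesis $(Q_1\cap Q_2)'\cap\widetilde M\subseteq M$ then gives $v^*w\in\mathcal U(M)$, hence $v^*\alpha_t(Q_2)v=(v^*w)Q_2(v^*w)^*\subseteq M$ while $v^*\alpha_t(Q_1)v=Q_1\subseteq M$, and normality of $x\mapsto v^*\alpha_t(x)v$ gives $v^*\alpha_t(Q_1\vee Q_2)v\subseteq M$. Your transversality reduction then applies uniformly: for every $x$ in the unit ball, $\|\alpha_{2t}(x)-x\|_2\le 2\|\alpha_t(x)-\E_M\alpha_t(x)\|_2\le 2\|\alpha_t(x)-v^*\alpha_t(x)v\|_2\le 4\|v-1\|_2\le 24\epsilon_t(Q_1)$. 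So your first paragraph is exactly the paper's first step, but the ``crux'' you flag is resolved by replacing your per-element circumcenter argument with the construction of this one unitary conjugating all of $\alpha_t(Q_1\vee Q_2)$ into $M$.
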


The proof of Theorem \ref{T:concrete estimate intro} relies on several pivotal discoveries due to Popa \cite{PopaStrongRigidity, PopaCoc, PopaSG}. In order to explain, we briefly sketch the proof.

Set $Q=Q_{1}\vee Q_{2}.$ First, we rely on Popa's key insight that the involution $\beta$ allows one, by way of his transversality inequality (\cite[Lemma 2.1]{PopaSG}, see \Cref{thm:PopasTranservality} below), to prove that $Q$ is rigid by proving that $\alpha_{t}(Q)$ can be conjugated into $M$ by a unitary in $\widetilde{M}$ which is close to $1$ in $\|\cdot\|_{2}.$ In fact, the crucial discoveries of Popa in \cite[Theorem 4.4(ii)]{PopaStrongRigidity}, \cite[Lemma 4.6]{PopaCoc}, \cite[Theorem 4.1]{PopaSG} imply that the converse is true: if $B\leq M$ is rigid, then for small $t$ one can in fact find a unitary close to $1$ which implements $\alpha_{t}$ on $B$. So we know that $\alpha_{t}$ can be unitarily implemented on $Q_j$ for $j=1,2$ by unitaries in $\widetilde{M}$ which are close to $1.$ Since $(Q_{1}\cap Q_{2})'\cap \widetilde{M}\subseteq M,$ these unitaries differ by a unitary which is in $M.$ So we can unitarily conjugate $\alpha_{t}(Q_{1}\vee Q_{2})$ into $M$ by a unitary in $\widetilde{M}$ which is close to $1,$ and hence $Q$ is rigid.

A notable consequence of Theorem \ref{T:concrete estimate intro} is that any $Q \in {\rm Rig}(\alpha)$ with $Q' \cap \widetilde{M} \subseteq M$ has its rigidity controlled by {\it any} of its subalgebras $N$ satisfying $N' \cap \widetilde M \subseteq M$. Naturally, $Q$ itself then controls the rigidity of any rigid subalgebra containing it. This is worth stating precisely:
\begin{cor}\label{cor:defcontrolsubalgebra}
Let $(\alpha, \beta)$ be an s-malleable deformation of tracial von Neumann algebras $M \subseteq \widetilde M$.\nopagebreak

Then for any $Q \in {\rm Rig}(\alpha)$ with $Q' \cap \widetilde M \subseteq M$, and any $t \in \mathbb{R}$, we have
\begin{align*}
    \frac{1}{24}\hspace{-2.2ex}\sup_{\stackrel{N \geq Q}{N \in {\rm Rig}(\alpha)}} \hspace{-2.2ex}\epsilon_{4t}(N) \; \le \; \epsilon_{2t}(Q) \;\le\; 24\hspace{-2.7ex} \inf_{\stackrel{N \leq Q}{N' \cap \widetilde M \subseteq M}} \hspace{-2.7ex}\epsilon_t(N).
\end{align*}
\end{cor}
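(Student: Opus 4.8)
The plan is to derive both displayed inequalities as direct specializations of \Cref{T:concrete estimate intro}, with no new analytic input. The governing idea is that the estimate $\epsilon_{2t}(Q_1\vee Q_2)\le 24\,\epsilon_t(Q_1)$ holds for an \emph{arbitrary} pair, so I will choose the pair $(Q_1,Q_2)$ so that one member contains the other; then $Q_1\vee Q_2$ collapses to the larger algebra while $Q_1\cap Q_2$ collapses to the smaller one, and the hypothesis $(Q_1\cap Q_2)'\cap\widetilde M\subseteq M$ becomes exactly the relative commutant condition attached to whichever algebra is smaller. I will also use the elementary monotonicity $\epsilon_t(A)\le\epsilon_t(B)$ whenever $A\subseteq B$ (immediate from $(A)_1\subseteq(B)_1$), which in particular guarantees that any subalgebra of a rigid algebra is again rigid.

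For the right-hand inequality, fix $N\le Q$ with $N'\cap\widetilde M\subseteq M$. Since $N\subseteq Q\in{\rm Rig}(\alpha)$, monotonicity gives $N\in{\rm Rig}(\alpha)$, so I may apply \Cref{T:concrete estimate intro} to the pair $(Q_1,Q_2)=(N,Q)$. Here $Q_1\cap Q_2=N$, so the required hypothesis reads $N'\cap\widetilde M\subseteq M$, which holds, while $Q_1\vee Q_2=Q$. The estimate (at parameter $t$) therefore yields $\epsilon_{2t}(Q)\le 24\,\epsilon_t(N)$, and taking the infimum over all admissible $N$ gives the right-hand bound.

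For the left-hand inequality, fix $N\ge Q$ with $N\in{\rm Rig}(\alpha)$ and apply \Cref{T:concrete estimate intro} to the pair $(Q_1,Q_2)=(Q,N)$, this time at deformation parameter $2t$. Now $Q_1\cap Q_2=Q$, so the hypothesis is precisely the standing assumption $Q'\cap\widetilde M\subseteq M$, and $Q_1\vee Q_2=N$. The estimate gives $\epsilon_{4t}(N)\le 24\,\epsilon_{2t}(Q)$; taking the supremum over all such $N$ produces the left-hand bound.

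I do not anticipate a genuine obstacle, since the entire content is carried by \Cref{T:concrete estimate intro}. The only points requiring care are bookkeeping: the estimate is asymmetric in $Q_1,Q_2$, so in each case I must place the algebra whose rigidity I wish to \emph{use} into the $Q_1$ slot, and the intersection hypothesis must be checked to degenerate to the correct one-sided relative commutant condition. The apparent mismatch of parameters ($\epsilon_{2t}$ versus $\epsilon_{4t}$) is forced by the doubling $s\mapsto 2s$ built into \Cref{T:concrete estimate intro}, which is exactly why the left-hand inequality is stated at $4t$ and invoked at $s=2t$.
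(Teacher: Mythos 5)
Your proposal is correct and is exactly the derivation the paper intends: Corollary \ref{cor:defcontrolsubalgebra} is stated as an immediate consequence of Theorem \ref{T:concrete estimate intro}, obtained by specializing to nested pairs so that the join collapses to the larger algebra and the intersection to the smaller one, precisely as you do. Your bookkeeping of the parameter doubling ($t$ versus $2t$, yielding $\epsilon_{2t}$ and $\epsilon_{4t}$) and of the monotonicity fact that a subalgebra of a rigid algebra is rigid (needed to place $N$ in the $Q_1$ slot of the theorem) is accurate.
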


One of the major applications of Popa's deformation/rigidity theory is to the class of \emph{group von Neumann algebras}. In this context, cohomological and representation theoretic data play a significant role. Theorem \ref{T:main theorem intro} gives a new way to apply these tools  to the study of group von Neumann algebras. Suppose that $G$ is a countable, discrete group and $\pi\colon G\to \mathcal{O}(\mathcal{H}_{\R})$ is an orthogonal representation on a real Hilbert space $\mathcal{H}_{\R}.$ Given a cocycle $c\colon G\to \mathcal{H}_{\R}$ for $\pi,$ there is a natural way to ``exponentiate" $c$ to construct an  s-malleable deformation $\alpha_{c, t}\colon \widetilde{M} \to \widetilde{M}$ of the group von Neumann algebra $M=L(G)$ (see Section \ref{S:1-cohomology} for the precise details).
It can be directly established that $L^{2}(\widetilde{M})\ominus L^{2}(L(G))$ is a mixing $L(G)$-$L(G)$ bimodule if and only if $\pi$ is mixing, and that $L(G)$ is $\alpha_{c}$-rigid if and only if $c$ is bounded, i.e., inner.  In this context, Theorem \ref{T:main theorem intro} is analogous to the known fact that if a cocycle associated to a mixing representation is inner on two subgroups with infinite intersection, then it is inner on the group they generate. Indeed, much of the work in this paper is motivated by previous works in cohomology (e.g., \cite{PopaCoc, PopaCohomologyOE, PetersonThom, PetersonL2}).
Our methods  produce the following concrete corollary.

\begin{thm}\label{T:group case intro}
Let $G$ be a countable, discrete group, and let $\pi\colon G\to \mathcal{O}(\mathcal{H}_{\R})$ be an orthogonal representation where $\mathcal{H}_{\R}$ is a real Hilbert space. Suppose that $\pi$ is mixing and that $H^{1}(G,\mathcal{H}_{\R})\ne \{0\}.$ Then, for any pair $Q_{j},j=1,2$ of property (T) subalgebras of $L(G)$ such that $Q_{1}\cap Q_{2}$ is diffuse, we have that $Q_{1}\vee Q_{2}$ has infinite Jones index in $L(G).$

More generally, let $\mathcal{C}$ be the smallest class of von Neumann subalgebras of $L(G)$ that contains all property (T) subalgebras and is closed under the following operations:
\begin{itemize}
    \item if $Q\in \mathcal{C},$ then $W^{*}(\mathcal{N}_{M}^{wq}(Q)),W^{*}(q^{1}\mathcal{N}_{M}(Q)),W^{*}(wI_{M}(Q,Q))\in \mathcal{C},$ (see the discussion preceding Theorem \ref{T:wq-normalizer intro})
    \item if $Q_{1},Q_{2}\in \mathcal{C},$ and $Q_{1}\cap Q_{2}$ is diffuse, then $Q_{1}\vee Q_{2}\in \mathcal{C},$
    \item if $(Q_{j})_{j}$ is an increasing chain in $\mathcal{C},$ then ${\bigvee_{j}Q_{j}}\in \mathcal{C},$
    \item if $Q\in \mathcal{C}$ and $N$ is an intermediate von Neumann subalgebra between $Q$ and $M$ such that $Q\leq N$ has finite Jones index, then $N\in \mathcal{C}.$
\end{itemize}
Then $L(G)\notin \mathcal{C}.$

\end{thm}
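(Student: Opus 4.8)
The plan is to deduce both assertions from a single dichotomy: for a suitable deformation $\alpha_c$, \emph{every} member of $\mathcal{C}$ is $\alpha_c$-rigid, whereas $L(G)$ is not. To set up the deformation, since $H^{1}(G,\mathcal{H}_{\R})\neq\{0\}$ I would choose a cocycle $c\colon G\to\mathcal{H}_{\R}$ for $\pi$ that is not a coboundary, i.e.\ unbounded, and form the associated s-malleable deformation $\alpha_{c}$ of $M=L(G)$ inside $\widetilde{M}$ from \Cref{S:1-cohomology}. By the equivalences recorded there, $L^{2}(\widetilde{M})\ominus L^{2}(M)$ is a mixing $M$–$M$ bimodule (because $\pi$ is mixing) and $L(G)\notin\Rig(\alpha_{c})$ (because $c$ is unbounded). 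The mixing property, via Popa's insight noted after \Cref{T:main theorem intro}, guarantees that every diffuse $Q\leq M$ satisfies $Q'\cap\widetilde{M}\subseteq M$, so the hypotheses of \Cref{T:main theorem intro,T:concrete estimate intro} are available whenever the algebra in question is diffuse.

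\textbf{Every member of $\mathcal{C}$ is rigid.} I would argue by induction on the construction of $\mathcal{C}$ that $Q\in\Rig(\alpha_{c})$ for all $Q\in\mathcal{C}$. The base case is Popa's foundational observation that property (T) subalgebras force $\epsilon_{t}\to 0$ on their unit balls \cite{Po01a,PopaL2Betti}. Each closure operation then preserves rigidity: a join $Q_{1}\vee Q_{2}$ with $Q_{1}\cap Q_{2}$ diffuse is rigid by \Cref{T:concrete estimate intro}, the diffuse intersection furnishing the commutant condition through mixing; the algebras $W^{*}(\mathcal{N}_{M}^{wq}(Q))$, $W^{*}(q^{1}\mathcal{N}_{M}(Q))$, $W^{*}(wI_{M}(Q,Q))$ are rigid by \Cref{T:wq-normalizer intro}; and an increasing chain $(Q_{j})_{j}$ has rigid join because the members share a common diffuse subalgebra $Q$ and therefore all lie inside the single $P\in\MaxR(\alpha_{c})$ containing $Q$ supplied by \Cref{T:main theorem intro}, whence $\bigvee_{j}Q_{j}\subseteq P\in\Rig(\alpha_{c})$ and rigidity passes to this subalgebra. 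For a finite-index extension $Q\leq N$ I would run a Pimsner--Popa basis argument: fixing a finite basis $\{m_{i}\}$ of $N$ over $Q$ and writing $x=\sum_{i}m_{i}E_{Q}(m_{i}^{*}x)$ for $x\in(N)_{1}$, the bound $\|E_{Q}(m_{i}^{*}x)\|\leq\|m_{i}\|$ lets one split $\alpha_{t}(x)-x$ into one term estimated by $\epsilon_{t}(Q)$ and one estimated by the individual convergences $\|\alpha_{t}(m_{i})-m_{i}\|_{2}\to 0$, both of which vanish uniformly in $x$ as $t\to 0$, so $N\in\Rig(\alpha_{c})$.

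\textbf{Conclusion.} Since $H^{1}(G,\mathcal{H}_{\R})\neq\{0\}$ forces $G$ to be infinite, $L(G)$ is diffuse; were $L(G)\in\mathcal{C}$ it would be $\alpha_{c}$-rigid, contradicting the choice of $c$, so $L(G)\notin\mathcal{C}$. The two-subalgebra statement is then immediate: for property (T) subalgebras $Q_{1},Q_{2}$ with $Q_{1}\cap Q_{2}$ diffuse we have $Q_{1}\vee Q_{2}\in\mathcal{C}$, and if $[L(G):Q_{1}\vee Q_{2}]<\infty$ then the finite-index closure property applied to $Q_{1}\vee Q_{2}\leq L(G)$ would place $L(G)\in\mathcal{C}$, a contradiction; hence the Jones index is infinite.

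I expect the principal difficulty to be not the conceptual core---\Cref{T:concrete estimate intro} already delivers that rigidity is closed under generation by diffusely intersecting rigid subalgebras---but the bookkeeping of diffuseness and the commutant condition $(\,\cdot\,)'\cap\widetilde{M}\subseteq M$ throughout the induction. One must check at every application of \Cref{T:main theorem intro,T:concrete estimate intro} that the relevant subalgebra is genuinely diffuse, so that mixing supplies the commutant hypothesis, and that the normalizer constructions of \Cref{T:wq-normalizer intro} both preserve rigidity and maintain enough diffuseness that the increasing-chain step always has a diffuse anchor $Q$ to feed into the uniqueness of the maximal rigid overalgebra.
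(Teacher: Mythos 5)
Your proposal is, in substance, the paper's own argument: the paper never writes a dedicated proof of this theorem, but obtains it exactly as you do, by exponentiating a non-inner cocycle (which exists since $H^{1}(G,\mathcal{H}_{\R})\ne\{0\}$) to the deformation of \Cref{D:s-mall def cocycle}, using that mixingness of $\pi$ makes $L^{2}(\widetilde{M})\ominus L^{2}(M)$ a mixing $L(G)$-$L(G)$ bimodule (so diffuse subalgebras satisfy the commutant condition), that $L(G)$ is $\alpha_{c}$-rigid if and only if $c$ is inner, and then checking that $\Rig(\alpha_{c})$ is closed under the four operations via \Cref{T:concrete estimate intro}, \Cref{T:wq-normalizer intro}, and \Cref{T:main theorem intro}. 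Your chain argument (all members lie in the rigid envelope of a common diffuse member, and rigidity passes trivially to subalgebras because unit balls nest) and your Pimsner--Popa argument for finite index are likewise the paper's, the latter being the unlabeled proposition closing \Cref{S:permanence} --- with the minor correction that for a finite-index inclusion one takes a countable basis $(m_{j})_{j}$ with $\sum_{j}\tau(m_{j}m_{j}^{*})<\infty$ and truncates, since a finite basis need not exist.

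The one substantive correction concerns your closing paragraph, where you frame diffuseness as bookkeeping that ``one must check'': for the class $\mathcal{C}$ as literally defined it cannot be checked, because it is false. The algebra $\C 1$ has property (T) (every vector of every $\C$-$\C$ bimodule is central), hence $\C 1\in\mathcal{C}$; but $q^{1}\mathcal{N}_{M}(\C 1)=M$, since $\C x\subseteq x\,\C$ for every $x\in M$, so the literal closure rules would put $L(G)=W^{*}(q^{1}\mathcal{N}_{M}(\C 1))$ into $\mathcal{C}$, and both the theorem and your induction claim that every member of $\mathcal{C}$ is $\alpha_{c}$-rigid would fail at such $Q$. (A similar failure occurs for the chain operation applied to finite-dimensional subalgebras when $L(G)$ is hyperfinite, e.g.\ $G=\Z$, which does satisfy the hypotheses.) The repair, implicit in the paper and consistent with the hypotheses of \Cref{T:wq-normalizer intro}, is to admit into the base of $\mathcal{C}$ only \emph{diffuse} property (T) subalgebras. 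Then an easy induction shows every member of $\mathcal{C}$ is diffuse --- each operation outputs an algebra containing a diffuse member of $\mathcal{C}$, noting $Q\subseteq W^{*}(\mathcal{N}_{M}^{wq}(Q))$, $Q\subseteq W^{*}(q^{1}\mathcal{N}_{M}(Q))$, and $Q\subseteq W^{*}(wI_{M}(Q,Q))$ for diffuse $Q$ --- so mixingness supplies the commutant hypothesis at every step, every chain has a diffuse anchor, and your induction closes verbatim. With that reading imposed, your proof is complete and coincides with the paper's.
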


If $\pi$ is the left regular representation and $G$ is nonamenable, then the assumption $H^{1}(G,\mathcal{H}_{\R})\ne \{0\}$ is equivalent to saying that the first $\ell^{2}$-Betti number of $G,$ denoted $\beta_{(2)}^{1}(G),$ is positive \cite{PetersonThom}.
Viewed from the lens of $\ell^{2}$-Betti numbers, Theorem \ref{T:group case intro} is analogous to results of Peterson and Thom (see \cite[Theorem 5.6, Corollary 5.8]{PetersonThom}). They show that if $\beta^{1}_{(2)}(G)>0$, then $G$ cannot be generated by two property (T) subgroups which have infinite intersection.
As another example, under the hypotheses of Theorem \ref{T:group case intro}, suppose that $Q_{1},\cdots,Q_{k}\leq L(G)$ are diffuse, have property (T), and that $[Q_{j+1},Q_{j}]=\{0\}$ for $j=1,\cdots,k-1.$ Then $Q_{1}\vee Q_{2}\vee \cdots \vee Q_{k}$ has infinite Jones index in $L(G).$ This is analogous to a result of Gaboriau in the group case, in the context of cost (see \cite[Crit\`eres VI.24.]{Gab1}). The case of group von Neumann algebras is also one situation where we can give concrete examples of ${\rm MaxRig}(\alpha).$ See Corollary \ref{C:max rigid subgroup} below.

Returning to the general context of malleable deformations, we remark that many of the properties we can establish for elements of ${\rm MaxRig}(\alpha)$ are analogous to properties of \emph{Pinsker algebras} in the context of entropy theory for measure-preserving systems.
To illustrate how this perspective interfaces with Popa's deformation/rigidity theory, we discuss the case of weak normalizers of rigid subalgebras here and refer the reader to Theorem \ref{T:main properties of max rig intro}, several of the parts of which have analogues in the entropy theory context, for other instances.

By work of Popa in \cite[Theorem 4.1, Theorem 4.4]{PopaStrongRigidity} (see also \cite[Section 6]{Va06a}) and Peterson \cite[Theorem 4.5]{PetersonL2}), it is well known that if $(\alpha,\beta)$ is an s-malleable deformation of an inclusion of tracial von Neumann algebras $M\subseteq \widetilde{M}$, then mixingness of $L^{2}(\widetilde{M})\ominus L^{2}(M)$ as an $M$-$M$ bimodule often allows one to upgrade certain properties of a diffuse von Neumann subalgebra $Q \subseteq M$ to various weak normalizers. For example, it is well known that $Q \in {\rm Rig}(\alpha)$ gives $W^{*}(\mathcal{N}_{M}(Q))\in {\rm Rig}(\alpha)$ as in \cite[Theorem 4.5]{PetersonL2}. In light of these results
and in light of the analogy between elements of ${\rm MaxRig}(\alpha)$ and Pinsker algebras in the context of $1$-bounded entropy/ergodic theory, one might suspect to be able to pass rigidity not just to normalizers,  but more generally von Neumann algebraic weak normalizers as in \cite{Hayes8}. Fortunately, this is  indeed the case.
Recall (cf. \cite{PopaStrongRigidity, PopaCohomologyOE, IPP, GalatanPopa}) that if $M$ is a von Neumann algebra and $N\leq M,$ then the wq-normalizer of $Q$ inside of $M,$ denoted $\mathcal{N}_{M}^{wq}(N)$ is defined by
\[\mathcal{N}_{M}^{wq}(N)=\{u \in \mathcal{U}(M):uNu^{*}\cap N\mbox{ is diffuse}\}.\]
This is analogous to the wq-normalizer for an inclusion $H\leq G$ of groups, defined by Popa \cite[Definition 2.3]{PopaCohomologyOE}:
\[\mathcal{N}_{G}^{wq}(H)=\{g\in G:gHg^{-1}\cap H\mbox{ is infinite}\}.\]
The wq-normalizer of groups has already seen many applications in context where cohomology of groups with values in a unitary representation are relevant (e.g., \cite{PopaCohomologyOE},\cite{PetersonThom}). The  wq-normalizer of von Neumann algebras also appears naturally in situations where cohomology of von Neumann algebras is concerned (see, for example, \cite{GalatanPopa}).
The \emph{one-sided quasi-normalizer} is the set of all $x\in M$ so that there are $x_{1},\cdots,x_{k}\in M$ with
\[Nx\subseteq \sum_{j=1}^{k}x_{j}N,\]
and is denoted by $q^{1}\mathcal{N}_{M}(N).$
We say that $N$ is \emph{one-sided quasiregular} in $M$ if $W^{*}(q^{1}\mathcal{N}_{M}(N))=M.$
The one-sided quasi-normalizer first appears in \cite{IzumiLongoPopa}, where they use the term ``discrete" to refer to a quasi-regular inclusion (see also \cite[Proposition 1.3.1]{PPEntropy}).
The inspiration for the one-sided quasi-normalizer can be traced back to the work of Popa in \cite{PopaOrthoPairs}.
We can prove that, under the assumption that $L^{2}(\widetilde{M})\ominus L^{2}(M)$ is a mixing $M$-$M$ bimodule,  rigidity passes from a diffuse algebra to the von Neumann algebra generated by its wq-normalizer, as well as its one-sided quasi-normalizer. We can also upgrade rigidity to the weak intertwining space, $W^{*}(wI_{M}(N,N))$, defined by Popa in \cite{PopaMC, PopaWeakInter} (see also \cite{PopaCohomologyOE, IPP, GalatanPopa} for related notions). We recall the definition of the weak intertwining space in Section \ref{S:mixingness}.  

\begin{thm}\label{T:wq-normalizer intro}
Let $(\alpha,\beta)$ be an s-malleable deformation of tracial von Neumann algebras $M\subseteq \widetilde{M}.$ Suppose that $L^{2}(\widetilde{M})\ominus L^{2}(M)$ is a mixing $M$-$M$ bimodule. Then for any diffuse $Q\in {\rm Rig}(\alpha),$ we have that $W^{*}(\mathcal{N}_{M}^{wq}(Q)), W^{*}(q^{1}\mathcal{N}_{M}(Q)),W^{*}(wI_{M}(Q,Q)) \in {\rm Rig}(\alpha).$
\end{thm}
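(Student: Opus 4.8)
The plan is to reduce the statement about weak normalizers to the already-established two-algebra estimate of Theorem \ref{T:concrete estimate intro}, using mixingness to supply the diffuse-intersection hypothesis. Let me think about the structure of the argument.

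The key observation is that Theorem \ref{T:main theorem intro} gives a unique maximal rigid subalgebra $P \in {\rm MaxRig}(\alpha)$ containing our diffuse $Q$; this relies on the mixingness hypothesis via Popa's insight that $Q' \cap \widetilde M \subseteq M$ holds automatically for diffuse $Q$. So the real content is to show that each of the three weak-normalizer algebras $W^*(\mathcal{N}_M^{wq}(Q))$, $W^*(q^1\mathcal{N}_M(Q))$, $W^*(wI_M(Q,Q))$ lands inside this $P$, whence it is rigid since $P$ is. Because $P$ is uniquely maximal, it suffices to verify that each generating unitary (or element) of these normalizers, together with $Q$, generates a rigid algebra, and then invoke uniqueness to conclude all these algebras share the same $P$.

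The approach I would take for the wq-normalizer is as follows. Fix $u \in \mathcal{N}_M^{wq}(Q)$, so that $R := uQu^* \cap Q$ is diffuse by definition. Then $uQu^*$ is rigid (conjugation by a unitary in $M$ preserves $\alpha$-rigidity, since $\alpha_t$ fixes $M$ pointwise in $\|\cdot\|_2$ only after composing — more precisely, $\epsilon_t(uQu^*)$ is controlled by $\epsilon_t(Q)$ together with the fixed error $\|\alpha_t(u)-u\|_2$, which tends to $0$). Now $Q$ and $uQu^*$ are two rigid subalgebras whose intersection contains the diffuse algebra $R$, so $R' \cap \widetilde M \subseteq M$ by mixingness, and Theorem \ref{T:concrete estimate intro} applies to give $Q \vee uQu^* \in {\rm Rig}(\alpha)$. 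In particular $u \in Q \vee uQu^*$, and this algebra contains $Q$ diffusely, so by the uniqueness in Theorem \ref{T:main theorem intro} it lies in the same $P \in {\rm MaxRig}(\alpha)$ as $Q$. Letting $u$ range over all of $\mathcal{N}_M^{wq}(Q)$, every such unitary lies in $P$, whence $W^*(\mathcal{N}_M^{wq}(Q)) \subseteq P$ is rigid. For the one-sided quasi-normalizer and the weak intertwining space, the strategy is the same in spirit, but the relevant elements are not individual unitaries conjugating $Q$ onto itself; instead one must show that each $x \in q^1\mathcal{N}_M(Q)$ (resp.\ each element of $wI_M(Q,Q)$) generates, together with $Q$, an algebra that intertwines back into a rigid algebra with diffuse intersection with $Q$. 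Here I would use the characterizations of these normalizers as giving finitely many $Q$-bimodular relations, extract from mixingness a diffuse common subalgebra, and again apply Theorem \ref{T:concrete estimate intro}.

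The main obstacle I anticipate is the passage from unitaries to the one-sided quasi-normalizer and weak intertwining space. Unlike the wq-normalizer, whose defining condition $uQu^* \cap Q$ diffuse hands us the diffuse intersection directly, the conditions $Qx \subseteq \sum_{j=1}^k x_j Q$ and the weak intertwining condition are one-sided and a priori do not produce a unitary conjugating $Q$ into itself. The crux is to upgrade these one-sided containments, using mixingness of $L^2(\widetilde M) \ominus L^2(M)$ as an $M$-$M$ bimodule, into the hypothesis needed to feed into Theorem \ref{T:concrete estimate intro} — effectively showing that the relative commutant of the relevant intersection stays inside $M$. This is precisely the kind of step where Popa's spectral-gap/mixingness machinery (as in \cite[Section 3]{PopaStrongRigidity} and the results of \cite{PetersonL2, GalatanPopa}) is indispensable, and I expect the bulk of the technical work to lie in verifying that these generalized normalizers do not ``escape'' the bimodule structure, so that the diffuse-intersection hypothesis of Theorem \ref{T:concrete estimate intro} can be met.
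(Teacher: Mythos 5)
Your argument breaks down at the claim that $u \in Q \vee uQu^{*}$. This containment is false in general: the von Neumann algebra generated by $Q$ and $uQu^{*}$ need not contain the unitary $u$ itself. The simplest counterexample is a normalizing unitary: if $u\in \mathcal{N}_{M}(Q)\setminus Q$, then $uQu^{*}=Q$, so $Q\vee uQu^{*}=Q\not\ni u$, even though $u\in \mathcal{N}_{M}^{wq}(Q)$ whenever $Q$ is diffuse. Concretely, take $M=L(\mathbb{Z}^{2}\rtimes \mathrm{SL}_{2}(\mathbb{Z}))$, $Q=L(\mathbb{Z}^{2})$, and $u=u_{g}$ for $g\ne e$ in $\mathrm{SL}_{2}(\mathbb{Z})$. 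What your argument does correctly establish (via \Cref{T:concrete estimate intro} and uniqueness of the rigid envelope) is that $uQu^{*}\subseteq P$ for every $u\in\mathcal{N}_{M}^{wq}(Q)$, where $P$ is the maximal rigid algebra containing $Q$; but this does not place $u$ itself in $P$, so it does not show $W^{*}(\mathcal{N}_{M}^{wq}(Q))\subseteq P$. This is exactly the obstruction the paper flags in the discussion following \Cref{C:upgrading rigidity to all the normalizers}: one can show by elementary means that $\alpha_{t}\to\id$ uniformly on the \emph{set} $\mathcal{N}_{M}^{wq}(Q)$, but since this set is not a group, no convex-hull or join argument upgrades that to rigidity of the von Neumann algebra it generates. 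Your treatment of $q^{1}\mathcal{N}_{M}(Q)$ and $wI_{M}(Q,Q)$ is only a statement of intent, and it inherits the same problem: the join estimate feeds on pairs of rigid \emph{algebras} with diffuse intersection and can never capture the intertwining \emph{elements} themselves.

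The paper's proof uses a genuinely different, bimodule-level mechanism. First (\Cref{T:upgrading rig to the singualr part}), taking the partial isometry $v$ from \Cref{P:two sided estimate intertwiner} that implements $\alpha_{t}$ on $Q$, the map $\Theta_{t}=v^{*}\alpha_{t}(\cdot)v$ fixes $Q$ pointwise, so $(1-\E_{M})\circ\Theta_{t}$ is a $Q$-$Q$ bimodular operator $L^{2}(M)\to L^{2}(\widetilde{M})\ominus L^{2}(M)$; it therefore vanishes on the subspace $\mathcal{H}_{s}$ of singular vectors, giving $v^{*}\alpha_{t}(W^{*}(\mathcal{H}_{s}))v\subseteq M$ and hence rigidity of $W^{*}(\mathcal{H}_{s})$. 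Consequently $\mathcal{H}_{s}\subseteq L^{2}(P)$, so $L^{2}(M)\ominus L^{2}(P)$ embeds into a direct sum of copies of $L^{2}(\widetilde{M})\ominus L^{2}(M)$ as a $P$-$P$ bimodule and is therefore mixing. Finally, Popa's intertwining criterion (\cite[Theorem 3.1]{PopaStrongRigidity}) applied to this mixing bimodule shows $wI_{M}(Q,Q)\subseteq P$ as a set of elements, and since $\mathcal{N}_{M}^{wq}(Q)$ and $q^{1}\mathcal{N}_{M}(Q)$ are contained in $wI_{M}(Q,Q)$, all three generated algebras lie in $P$ and are rigid. To repair your outline you would need some argument of this kind that puts the normalizing elements, and not merely their conjugates of $Q$, inside $P$.
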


Theorem \ref{T:wq-normalizer intro} is motivated by striking results of Popa (see \cite[Corollary 0.2]{PopaCoc} and \cite[Lemma 2.4]{PopaCohomologyOE}) that upgrade cocycle untwisting on a subgroup to untwisting on its wq-normalizer and unitary intertwining of a subalgebra of $\widetilde{M}$ back into $M$ to weak versions of normalizers of that subalgebra, in the context of mixing group actions (see \cite[Theorem 4.1, Theorem 4.4]{PopaStrongRigidity},  \cite[Section 6]{Va06a}, \cite[Theorem 6.5]{SorinStefaanGeneralizedBern}).
Despite the elegance and generality of these results, the results of this paper constitute the first instance where one can upgrade rigidity, and not just intertwining, to the von Neumann algebra generated by  ``weak versions" of the normalizer (specifically the wq-normalizer, the one-sided quasi-normalizer, and the weak intertwining space),
in the general abstract setting of a mixing s-malleable deformation of a tracial von Neumann algebra. 
Part of the difficulty is that none of these ``weak normalizers" are groups.
We refer the reader to the discussion following Corollary \ref{C:upgrading rigidity to all the normalizers}. We remark that if $M\leq \widetilde{M}$ is \emph{coarse} in the sense of \cite{PopaWeakInter}, and if $Q\leq M$ is $\alpha$-rigid, then so is the von Neumann algebra generated by the \emph{singular subspace of $Q\leq M$} as defined in \cite{Hayes8} (see Section \ref{S:mixingness} for the relevant definitions). This is a direct analogue of \cite[Theorem 1.5]{Hayes8}.

Lastly, we present applications to $L^{2}$-rigidity of II$_1$ factors initiated by Peterson \cite{PetersonL2}.
The notion of $L^{2}$-rigidity is significant because it is one of a very few cohomological properties which is well-defined for all II$_1$ factors and which implies (and is implied by) interesting structural properties. For example, nonamenable II$_1$ factors with  property Gamma or property (T) are $L^2$-rigid \cite{PetersonL2}.

Using the dilation theorem of Dabrowski (see \cite[Theorem 20, Proposition 31]{Dabrowski} as well as \cite[Theorem 3.1]{DabrowskiIoana}), we are able to give an analogue of our main results in the context of $L^{2}$-rigidity.

\begin{thm} Let $M$ be a II$_1$ factor, and let $\LRig(M)$ denote the collection of all diffuse von Neumann subalgebras of $M$ which are $L^2$-rigid in the sense of Definition \ref{L2rigid}. If $Q\in \LRig(M)$, then $Q$ is contained in a unique maximal $P\in \LRig(M)$.

Moreover, $\LRig(M)$ is closed under the following operations:
\begin{itemize}
    \item if $Q\in \LRig(M),$ then $W^{*}(\mathcal{N}_{M}^{wq}(Q))\in \LRig(M),$
    \item if $Q_{1},Q_{2}\in \LRig(M),$ and $Q_{1}\cap Q_{2}$ is diffuse, then $Q_{1}\vee Q_{2}\in \LRig(M),$
    \item if $(Q_{\alpha})_\alpha$ is a chain in $\LRig(M),$ then $\bigvee_{\alpha}Q_{\alpha}\in \LRig(M),$
    \item if $Q\in \LRig(M)$ and $N$ is an intermediate von Neumann subalgebra between $Q$ and $M$ such that $Q\leq N$ has finite Jones index, then $N\in \LRig(M).$
\end{itemize}

\end{thm}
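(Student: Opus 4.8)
The plan is to reduce the entire statement to the mixing s-malleable results already established in the paper, by passing each relevant deformation of $M$ through Dabrowski's dilation theorem \cite{Dabrowski,DabrowskiIoana}. Recall that $L^2$-rigidity of a diffuse $Q\subseteq M$ (Definition \ref{L2rigid}) is phrased in terms of the symmetric quantum Markov semigroups (equivalently, closable real derivations) on $M$. The first task is to record a reduction lemma: given any such deformation datum $\delta$ with semigroup $\phi^{\delta}_t$, Dabrowski's theorem produces an s-malleable deformation $(\alpha^{\delta},\beta^{\delta})$ of $M$ inside a larger \emph{tracial} algebra $\widetilde{M}_{\delta}$ which dilates $\phi^{\delta}$ (so that $E_M\circ\alpha^{\delta}_t|_M$ recovers $\phi^{\delta}_{s(t)}$ up to reparametrization) and for which $L^2(\widetilde{M}_{\delta})\ominus L^2(M)$ is coarse, hence mixing. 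Using Popa's transversality inequality (\Cref{thm:PopasTranservality}) to compare $\|\alpha^{\delta}_t(x)-x\|_2$ with $\|\phi^{\delta}_{s}(x)-x\|_2$, one obtains that a diffuse $Q$ is $L^2$-rigid if and only if $Q\in\Rig(\alpha^{\delta})$ for \emph{every} deformation datum $\delta$. Since each complement bimodule is mixing, any diffuse $Q$ automatically satisfies $Q'\cap\widetilde{M}_{\delta}\subseteq M$, so the hypotheses of \Cref{T:main theorem intro,T:concrete estimate intro,T:wq-normalizer intro} are met for every $\delta$.

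With this dictionary in hand, each closure property reduces to a per-$\delta$ application followed by quantification over $\delta$. For the join, fix $\delta$: since $Q_1,Q_2$ are $L^2$-rigid they lie in $\Rig(\alpha^{\delta})$, and as $Q_1\cap Q_2$ is diffuse we have $(Q_1\cap Q_2)'\cap\widetilde{M}_\delta\subseteq M$, so \Cref{T:concrete estimate intro} gives $\epsilon_{2t}(Q_1\vee Q_2)\le 24\,\epsilon_t(Q_1)\to 0$; as $\delta$ was arbitrary, $Q_1\vee Q_2$ is $L^2$-rigid. For a chain $(Q_\alpha)$, fix $\delta$ and a member $Q_{\alpha_0}$; every $Q_\alpha\supseteq Q_{\alpha_0}$ in the cofinal tail is $\alpha^{\delta}$-rigid, so the controlling estimate of \Cref{cor:defcontrolsubalgebra} bounds $\sup_\alpha\epsilon_s(Q_\alpha)$ uniformly by a multiple of $\epsilon_{s/2}(Q_{\alpha_0})\to 0$. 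Since $x\mapsto\|\alpha^{\delta}_t(x)-x\|_2$ is $\|\cdot\|_2$-continuous and $\bigcup_\alpha(Q_\alpha)_1$ is strongly dense in the unit ball of $\bigvee_\alpha Q_\alpha$ by Kaplansky density, we get $\epsilon_t(\bigvee_\alpha Q_\alpha)=\sup_\alpha\epsilon_t(Q_\alpha)\to 0$, so $\bigvee_\alpha Q_\alpha$ is $L^2$-rigid. The unique maximal element is then extracted exactly as \Cref{T:main theorem intro} is deduced from \Cref{T:concrete estimate intro}: Zorn's lemma applied to diffuse $L^2$-rigid subalgebras containing $Q$, with chain-closure as the upper-bound step, yields a maximal $P$ (maximal in all of $\LRig(M)$, since any $P'\supseteq P$ in $\LRig(M)$ still contains $Q$); and if $P_1,P_2$ were both maximal containing $Q$ then $P_1\cap P_2\supseteq Q$ is diffuse, so join-closure forces $P_1\vee P_2\in\LRig(M)$ and hence $P_1=P_2$.

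The normalizer and index statements are handled the same way. For the wq-normalizer I would fix $\delta$ and apply \Cref{T:wq-normalizer intro} (the mixing hypothesis holds by construction) to get $W^{*}(\mathcal{N}^{wq}_M(Q))\in\Rig(\alpha^{\delta})$ for every $\delta$, that is, $W^{*}(\mathcal{N}^{wq}_M(Q))\in\LRig(M)$. For finite index, I would first note that \Cref{T:wq-normalizer intro} equally yields $W^{*}(q^{1}\mathcal{N}_M(Q))\in\LRig(M)$; given an intermediate $Q\le N\le M$ with $[N:Q]<\infty$, a finite Pimsner--Popa basis of $N$ over $Q$ shows $N\subseteq q^{1}\mathcal{N}_M(Q)$, so $N$ is a diffuse subalgebra of the $L^2$-rigid algebra $W^{*}(q^{1}\mathcal{N}_M(Q))$. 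Since rigidity, and hence $L^2$-rigidity, is monotone under inclusion — $\epsilon_t(N)\le\epsilon_t(P)$ whenever $N\subseteq P$ — it follows that $N\in\LRig(M)$.

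The main obstacle I anticipate is the reduction lemma itself: verifying that Dabrowski's dilation fits the s-malleable framework used here and is quantitatively faithful to $L^2$-rigidity. Two points require care. First, one must confirm the dilation can be arranged with $L^2(\widetilde{M}_\delta)\ominus L^2(M)$ genuinely mixing (coarse) for the full family of deformation data appearing in Definition \ref{L2rigid}, so that the automatic relative-commutant condition $Q'\cap\widetilde{M}_\delta\subseteq M$ is available for every diffuse $Q$. Second, one must pin down the precise transversality-type comparison between the dilated deformation and the semigroup so that ``$Q\in\Rig(\alpha^{\delta})$ for all $\delta$'' is logically \emph{equivalent} to $L^2$-rigidity, not merely implied by it. Once this equivalence is secured, all four closure properties and the uniqueness of the maximal element follow formally from the mixing s-malleable results by quantifying over $\delta$.
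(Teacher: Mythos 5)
Your overall strategy is the paper's own: translate $L^2$-rigidity into rigidity with respect to the family of s-malleable dilations supplied by Dabrowski's theorem (Theorem \ref{yoann}), then quantify the mixing s-malleable machinery over that family. Your per-deformation derivations of the join, chain, Zorn/uniqueness, and wq-normalizer statements match the paper's, and your second flagged worry --- that ``$Q\in\Rig(\alpha^{\delta})$ for all $\delta$'' be \emph{equivalent} to, not merely implied by, $L^2$-rigidity --- is resolved exactly as you hope: one direction is \cite[Corollary 5.2]{PetersonSinclair} (the semigroup $\E_M\circ\alpha_t|_M$ converges uniformly on a set if and only if $\alpha_t$ does), the other is Theorem \ref{yoann}; this dictionary is recorded in Remark \ref{rmk:DL2}.

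Your first flagged issue, however, is a genuine gap, and it cannot be closed in the way you propose. Dabrowski's dilation does \emph{not} give that $L^2(\widetilde{M}_\delta)\ominus L^2(M)$ is coarse or mixing; it gives this only for $L^2\bigl(\bigvee_{t\in[0,\infty)}\alpha_t(M)\bigr)\ominus L^2(M)$, the positive-time part. Consequently the relative commutant condition $Q'\cap\widetilde{M}_\delta\subseteq M$ is not automatic for diffuse $Q$, and Theorems \ref{T:main theorem intro}, \ref{T:concrete estimate intro} and \ref{T:wq-normalizer intro} cannot be invoked as black boxes. The paper's fix (Remark \ref{rmk:DL2}) is to rework the proofs rather than strengthen the hypotheses: in Proposition \ref{L:intertwining on big sets} the intertwining partial isometry can be taken inside $\bigvee_{s\in[0,\infty)}\alpha_s(M)$ when $t>0$; since $\|\alpha_t(x)-x\|_2=\|\alpha_{-t}(x)-x\|_2$ for $x\in M$, rigidity need only be checked for $t>0$; and then the only relative commutant needed is $Q'\cap p\bigl(\bigvee_{t\in[0,\infty)}\alpha_t(M)\bigr)p\subseteq pMp$, which Lemma \ref{L:usemix} does supply from positive-time mixingness. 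With this modification the proofs of Theorem \ref{T:preservation on big interesections}, the rigid envelope construction, and Corollary \ref{C:upgrading rigidity to all the normalizers} go through, and your quantification over $\delta$ then completes the argument. A minor further point: for the finite-index item the paper applies, per $\delta$, the Pimsner--Popa-basis proposition at the end of Section \ref{S:permanence}, which needs no mixing at all; your route through $W^{*}(q^{1}\mathcal{N}_M(Q))$ also works, but note that landing $N$ inside the one-sided quasi-normalizer requires a \emph{finite} basis of $N$ over $Q$, since $q^{1}\mathcal{N}_M(Q)$ is defined by finite sums.
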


\noindent In particular, this establishes one additional permanence property of $L^{2}$-rigidity which is analogous to a permanence property we have for groups with vanishing first $\ell^{2}$-Betti number. For example, it is known that if two subgroups of a fixed group have vanishing first $\ell^{2}$-Betti number and infinite intersection, then the group they generate has vanishing first $\ell^{2}$-Betti number.

\subsection*{Structure of the paper} Aside from the introduction, this paper has six sections.  In Section \ref{S:Prelim}, we remind the reader of important notions in von Neumann algebras, in particular, Popa's notion of an s-malleable deformation of a semifinite von Neumann algebra. In Section \ref{S:mainproof}, we establish the crucial estimate  Theorem \ref{T:concrete estimate intro} from which we derive Theorem \ref{T:main theorem intro}. The discussion found in Section \ref{S:1-cohomology} is devoted to the case where the von Neumann algebra and the deformation arise from group-theoretic considerations.
Section \ref{S:permanence} is devoted to investigating permanence properties of maximal rigid subalgebras, particularly their behavior under tensor products and amplifications. The consequences of our results in the presence of mixingness are detailed in Section \ref{S:mixingness}.
We conclude this paper by exploring connections of our results to $L^2$-rigidity. In particular, we use our techniques in combination with Dabrowski's dilation theorem to give a new approach to a conjecture of Peterson and Thom \cite{PetersonThom}.
\subsection*{Acknowledgements}

The authors thank the American Institute of Mathematics for hosting the workshop ``Classification of Group von Neumann Algebras'' during which the initial results that led to this project were completed. The authors are indebted to Jesse Peterson for posing a problem at the workshop which initiated the work here and whose solution is contained in Theorem \ref{T:group case intro}. Part of this work was done during the
program ``Quantitative Linear Algebra'' at the Institute for Pure and Applied Mathematics. The authors thank IPAM for its hospitality. The authors kindly thank Sorin Popa and Stefaan Vaes for suggesting many improvements on an earlier version of this paper.

\section{Preliminaries}\label{S:Prelim}
We begin by recalling several notions involving semifinite von Neumann algebras.
Let $M\subseteq B(\mathcal{H})$ be a von Neumann algebra with unit $1=\operatorname{id}_{\mathcal{H}} $.    The set $$ M':=\{y\in B(\mathcal{H}) : my=ym \, \text{for all }m\in M\} $$ will denote the \emph{commutant} of $M$ inside of $B (\mathcal{H})$. $N\leq M $ will be used to denote that $N\subseteq M $ is a unital inclusion of  von Neumann algebras, and the set $N'\cap M $ will be called the \emph{relative commutant of $N$ inside $M$}. Whenever $S\subseteq M $ (or $\{P_j\}_{j\in \mathcal{J}}  $ is a collection of subalgebras of $M$), $W^*(S) $ is the smallest unital von Neumann subalgebra of $M$ containing $S$ ($\bigvee_{j\in \mathcal{J}} P_j$ containing $ \bigcup_{j\in\mathcal{J}} P_j$). Letting $S$ be a self-adjoint subset of $M$ containing $1$, von Neumann's Bicommutant Theorem gives $W^*(S)=M $ if and only if $S'':=(S')'=M $. $\mathcal{U}(M) $ is the group of unitaries of $M$,  $\mathcal{P}(M)$ are the projections of $M$, $Z(M)=M\cap M' $ is the center of $M$ and $(M)_1 $ is the unit ball of $M$.  When $(M,\tau)$ is a II$_1$ factor, $s>0 $ is a scalar, $M^s:=pM\Bar{\otimes } B(\ell^2(\N))p   $ is a corner of  $M\Bar{\otimes } B(\ell^2(\N)) $ such that $p\in \mathcal{P}(M\Bar{\otimes } B(\ell^2(\N))) $ is a projection with $\tau\bar\otimes \operatorname{Tr}(p)=s $  and $\operatorname{Tr} $ is the usual semifinite trace on $B(\ell^2(\N))$.

A \emph{tracial weight} on a von Neumann algebra $M$ is a map $w:M_+\to [0,\infty]$ which is additive on $M_+$, homogeneous over $ \R_{+}$ (with the convention that $0\cdot \infty=0$), normal, and satisfies $ w(x^{*}x)=w(xx^{*})$ for all $x\in M_+ $. $w$ is \emph{semifinite}  if for every $x\in M_+ $, there exists a non-zero $0< y\leq x $ so that $w(y)<\infty $.
In the  general situation where $M$ is a semifinite von Neumann algebra with faithful tracial weight $ w$, we may linearly extend $w$ to a function $\tau $ on the $\C $ linear span of $\{x\in M_{+}: w(x)<\infty\} $.  $\tau $ is then a semifinite trace on $M$ and we denote this pairing by $(M,\tau) $, and call $(M,\tau)$ a \emph{semifinite tracial von Neumann algebra}.
Should
 $ w(1_M)<\infty$, we  assume $w(1_M)=1$ and in this case $\tau$  is defined on all of $M$ and $(M,\tau)$ is called a \emph{tracial von Neumann algebra}.
 In the case where $M$ is a \emph{factor}, i.e.~$M\cap M'\cong \C $, any semifinite trace $\tau $ on $M$ is unique up to multiplication by a positive scalar.

 Fixing a semifinite tracial von Neumann algebra $(M,\tau) $, $L^2(M,\tau) $ is the Hilbert space completion of $M_{0} :=\{x\in M : \tau(x^*x)<\infty\}$ with respect to the sesquilinear form defined by $\langle x,y\rangle :=\tau(y^*x) $ for all $x,y\in M_0$. For all $x\in M$, let $\|x\|_2 = \tau(x^*x)^{1/2} \in [0, \infty]$.
Choosing a von Neumann subalgebra $N\subseteq M$, so that $\tau\big|_{N}$ is still semifinite, $\E_N:M\to N $ will denote the unique $ \tau$-preserving conditional expectation.

For our purposes, it will be important to recall Popa's notion of an s-malleable deformation in the semifinite setting.

\begin{defn}[Popa, cf. \cite{Po01a, PopaStrongRigidity, PopaStrongRigidtyII, PopaCohomologyOE, PopaCoc}]\label{D:smalleable semifinite}
Let $(M,\tau)$ be  a semifinite tracial von Neumann algebra. An s-malleable deformation of $M$ is a tuple $((\alpha_{t})_{t\in \R},\beta,\widetilde{M},\widetilde{\tau})$ where
\begin{itemize}
    \item $(\widetilde{M},\widetilde{\tau})$ is a semifinite tracial von Neumann algebra and $M\leq\widetilde{M}$ in a trace-preserving way,
    \item $t\mapsto \alpha_{t}$ is a homomorphism,
     \item $\alpha_{t}$ is a trace-preserving automorphism of $\widetilde{M}$ for each $t\in \R,$ and for every $x\in \widetilde{M}$ the map $t\mapsto \alpha_{t}(x)$ is continuous if we give $\widetilde{M}$ the strong topology,
     \item $\beta$ is a trace-preserving automorphism of $\widetilde{M}$ with $\beta^{2}=\id,$ $\beta\big|_{M}=\id$ and $\beta \alpha_{t}=\alpha_{-t}\beta$ for all $t \in \mathbb{R}$.
\end{itemize}

\end{defn}
Though we use a different notation $\widetilde{\tau}$ for the trace on $\widetilde{M},$ we will often in proofs just write the trace on $\widetilde{M}$ as $\tau$ for ease of notation. As the embedding $M\leq \widetilde{M}$ is trace-preserving, this will cause no confusion.

We will not actually need to use that $\beta^{2}=\id,$ but this is a standard assumption and we may typically assume it without much loss of generality. In most arguments, one does not usually need to work with all of $\widetilde{M},$ and it is sufficient to understand how the deformation behaves on the subalgebra $\bigvee_{t\in \R}\alpha_{t}(M)$. It is easy to see that if we only assume the first two items of Definition \ref{D:smalleable semifinite}, then $\beta^{2}\big|_{\bigvee_{t\in \R}\alpha_{t}(M)}=\id.$ So removing the assumption $\beta^{2}=\id$ does not typically lead to different results.

We also remark that we do not really need to assume that $\beta$ is defined on all of $\widetilde{M}$, and instead it is sufficient to assume that $\beta$ is defined on $\bigvee_{t\in \R}\alpha_{t}(M)$ and satisfies $\beta\big|_{M}=\id,\beta\alpha_{t}=\alpha_{-t}\beta$ there. An exercise in using the GNS construction shows that the existence of a $\beta$ on $\bigvee_{t\in \R}\alpha_{t}(M)$ which satisfies the first two assumptions of Definition \ref{D:smalleable semifinite} is equivalent to the following moment condition: for all $k\in \N,$ all $x_{1},\cdots,x_{k}\in M,$ and all $t_{1},\cdots,t_{k}\in \R$ we have:
\[\tau(\alpha_{t_{1}}(x_{1})\alpha_{t_{2}}(x_{2})\cdots \alpha_{t_{k}}(x_{k}))=\tau(\alpha_{-t_{1}}(x_{1})\alpha_{-t_{2}}(x_{2})\cdots \alpha_{-t_{k}}(x_{k})).\]
In fact, applying $\alpha_{s}$ reduces this moment condition to one only involving positive times: for all $k\in \N,$ all $x_{1},\cdots,x_{k}\in M,$ all $s\in (0,\infty)$ and all $t_{1},\cdots,t_{k}\in (0,s):$
\[\tau(\alpha_{t_{1}}(x_{1})\alpha_{t_{2}}(x_{2})\cdots \alpha_{t_{k}}(x_{k}))=\tau(\alpha_{s-t_{1}}(x_{1})\alpha_{s-t_{2}}(x_{2})\cdots \alpha_{s-t_{k}}(x_{k})).\]
In the classical setting (i.e. the case that $M,\bigvee_{t\in \R}\alpha_{t}(M)$ are abelian), this is known as saying that the flow $\alpha_{t}$ is \emph{time-reversible}.

The following is well known, but we include it to ease any concerns the reader may have as to what it means for a one-parameter group of trace preserving automorphisms of a semifinite tracial von Neumann algebra to be continuous.

\begin{prop}
Let $(M,\tau)$ be a semifinite tracial von Neumann algebra and $\alpha_{t}\colon M\to M$ for $t\in \R$ a one-parameter group of trace-preserving automorphisms. The following are equivalent:
\begin{enumerate}[(a)]
\item $t\mapsto \alpha_{t}$ is pointwise strong operator topology continuous; \label{I:1pSOT}
\item $t\mapsto \alpha_{t}$ is pointwise continuous in the strong$^{*}$ topology; \label{I:1pSOT*}
\item  $t\mapsto \alpha_{t}\big|_{M\cap L^{2}(M)}$ is pointwise $\|\cdot\|_{2}$-continuous. \label{I:ip2}
\end{enumerate}
\end{prop}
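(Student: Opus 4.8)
The plan is to realize everything on the standard Hilbert space $L^2(M,\tau)$ (with $M$ acting by left multiplication) and to encode each trace-preserving automorphism $\alpha_t$ by the canonical unitary $U_t \in B(L^2(M))$ determined by $U_t\widehat{x} = \widehat{\alpha_t(x)}$ for $x \in M_0$. This is well defined and isometric \emph{precisely because} $\alpha_t$ preserves $\tau$; the $U_t$ form a one-parameter unitary group, and $\alpha_t(x) = U_t x U_t^*$ as operators on $L^2(M)$. Since the $\alpha_t$ are trace-preserving (hence $\|\cdot\|_2$-isometric on $M_0$) and form a group, each of (a), (b), (c) is equivalent to its formulation at $t=0$: for instance $\|\alpha_t(x)-\alpha_{t_0}(x)\|_2 = \|\alpha_{t-t_0}(x)-x\|_2$, and SOT/strong$^{*}$ continuity at $t_0$ reduces to continuity at $0$ after absorbing the unitary $U_{t_0}$. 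The crucial reformulation is that for $x \in M_0$ one has $\|\alpha_t(x)-x\|_2 = \|U_t\widehat{x}-\widehat{x}\|_{L^2}$; as $\widehat{M_0}$ is dense in $L^2(M)$ and the $U_t$ are uniformly bounded, condition (c) is \emph{exactly} the assertion that $U_t \to I$ strongly.

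Granting this, (c) $\Rightarrow$ (b) is immediate: for $x\in M$ and $\xi \in L^2(M)$ I would split
\[
\alpha_t(x)\xi - x\xi = U_t x(U_t^*\xi - \xi) + (U_t - I)(x\xi),
\]
whose first term is bounded by $\|x\|\,\|U_t^*\xi - \xi\| \to 0$ and whose second term tends to $0$ by strong continuity of $U_t$; applying the same to $x^*$ and using $\alpha_t(x)^* = \alpha_t(x^*)$ upgrades this to strong$^{*}$ continuity. The implication (b) $\Rightarrow$ (a) is trivial.

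The substance, and the main obstacle, is (a) $\Rightarrow$ (c): in the genuinely semifinite case $1 \notin L^2(M)$, so one cannot simply feed the cyclic trace vector into the SOT convergence of the operators $\alpha_t(x)$ to recover $\|\cdot\|_2$-convergence. I would circumvent this with an approximate-unit argument in two stages. First, for a projection $e$ with $\tau(e)<\infty$ I would observe, using the trace property and $e^2 = e$, that $\tau(e\,\alpha_t(e)) = \langle \alpha_t(e)\widehat{e},\widehat{e}\rangle$, so SOT continuity applied to the \emph{fixed} vector $\widehat{e}$ gives $\tau(e\,\alpha_t(e)) \to \tau(e)$ and hence $\|\alpha_t(e)-e\|_2^2 = 2\tau(e) - 2\Re\tau(e\,\alpha_t(e)) \to 0$. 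Second, I would note that the elements $a\in M_0$ with $a = ae$ for some finite-trace projection $e$ are $\|\cdot\|_2$-dense in $M_0$ (take $a = x p_\lambda$ for an increasing net of finite-trace projections $p_\lambda \uparrow 1$, so that $\|x p_\lambda - x\|_2 \to 0$ by normality of right multiplication). For such an $a$, writing $\tau(a^*\alpha_t(a)) = \langle \alpha_t(a)\widehat{\alpha_t(e)},\widehat{a}\rangle$ and splitting off $\widehat{\alpha_t(e)} - \widehat{e}$, the error term is at most $\|a\|\,\|a\|_2\,\|\alpha_t(e)-e\|_2 \to 0$ by the first stage, while the main term $\langle \alpha_t(a)\widehat{e},\widehat{a}\rangle \to \langle a\widehat{e},\widehat{a}\rangle = \|a\|_2^2$ by SOT continuity; thus $\|\alpha_t(a)-a\|_2 \to 0$.

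Finally, the uniform isometry bound $\|\alpha_t(x)-x\|_2 \le 2\|x-a\|_2 + \|\alpha_t(a)-a\|_2$ propagates continuity from this dense family to all of $M_0$, establishing (c) and closing the cycle (a) $\Rightarrow$ (c) $\Rightarrow$ (b) $\Rightarrow$ (a). Since (c) is manifestly independent of the ambient representation, the equivalence incidentally shows that the SOT and strong$^{*}$ notions in (a), (b) do not depend on the choice of faithful normal representation, which is what renders the continuity hypothesis in the definition of an s-malleable deformation unambiguous.
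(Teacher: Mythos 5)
Your proof is correct, but it is organized quite differently from the paper's. The paper proves the cycle (a) $\Rightarrow$ (b) $\Rightarrow$ (c) $\Rightarrow$ (a): the first implication is immediate from $\alpha_t(x)^*=\alpha_t(x^*)$; for (b) $\Rightarrow$ (c) it uses $\|\cdot\|_2$-density of $M\cap L^1(M)$ in $M\cap L^2(M)$ and the factorization $x=ab$ with $a,b\in M\cap L^2(M)$, so that $\langle \alpha_t(x),ab\rangle=\langle \alpha_t(x)b^*,a\rangle$ and strong continuity applied to the \emph{fixed} vector $b^*$ yields $\|\alpha_t(x)-x\|_2\to 0$; and for (c) $\Rightarrow$ (a) it invokes Kaplansky's density theorem to replace $x\in M$ by $y\in M\cap L^2(M)$ with $\|y\|\le\|x\|$. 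You instead prove (a) $\Rightarrow$ (c) $\Rightarrow$ (b) $\Rightarrow$ (a), with the implementing unitaries $U_t$ on $L^2(M)$ as the central object. Your hard direction (a) $\Rightarrow$ (c) replaces the paper's one-line factorization trick by a two-stage approximate-unit argument (first finite-trace projections $e$, via $\tau(e\,\alpha_t(e))=\langle \alpha_t(e)\widehat{e},\widehat{e}\rangle$, then elements $a=ae$ tested against $\widehat{e}$); this is more work, but it rests on the same underlying idea of pairing $\alpha_t(\cdot)$ against a fixed $L^2$-vector, and note that the paper's (b) $\Rightarrow$ (c) argument in fact only uses SOT continuity, so both papers effectively prove (a) $\Rightarrow$ (c). Conversely, your (c) $\Rightarrow$ (b), via the splitting $\alpha_t(x)\xi-x\xi=U_tx(U_t^*\xi-\xi)+(U_t-I)(x\xi)$, is cleaner than the paper's Kaplansky argument and delivers strong$^*$ continuity in one stroke, the operator-norm control coming for free from unitarity. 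One caveat: your closing claim that the equivalence ``incidentally'' shows representation-independence of (a) and (b) overreaches, since your proof of (a) $\Rightarrow$ (c) is carried out specifically in the standard representation; representation-independence instead follows from the standard fact that on bounded sets the strong and strong$^*$ topologies agree with the intrinsic $\sigma$-strong and $\sigma$-strong$^*$ topologies. That is a side remark, though, not a gap in your proof of the proposition as stated.
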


\begin{proof}
(\ref{I:1pSOT}) implies (\ref{I:1pSOT*}): Obvious from the fact that $\alpha_{t}(x^{*})=\alpha_{t}(x)^{*}.$

(\ref{I:1pSOT*}) implies (\ref{I:ip2}):
Since $t\mapsto \alpha_{t}$ is a homomorphism, it suffices to show pointwise $\|\cdot\|_{2}$-continuity at $t=0.$
Since $\|\alpha_{t}(x)\|_{2}=\|x\|_{2}$ for all $x\in M\cap L^{2}(M),$ and $M\cap L^{1}(M)$ is $\|\cdot\|_{2}$-dense in $M\cap L^{2}(M),$ it suffices to show that for every $x\in M\cap L^{1}(M)$ we have that $t\mapsto \alpha_{t}(x)$ is $\|\cdot\|_{2}$-continuous at $t=0$.

Since $x\in M\cap L^{1}(M),$ we may write $x=ab$ for $a,b\in M\cap L^{2}(M).$ Then:
\[\|\alpha_{t}(x)-x\|_{2}^{2}=2\|x\|_{2}^{2}-2\Re(\ip{\alpha_{t}(x),ab})=2\|x\|_{2}^{2}-2\Re(\ip{\alpha_{t}(x)b^{*},a}).\]
By SOT-continuity at $t=0,$ we have that
\[\lim_{t\to 0}\|\alpha_{t}(x)-x\|_{2}^{2}=2\|x\|_{2}^{2}-2\Re(\ip{xb^{*},a})=2(\|x\|_{2}^{2}-\Re(\ip{x,x}))=0.\]

(\ref{I:ip2}) implies (\ref{I:1pSOT}):
As in (\ref{I:1pSOT*}) implies (\ref{I:ip2}), it suffices to show pointwise SOT-continuity at $t=0.$ Fix an $x\in M.$ Since $\|\alpha_{t}(x)\|\leq \|x\|$ for all $t\in \R,$ it suffices to show that $\|\alpha_{t}(x)b-xb\|_{2}\to 0$ for every $b\in M\cap L^{2}(M).$

Fix a $b\in M\cap L^{2}(M)$ and let $\varepsilon>0.$ Apply Kaplansky's density theorem to find a $y\in M\cap L^{2}(M)$ so that $\|xb-yb\|_{2}<\varepsilon$ and $\|y\|\leq\|x\|.$ Then:
\begin{align*}
    \|\alpha_{t}(x)b-xb\|_{2}&\leq \varepsilon+\|\alpha_{t}(x-y)b\|_{2}+\|(\alpha_{t}(y)-y)b\|_{2}\\
    &\leq \varepsilon+\|(x-y)\alpha_{-t}(b)\|_{2}+\|b\|\|\alpha_{t}(y)-y\|_{2}\\
    &\leq 2\varepsilon+\|(x-y)(\alpha_{-t}(b)-b)\|_{2}+\|b\|\|\alpha_{t}(y)-y\|_{2}\\
    &\leq 2\varepsilon+2\|x\|\|\alpha_{-t}(b)-b\|_{2}+\|b\|\|\alpha_{t}(y)-y\|_{2}.
\end{align*}
Since $\alpha_{t}\big|_{M\cap L^{2}(M)}$ is pointwise $\|\cdot\|_{2}$-continuous we can let $t\to 0$ to see that
\[\limsup_{t\to 0}\|\alpha_{t}(x)b-xb\|_{2}\leq 2\varepsilon.\]
Since $\varepsilon$ is arbitrary, this completes the proof. \qedhere

\end{proof}

The notion of being rigid with respect to a deformation is crucial for our discussion. Given $((\alpha_{t})_{t\in \R},\widetilde{M})$ as in Definition \ref{D:smalleable semifinite}, and a projection $p\in M$ with $\tau(p)<\infty,$ we say that $Q\leq pMp$ is \emph{$\alpha$-rigid} if
\[\sup_{x\in Q:\|x\|\leq 1}\|\alpha_{t}(x)-x\|_{2}\to_{t\to 0}0.\]

One of Popa's key innovations, which will be crucial in our investigation, is the following property of s-malleable deformations, which shows that we may control how fast they converge to the identity in terms of how fast they move elements close to \emph{some} element of $M.$ The proof follows by the same argument as in \cite[Lemma 2.1]{PopaSG}.

\begin{thm}[Popa's Transversality Inequality]\label{thm:PopasTranservality}
Let $M\leq\widetilde{M}$ be a trace-preserving inclusion of semifinite von Neumann algebras. Let	$\alpha:\R\to \operatorname{Aut}(\widetilde{M}) $, $\beta\in \operatorname{Aut}(\widetilde{M}) $ be an s-malleable deformation of $M$ inside $\widetilde{M}$. Then for all $x\in M\cap L^{2}(M)$ and $t\in \R$,
\begin{align*}
\| \alpha_{2t}(x)-x \|_2\leq 2\|\alpha_t(x)- \mathbb{E}_M(\alpha_t(x))  \|_{2}.
\end{align*}
\end{thm}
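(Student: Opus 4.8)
The plan is to reduce the inequality to two elementary observations: an algebraic identity that rewrites the "transversal" displacement $\alpha_{2t}(x)-x$ in terms of the involution $\beta$, followed by a completely general estimate valid for any trace-preserving automorphism fixing $M$ pointwise. The whole point of the s-malleable structure is that $\beta$ converts a statement about how far $\alpha_{2t}$ moves $x$ into a statement about how far $\alpha_t(x)$ sits from $M$.

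First I would set $u=\alpha_t(x)$ and compute how $\beta$ acts on it. Using the intertwining relation $\beta\alpha_t=\alpha_{-t}\beta$ together with $\beta\big|_M=\id$ and $x\in M$, I obtain
\[
\beta(u)=\beta\alpha_t(x)=\alpha_{-t}\beta(x)=\alpha_{-t}(x).
\]
Consequently, since $\alpha_t\alpha_t=\alpha_{2t}$ and $\alpha_t\alpha_{-t}=\id$,
\[
\alpha_{2t}(x)-x=\alpha_t\big(\alpha_t(x)\big)-\alpha_t\big(\alpha_{-t}(x)\big)=\alpha_t\big(u-\beta(u)\big).
\]
As $\alpha_t$ is trace-preserving, hence $\|\cdot\|_2$-isometric, this yields $\|\alpha_{2t}(x)-x\|_2=\|u-\beta(u)\|_2$.

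Next I would establish the general bound $\|u-\beta(u)\|_2\le 2\|u-\mathbb{E}_M(u)\|_2$ for any $u\in\widetilde M\cap L^2(\widetilde M)$. Writing $m=\mathbb{E}_M(u)\in M$ and $w=u-m$, the fact that $\beta\big|_M=\id$ gives $\beta(m)=m$, so that $u-\beta(u)=w-\beta(w)$. The triangle inequality together with $\|\beta(w)\|_2=\|w\|_2$ (again because $\beta$ is trace-preserving) produces
\[
\|u-\beta(u)\|_2=\|w-\beta(w)\|_2\le\|w\|_2+\|\beta(w)\|_2=2\|w\|_2=2\|u-\mathbb{E}_M(u)\|_2.
\]
Chaining this with the previous display and substituting $u=\alpha_t(x)$ gives exactly $\|\alpha_{2t}(x)-x\|_2\le 2\|\alpha_t(x)-\mathbb{E}_M(\alpha_t(x))\|_2$.

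The conceptual content is entirely in the identity $\beta(\alpha_t(x))=\alpha_{-t}(x)$; everything else is bookkeeping. The only points that merit care arise from working in the \emph{semifinite} setting, rather than being genuine obstacles: I must confirm that $\mathbb{E}_M$ is well-defined and $\|\cdot\|_2$-contractive (using that $\tau\big|_M$ is semifinite), that $\alpha_t(x)\in\widetilde M\cap L^2(\widetilde M)$ so that $\mathbb{E}_M(\alpha_t(x))$ and all the $L^2$-manipulations are legitimate for $x\in M\cap L^2(M)$, and that both $\alpha_t$ and $\beta$ act isometrically on $L^2(\widetilde M)$ precisely because they preserve the trace. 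I do not expect any serious difficulty here; the argument is the same as in \cite[Lemma 2.1]{PopaSG}.
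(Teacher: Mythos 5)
Your proof is correct and takes essentially the same approach as the paper, whose proof of this statement consists of deferring to the argument of \cite[Lemma 2.1]{PopaSG}: your two steps --- the identity $\beta(\alpha_t(x))=\alpha_{-t}(x)$ reducing $\|\alpha_{2t}(x)-x\|_2$ to $\|\alpha_t(x)-\beta(\alpha_t(x))\|_2$ via the $\|\cdot\|_2$-isometry of $\alpha_t$, followed by the triangle inequality centered at $\mathbb{E}_M(\alpha_t(x))$, which $\beta$ fixes --- are precisely Popa's original argument, and your attention to the semifinite technicalities is appropriate.
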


\section{Proof of The Main Result}\label{S:mainproof}

The main goal of this paper is the study of von Neumann subalgebras which are rigid with respect to an s-malleable deformation and maximal with respect to inclusion among rigid subalgebras. For this reason, we make the following definition.
\begin{defn}
Let $(\widetilde{M},\alpha,\beta)$ be an s-malleable deformation of $M.$ Given $p\in \mathcal{P}(M)$ with $p\ne 0,$ we say that an $\alpha$-rigid $Q\leq pMp$ is \emph{maximal rigid with respect to $\alpha$} if whenever $P\leq pMp$ is $\alpha$-rigid and $P\supseteq Q,$ then $P=Q.$ If $\alpha$ is clear from the context, then we will often just say that $P$ is \emph{maximal rigid}.
\end{defn}

In this section we prove Theorem \ref{T:main theorem intro},
which is the main result by which we obtain maximal rigid subalgebras. We actually handle the more general situation where the deformation in question is defined on a semifinite von Neumann algebra, and the subalgebra in question is a subalgebra of a \emph{finite trace} corner. Part of the motivation is this. Fix an s-malleable deformation $\alpha_{t,0}\colon \widetilde{M}_{0}\to\widetilde{M}_{0}$ of a tracial von Neumann algebra $(M_{0},\tau_{0}).$ Consider $\widetilde{M}=\widetilde{M}_{0}\overline{\otimes}B(\ell^{2}(\N)),$ $M=M_{0}\overline{\otimes}B(\ell^{2}(\N)),$ and $\alpha_{t}=\alpha_{t,0}\otimes \id_{B(\ell^{2}(\N))}.$ If $M_{0}$ is a factor, then a finite trace corner of $M$ is simply an amplification of $M_0.$ So working in the semifinite context will make it much clearer how maximal rigid subalgebras behave under amplification. Our work  applies to the situation where $M_{0}$ is not a factor, and so we  have a clean picture for ``generalized amplifications."

For the sake of exposition, it will be helpful to explicitly give a name to a maximal rigid subalgebra containing a given subalgebra.
\begin{defn}\label{D:rigid envelope}
Let $(M,\tau)$ be a semifinite tracial von Neumann algebra, and $(\widetilde{M},\widetilde{\tau})$ a semifinite tracial von Neumann algebra so that $M\leq \widetilde{M}$ in a trace-preserving way. Let $\alpha_{t}\colon \widetilde{M}\to \widetilde{M}$ be strongly continuous one-parameter group in $\Aut(\widetilde{M},\widetilde{\tau}).$ Suppose that $p\in M$ with $\tau(p)<\infty,$ and that $Q\leq pMp$ is $\alpha$-rigid. We say  that $P\leq pMp$ is a \emph{rigid envelope of $Q$ with respect to $\alpha$} if
\begin{itemize}
    \item $P$ is $\alpha$-rigid
    \item $P\supseteq Q,$
    \item if $N\leq pMp$ is $\alpha$-rigid and $N\supseteq Q,$ then $N\subseteq P.$
\end{itemize}
\end{defn}

A few comments are in order about the definition. The first is that it may not be the case that rigid envelopes exist, though the main results of this section show that in many natural cases they do. Second, it should be emphasized that the inclusion $Q\subseteq P$ is \emph{unital} by assumption. This is primarily because we want rigid envelopes to behave properly under compressions and amplifications. Moreover, if we were to allow  the inclusion $Q\subseteq P$ to be nonunital, then we could easily have situations where $Q$ has a rigid envelope in our sense but not under nonunital inclusions. For example, if $\widetilde{\tau}$ is finite, and $P$ is maximal rigid subalgebra of $M,$ and if $1\in P,$ and $P\supseteq Q,$ then $uPu^{*}$ is also maximal rigid and contains $Q$ for any $u\in \mathcal{U}(M)$ with $uQu^{*}=Q.$ For example, we could take $u=u_{1}+u_{2}$ where $u_{1}\in \mathcal{U}(Q),$ and $u_{2}\in \mathcal{U}((1-p)M(1-p)).$ In particular, if $p\ne 0,1$ there are \emph{many} maximal rigid subalgebras which nonunitally contain $Q.$ See Section \ref{S:compressions/amplifications}  for situations in which we can describe all  maximal rigid subalgebras $P$ which contain a given rigid $Q,$ and so that the inclusion of $Q$ into $P$ is not unital.

We use the following notation.
Let $(N,\tau_{N}),(M,\tau_{M})$ be semifinite tracial von Neumann algebras. Given a normal linear map $\phi\colon N\to M,$ we let $\|\phi\|_{\infty,2}=\sup_{x\in N:\|x\|\leq 1}\|\phi(x)\|_{2},$ which we allow to be $+\infty$ if the supremum is not finite. Note that when $\|\cdot\|_{\infty,2}$ is bounded, it can be viewed as the operator norm of $\phi$ regarded as a map $N\to M\cap L^{2}(M),$ so it can be thought of as an $L^{\infty}$--$L^{2}$ norm. Observe that if $\alpha_{t}\colon \widetilde{M}\to \widetilde{M}$ is an s-malleable deformation of $M,$ if $Q\leq pMp$ with $p\in \mathcal{P}(M),$ and $\tau(p)<\infty,$ then $Q$ is $\alpha$-rigid if and only if
\[\lim_{t\to 0}\|(\alpha_{t}-\id)\big|_{Q}\|_{\infty,2}=0.\]

The proof of the main theorem will make use of the following fundamental result due to Popa (Cf. \cite[Proof of Theorem 4.4(ii)]{PopaStrongRigidity}), one of his major discoveries on the power of s-malleability. For the reader's convenience, we reproduce
his well known proof.

\begin{prop}[Popa]\label{L:intertwining on big sets}
Let $(\widetilde{M},\tau)$ be a semifinite tracial von Neumann algebra and $M\subseteq \widetilde{M}$ a von Neumann subalgebra so that $\tau\big|_{M}$ is semifinite.  Let $p$ be a  projection in $M$ with $\tau(p)<\infty.$ Suppose that $\alpha_{t}\colon \widetilde{M}\to \widetilde{M}$ is an s-malleable deformation,  that $ Q\leq pMp$  is $\alpha$-rigid, and that $Q'\cap p\widetilde{M}p\subseteq pMp.$ Let $q\in \mathcal{P}(Q'\cap pMp)$ and $t\in \R.$ Then there exists a partial isometry $v\in\widetilde{M}$ so that $v^{*}v=q,$ and $vv^{*}=\alpha_{t}(q),$ and $\alpha_{t}(x)v=vx$ for all $x\in Q.$

\end{prop}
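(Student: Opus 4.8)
The goal is to produce, for a fixed projection $q \in \mathcal{P}(Q' \cap pMp)$ and fixed $t$, a partial isometry $v \in \widetilde{M}$ with $v^*v = q$, $vv^* = \alpha_t(q)$, and $\alpha_t(x)v = vx$ for all $x \in Q$. The last relation says precisely that $v$ implements a spatial isomorphism between the $Q$-module structures on $q$ and $\alpha_t(q)$, intertwining the identity representation of $Q$ with the representation $x \mapsto \alpha_t(x)$. The natural object to build is the element
\begin{align*}
v_0 = \alpha_t(q)\,\mathbb{E}_{M}'\!\big(\text{something}\big),
\end{align*}
but more to the point I would work with the \emph{intertwiner space} $\mathcal{I} = \{w \in q\widetilde{M}\alpha_t(q) : \alpha_t(x)w = wx \text{ for all } x \in Q\}$ — here I mean $w \in \alpha_t(q)\widetilde{M}q$ with the stated intertwining — and show it contains a partial isometry with the correct support projections.

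The plan is to first exploit $\alpha$-rigidity of $Q$ together with Popa's transversality inequality (\Cref{thm:PopasTranservality}) to find, for small $s$, an element of $\widetilde{M}$ close to $q$ that intertwines $Q$ with $\alpha_s(Q)$. Concretely, for $x$ in the unit ball of $Q$ one has $\|\alpha_s(x) - \mathbb{E}_M(\alpha_s(x))\|_2$ controlled by $\epsilon_s(Q)$ via transversality, which is small; the standard averaging argument (taking a suitable element of the weak-operator-closed convex hull of $\{\alpha_s(u)^* \alpha_t(q) \, : \, u \in \mathcal{U}(Q)\}$ or its $\|\cdot\|_2$-closest point, following \cite[Theorem 4.4(ii)]{PopaStrongRigidity}) produces a nonzero $Q$-central vector, i.e.\ a nonzero intertwiner $w \in \mathcal{I}$. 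The hypothesis $Q' \cap p\widetilde{M}p \subseteq pMp$ is what prevents the intertwiner from degenerating and forces the relevant support projections to live in $Q' \cap pMp$, where $q$ already sits. Next I would polar-decompose: writing $w = v|w|$, the partial isometry $v$ automatically intertwines ($\alpha_t(x)v = vx$), and $v^*v$, $vv^*$ are projections in $Q' \cap pMp$ and $\alpha_t(Q)' \cap \alpha_t(p)\widetilde{M}\alpha_t(p) = \alpha_t(Q'\cap pMp)$ respectively. Finally, a maximality/exhaustion argument over all such intertwiners — choosing $v$ with $v^*v$ maximal under $q$ — upgrades the partial intertwiner to one with $v^*v = q$ exactly, and then $vv^* = \alpha_t(q)$ follows because the traces match ($\alpha_t$ is trace-preserving, so $\tau(vv^*) = \tau(v^*v) = \tau(q) = \tau(\alpha_t(q))$) and $vv^* \le \alpha_t(q)$.

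I would organize the maximality step as follows: among all partial isometries $v$ satisfying the intertwining relation with $v^*v \le q$ and $vv^* \le \alpha_t(q)$, pick one with $\tau(v^*v)$ maximal (this set is nonempty since the intertwiner $w$ above gives a nonzero member, and closed under the obvious orthogonal-sum operation on intertwiners with orthogonal source/range projections). If $v^*v \ne q$, then applying the existence-of-nonzero-intertwiner argument to the corner cut down by $q - v^*v$ (which is again a nonzero projection in $Q'\cap pMp$, and $\alpha$-rigidity passes to the cutdown) produces a further nonzero intertwiner orthogonal to $v$, contradicting maximality. Hence $v^*v = q$, and the trace computation closes out $vv^* = \alpha_t(q)$.

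\textbf{Main obstacle.} The crux will be the first existence step — producing a \emph{nonzero} intertwiner $w$ from rigidity — and in particular verifying that the averaging procedure does not collapse to zero. This is exactly where the combination of the transversality inequality and the relative-commutant condition $Q' \cap p\widetilde{M}p \subseteq pMp$ must be used in tandem: transversality guarantees $\alpha_t(x)$ is $\|\cdot\|_2$-close to its $M$-expectation uniformly over the unit ball of $Q$, which feeds Popa's convexity argument, while the relative-commutant hypothesis ensures the limiting $Q$-central element is a genuine partial isometry with support in $pMp$ rather than a spurious element that could vanish. I expect the bookkeeping of which corner ($q$ versus $\alpha_t(q)$) each projection lands in, and the verification that $\alpha$-rigidity descends to the cutdown by $q - v^*v$, to be the routine-but-delicate part; the conceptual weight rests entirely on Popa's s-malleability machinery invoked in the first step.
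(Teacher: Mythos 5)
Your proposal has a genuine gap: it never explains how to reach the \emph{arbitrary} time $t$ in the statement, and this is exactly the point of the proposition. Popa's averaging argument produces a nonzero element $\xi$ of the $\|\cdot\|_2$-closed convex hull of $\{\alpha_s(u)u^* : u \in \mathcal{U}(Qq)\}$ only when $s$ is small enough that rigidity forces every $\alpha_s(u)u^*$ to lie within, say, $\|q\|_2/2$ of $q$; for a general fixed $t$ that convex hull may perfectly well contain $0$, and your writeup silently slides from the time-$s$ intertwiner to assertions about $\alpha_t$ at the polar-decomposition step. The paper bridges this gap using the one ingredient you never invoke: the involution $\beta$, i.e.\ s-malleability itself. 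Taking $s = t/n$ small, the symmetry $\beta|_M = \id$, $\beta\alpha_s = \alpha_{-s}\beta$ gives $\alpha_s(\beta(\xi)) = \xi^*$, which forces the polar part $w$ of $\xi$ to satisfy $ww^* = \alpha_s(w^*w)$ \emph{exactly} — not merely $ww^* \in \alpha_s(Q'\cap pMp)$, which is all that abstract intertwining plus the relative-commutant hypothesis yields. This matching of supports is what allows composition across time steps: $v_0 = \alpha_{(n-1)s}(w)\cdots\alpha_s(w)w$ is then a partial isometry carrying $q_0 = w^*w$ to $\alpha_{ns}(q_0) = \alpha_t(q_0)$ and intertwining $x$ with $\alpha_t(x)$ for $x \in Q$. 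Without $\beta$, the range of a time-$s$ intertwiner need not be the $\alpha_s$-image of its source, the telescoping product fails to be a partial isometry with the right supports, and nothing in your proposal substitutes for this.

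The same missing identity undermines your exhaustion step. You take a maximal intertwiner $v$ with $v^*v \le q$, $vv^* \le \alpha_t(q)$ and, if $v^*v \ne q$, produce a nonzero intertwiner $w'$ with source under $q - v^*v$; but to contradict maximality you must adjoin $w'$ to $v$, which requires its range to be orthogonal to $vv^*$. That is not automatic: $w'$ can have range inside $vv^*$ (equivalently, $v^*w'$ can be a nonzero element of $Q'\cap pMp$ with right support under $q - v^*v$ and left support under $v^*v$ — nothing in the hypotheses excludes such elements), and then compressing $w'$ by $\alpha_t(q) - vv^*$ kills it, so maximality is not contradicted. In the paper this problem evaporates because every piece satisfies $v_0v_0^* = \alpha_t(v_0^*v_0)$: orthogonality of the source projections automatically yields orthogonality of the ranges, and the maximal orthogonal family sums to the desired $v$ (your final trace-matching observation for $vv^* = \alpha_t(q)$ is fine once this is in place). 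A last small point: transversality (\Cref{thm:PopasTranservality}) plays no role in this proposition — rigidity alone, i.e.\ uniform smallness of $\|\alpha_s(u)-u\|_2$ over $\mathcal{U}(Qq)$, is what makes the averaging nondegenerate; transversality enters only in the later estimates of the paper.
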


\begin{proof}

It is enough to show that whenever $q$ as in the statement of the proposition is nonzero, there exists a partial isometry $v_0\in \widetilde{M}$ and a nonzero projection $q_{0}\in Q'\cap pMp$ so that
\[q_{0}\leq q,\quad v_0^{*}v_0=q_{0},\quad \text{and} \quad  v_0v_0^{*}=\alpha_{t}(q_{0}).\]
The required $v$ is then obtained either as $v = 0$ in the case where $q = 0$, or as the sum of a maximal orthogonal family of partial isometries obtained as above when $q \ne 0$.

Since $Q$ is rigid, we may fix  $s$ of the form $t/n$ for some integer $n\geq 1,$ and so that the unique element $\xi$ of minimal $\|\cdot\|_{2}$-norm in the $\|\cdot\|_{2}$-closed convex hull of $\{\alpha_{s}(u)u^{*}:u\in \mathcal{U}(Qq)\}$ is not zero. By the uniqueness of $\xi$ and the fact that $q \in Q'$ we then have that
\[\alpha_{s}(x)\xi=\xi x\mbox{ for all $x\in Q$.}\]
Let $\xi=w|\xi|$ be the polar decomposition of $\xi,$ and let $q_{0}=w^{*}w.$ Since $Q'\cap p\widetilde{M}p\subseteq M,$ we obtain that $|\xi|,q_{0}\in Q'\cap pMp$. For all $u \in \mathcal{U}(Qq)$ we have
\[\xi = \alpha_s(u)\xi u^* = \alpha_s(u)w|\xi|u^* =  \alpha_s(u)wu^*|\xi|
\]
and hence $\alpha_s(u)wu^* = w$ by the uniqueness of the polar decomposition, so that $\alpha_{s}(x)w=wx$ for all $x\in Q$ (again using that $q \in Q'$).

Since $\beta\big|_{M}=\id,$ we have for every $u\in \mathcal{U}(Qq)$
\[\alpha_{s}(\beta(\alpha_{s}(u)u^{*}))=\alpha_{s}(\alpha_{-s}(u)u^{*}))=u\alpha_{s}(u)^{*},\]
so $\alpha_{s}(\beta(\xi))=\xi^{*}.$ So $\alpha_{s}(q_0)=\alpha_{s}(\beta(q_0))$ is the right support of $\xi^{*},$ and hence $ww^{*}=\alpha_{s}(q_0).$ Now setting
\[v_0=\alpha_{(n-1)s}(w)\alpha_{(n-2)s}(w)\cdots \alpha_{3s}(w)\alpha_{2s}(w)\alpha_{s}(w)w,\]
it is then easy to verify that $v_0$ has the desired properties. \qedhere
\end{proof}

For the sake of clarity, we isolate below a consequence of combining two foundational results of Popa, namely the preceding proposition and his transversality inequality. The main theorem will follow quickly from this. 

\begin{prop}\label{P:two sided estimate intertwiner}
Let $(M,\tau)$ be a semifinite tracial von Neumann algebra and $\alpha_{t}\colon\widetilde{M}\to\widetilde{M}$ an s-malleable deformation of $(M,\tau)$. Fix a projection $p\in M$ with $\tau(p) < \infty$, and for $t \in \R$ let $\mathcal{V}_{t}$ denote the set of partial isometries in $\widetilde{M}$ with $v^{*}v=p,vv^{*}=\alpha_{t}(p)$. 

Then for any $\alpha$-rigid $Q \le pMp$ with $Q'\cap p\widetilde{M}p\subseteq M$, the quantities
\begin{align*}
\varepsilon_{t}(Q) &= \|(\alpha_{t}-\id)\big|_{Q}\|_{\infty,2}, \\
\delta_t(Q) &= \inf\{\|v-p\|_{2}: v \in \mathcal{V}_{t},\, vx=\alpha_{t}(x)v \mbox{ for all $x\in Q$}\}, \quad\text{and} \\
\gamma_t(Q) &= \inf\{\|v-p\|_{2}: v \in \mathcal{V}_{t},\, v^*\alpha_t(Q)v \subseteq M\}
\end{align*}
satisfy
\begin{align*}
\frac{1}{4}\varepsilon_{2t}(Q) \le \gamma_t(Q) \le \delta_t(Q) \le 6\varepsilon_t(Q).
\end{align*}
\end{prop}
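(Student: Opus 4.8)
The plan is to prove the three inequalities separately.

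\medskip\noindent\emph{Middle inequality $\gamma_t(Q)\le\delta_t(Q)$.} This is a containment of the sets over which the infima are taken. If $v\in\mathcal{V}_t$ satisfies $vx=\alpha_t(x)v$ for all $x\in Q$, then multiplying by $v^*$ on the left and using $v^*v=p$ and $x=pxp$ gives $v^*\alpha_t(x)v=x\in M$, so $v^*\alpha_t(Q)v\subseteq M$. Thus every $v$ admissible for $\delta_t(Q)$ is admissible for $\gamma_t(Q)$.

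\medskip\noindent\emph{Left inequality $\tfrac14\varepsilon_{2t}(Q)\le\gamma_t(Q)$.} Fix $v\in\mathcal{V}_t$ with $v^*\alpha_t(Q)v\subseteq M$, and for $x\in(Q)_1$ put $y=v^*\alpha_t(x)v\in M$. Since $vv^*=\alpha_t(p)$ and $x=pxp$ we get $vyv^*=\alpha_t(p)\alpha_t(x)\alpha_t(p)=\alpha_t(x)$, while $y=pyp$ and $\|y\|\le1$; expanding $vyv^*-y=(v-p)yv^*+py(v^*-p)$ yields $\|\alpha_t(x)-y\|_2\le2\|v-p\|_2$. As $y\in M$, this bounds $\|\alpha_t(x)-\E_M(\alpha_t(x))\|_2\le2\|v-p\|_2$, and Popa's transversality inequality (\Cref{thm:PopasTranservality}) gives $\|\alpha_{2t}(x)-x\|_2\le4\|v-p\|_2$. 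Taking the supremum over $x$ and the infimum over admissible $v$ finishes this case.

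\medskip\noindent\emph{Right inequality $\delta_t(Q)\le6\varepsilon_t(Q)$, the main point.} First, \Cref{L:intertwining on big sets} applied with $q=p$ produces an intertwiner $v\in\mathcal{V}_t$ with $\alpha_t(x)v=vx$ for all $x\in Q$, so the set defining $\delta_t$ is nonempty. Because $Q'\cap p\widetilde{M}p\subseteq M$, any two such intertwiners differ by right multiplication by a unitary of $Q'\cap pMp$, so the admissible set is exactly $\{v\bar u:\bar u\in\mathcal{U}(Q'\cap pMp)\}$ and, using $\|v\bar u-p\|_2^2=2\tau(p)-2\Re\tau(v\bar u)$,
\begin{align*}
\delta_t(Q)^2 &=\inf_{\bar u}\|v\bar u-p\|_2^2 =2\tau(p)-2\sup_{\bar u\in\mathcal{U}(Q'\cap pMp)}\Re\tau(v\bar u).
\end{align*}
Now average: let $c$ be the unique element of minimal $\|\cdot\|_2$-norm in the closed convex hull of $\{u^*vu:u\in\mathcal{U}(Q)\}$, i.e.\ $c=\E_{Q'\cap\widetilde{M}}(v)$, which by hypothesis lies in $Q'\cap pMp$ with $\|c\|\le1$. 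The intertwining relation together with $\|\alpha_t(u)-u\|_2\le\varepsilon_t(Q)$ gives $\|u^*vu-v\|_2\le\varepsilon_t(Q)$ for each $u\in\mathcal{U}(Q)$, hence $\|v-c\|_2\le\varepsilon_t(Q)$; since $c$ is the orthogonal projection of $v$ onto $L^2(Q'\cap\widetilde{M})$ this reads $\tau(p)-\|c\|_2^2=\|v-c\|_2^2\le\varepsilon_t(Q)^2$.

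\medskip Finally, as $\bar u\in Q'\cap\widetilde{M}$ we have $\tau(v\bar u)=\tau(c\bar u)$, so the supremum above equals $\sup_{\bar u}\Re\tau(c\bar u)=\|c\|_1=\tau(|c|)$. Combining with $\tau(|c|)\ge\tau(|c|^2)=\|c\|_2^2$ (valid since $0\le|c|\le p$) gives $\delta_t(Q)^2=2(\tau(p)-\|c\|_1)\le2(\tau(p)-\|c\|_2^2)\le2\varepsilon_t(Q)^2$, so in fact $\delta_t(Q)\le\sqrt2\,\varepsilon_t(Q)\le6\varepsilon_t(Q)$. The main obstacle is exactly this last bound: taking the polar part of the minimal-norm vector directly only controls the support defect by $\sqrt{\varepsilon_t(Q)}$, which is too weak. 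The key idea is to average $v$ into the relative commutant first — which the hypothesis $Q'\cap p\widetilde{M}p\subseteq M$ keeps inside $M$ — making the defect $\tau(p)-\|c\|_2^2$ quadratic in $\varepsilon_t(Q)$, and then to convert this to a distance estimate via the torsor description of the admissible set and the inequality $\|c\|_1\ge\|c\|_2^2$.
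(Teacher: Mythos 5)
Your proofs of the first two inequalities are correct and essentially the same as the paper's: the middle one is the same set-containment observation, and the left one is the same combination of ``a unitary conjugate of $\alpha_t(x)$ lying in $M$ beats the conditional expectation'' with Popa's transversality inequality (\Cref{thm:PopasTranservality}).

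For the right inequality you take a genuinely different route. The paper averages $\{\alpha_t(u)u^*:u\in\mathcal{U}(Q)\}$, takes the polar part $v_1$ of the minimal-norm element (an intertwiner whose support $q$ may be strictly below $p$), and then invokes \Cref{L:intertwining on big sets} a second time on $p-q$ to fill in the missing support; the triangle inequalities produce the constant $6$. You instead invoke \Cref{L:intertwining on big sets} once (with $q=p$) to get a single full intertwiner $v$, use the hypothesis $Q'\cap p\widetilde{M}p\subseteq M$ to identify the admissible set for $\delta_t$ as the right torsor $v\,\mathcal{U}(Q'\cap pMp)$, and then locate the optimal translate by projecting onto $L^2(Q'\cap p\widetilde{M}p)$ and comparing $\|c\|_1$ with $\|c\|_2^2$. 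This duality argument is sound in structure, computes (rather than merely bounds) the infimum, and would give a sharper constant than the paper's.

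There is, however, one slip. You claim $\|u^*vu-v\|_2\le\varepsilon_t(Q)$, implicitly via $u^*vu-v=u^*(\alpha_t(u)-u)v$; but that identity uses $u^*uv=pv=v$, which fails in general: $v$ is a partial isometry from $p$ to $\alpha_t(p)$, so $vp=v$ and $\alpha_t(p)v=v$, but $pv\neq v$ unless $\alpha_t(p)=p$. Indeed $u^*vu=u^*(pvp)u$, so your orbit is the conjugation orbit of $pv=pvp$, and the intertwining relation only gives $u^*vu-pv=u^*(\alpha_t(u)-u)v$, hence $\|c-pv\|_2\le\varepsilon_t(Q)$. The gap is harmless: since $p\in (Q)_1$, one has $\|v-pv\|_2=\|(1-p)\alpha_t(p)v\|_2=\|(1-p)(\alpha_t(p)-p)v\|_2\le\|\alpha_t(p)-p\|_2\le\varepsilon_t(Q)$, so $\|v-c\|_2\le 2\varepsilon_t(Q)$; your Pythagoras identity $\|v-c\|_2^2=\tau(p)-\|c\|_2^2$ remains valid because the cross term $\langle v,c\rangle=\tau(c^*pvp)$ only sees $pvp$, and the rest of your argument then yields $\delta_t(Q)\le 2\sqrt{2}\,\varepsilon_t(Q)$ (with slightly more careful bookkeeping, $2\varepsilon_t(Q)$), which still proves the proposition with room to spare. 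The claimed constant $\sqrt{2}$, though, is not justified as written.
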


\begin{proof}
Throughout the proof, we use that as a consequence of traciality we have $\|v-p\|_{2}=\|v-\alpha_{t}(p)\|_{2}$ for every $v\in\mathcal{V}_{t},$ which is a computation that we leave as an exercise to the reader. 

To establish the first inequality, take any $v \in \mathcal{V}_t$ such that $v^*\alpha_t(Q)v \subseteq M$. Then by Popa's transversality inequality,
for any $x \in Q$ with $\|x\|\le 1$ we have
\begin{align*}
    \|\alpha_{2t}(x) - x\|_2 \le  2\|\alpha_{t}(x)-\E_{M}(\alpha_{t}(x))\|_{2}
    \leq 2\|\alpha_{t}(x)-v^{*}\alpha_{t}(x)v\|_{2}
    \leq 4\|v-\alpha_{t}(p)\|_{2}
    = 4\|v-p\|_{2},
\end{align*}
the second inequality following since $v^{*}\alpha_{t}(x)v \in M$. Hence $\varepsilon_{2t}(Q) \le 4\gamma_t(Q)$. 

The second inequality is immediate, since any $v \in \mathcal{V}_t$ with $vx=\alpha_{t}(x)v$ for all $x \in Q$ also has $v^*\alpha_t(Q)v = Q \subseteq M$. 

For the third inequality, we use Proposition \ref{L:intertwining on big sets}.
Fix $t\in \R,$ and to simplify notation let $\varepsilon=\varepsilon_{t}(Q).$ 
Let $a\in\widetilde{M}$ be the unique element of minimal $\|\cdot\|_{2}$-norm in the closed convex hull of $\{\alpha_{t}(u)u^{*}:u\in \mathcal{U}(Q)\}.$ Then $\|a-p\|_{2}\leq \varepsilon.$ Let $a=v_{1}|a|$ be the polar decomposition of $a,$ and let $q=v_{1}^{*}v_{1}.$ Observe that $q\leq p,$ and that $p|a|=|a|.$ As in the proof of Proposition \ref{L:intertwining on big sets}, we get that $v_{1}^{*}v_{1}\in Q'\cap pMp,$ that $\alpha_{t}(x)v_{1}=v_{1}x$ for all $x\in Q,$ and that $v_{1}v_{1}^{*}=\alpha_{t}(q).$ Since $0\leq |a|\leq 1,$ we also have that
\[\|p-|a|\|_{2}=\|p(1-|a|)\|_{2}\leq \|p(1-|a|^{2})\|_{2}=\|p-a^{*}a\|_{2}\leq 2\|p-a\|_{2}\leq 2\varepsilon. \]
So,
\[\|v_{1}-a\|_{2}=\|v_{1}(p-|a|)\|_{2}\leq 2\varepsilon\]
and hence \[\|v_1 - p\|_2 \le \|v_1-a\|_2 + \|a-p\|_2 \le 3\varepsilon.\]

By Proposition \ref{L:intertwining on big sets}, we may find a partial isometry $v_{2}\in \widetilde{M}$ so that
\[v_{2}^{*}v_{2}=p-q,\, v_{2}v_{2}^{*}=\alpha_{t}(p-q), \mbox{ and } \alpha_{t}(x)v_{2}=v_{2}x\mbox{ for all $x\in Q.$}\]
Then $v=v_{1}+v_{2}$ is partial isometry in $\widetilde{M}$ with $v^{*}v=p,vv^{*}=\alpha_{t}(p)$ and $\alpha_{t}(x)v=vx$ for all $x\in Q.$ Moreover, as $v_{1}(p-q)=0,$ we have that
\[\|v_{2}\|_{2}=\|p-q\|_{2}=\|(p-v_{1})(p-q)\|_{2}\leq \|p-v_{1}\|_{2}\leq 3\varepsilon.\]
So
\[\|p-v\|_{2}\leq \|p-v_{1}\|_{2}+\|v_{2}\|_{2}\leq 6\varepsilon.\]
Thus $\delta_{t}(Q)\leq 6\varepsilon_{t}(Q).$ \qedhere

\end{proof}

We now prove the main result, recovering Theorem \ref{T:concrete estimate intro}. The proof given here via the preceding proposition is a significant simplification of the argument given in an earlier draft of this paper and gives also an improved estimate; this refinement is due to Stefaan Vaes and we thank him for sharing it with us.
\begin{thm}\label{T:preservation on big interesections}
Let $(M,\tau)$ be a semifinite tracial von Neumann algebra. Let $\alpha_{t}\colon\widetilde{M}\to\widetilde{M}$ be an s-malleable deformation of $(M,\tau)$. Fix a projection $p\in M$ with $\tau(p)<\infty.$ Assume that $Q_{1},Q_{2} \le pMp$ are $\alpha$-rigid with $(Q_{1}\cap Q_{2})'\cap p\widetilde{M}p\subseteq M.$ 

Then $Q_{1}\vee Q_{2}$ is $\alpha$-rigid. Further,
\[\|(\alpha_{t}-\id)\big|_{Q_{1}\vee Q_{2}}\|_{\infty,2}\leq 24\|(\alpha_{t/2}-\id)\big|_{Q_{1}}\|_{\infty,2}.\]

\end{thm}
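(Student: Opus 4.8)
The plan is to manufacture a single partial isometry $v_1\in\widetilde M$ close to $p$ that conjugates $\alpha_t(Q_1\vee Q_2)$ back into $M$, and then to read off rigidity of $Q_1\vee Q_2$ directly from Popa's transversality inequality; the point is that transversality needs no rigidity hypothesis on $Q_1\vee Q_2$ itself, so the argument is not circular. Write $Q=Q_1\vee Q_2$ and $\varepsilon_t(\cdot)=\|(\alpha_t-\id)\big|_{(\cdot)}\|_{\infty,2}$. First I would note that since $Q_1\cap Q_2\subseteq Q_j$ we have $Q_j'\cap p\widetilde M p\subseteq(Q_1\cap Q_2)'\cap p\widetilde M p\subseteq M$ for $j=1,2$, so each $Q_j$ satisfies the hypotheses of Propositions \ref{L:intertwining on big sets} and \ref{P:two sided estimate intertwiner}.

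Fix $t\in\R$ and $\eta>0$. Using the bound $\delta_t(Q_1)\le 6\varepsilon_t(Q_1)$ from Proposition \ref{P:two sided estimate intertwiner}, I would choose $v_1\in\mathcal{V}_t$ with $\alpha_t(x)v_1=v_1x$ for all $x\in Q_1$ and $\|v_1-p\|_2\le 6\varepsilon_t(Q_1)+\eta$; using Proposition \ref{L:intertwining on big sets} applied to $Q_2$ with $q=p$, I would choose $v_2\in\mathcal{V}_t$ with $\alpha_t(x)v_2=v_2x$ for all $x\in Q_2$ (no size control on $v_2$ is needed). The crux of the proof is the element $w=v_2^*v_1$. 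For $x\in Q_1\cap Q_2$ the two intertwining relations give $v_1x=\alpha_t(x)v_1$ and $v_2^*\alpha_t(x)=xv_2^*$, so that $wx=v_2^*\alpha_t(x)v_1=xw$; thus $w$ commutes with $Q_1\cap Q_2$. A short computation using $v_j^*v_j=p$ and $v_jv_j^*=\alpha_t(p)$ shows that $w$ is a unitary of $p\widetilde M p$, hence $w\in(Q_1\cap Q_2)'\cap p\widetilde M p$, which by hypothesis forces $w\in M$ (so in fact $w\in\mathcal{U}(pMp)$).

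With $w\in M$ secured, I would conjugate by $v_1$ alone. The relation $v_1=v_2w$ (which follows from $v_2v_2^*=\alpha_t(p)=v_1v_1^*$) shows that the normal unital $*$-homomorphism $\phi(y)=v_1^*\alpha_t(y)v_1$ of $pMp$ satisfies $\phi(x)=v_1^*v_1x=x\in M$ for $x\in Q_1$ and $\phi(x)=w^*(v_2^*\alpha_t(x)v_2)w=w^*xw\in M$ for $x\in Q_2$; by normality $\phi(Q)\subseteq M$, i.e.\ $v_1^*\alpha_t(Q)v_1\subseteq M$. Finally, applying the transversality inequality (Theorem \ref{thm:PopasTranservality}) with this $v_1$ exactly as in the first part of the proof of Proposition \ref{P:two sided estimate intertwiner}, I would obtain $\|\alpha_{2t}(x)-x\|_2\le 4\|v_1-p\|_2$ for every $x\in Q$ with $\|x\|\le1$, whence
\[\varepsilon_{2t}(Q)\le 4\|v_1-p\|_2\le 24\,\varepsilon_t(Q_1)+4\eta.\]
Letting $\eta\to0$ and replacing $t$ by $t/2$ gives $\varepsilon_t(Q)\le 24\,\varepsilon_{t/2}(Q_1)$, which is the stated estimate, and rigidity of $Q$ follows at once because $\varepsilon_{t/2}(Q_1)\to0$ as $t\to0$. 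I expect the one genuine difficulty to be the verification that the a priori unrelated intertwiners $v_1$ and $v_2$ differ by an element of $M$: this is precisely where the intersection hypothesis $(Q_1\cap Q_2)'\cap p\widetilde M p\subseteq M$ enters, and it is the only place the assumption on the intersection is used, the remainder being bookkeeping with partial isometries together with transversality.
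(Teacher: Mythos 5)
Your proposal is correct and follows essentially the same route as the paper's proof: both produce an intertwiner $v_2$ for $Q_2$ via Proposition \ref{L:intertwining on big sets}, use the hypothesis $(Q_1\cap Q_2)'\cap p\widetilde Mp\subseteq M$ to see that $v_2^*v_1$ is a unitary of $pMp$, conclude that the $Q_1$-intertwiner $v_1$ alone conjugates $\alpha_t(Q_1\vee Q_2)$ into $M$ by normality, and finish with Proposition \ref{P:two sided estimate intertwiner}. The only cosmetic difference is that you work with an explicit $\eta$-approximation of $\delta_t(Q_1)$, whereas the paper phrases the same argument as the inequality $\gamma_t(Q_1\vee Q_2)\le\delta_t(Q_1)$ between the infima.
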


\begin{proof}
Fix $t \in \R$ and adopt the notations of \Cref{P:two sided estimate intertwiner}. We claim that $\gamma_t(Q_1 \vee Q_2) \le \delta_t(Q_1)$. To establish this, taking any $v \in \mathcal{V}_t$ with $vx = \alpha_t(x)v$ for all $x \in Q_1$, we will show that in fact $v^{*}\alpha_{t}(Q_{1}\vee Q_{2})v\subseteq M$.

To do so, take by Proposition \ref{L:intertwining on big sets} some $w \in \mathcal{V}_t$ with $wy = \alpha_{t}(y)w$ for all $y\in Q_{2}$. 
Then $v^{*}w\in (Q_{1}\cap Q_{2})'\cap p\widetilde{M}p\subseteq M$, so that $b=v^{*}w\in \mathcal{U}(pMp)$. Thus $v^{*}\alpha_{t}(Q_{2})v = bQ_{2}b^* \subseteq M$, and $v^{*}\alpha_{t}(Q_{1})v\subseteq M$ by assumption. Since $x\mapsto v^{*}\alpha_{t}(x)v$ is a normal $*$-homomorphism $pMp\to p\widetilde{M}p$, it follows that $v^{*}\alpha_{t}(Q_{1}\vee Q_{2})v\subseteq M$ as desired.  

With the claimed inequality in hand, we simply apply \Cref{P:two sided estimate intertwiner} to get
\begin{align*}
\varepsilon_{2t}(Q_1 \vee Q_2) \le 4\gamma_t(Q_1 \vee Q_2) \le 4\delta_t(Q_1) \le 24\varepsilon_t(Q_1).
\end{align*}\qedhere

\end{proof}

As promised in the introduction, it is straightforward from Theorem \ref{T:preservation on big interesections} to prove Theorem \ref{T:main theorem intro} since the uniform estimate provided in Theorem \ref{T:preservation on big interesections} makes it simple to pass to a inductive limit of subalgebras.

By way of an illustrative example, we give the following result on increasing chains of property (T) subalgebras, the proof of which is now immediate.

\begin{cor}\label{T:PropT intro}
Let $(\alpha,\beta)$ be an s-malleable deformation of tracial von Neumann algebras $M\subseteq \widetilde{M}.$ Let $(Q_{\iota})_{\iota}$ be an increasing net of  property (T) subalgebras of $M.$ Suppose that there is some $\kappa$ so that $Q_{\kappa}'\cap \widetilde{M}\subseteq M.$ Then, $\bigvee_{\iota}Q_{\iota}\in {\rm Rig}(\alpha).$
\end{cor}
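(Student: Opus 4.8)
The plan is to reduce the corollary to \Cref{T:preservation on big interesections} by exhibiting a uniform bound on the deformation moduli of all the algebras in the net, and then using that a strong-operator-topology limit of such a uniformly-controlled net remains rigid. First I would observe that each property (T) subalgebra $Q_\iota$ is automatically $\alpha$-rigid: this is Popa's foundational observation that the deformation $\alpha_t \to \id$ in $\|\cdot\|_2$ pointwise, combined with property (T) of $Q_\iota$, forces uniform convergence on the unit ball of $Q_\iota$, so $Q_\iota \in {\rm Rig}(\alpha)$ for each $\iota$.

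The key step is to apply the uniform estimate. Fix $\kappa$ with $Q_\kappa' \cap \widetilde M \subseteq M$. I would first argue that $Q_\kappa' \cap \widetilde M \subseteq M$ propagates to give $(Q_\kappa \cap Q_\iota)' \cap \widetilde M \subseteq M$ for every $\iota \ge \kappa$ in the net: since the net is increasing, for $\iota \ge \kappa$ we have $Q_\kappa \subseteq Q_\iota$, so $Q_\kappa \cap Q_\iota = Q_\kappa$, and hence $(Q_\kappa \cap Q_\iota)' \cap \widetilde M = Q_\kappa' \cap \widetilde M \subseteq M$. Applying \Cref{T:preservation on big interesections} to the pair $Q_\kappa, Q_\iota$ (with $p = 1$, the finite-trace case) gives $Q_\kappa \vee Q_\iota = Q_\iota$ is $\alpha$-rigid with the uniform bound
\[
\|(\alpha_t - \id)\big|_{Q_\iota}\|_{\infty,2} = \|(\alpha_t - \id)\big|_{Q_\kappa \vee Q_\iota}\|_{\infty,2} \le 24\,\|(\alpha_{t/2} - \id)\big|_{Q_\kappa}\|_{\infty,2},
\]
valid for all $\iota \ge \kappa$. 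The right-hand side is independent of $\iota$ and tends to $0$ as $t \to 0$ by rigidity of $Q_\kappa$.

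Finally I would pass to the limit. Set $Q = \bigvee_\iota Q_\iota$. Since $\bigcup_{\iota \ge \kappa} Q_\iota$ is a strong-operator-topology dense $*$-subalgebra of $Q$ (tails of an increasing net are cofinal, so discarding indices below $\kappa$ changes neither the union nor its closure), and since the unit ball of the union is $\|\cdot\|_2$-dense in $(Q)_1$ by Kaplansky density, the supremum defining $\|(\alpha_t - \id)\big|_{Q}\|_{\infty,2}$ is governed by its values on $\bigcup_{\iota \ge \kappa} Q_\iota$. Concretely, any $x \in (Q)_1$ is a $\|\cdot\|_2$-limit of elements $x_n \in (Q_{\iota_n})_1$, and the bound $\|\alpha_t(x_n) - x_n\|_2 \le 24\,\|(\alpha_{t/2} - \id)\big|_{Q_\kappa}\|_{\infty,2}$ is uniform in $n$; since $\alpha_t$ is $\|\cdot\|_2$-isometric it passes to the limit, yielding $\|(\alpha_t - \id)\big|_{Q}\|_{\infty,2} \le 24\,\|(\alpha_{t/2} - \id)\big|_{Q_\kappa}\|_{\infty,2} \to 0$ as $t \to 0$. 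Hence $Q \in {\rm Rig}(\alpha)$.

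The main obstacle is the limiting argument, and specifically verifying that the uniform $\|\cdot\|_{\infty,2}$-bound survives passage to the SOT-generated von Neumann algebra rather than just the union of the net. The subtlety is that $\|\cdot\|_{\infty,2}$ is an $L^\infty$--$L^2$ norm, so one must control $\alpha_t - \id$ on the operator-norm unit ball of $Q$, which is larger than the closure of the unit balls of the $Q_\iota$ in operator norm; the correct move is to use Kaplansky density to approximate any $x \in (Q)_1$ in $\|\cdot\|_2$ by elements of the union with operator norm at most $1$, so that the uniform estimate transfers. This is the essential content that makes the inductive-limit passage work and is exactly the point where the uniformity of the constant $24$ (independent of $\iota$) in \Cref{T:preservation on big interesections} is indispensable.
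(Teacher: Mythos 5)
Your proof is correct and is essentially the paper's intended argument: the paper declares the corollary ``immediate'' from Theorem \ref{T:preservation on big interesections}, and the intended route is exactly yours --- property (T) makes each $Q_\iota$ $\alpha$-rigid, the estimate $\epsilon_t(Q_\iota) \le 24\,\epsilon_{t/2}(Q_\kappa)$ holds uniformly for $\iota \ge \kappa$ since $Q_\kappa \cap Q_\iota = Q_\kappa$, and the passage to $\bigvee_\iota Q_\iota$ via Kaplansky density is the same limit argument the paper carries out explicitly in the proof of Corollary \ref{C:existence of maximal subalgebras}. Your attention to approximating in $\|\cdot\|_2$ by elements of operator norm at most $1$ is precisely the point the paper handles by citing ``the density of the unit ball of $P_0$ in the unit ball of $P$.''
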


The next corollary gives a general stability result for rigid envelopes. It may be viewed as a prototype for proving various permanence and structural properties under stronger assumptions in the sequel.

\begin{cor}\label{C:existence of maximal subalgebras}
Let $(\widetilde{M},\tau)$ be a semifinite tracial von Neumann, and $M\leq \widetilde{M}$ with $\tau\big|_{M}$ semifinite. Suppose we have an s-malleable deformation $\alpha_{t}\colon \widetilde{M}\to \widetilde{M}.$ Fix a projection $p\in M$ with $\tau(p)<\infty.$ Suppose $Q\leq pMp$ has $Q'\cap p\widetilde{M}p\subseteq M$ and that $Q$ is $\alpha$-rigid. Then there is a rigid envelope $P\leq pMp$ of $Q$ with respect to $\alpha.$
Further, $P$ has the  following properties:
\begin{enumerate}[(i)]
\item If $N\leq pMp$ has $(N\cap P)'\cap p\widetilde{M}p\subseteq M$, and if $N$ is $\alpha$-rigid, then $N\subseteq P.$ \label{I:diffuse absorption}
\item Suppose that $\sigma\in \Aut(\widetilde{M}),$ and that $\sigma(pMp)=pMp,$ and
\[\lim_{t\to 0}\|(\alpha_{t}\circ \sigma-\sigma\circ \alpha_{t})\big|_{P}\|_{\infty,2}=0.\]
If $(\sigma(P)\cap P)'\cap p\widetilde{M}p\subseteq M$, then $\sigma(P)=P.$ \label{I:automorphic absorption}
\end{enumerate}

\end{cor}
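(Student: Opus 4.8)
The plan is to establish existence of the rigid envelope first, then verify the two additional properties, all leveraging the uniform estimate in \Cref{T:preservation on big interesections}. For existence, I would consider the collection of all $\alpha$-rigid subalgebras $N \leq pMp$ with $N \supseteq Q$, and take $P$ to be the strong operator topology closure of the algebra generated by all of them, i.e.\ $P = \bigvee \{N : N \text{ is } \alpha\text{-rigid}, N \supseteq Q\}$. The key point is that any two such $N_1, N_2$ contain $Q$, so $(N_1 \cap N_2)' \cap p\widetilde{M}p \subseteq Q' \cap p\widetilde{M}p \subseteq M$, whence \Cref{T:preservation on big interesections} gives $N_1 \vee N_2$ is $\alpha$-rigid with $\|(\alpha_t - \id)|_{N_1 \vee N_2}\|_{\infty,2} \leq 24\|(\alpha_{t/2}-\id)|_{N_1}\|_{\infty,2}$. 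The uniformity of the constant $24$ is exactly what lets me pass to the join over the whole family: I would argue that $\|(\alpha_{2t}-\id)|_P\|_{\infty,2} \leq 24\sup_N \|(\alpha_{t}-\id)|_{N}\|_{\infty,2}$ by approximating any unit-ball element of $P$ in $\|\cdot\|_2$ by elements of finite joins $N_1 \vee \cdots \vee N_k$ (each itself $\alpha$-rigid with the uniform bound via induction on \Cref{T:preservation on big interesections}), then using that $Q' \cap p\widetilde{M}p \subseteq M$ already controls all the $N$ uniformly through \Cref{P:two sided estimate intertwiner}. This shows $P$ is $\alpha$-rigid, and by construction it contains every $\alpha$-rigid $N \supseteq Q$, so $P$ is a rigid envelope.

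For property \eqref{I:diffuse absorption}, suppose $N \leq pMp$ is $\alpha$-rigid with $(N \cap P)' \cap p\widetilde{M}p \subseteq M$. The natural move is to apply \Cref{T:preservation on big interesections} to the pair $N, P$ (noting $P$ is $\alpha$-rigid from the existence part): since $(N \cap P)' \cap p\widetilde{M}p \subseteq M$, the theorem gives that $N \vee P$ is $\alpha$-rigid. But $N \vee P \supseteq Q$ and is $\alpha$-rigid, so by the defining maximality property of the rigid envelope $P$ we must have $N \vee P \subseteq P$, hence $N \subseteq P$. I should double-check that $N \vee P$ contains $Q$: indeed $P \supseteq Q$, so this is automatic.

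For property \eqref{I:automorphic absorption}, the strategy is to show $\sigma(P)$ is itself $\alpha$-rigid and contains a subalgebra that lets me invoke part \eqref{I:diffuse absorption} (or symmetry). The hypothesis $\lim_{t\to 0}\|(\alpha_t \circ \sigma - \sigma \circ \alpha_t)|_P\|_{\infty,2} = 0$ is designed so that $\alpha$-rigidity of $P$ transfers to $\alpha$-rigidity of $\sigma(P)$: for $y = \sigma(x)$ with $x \in (P)_1$, I estimate $\|\alpha_t(y) - y\|_2 = \|\alpha_t(\sigma(x)) - \sigma(x)\|_2 \leq \|(\alpha_t\sigma - \sigma\alpha_t)(x)\|_2 + \|\sigma(\alpha_t(x) - x)\|_2$, and since $\sigma$ is trace-preserving the second term equals $\|\alpha_t(x)-x\|_2 \to 0$ uniformly, while the first $\to 0$ by hypothesis. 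Thus $\sigma(P)$ is $\alpha$-rigid. Now $\sigma(P)$ and $P$ are both $\alpha$-rigid subalgebras of $pMp$ with $(\sigma(P) \cap P)' \cap p\widetilde{M}p \subseteq M$, so \Cref{T:preservation on big interesections} yields that $\sigma(P) \vee P$ is $\alpha$-rigid. Since $\sigma(P) \vee P \supseteq Q$, maximality of $P$ forces $\sigma(P) \vee P \subseteq P$, hence $\sigma(P) \subseteq P$; as $\sigma$ is an automorphism preserving $pMp$ and the trace, a dimension/trace count (or applying the same argument to $\sigma^{-1}$, which inherits the commutation hypothesis) gives the reverse inclusion, so $\sigma(P) = P$.

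The main obstacle I anticipate is the existence argument: making rigorous that the uniform constant survives the passage to the possibly uncountable join. The subtlety is that a priori $P$ is only the SOT-closure of $\bigcup (\text{finite joins})$, and I must ensure that $\|(\alpha_{2t}-\id)|_P\|_{\infty,2}$ is controlled by the \emph{supremum} over the family rather than degenerating. The resolution is that \Cref{P:two sided estimate intertwiner} bounds $\varepsilon_{2t}(N)$ purely in terms of the intertwiner data $\gamma_t, \delta_t$, which for every $N \supseteq Q$ are in turn governed by $Q$ through the chain of inequalities once one knows $Q' \cap p\widetilde{M}p \subseteq M$; combined with $\|\cdot\|_2$-density of the finite joins in $(P)_1$ (via Kaplansky) and lower semicontinuity of $x \mapsto \|\alpha_t(x)-x\|_2$, the uniform bound extends to all of $P$. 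I would want to state this density/semicontinuity step carefully, as it is the one place where an otherwise routine-looking limit could hide a genuine gap.
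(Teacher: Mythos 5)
Your existence proof and your proof of part (i) are correct and are essentially the paper's own argument: the paper also takes the join of the (by \Cref{T:preservation on big interesections}, directed) family of $\alpha$-rigid subalgebras containing $Q$, and the point you isolate is exactly the paper's mechanism --- every member $N$ of that family satisfies $\|(\alpha_{t}-\id)\big|_{N}\|_{\infty,2}\le 24\|(\alpha_{t/2}-\id)\big|_{Q}\|_{\infty,2}$ with the \emph{fixed} algebra $Q$ on the right (apply \Cref{T:preservation on big interesections} with $Q_{1}=Q$, $Q_{2}=N$), so that Kaplansky density and normality of $\alpha_{t}$ carry the uniform bound over to the SOT-closure.

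Part (ii) contains a genuine gap, in the reverse inclusion $P\subseteq\sigma(P)$. The ``dimension/trace count'' cannot work: a trace-preserving automorphism can map a von Neumann subalgebra properly into itself (think of the Bernoulli shift on an infinite tensor product, which carries the subalgebra indexed by $\{n\ge 0\}$ properly inside itself), so no trace argument upgrades $\sigma(P)\subseteq P$ to equality. The fallback ``apply the same argument to $\sigma^{-1}$, which inherits the commutation hypothesis'' is also unjustified: since $\sigma$ is trace-preserving, the substitution $x=\sigma(y)$ gives
\[
\|(\alpha_{t}\circ\sigma^{-1}-\sigma^{-1}\circ\alpha_{t})\big|_{X}\|_{\infty,2}
=\|(\sigma\circ\alpha_{t}-\alpha_{t}\circ\sigma)\big|_{\sigma^{-1}(X)}\|_{\infty,2},
\]
so the hypothesis on $P$ hands you commutation of $\sigma^{-1}$ only on $\sigma(P)$ (which merely re-proves that $P=\sigma^{-1}(\sigma(P))$ is rigid), whereas running your argument for $\sigma^{-1}$ --- showing $\sigma^{-1}(P)$ is rigid, so that $\sigma^{-1}(P)\supseteq P\supseteq Q$ forces $\sigma^{-1}(P)\subseteq P$ --- needs commutation on $\sigma^{-1}(P)\supseteq P$, strictly more than is assumed.

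This is not a presentational defect that a cleverer argument could fix: commutation on $P$ alone does not imply $\sigma(P)=P$. Take $G=\mathbb{F}_{\Z}$, free on $\{x_{n}\}_{n\in\Z}$, $\pi$ the real regular representation, and the free cocycle with $c(x_{n})=0$ for $n\ge0$ and $c(x_{-n})=c_{n}\delta_{e}$ for $n\ge1$ with distinct $c_{n}>0$; one checks that $\{g:c(g)=0\}=K:=\langle x_{n}:n\ge 0\rangle$ exactly, so $P=L(K)$ is maximal rigid for $\alpha_{c}$ by \Cref{P:kernel pineapples}. The shift $\theta(x_{n})=x_{n+1}$ induces a trace-preserving $\sigma\in\Aut(\widetilde{M})$ with $\sigma(M)=M$; since $\alpha_{c,t}$ is the identity on both $L(K)$ and $L(\theta(K))$, we get $\alpha_{c,t}\circ\sigma=\sigma\circ\alpha_{c,t}$ \emph{exactly} on $P$, and $(\sigma(P)\cap P)'\cap\widetilde{M}=L(\theta(K))'\cap\widetilde{M}\subseteq M$ by mixing; yet $\sigma(P)=L(\langle x_{n}:n\ge1\rangle)\subsetneq P$. (To be fair, the paper's own proof is terse at exactly this step, asserting ``by a similar argument $\sigma(P)$ is a maximal rigid subalgebra''; in the example above $\sigma(P)$ is not maximal rigid.) The clean repair, for your write-up as for the statement, is the stronger hypothesis appearing in \Cref{T:main properties of max rig intro}: global commutation $\|\alpha_{t}\circ\sigma-\sigma\circ\alpha_{t}\|_{\infty,2}\to0$. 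By the displayed identity that hypothesis is symmetric in $\sigma$ and $\sigma^{-1}$, so then $\sigma^{-1}(P)$ is rigid, contains $Q$, hence lies in $P$ by the envelope property, and together with your forward inclusion this yields $\sigma(P)=P$.
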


\begin{proof}
Let $\mathcal{J}$ be the set of all intermediate von Neumann subalgebras $N$ between $Q$ and $pMp$ so that $N$ is $\alpha$-rigid. Order $\mathcal{J}$ by containment. By Theorem \ref{T:preservation on big interesections}, we know that $\mathcal{J}$ is a directed set. Let $P=\bigvee_{N\in \mathcal{J}}N,$ and let $P_{0}=\bigcup_{N\in \mathcal{J}}N.$ Since $
\mathcal{J}$ is directed, we know that  $P_{0}$ is a $*$-subalgebra of $M,$ and that $P=\overline{P_{0}}^{SOT}.$

Let $x\in P_{0}$ with $\|x\|\leq 1,$ and choose $N\in \mathcal{J}$ with $x\in N.$ By Theorem \ref{T:preservation on big interesections} with $Q_{1}=Q,Q_{2}=N,$ we have for any $t\in\mathbb R$ that $\|(\alpha_{t}-\id)\big|_{N}\|_{\infty,2}\leq 24\|(\alpha_{t/2}-\id)\big|_{Q}\|_{\infty,2}.$ Thus, for all $t\in\mathbb R$
\[\|\alpha_{t}(x)-x\|_{2}\leq 24\|(\alpha_{t/2}-\id)\big|_{Q}\|_{\infty,2}.\]
Since this is true for any $x$ in the unit ball of $P_{0},$ the density of the unit ball of $P_{0}$ in the unit ball of $P$ and the normality of $\alpha_{t}$ imply that for all $t\in \mathbb R$
\[\|(\alpha_{t}-\id)\big|_{P}\|_{\infty,2}\leq 24\|(\alpha_{t/2}-\id)\big|_{Q}\|_{\infty,2}.\]
This shows that $\|(\alpha_{t}-\id)\big|_{P}\|_{\infty,2}\to 0$ as $t\to 0.$ So $P\in \mathcal{J},$ and it is clear that $P$ is  the largest element of $\mathcal{J}.$

(\ref{I:diffuse absorption}): This is clear from the maximality of $P$ and Theorem \ref{T:preservation on big interesections}.

(\ref{I:automorphic absorption}): Since $\|(\alpha_{t}\circ \sigma-\sigma\circ \alpha_{t})\big|_{P}\|_{\infty,2}\to_{t\to 0}0,$ we have that
\begin{align*}
\|(\alpha_{t}-\id)\big|_{\sigma(P)}\|_{\infty,2}&=\|(\alpha_{t}\circ \sigma  \circ \id-\sigma\circ \id) \big|_{P}\circ\sigma^{-1}\big|_{\sigma(P)}\|_{\infty,2}\\
&=\|(\alpha_{t}\circ \sigma  \circ \id-\sigma\circ \id)\big|_{P}\|_{\infty,2}\\
&\leq \|(\sigma\circ \alpha_{t}-\alpha_{t}\circ \sigma)\big|_{P}\|_{\infty,2}+\|(\alpha_{t}-\id)\big|_{P}\|_{\infty,2}\to_{t\to 0}0.
\end{align*}
Hence   $\sigma(P)$ is $\alpha$-rigid. Since $(\sigma(P)\cap P)'\cap p\widetilde{M}p \subseteq M,$ it follows part (\ref{I:diffuse absorption}) that $\sigma(P)\subseteq P.$ By a similar argument as above, we see that $\sigma(P)$ is a maximal rigid subalgebra of $pMp.$ Hence we must have that $\sigma(P)=P.$\qedhere

\end{proof}

Having shown the existence of rigid envelopes containing any rigid subalgebra whose relative commutant in $\widetilde{M}$ is contained in $M$, we spend the rest of the paper deriving general results about maximal rigid subalgebras, as well as investigating maximal rigid subalgebras in the special cases (e.g., in the case of deformations of group von Neumann algebras coming from cocycles, as well as the case that $L^{2}(\widetilde{M})\ominus L^{2}(M)$ is a mixing $M$-$M$ bimodule). We close this section with one more general property of maximal rigid subalgebras. We start with the following well known general fact about s-malleable deformations.

\begin{thm}\label{T:amping up to commutants}
Let $(\widetilde{M},\tau)$ be a semifinite tracial von Neumann algebra and that $M\leq \widetilde{M}$ is such that $\tau\big|_{M}$ is still semifinite. Let $\alpha_{t}\colon \widetilde{M}\to \widetilde{M},$ $t\in \R$ be an s-malleable deformation for $M\leq \widetilde{M}.$ Let $p\in \mathcal{P}(M)$ with $\tau(p)<\infty.$ Suppose that $P\leq pMp$ is $\alpha$-rigid and $P'\cap (p\widetilde{M}p)\leq pMp.$ Then $P\vee (P'\cap (pMp))$ is $\alpha$-rigid.
\end{thm}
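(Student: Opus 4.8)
The plan is to mimic the proof of Theorem \ref{T:preservation on big interesections}, running the argument of Proposition \ref{P:two sided estimate intertwiner} with $Q_{1}=P$ and the second algebra replaced by the (a priori non-rigid) relative commutant. Write $R=P\vee(P'\cap pMp)$ and adopt the notation $\varepsilon_{t},\delta_{t},\gamma_{t},\mathcal{V}_{t}$ of Proposition \ref{P:two sided estimate intertwiner}. First I would observe that since $R\supseteq P$ we have $R'\cap p\widetilde{M}p\subseteq P'\cap p\widetilde{M}p\subseteq pMp\subseteq M$, so that $R$ satisfies the standing hypothesis of Proposition \ref{P:two sided estimate intertwiner}, and in particular $\varepsilon_{2t}(R)\le 4\gamma_{t}(R)$. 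The whole theorem then reduces to the estimate $\gamma_{t}(R)\le\delta_{t}(P)$, for then $\varepsilon_{2t}(R)\le 4\gamma_{t}(R)\le 4\delta_{t}(P)\le 24\varepsilon_{t}(P)\to 0$ as $t\to 0$, using rigidity of $P$.

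To prove $\gamma_{t}(R)\le\delta_{t}(P)$, I would fix any $v\in\mathcal{V}_{t}$ intertwining $P$, i.e. with $vx=\alpha_{t}(x)v$ for all $x\in P$; such $v$ exist by Proposition \ref{L:intertwining on big sets} applied with $q=p$ (legitimate because $P'\cap p\widetilde{M}p\subseteq pMp$ by hypothesis), and these are precisely the competitors in the infimum defining $\delta_{t}(P)$. The key is to show that this single $v$ already conjugates all of $\alpha_{t}(R)$ into $M$. Consider $\Phi\colon pMp\to p\widetilde{M}p$, $\Phi(w)=v^{*}\alpha_{t}(w)v$. Using $v^{*}v=p$ and $vv^{*}=\alpha_{t}(p)$ one checks $\Phi$ is a normal $*$-homomorphism (multiplicativity uses $\alpha_{t}(w)\,vv^{*}=\alpha_{t}(w)\alpha_{t}(p)=\alpha_{t}(w)$ for $w\in pMp$), and the intertwining relation gives $\Phi(x)=v^{*}vx=x$ for $x\in P$, so $\Phi\big|_{P}=\id$ and $\Phi(P)=P\subseteq M$.

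The genuinely new step, relative to Theorem \ref{T:preservation on big interesections}, is handling $P'\cap pMp$ without a second intertwiner --- there is none available, since this algebra is not assumed rigid. Here I would argue by commutation: for $y\in P'\cap pMp$, the partial-isometry identities $pv^{*}=v^{*}$ and $vp=v$ give $\Phi(y)\in p\widetilde{M}p$, while homomorphicity of $\Phi$ together with $\Phi\big|_{P}=\id$ and $xy=yx$ forces $\Phi(y)$ to commute with $\Phi(x)=x$ for all $x\in P$. Thus $\Phi(y)\in P'\cap p\widetilde{M}p\subseteq pMp\subseteq M$. Consequently $\{w\in pMp:\Phi(w)\in M\}$ is a von Neumann subalgebra (by normality of $\Phi$ and SOT-closedness of $M$) containing both $P$ and $P'\cap pMp$, hence all of $R$; that is, $v^{*}\alpha_{t}(R)v=\Phi(R)\subseteq M$. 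This makes $v$ a competitor in the infimum for $\gamma_{t}(R)$, so $\gamma_{t}(R)\le\|v-p\|_{2}$, and taking the infimum over admissible $v$ yields $\gamma_{t}(R)\le\delta_{t}(P)$.

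The only place demanding care --- and hence the main obstacle --- is this commutation step: one must verify not merely that $\Phi(y)$ formally commutes with $P$, but that it genuinely lands in the relative commutant $P'\cap p\widetilde{M}p$ to which the hypothesis $P'\cap p\widetilde{M}p\subseteq pMp$ applies. This is exactly what the partial-isometry bookkeeping securing $\Phi(y)\in p\widetilde{M}p$ provides. Everything else is a direct transcription of the Vaes-style computation already carried out for Theorem \ref{T:preservation on big interesections}.
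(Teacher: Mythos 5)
Your proof is correct, but it follows a genuinely different route from the paper's. The paper argues in two stages: it first proves that $P'\cap pMp$ is itself $\alpha$-rigid by a Popa averaging argument --- for $x\in P'\cap pMp$ and $u\in\mathcal{U}(P)$ one has $\|\alpha_t(x)-u\alpha_t(x)u^*\|_2\le 2\|x\|\,\varepsilon_t(P)$, and since $P'\cap p\widetilde{M}p=P'\cap pMp$ the minimal $\|\cdot\|_2$-norm point of the closed convex hull of $\{u\alpha_t(x)u^*:u\in\mathcal{U}(P)\}$ is exactly $\E_{P'\cap pMp}(\alpha_t(x))$, so transversality applies --- and it then observes that $G=\mathcal{U}(P)\,\mathcal{U}(P'\cap pMp)$ is a group generating $P\vee(P'\cap pMp)$ on which $\alpha_t$ converges uniformly, finishing with a convex-hull argument that upgrades uniform convergence on $G$ to rigidity of $W^*(G)$. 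You instead transplant the Vaes-style proof of Theorem \ref{T:preservation on big interesections}: a single $P$-intertwiner $v$ supplied by Proposition \ref{L:intertwining on big sets} already conjugates $\alpha_t(P\vee(P'\cap pMp))$ into $M$, because $\Phi=v^*\alpha_t(\cdot)v$ is a normal $*$-homomorphism fixing $P$ pointwise and therefore carries $P'\cap pMp$ into $P'\cap p\widetilde{M}p\subseteq pMp$; here the relative-commutant hypothesis plays precisely the role that rigidity of $Q_2$ (via a second intertwiner $w$) plays in Theorem \ref{T:preservation on big interesections}. Your route buys the explicit quantitative bound $\varepsilon_{2t}(P\vee(P'\cap pMp))\le 24\,\varepsilon_t(P)$, which the paper's proof never records (its final step is a qualitative appeal to a ``standard convex hull argument''), and it exhibits this theorem as a formal instance of the same mechanism as Theorem \ref{T:preservation on big interesections}; the paper's route, in exchange, isolates rigidity of the relative commutant $P'\cap pMp$ as a standalone intermediate fact. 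One point to tighten: you assert that $R=P\vee(P'\cap pMp)$ ``satisfies the standing hypothesis'' of Proposition \ref{P:two sided estimate intertwiner}, but that hypothesis includes $\alpha$-rigidity of $R$, which is exactly the conclusion sought. This is harmless --- the proof of $\varepsilon_{2t}\le 4\gamma_t$ uses only transversality and never the rigidity assumption, and the paper itself applies it the same way to the not-yet-known-to-be-rigid $Q_1\vee Q_2$ --- but you should say this explicitly rather than invoke the proposition's hypotheses wholesale.
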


\begin{proof}
We first claim that $P'\cap pMp$ is $\alpha$-rigid. Given $\varepsilon>0,$ choose a $t_{0}\in (0,\infty)$ so that
\[\|(\alpha_{t}-\id)\big|_{P}\|_{\infty,2}<\varepsilon,\]
for all $t\in [-t_{0},t_{0}].$
Fix $t\in [-t_{0},t_{0}].$
Given $x\in P'\cap pMp,$ and $u\in \mathcal{U}(P),$ we have that
\begin{align*}
\|\alpha_{t}(x)-u\alpha_{t}(x)u^{*}\|_{2}=\|\alpha_{t}(uxu^{*})-u\alpha_{t}(x)u^{*}\|_{2}&=\|\alpha_{t}(u)\alpha_{t}(x)\alpha_{t}(u)^{*}-u\alpha_{t}(x)u^{*}\|_{2}\\
&\leq \|(\alpha_{t}(u)-u)(\alpha_{t}(x)\alpha_{t}(u)^{*})\|_{2}\\
&+\|u\alpha_{t}(x)(\alpha_{t}(u)^{*}-u^{*})\|_{2}\\
&\leq 2\|x\|\|\alpha_{t}(u)-u\|_{2}\\
&\leq 2\|x\|\varepsilon.
\end{align*}
As $P'\cap p\widetilde{M}p=P'\cap pMp,$ we have that $\E_{P'\cap pMp}(\alpha_{t}(x))$ is the element in $\overline{co}^{\|\cdot\|_{2}}\{u\alpha_{t}(x)u^{*}:u\in \mathcal{U}(P)\}$ of minimal $\|\cdot\|_{2}$-norm. So we may write $\E_{P'\cap pMp}(\alpha_{t}(x))$ as a $\|\cdot\|_{2}$-limit of convex combinations of $u\alpha_{t}(x)u^{*}$ with $u\in \mathcal{U}(P).$ So the above estimate shows that
\[\|\alpha_{t}(x)-\E_{P'\cap pMp}(\alpha_{t}(x))\|_{2}\leq 2\|x\|\varepsilon,\]
and thus by transversality,
\[\|\alpha_{2t}(x)-x\|_{2}\leq 2\|\alpha_{t}(x)-\E_{M}(\alpha_{t}(x))\|_{2}\leq 2\|\alpha_{t}(x)-\E_{P'\cap pMp}(\alpha_{t}(x))\|_{2}\leq 4\|x\|\varepsilon.\]
Thus
\[\|(\alpha_{s}-\id)\big|_{P'\cap pMp}\|_{\infty,2}\leq 4\varepsilon\]
for all $s\in[-2t_{0},2t_{0}].$ So $P'\cap pMp$ is $\alpha$-rigid.

We now show that $P\vee (P'\cap pMp)$ is $\alpha$-rigid. Let
\[G=\mathcal{U}(P)\mathcal{U}(P'\cap pMp),\]
and observe that $G$ is a subgroup of $\mathcal{U}(pMp),$ with $W^{*}(G)=P\vee (P'\cap pMp).$
Therefore to show that $P\vee (P'\cap pMp)$ is $\alpha$-rigid it is enough to show that $\alpha_{t}$ converges uniformly on $G.$ Indeed, because of the group structure, we may then use a standard convex hull argument (e.g., as in the proof of Theorem \ref{T:preservation on big interesections} above, see also \cite[Proposition 5.1]{PopaL2Betti}) to find a partial isometry close to $p$ in $\|\cdot\|_2$ implementing $\alpha_t$ on $W^{*}(G)$, thereby showing that $W^{*}(G) = P\vee (P'\cap pMp)$ is $\alpha$-rigid.

So let $\varepsilon>0$ and choose a $t_{0}\in (0,\infty)$ so that
\[\sup_{|t|<t_{0}}\max(\|(\alpha_{t}-\id)\big|_{P}\|_{\infty,2},\|(\alpha_{t}-\id)\big|_{P'\cap pMp}\|_{\infty,2})<\varepsilon.\]
For $|t|<t_{0}$ and $u\in \mathcal{U}(P),v\in \mathcal{U}(P'\cap pMp)$ we have:
\[\|\alpha_{t}(uv)-uv\|_{2}\leq \|\alpha_{t}(u)-u\|_{2}+\|\alpha_{t}(v)-v\|_{2}<2\varepsilon.\]
Thus
\[\sup_{w\in G}\|\alpha_{t}(w)-w\|_{2}\leq 2\varepsilon.\]
Hence, $\alpha_{t}$ converges uniformly on $G$ and thus $P\vee (P'\cap pMp)$ is $\alpha$-rigid. \qedhere

\end{proof}

The experienced reader may note that  typically in deformation rigidity one not only has that rigidity passes to relative commutants, but more generally to the von Neumann algebra generated by the normalizer. However, in order to get good control over normalizers of maximal rigid subalgebras, one needs to assume more about the inclusion $M\leq \widetilde{M},$ e.g., some sort of ``mixingness" of $L^{2}(\widetilde{M})\ominus L^{2}(M)$ (see Section \ref{S:mixingness}). Without this assumption, the only general result along these lines is something like Theorem \ref{T:amping up to commutants}. Regardless, Theorem \ref{T:amping up to commutants} allows us to say that in the general case the relative commutant of a maximal rigid subalgebra is as small as possible.

\begin{cor}\label{C:minimal commutant}
Let $(\widetilde{M},\tau)$ be a semifinite tracial von Neumann, and $M\leq \widetilde{M}$ with $\tau\big|_{M}$ semifinite. Suppose we have an s-malleable deformation $\alpha_{t}\colon \widetilde{M}\to \widetilde{M}.$ Fix a projection $p\in M$ with $\tau(p)<\infty.$ Suppose that $P\leq pMp$ has $P'\cap (p\widetilde{M}p)=P'\cap pMp$ and that $P$ is a maximal rigid subalgebra in $pMp.$ Then $P'\cap pMp=Z(P).$
\end{cor}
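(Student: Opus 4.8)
The plan is to show that $P' \cap pMp \subseteq Z(P)$, since the reverse inclusion $Z(P) = P' \cap P \subseteq P' \cap pMp$ is automatic. The key tool is \Cref{T:amping up to commutants}, which I would apply directly: the hypotheses of that theorem are exactly what we are given here, namely $P \leq pMp$ is $\alpha$-rigid with $P' \cap (p\widetilde{M}p) \leq pMp$. (Maximality of $P$ gives that $P$ is $\alpha$-rigid in particular.) The conclusion of \Cref{T:amping up to commutants} is that $P \vee (P' \cap pMp)$ is $\alpha$-rigid.

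Now I would invoke the maximality of $P$. Since $P \vee (P' \cap pMp)$ is an $\alpha$-rigid subalgebra of $pMp$ that contains $P$, and $P$ is by hypothesis maximal rigid in $pMp$, we must have
\[
P \vee (P' \cap pMp) = P.
\]
This forces $P' \cap pMp \subseteq P$. Combined with the fact that $P' \cap pMp$ obviously commutes with every element of $P$, we conclude that $P' \cap pMp \subseteq P' \cap P = Z(P)$. Together with the trivial inclusion $Z(P) \subseteq P' \cap pMp$, this yields $P' \cap pMp = Z(P)$, as desired.

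The argument is essentially immediate given \Cref{T:amping up to commutants}, so there is no real obstacle once that theorem is in hand; the content of the corollary is entirely a packaging of the preceding result with the definition of maximality. The only point requiring a moment's care is verifying that the hypotheses of \Cref{T:amping up to commutants} transfer verbatim—in particular that the assumption $P' \cap (p\widetilde{M}p) = P' \cap pMp$ stated in the corollary is precisely the condition $P' \cap (p\widetilde{M}p) \leq pMp$ required by the theorem—and that a maximal rigid subalgebra is in particular $\alpha$-rigid, which holds by definition.
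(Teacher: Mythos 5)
Your proof is correct and follows exactly the paper's own argument: apply \Cref{T:amping up to commutants} to conclude that $P \vee (P' \cap pMp)$ is $\alpha$-rigid, invoke maximality to get $P' \cap pMp \subseteq P$, and conclude $P' \cap pMp = Z(P)$. Nothing is missing; the hypothesis-checking you include is a harmless elaboration of the same route.
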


\begin{proof}
By Theorem \ref{T:amping up to commutants}, we have that $P\vee (P'\cap pMp)$ is $\alpha$-rigid. By maximality of $P,$ we thus have
\[P\vee (P'\cap pMp)\leq P,\]
so
\[P'\cap pMp\leq P\]
and this clearly implies that $P'\cap pMp=Z(P).$ \qedhere

\end{proof}

Note that it follows from Corollary \ref{C:minimal commutant} that abelian, maximal rigid subalgebras are automatically maximal abelian in $pMp,$ and that factorial maximal rigid subalgebras are automatically irreducible subfactors in $pMp.$ We shall see in Section \ref{S:mixingness} that this can be upgraded in the case that $L^{2}(\widetilde{M})\ominus L^{2}(M)$ is a mixing $M$-$M$ bimodule: maximal rigid subalgebras are automatically singular in $pMp.$

\section{s-malleable deformations from 1-cohomology}\label{S:1-cohomology}

In this section, we give a nice class of examples of maximal rigid subalgebras coming from cocycles with values in orthogonal  representation. We begin by recalling some terminology.

\begin{defn}
Let $G$ be a countable, discrete group and $\mathcal{H}$ a real Hilbert space. The \emph{orthogonal group} of $\mathcal{H},$ denoted $\mathcal{O}(\mathcal{H}),$ is the group of all invertible, real-linear isometries of $\mathcal{H}.$ A homomorphism $\pi\colon G\to \mathcal{O}(\mathcal{H})$ will be called an \emph{orthogonal representation}.
We say that $\pi$ is \emph{mixing} if for $\xi,\eta\in \mathcal{H}$ the map $g\mapsto \ip{\pi(g)\xi,\eta}$ is in $c_{0}(G,\R).$ We say that $\pi$ is \emph{weak mixing} if $0\in \overline{\pi(G)}^{WOT}.$

A \emph{cocycle} for $\pi$ is a map $c\colon G\to\mathcal{H}$ so that
\[c(gh)=\pi(g)c(h)+c(g)\]
for all $g,h\in G.$ It is clear that cocycles form a real vector space under the obvious scaling and additive structure. The real vector space of all cocycles is denoted $Z^{1}(G,\pi).$ We say that $c$ is \emph{inner} if there is a vector $\xi\in \mathcal{H}$ so that $c(g)=(\pi(g)-1)\xi.$ The space of inner cocycles is denoted by $B^{1}(G,\pi).$ Finally, we set $H^{1}(G,\pi)=Z^{1}(G,\pi)/B^{1}(G,\pi).$
\end{defn}

It is well known that a cocycle is inner if and only if it is bounded, see e.g \cite[Proposition 2.2.]{BHV}. Given a cocycle, we can naturally construct a corresponding $s$-malleable deformation.
It requires an intermediate object, which is the \emph{Gaussian algebra}.

Fix a real Hilbert space $\mathcal{H}.$ Functorially associated to $\mathcal{H}$ is a pair $(A(\mathcal{H}),\tau)$ where $A(\mathcal{H})$ is an abelian von Neumann algebra and $\tau$ is a faithful, normal state on $A(\mathcal{H}).$ This pair is uniquely determined (up to state preserving isomorphism) by the following axioms:
\begin{itemize}
    \item regarding $\mathcal{H}$ as an additive group, and $\mathcal{U}(A(\mathcal{H}))$ as a multiplicative group, there is a homomorphism $\omega\colon \mathcal{H}\to \mathcal{U}(A(\mathcal{H}))$ so that $A(\mathcal{H})=W^{*}(\omega(\mathcal{H})),$
    \item $\tau(\omega(\xi))=\exp(-\|\xi\|^{2})$ for all $\xi\in \mathcal{H}.$
\end{itemize}
Recall that if $(M,\tau_{M}),(N,\tau_{N})$ are two tracial von Neumann algebras, and $D\subseteq N$ is a weak$^{*}$-dense $*$-subalgebra, then any trace-preserving $*$-homomorphism $\theta\colon D\to M$ automatically extends to a trace-preserving embedding $N\hookrightarrow M.$ From this, it is easy to establish that the above axioms uniquely determine the pair $(A(\mathcal{H}),\tau)$ up to isomorphism. For existence, see e.g. \cite[Section 2.1]{PetersonSinclair}. Since $A(\mathcal{H})$ is abelian, we know that $(A(\mathcal{H}),\tau)\cong (L^{\infty}(X,\mu),\int \cdot\, d\mu)$ for some probability space $(X,\mu).$ Thus one often speaks of the ``Gaussian measure space", however this measure space is not canonically associated to a Hilbert space, whereas the Gaussian algebra is.

Now suppose that $\pi\colon G\to \mathcal{O}(\mathcal{H})$ is an orthogonal representation. The axiomatic description of the Gaussian algebra shows that  every $g\in G$ induces a unique trace-preserving $*$-automorphism $\sigma_{g}$ of $A(\mathcal{H})$ by $\sigma_{g}(\omega(\xi))=\omega(\pi(g)\xi).$ Thus we have an action $G\actson^{\sigma}A(\mathcal{H})$ by trace-preserving automorphisms.
So we can form the crossed product $\widetilde{M}=A(\mathcal{H})\rtimes_{\sigma}G.$ To fix notation, we use $(u_{g})_{g\in G}$ for unitaries in $\widetilde{M}$ which implement the action of $G.$ We now define a deformation which ``exponentiates" any cocycle to an s-malleable deformation.

\begin{defn}\label{D:s-mall def cocycle}
Let $G$ be a countable, discrete group,  $\pi\colon G\to \mathcal{O}(\mathcal{H})$ an orthogonal representation of $G,$ and $c\in Z^{1}(G,\pi).$ Let $\widetilde{M}=A(\mathcal{H})\rtimes_{\sigma}G$ and view $M=L(G)$ as a subalgebra of $\widetilde{M}$ in the obvious way. For $t\in \R,$ define
\[\alpha_{c, t}(au_{g})=a\omega(tc(g))u_{g}\mbox{for all $g\in G,$ $a\in A(\mathcal{H}).$}\]
It is straightforward to check that $\alpha_{c, t}$ extends to a $1$-parameter group of automorphisms of $\widetilde{M}.$ Define $\beta\colon \widetilde{M}\to \widetilde{M}$ by $\beta(\omega(\xi)u_{g})=\omega(-\xi)u_{g}.$ The pair $(\alpha_{c, t},\beta)$ is an s-malleable deformation of $M\leq \widetilde{M}$.
\end{defn}
This deformation is due to the last named author \cite[Section 3]{SinclairGaussian}, and was also used to great effect by the last named author and Peterson \cite[Section 3.3]{PetersonSinclair}, as well as in \cite{Va10b, AdrianL2Betti, PopaVaesFree} (see also \cite{ChifanSinclair, PopaVaesHyp} for related techniques).
For some insight into how this deformation interacts with the cohomology of $G$ with values of $\mathcal{H},$ observe that by direct computation we have
\[\|\alpha_{c, t}(u_{g})-u_{g}\|_{2}^{2}=2(1-\exp(-t^{2}\|c(g)\|^{2})).\]
Thus $\alpha_{c, t}$ converges uniformly on $G$ as $t\to 0$ if and only if $c$ is bounded, which is equivalent to $c$ being inner. Since  $G\leq \mathcal{U}(L(G))$ and  $W^{*}(G)=L(G),$ the same convexity arguments as in \cite[Proposition 5.1]{PopaL2Betti},\cite[Theorem 4.5]{PetersonL2} show that in fact $L(G)$ is $\alpha_{c, t}$-rigid if and only if $c$ is inner. We proceed to show that maximal subgroups on which the cocycle is inner naturally give rise to maximal rigid subalgebras. We start with the simple case where $c$ is zero on an infinite subgroup.

\begin{prop}\label{P:kernel pineapples}
Let $G$ be a countable, discrete group, and $\pi\colon G\to \mathcal{O}(\mathcal{H})$ an orthogonal representation on a real Hilbert space $\mathcal{H}$. Let $c\colon G\to \mathcal{H}$ be a cocycle, and let
\[H=\{g\in G:c(g)=0\}.\]
If $H$ is infinite, and if $\pi\big|_{H}$ is weak mixing, then $L(H)$ is a maximal rigid subalgebra for $\alpha_{c},$ the $s$-malleable deformation corresponding to $c$ as defined in Definition \ref{D:s-mall def cocycle}.

\end{prop}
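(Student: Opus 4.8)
The plan is to first dispatch the easy direction and then isolate the one genuinely representation-theoretic input. Note first that $H$ is a subgroup of $G$: the cocycle identity $c(gh)=\pi(g)c(h)+c(g)$ forces $c(e)=0$ and $c(g^{-1})=0$ whenever $c(g)=0$, and closure under products is immediate. Since $H$ is infinite, $L(H)=W^{*}(\{u_{h}:h\in H\})$ is diffuse. Because $c(h)=0$ for $h\in H$, Definition \ref{D:s-mall def cocycle} gives $\alpha_{c,t}(u_{h})=u_{h}$ for all $t$ and all $h\in H$; as $\alpha_{c,t}$ is a trace-preserving automorphism fixing a set of unitaries generating $L(H)$, it fixes $L(H)$ pointwise, so $L(H)$ is trivially $\alpha_{c}$-rigid. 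All the content is therefore in maximality, whose crux is the relative commutant computation $L(H)'\cap\widetilde{M}\subseteq M=L(G)$.

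I would prove that containment as follows. Expand $x\in L(H)'\cap\widetilde{M}$ in its Fourier series $x=\sum_{g\in G}a_{g}u_{g}$ with $a_{g}\in A(\mathcal{H})$ and $\sum_{g}\|a_{g}\|_{2}^{2}<\infty$. Commutation with $u_{h}$, $h\in H$, together with $u_{h}a_{g}u_{h}^{*}=\sigma_{h}(a_{g})$ and $u_{h}u_{g}u_{h}^{*}=u_{hgh^{-1}}$, yields $\sigma_{h}(a_{h^{-1}gh})=a_{g}$ for all $h\in H$ and $g\in G$. Since $\sigma_{h}$ is trace-preserving, $\|a_{g}\|_{2}$ is constant along the $H$-conjugacy orbit of $g$; summability then forces $a_{g}=0$ whenever this orbit is infinite, i.e.\ whenever the centralizer $C_{H}(g)=\{h\in H:hg=gh\}$ has infinite index in $H$. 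For the remaining $g$, restricting the relation to $h\in C_{H}(g)$ gives $a_{g}\in A(\mathcal{H})^{\sigma(C_{H}(g))}$. This is where weak mixing enters: averaging a nonzero finite-dimensional $C_{H}(g)$-invariant subspace over coset representatives produces a nonzero finite-dimensional $H$-invariant subspace, so weak mixing of $\pi|_{H}$ passes to the finite-index subgroup $C_{H}(g)$. Hence the Gaussian action of $C_{H}(g)$ on $A(\mathcal{H})$ is ergodic and $A(\mathcal{H})^{\sigma(C_{H}(g))}=\C 1$. Thus every $a_{g}$ is a scalar and $x\in L(G)$.

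With this lemma, maximality follows from Popa's intertwining. Let $P\leq L(G)$ be $\alpha_{c}$-rigid with $P\supseteq L(H)$; since $P'\cap\widetilde{M}\subseteq L(H)'\cap\widetilde{M}\subseteq M$, Proposition \ref{L:intertwining on big sets}, applied with $p=q=1$, produces for any fixed $t\neq0$ a unitary $v\in\widetilde{M}$ with $\alpha_{c,t}(x)v=vx$ for all $x\in P$. Taking $x=u_{h}$ with $h\in H$ and using $\alpha_{c,t}(u_{h})=u_{h}$ shows $v$ commutes with $L(H)$, so $v\in L(H)'\cap\widetilde{M}\subseteq M$. Consequently $\alpha_{c,t}(x)=vxv^{*}\in M$ for every $x\in P$, that is, $\alpha_{c,t}(P)\subseteq L(G)$. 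Writing $x=\sum_{g}\widehat{x}(g)u_{g}\in L(G)$, we have $\alpha_{c,t}(x)=\sum_{g}\widehat{x}(g)\,\omega(tc(g))u_{g}$, and applying $\E_{L(G)}$ shows that membership in $L(G)$ forces $\widehat{x}(g)\,\omega(tc(g))\in\C 1$ for each $g$. Since $\tau(\omega(\xi))=\exp(-\|\xi\|^{2})$, the unitary $\omega(tc(g))$ is a nonscalar whenever $tc(g)\neq0$, i.e.\ for $g\notin H$; hence $\widehat{x}(g)=0$ there, so $x\in L(H)$ and $P=L(H)$.

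The main obstacle is the relative commutant lemma, and within it the passage to the centralizers $C_{H}(g)$: weak mixing is assumed only for $\pi|_{H}$, so one must check that it survives restriction to the finite-index subgroups $C_{H}(g)$ and then translate this into triviality of the Gaussian fixed-point algebra. Everything else—the pointwise rigidity of $L(H)$ and the Fourier bookkeeping in the maximality step—is routine once this input is in hand.
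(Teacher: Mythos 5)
Your proof is correct, but it executes both key steps by a genuinely different route than the paper. For the relative commutant $L(H)'\cap\widetilde{M}\subseteq M$, the paper identifies the conjugation action of $H$ on $L^{2}(\widetilde{M})\ominus L^{2}(M)$ with the tensor product $\kappa\otimes\lambda_{C}$ of the Koopman representation $\kappa$ on $L^{2}(A(\mathcal{H}))\ominus\C 1$ with the conjugation representation of $H$ on $\ell^{2}(G)$; weak mixing of $\kappa$ (via \cite[Proposition 2.7]{PetersonSinclair}) then kills all invariant vectors of the tensor product in one stroke. You instead unpack this at the level of Fourier coefficients: summability kills the coefficients over infinite $H$-conjugacy orbits, and for finite orbits you pass weak mixing to the finite-index centralizers $C_{H}(g)$ (your coset-sum argument for this inheritance is valid, and uses the standard equivalence of the paper's definition $0\in\overline{\pi(G)}^{WOT}$ with the absence of finite-dimensional invariant subspaces) and then invoke triviality of the Gaussian fixed-point algebra $A(\mathcal{H})^{\sigma(C_{H}(g))}$ --- which is the same Peterson--Sinclair input the paper uses, applied to a smaller group. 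The paper's tensor-product trick avoids the orbit/centralizer bookkeeping entirely. For maximality, the paper invokes its rigid-envelope machinery (Corollary \ref{C:existence of maximal subalgebras} together with the estimate of Theorem \ref{T:preservation on big interesections}) to conclude that the envelope $P$ of $L(H)$ satisfies $\alpha_{c,t}\big|_{P}=\id$ and hence $P\subseteq L(H)$, whereas you apply Proposition \ref{L:intertwining on big sets} directly to an arbitrary rigid $P\supseteq L(H)$, observe that the intertwining unitary commutes with $L(H)$ and therefore lies in $M$, deduce $\alpha_{c,t}(P)\subseteq L(G)$ for a single $t\neq 0$, and finish with a Fourier computation. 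Your ending is marginally more self-contained (it uses only Proposition \ref{L:intertwining on big sets}, not the derived envelope theorem) and proves something slightly sharper: conjugability of $\alpha_{c,t}(x)$ into $L(G)$ at one nonzero time already forces $x\in L(H)$, whereas the paper only needs the weaker fact that pointwise fixed elements lie in $L(H)$. What the paper's route buys is brevity and reusability given the machinery already developed; what yours buys is a hands-on argument in which the exact role of weak mixing and of the Gaussian structure is explicit.
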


\begin{proof}
First note that
$\alpha_{c,t}\big|_{L(H)}$ is the identity. Let us argue that $L(H)'\cap \widetilde{M}\subseteq M.$ Equivalently, we have to show that the conjugation action of $H$ on $L^{2}(\widetilde{M})\ominus L^{2}(M)$ has no nonzero fixed vector. It is easy to see that the conjugation action on $L^{2}(\widetilde{M})\ominus L^{2}(M)$ is isomorphic to $\kappa\otimes \lambda_{C}$ where:
\begin{itemize}
    \item $\kappa$ is the Koopman representation of $H$ on $L^{2}(A(\mathcal{H}))\ominus \C1,$
    \item $\lambda_{C}\colon H\to \mathcal{U}(\ell^{2}(G))$ is given by $(\lambda_{C}(h)\xi)(g)=\xi(h^{-1}gh),$ for all $g\in G,h\in H.$
\end{itemize}
Since $\pi\big|_{H}$ is weakly mixing, we know by \cite[Proposition 2.7]{PetersonSinclair} that $\kappa$ is weakly mixing. Hence $\kappa\otimes \lambda_{C}$ has no nonzero fixed vectors, so $L(H)'\cap \widetilde{M}\subseteq M.$

Let $P\leq L(G)$ be the rigid envelope of $L(H)$ with respect to $\alpha_{c,t}.$ By Theorem \ref{T:preservation on big interesections}, we have that
\[\|(\alpha_{c,t}-\id)\big|_{P}\|_{\infty,2}\leq 24 \|(\alpha_{c,t}-\id)\big|_{L(H)}\|_{\infty,2}=0\]
for all small $t.$ Since $t\mapsto \alpha_{c,t}$ is a homomorphism, we have that $\alpha_{c,t}\big|_{P}$ is the identity for all $t.$ But then it is easy to see that $P\leq L(H),$ and so $P=L(H).$ \qedhere

\end{proof}

The proceeding proposition can be bootstrapped to a nicer result, saying that maximal subgroups on which a cocycle is inner produce maximal rigid subalgebras.

\begin{cor}\label{C:max rigid subgroup}
Let $G$ be a countable, discrete group, and $\pi\colon G\to \mathcal{O}(\mathcal{H})$ an orthogonal representation on a real Hilbert space $\mathcal{H}$. Let $c\colon G\to \mathcal{H}$ be a cocycle.
Suppose that $H\leq G$ is an infinite subgroup, that $c\big|_{H}$ is inner, and that $\pi\big|_{H}$ is weak mixing. Let $P$ be the rigid envelope of $L(H).$ Then $P=L(K)$ where $K$ is the unique intermediate subgroup of $G$ containing $H$ which is maximal with respect to the property that $c\big|_{K}$ is inner (such a $K$ exists because $\pi\big|_{H}$ is weak mixing).
\end{cor}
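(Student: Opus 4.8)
The plan is to combine Proposition \ref{P:kernel pineapples} with a standard untwisting argument that reduces the inner cocycle case to the zero cocycle case. First I would establish the existence and uniqueness of the maximal intermediate subgroup $K$. Consider the collection of intermediate subgroups $L$ with $H \le L \le G$ on which $c|_L$ is inner. Given two such, $L_1$ and $L_2$, with $c|_{L_1} = (\pi(\cdot)-1)\xi_1$ and $c|_{L_2} = (\pi(\cdot)-1)\xi_2$ on each respectively, the key point is that on $H = L_1 \cap L_2 \supseteq H$ (note $L_1 \cap L_2 \supseteq H$) both formulas hold, so $(\pi(h)-1)(\xi_1 - \xi_2) = 0$ for all $h \in H$, i.e. $\xi_1 - \xi_2$ is $\pi|_H$-invariant. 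Since $\pi|_H$ is weak mixing it has no nonzero invariant vectors, forcing $\xi_1 = \xi_2 =: \xi$. Then $c(g) = (\pi(g)-1)\xi$ on all of $\langle L_1, L_2 \rangle$, so this larger subgroup also has $c$ inner (with the same $\xi$). This shows the collection is directed, and taking $K$ to be the subgroup generated by all of them yields the unique maximal such subgroup, with $c|_K = (\pi(\cdot)-1)\xi$ for a single vector $\xi$.

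**Untwisting to reduce to the zero cocycle.** With $\xi$ as above, I would pass to the cohomologous cocycle $c'(g) = c(g) - (\pi(g)-1)\xi$, which vanishes identically on $K$. The two deformations $\alpha_c$ and $\alpha_{c'}$ should be conjugate by an inner perturbation of $\widetilde M$: concretely, conjugation by the unitary $\omega(\xi)$ (or an appropriate unitary built from it) intertwines $\alpha_{c,t}$ and $\alpha_{c',t}$ up to the action of $\sigma$, so that $Q \le L(G)$ is $\alpha_c$-rigid if and only if it is $\alpha_{c'}$-rigid, and rigid envelopes correspond. This is the step that requires some care: I need to check that the cohomology relation $c - c' \in B^1(G,\pi)$ translates into an explicit unitary conjugacy of the exponentiated deformations on $\widetilde M = A(\mathcal H) \rtimes_\sigma G$, using the defining formula $\alpha_{c,t}(au_g) = a\,\omega(tc(g))u_g$ and the Gaussian functoriality. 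I expect this to be the main obstacle, as one must verify that conjugation by a fixed unitary (independent of $t$, or with controlled $t$-dependence) correctly adjusts the cocycle appearing in the $u_g$ factor while preserving $L(G)$ and the trace.

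**Concluding via Proposition \ref{P:kernel pineapples}.** Once the reduction is in place, I apply Proposition \ref{P:kernel pineapples} to the cocycle $c'$. Since $c'$ vanishes on $K$, and since $\{g : c'(g) = 0\} \supseteq K \supseteq H$ is infinite with $\pi|_H$ (hence a fortiori the relevant restriction) weak mixing, the proposition shows $L(K')$ is $\alpha_{c'}$-maximal rigid, where $K' = \{g : c'(g)=0\}$ is precisely the kernel of $c'$. In fact $K' = K$: clearly $K \subseteq K'$, and if $c'(g) = 0$ for some $g$ then $c(g) = (\pi(g)-1)\xi$, so $c$ is inner on $\langle K, g\rangle$ with the same vector $\xi$, whence $g \in K$ by maximality of $K$. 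Thus the rigid envelope $P$ of $L(H)$ equals the rigid envelope of $L(K)$ (as $L(H) \subseteq L(K)$ and $L(K)$ is itself rigid), which by Proposition \ref{P:kernel pineapples} applied to $c'$ is exactly $L(K)$. Transporting back through the untwisting identification gives $P = L(K)$ for the original deformation $\alpha_c$, as claimed.
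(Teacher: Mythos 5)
Your proposal is correct and follows the same path as the paper's proof: use weak mixing of $\pi\big|_{H}$ to see that a single vector $\xi$ implements the innerness of $c$ on all of $K$, pass to the untwisted cocycle $c'(g)=c(g)-(\pi(g)-1)\xi$ whose kernel is exactly $K$, transfer rigidity between $\alpha_{c}$ and $\alpha_{c'}$, and finish with Proposition \ref{P:kernel pineapples}. The one place you diverge is the transfer step, which you flag as the main obstacle. The paper avoids any conjugacy argument there: since $c(g)-c'(g)=(\pi(g)-1)\xi$ is uniformly bounded by $2\|\xi\|$, the Gaussian trace formula $\tau(\omega(\eta))=\exp(-\|\eta\|^{2})$ gives $\|\omega(tc(g))-\omega(tc'(g))\|_{2}^{2}=2\bigl(1-\exp(-t^{2}\|c(g)-c'(g)\|^{2})\bigr)\to 0$ uniformly in $g$, hence $\|(\alpha_{c,t}-\alpha_{c',t})\big|_{M}\|_{\infty,2}\to 0$ as $t\to 0$, which already shows the two deformations have the same rigid (hence the same maximal rigid) subalgebras. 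Your conjugacy idea also works, but only in the $t$-dependent form: conjugation by the \emph{fixed} unitary $\omega(\xi)$ does not intertwine the deformations, since it yields $a\,\omega\bigl(tc(g)-(\pi(g)-1)\xi\bigr)u_{g}$, where the correction term is not scaled by $t$ and so is not of the form $\alpha_{c'',t}$ for any cocycle $c''$. Instead one checks directly that $\omega(t\xi)\,\alpha_{c,t}(au_{g})\,\omega(t\xi)^{*}=a\,\omega\bigl(t[c(g)+\xi-\pi(g)\xi]\bigr)u_{g}=\alpha_{c',t}(au_{g})$, i.e.\ $\alpha_{c',t}=\operatorname{Ad}(\omega(t\xi))\circ\alpha_{c,t}$, and since $\|\omega(t\xi)-1\|_{2}^{2}=2\bigl(1-\exp(-t^{2}\|\xi\|^{2})\bigr)\to 0$ as $t\to 0$, this conjugacy transfers uniform convergence on unit balls, closing the gap you anticipated. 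Either route gives the same conclusion; the paper's bounded-difference estimate is simply the more economical of the two.
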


\begin{proof}
Find a vector $\xi\in \mathcal{H}$ so that $c(h)=(\pi(h)-1)\xi$ for all $h\in H.$ Since $\pi\big|_{H}$ is weak mixing, we know that $\pi$ has no nonzero $H$-invariant vectors. Therefore the fact that $c\big|_{K}$ is inner implies that $c(k)=(\pi(k)-1)\xi$ for all $k\in K.$ By maximality of $K,$ we must in fact have that $K=\{g\in G:c(g)=(\pi(g)-1)\xi\}.$

Define $\widetilde{c}(g)=c(g)-(\pi(g)-1)\xi,$ so
\begin{equation}\label{E:bounded distance cocycles}
\sup_{g\in G}\|c(g)-\widetilde{c}(g)\|<\infty.
\end{equation}

Let $\alpha_{c, t},\alpha_{\widetilde{c}, t}$ be the $s$-malleable deformations corresponding to $c,\widetilde{c}.$ Observe that $\alpha_{c, t},\alpha_{\widetilde{c}, t}$ are valued in the same von Neumann algebra (since $c,\widetilde{c}$ are valued in the same Hilbert space).  The estimate (\ref{E:bounded distance cocycles}) is easily seen to imply that
\[\lim_{t\to 0}\|(\alpha_{c, t}-\alpha_{\widetilde{c}, t})\big|_{M}\|_{L^{2}(M)\to L^{2}(\widetilde{M})}= 0.\]
A fortiori,
\[\lim_{t\to 0}\|(\alpha_{c, t}-\alpha_{\widetilde{c}, t})\big|_{M}\|_{\infty,2}= 0.\]
So  a diffuse $P\leq M$ is maximal rigid for $\alpha_{c, t}$ if and only if it is maximal rigid for $\alpha_{\widetilde{c}, t}.$
In particular, Proposition \ref{P:kernel pineapples} implies that $L(K)$ is maximal rigid for $\alpha_{\widetilde{c}, t}$ and thus for $\alpha_{c, t}$ as well.

\end{proof}

\section{Permanence properties of maximal rigid subalgebras}\label{S:permanence}

Having established the existence of maximal rigid subalgebras in Section \ref{S:mainproof}, and also investigated the concrete case of $s$-malleable deformations associated to $1$-cocycles in Section \ref{S:1-cohomology}, we now turn to establishing general permanence properties of maximal rigid subalgebras. For the convenience of the reader, we state the main conclusions here.

\begin{thm}\label{T:main properties of max rig intro}
Suppose $(\alpha,\beta)$ is an s-malleable deformation of tracial von Neumann algebras $M\leq \widetilde{M}.$
\begin{enumerate}
\item \label{I:not mixing assumption intro} Fix $P\in {\rm MaxRig}(\alpha)$ with $P'\cap \widetilde{M}\subseteq M.$
\begin{enumerate}
\item We have that $P'\cap M=Z(P),$\label{I:commutant is center}
 \item Suppose that $(\alpha^{0},\beta^{0})$ is another s-malleable deformation of tracial von Neumann algebras $M^{0}\subseteq \widetilde{M}^{0}.$ If $P^{0}\in {\rm MaxRig}(\alpha^{0})$ and $(P^{0})'\cap \widetilde{M}^{0}\subseteq M^{0},$ then $P\overline{\otimes}P^{0}\in {\rm MaxRig}(\alpha\otimes \alpha^{0}).$ \label{I:Pinsker Product formula intro}
 \item If $Q\in {\rm MaxRig}(\alpha)$  and $(P\cap Q)'\cap \widetilde{M}\subseteq M$, then $P=Q.$
 \item If $P\leq N\leq M,$ and $N\ne P,$ then the Jones index of $P$ inside of $N$ is infinite.
 \item If $P,M$ are factors, and $s\in (0,\infty)$ and we regard $P^{s}$ as a subalgebra of $M^{s},$ then $P^{s}\in {\rm MaxRig}(\alpha\otimes\id_{M_{n}(\C)})$ for any $n>s.$ \label{I:amplifications}
 \end{enumerate}
 \item \label{I:mixing assumption intro} Assume that $L^{2}(\widetilde{M})\ominus L^{2}(M)$ is a mixing $M$-$M$ bimodule, and that $P\in {\rm MaxRig}(\alpha)$ is diffuse.
 \begin{enumerate}
\item We have that $\mathcal{N}_{M}(P)=\mathcal{U}(P).$ \label{I:intro singular}
\item  If $Q\in {\rm MaxRig}(\alpha)$ and a corner of $P$ interwines into $Q$ inside of $M$ (in the sense of Popa), then there are nonzero projections $e\in P,f\in Q$ and a unitary $u\in M$ so that $u^{*}(ePe)u=fQf.$
 \item If $Q\in {\rm MaxRig}(\alpha)$ is diffuse and no nonzero corner of $Q$ is conjugate to a nonzero corner of $P,$ then with $wI_{M}(Q,P)$ the weak intertwining space defined by Popa (\cite[Section 2.6]{PopaWeakInter}, \cite{PopaMC}) we have $wI_{M}(Q,P)=\{0\}.$
 \item For any $\sigma\in \Aut(\widetilde{M},\tau)$ with $\sigma(M)=M$  so that $\sigma(P)\cap P$ is diffuse, and so that $\|\alpha_{t}\circ \sigma -\sigma\circ \alpha_{t}\|_{\infty,2}\to_{t\to 0} 0$ we have that $\sigma(P)=P.$ \label{I:aut invariance intro}
 \end{enumerate}

 \end{enumerate}

 \end{thm}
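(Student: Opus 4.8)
The plan is to read every assertion off the machinery of Section~\ref{S:mainproof}---chiefly \Cref{T:preservation on big interesections}, the rigid-envelope package \Cref{C:existence of maximal subalgebras}, \Cref{C:minimal commutant}, and the implementing-isometry dictionary \Cref{P:two sided estimate intertwiner}---supplemented in part~(\ref{I:mixing assumption intro}) by the mixing results of Section~\ref{S:mixingness}. Several parts are then immediate. Part~(a) is exactly \Cref{C:minimal commutant} with $p=1$, using $P'\cap\widetilde M\subseteq M$. For part~(c), both $P$ and $Q$ are their own rigid envelopes by \Cref{C:existence of maximal subalgebras}, so the hypothesis $(P\cap Q)'\cap\widetilde M\subseteq M$ lets the diffuse-absorption clause \Cref{C:existence of maximal subalgebras}\,(\ref{I:diffuse absorption}) pull $Q$ into $P$, whence maximality of $Q$ gives $P=Q$. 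Part~(d) I would reduce to the statement that rigidity passes to finite-index extensions (via a finite Pimsner--Popa basis): if $[N:P]<\infty$ with $N\ne P$, then $N$ is rigid and strictly contains $P$, contradicting maximality. In the mixing setting, part~(2)(a) follows the same template: $W^{*}(\mathcal N_M(P))\subseteq W^{*}(\mathcal N^{wq}_M(P))$ is rigid by \Cref{T:wq-normalizer intro}, so maximality forces $W^{*}(\mathcal N_M(P))=P$, i.e.\ $\mathcal N_M(P)=\mathcal U(P)$; and part~(2)(d) is a direct invocation of the automorphic-absorption clause \Cref{C:existence of maximal subalgebras}\,(\ref{I:automorphic absorption}), where diffuseness of $\sigma(P)\cap P$ supplies $(\sigma(P)\cap P)'\cap\widetilde M\subseteq M$ through Popa's mixing criterion.

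The real content is the tensor (Pinsker) product formula~(b). Rigidity of $P\overline{\otimes} P^{0}$ for $\alpha\otimes\alpha^{0}$ is easy: \Cref{P:two sided estimate intertwiner} yields unitaries $v,v^{0}$ close to $1$ implementing $\alpha_t,\alpha^{0}_t$ on $P,P^{0}$, then $v\otimes v^{0}$ implements $\alpha_t\otimes\alpha^{0}_t$ on $P\overline{\otimes} P^{0}$ close to $1$, and transversality upgrades this to rigidity; the relative-commutant hypothesis passes through the tensor commutation theorem as $(P\overline{\otimes} P^{0})'\cap(\widetilde M\overline{\otimes}\widetilde M^{0})=(P'\cap\widetilde M)\overline{\otimes}((P^{0})'\cap\widetilde M^{0})\subseteq M\overline{\otimes} M^{0}$. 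For \emph{maximality}, let $\mathcal R$ be the rigid envelope of $P\overline{\otimes} P^{0}$; the goal is $\mathcal R=(P\overline{\otimes} M^{0})\cap(M\overline{\otimes} P^{0})$. Since $\mathcal R\supseteq P\overline{\otimes} P^{0}$ it inherits the relative-commutant condition, so \Cref{P:two sided estimate intertwiner} furnishes an exact intertwiner $V_t$ close to $1$. The decisive step is a slicing computation: using that $V_t$ factors as $(v\otimes v^{0})z_t$ with $z_t\in M\overline{\otimes} M^{0}$ (from the relative commutant of $P\overline{\otimes} P^{0}$) and projecting the intertwining relation through $\E_{1\overline{\otimes}\widetilde M^{0}}=\tau\otimes\id$, one verifies for $y\in\mathcal R$ and $a\in P$ the \emph{exact} identity $(v^{0})^{*}\alpha^{0}_t\big((\tau\otimes\id)((a\otimes1)y)\big)v^{0}=(\tau\otimes\id)\big(z_t(a\otimes1)y\,z_t^{*}\big)\in M^{0}$. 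Hence the algebra $B_0$ generated by these second-coordinate slices satisfies $(v^{0})^{*}\alpha^{0}_t(B_0)v^{0}\subseteq M^{0}$; the transversality estimate (the first inequality in \Cref{P:two sided estimate intertwiner}, which needs no relative-commutant hypothesis) then makes $B_0$ $\alpha^{0}$-rigid, and since $P^{0}\subseteq B_0$ maximality of $P^{0}$ gives $B_0=P^{0}$. This is precisely $(\E_{P}\otimes\E_{P^{0}}^{\perp})(y)=0$ for all $y\in\mathcal R$, and the symmetric argument gives $(\E_{P}^{\perp}\otimes\E_{P^{0}})(y)=0$.

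These identities force each $y\in\mathcal R$ into $\big(L^{2}(P)\otimes L^{2}(P^{0})\big)\oplus\big(L^{2}(P)^{\perp}\otimes L^{2}(P^{0})^{\perp}\big)$, i.e.\ $y=\E_{P\overline{\otimes} P^{0}}(y)+y_{22}$ with $y_{22}\in\mathcal R$ and $(\E_{P}\otimes\id)(y_{22})=(\id\otimes\E_{P^{0}})(y_{22})=0$. \textbf{The main obstacle I anticipate is eliminating this doubly off-diagonal remainder} $y_{22}$: it is invisible to $P$- and $P^{0}$-slices, so the slicing alone cannot detect it, exactly as in the Pinsker product formula of $1$-bounded entropy theory. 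I would attack $y_{22}$ by a second pass---applying the rigidity of $\mathcal R$ and a conditional-variance estimate to $W^{*}(\{y_{22}\})$---to conclude $y_{22}=0$, hence $\mathcal R\subseteq(P\overline{\otimes} M^{0})\cap(M\overline{\otimes} P^{0})=P\overline{\otimes} P^{0}$. Granting~(b), part~(e) is the special case in which $M^{0}$ is a full matrix algebra and $\alpha^{0}=\id$, giving $P\overline{\otimes} M_n(\C)\in{\rm MaxRig}(\alpha\otimes\id)$, followed by passing to the corner $P^{s}\le M^{s}$ via the compression/amplification invariance of maximal rigidity developed in Section~\ref{S:compressions/amplifications}.

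Finally, parts~(2)(b) and~(2)(c) are handled through Popa's intertwining. An intertwining of a corner of $P$ into $Q$ produces nonzero projections and a partial isometry conjugating a corner $ePe$ into $Q$; rigidity is invariant under conjugation by a unitary of $M$ (a single unitary contributes only the pointwise-vanishing error $2\|\alpha_t(u)-u\|_2$), so the image is a rigid subalgebra of the maximal rigid corner $fQf$, and singularity from~(2)(a) together with the uniqueness of rigid envelopes (the corner analogue of~(c)) upgrades the one-sided intertwining to the asserted conjugacy $u^{*}(ePe)u=fQf$. Part~(2)(c) is then the contrapositive: a nonzero weak-intertwining space $wI_M(Q,P)$ would, by its properties recalled in Section~\ref{S:mixingness}, yield an intertwining of a corner of $Q$ into $P$ and hence a forbidden corner conjugacy, so $wI_M(Q,P)=\{0\}$.
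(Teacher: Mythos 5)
Your reductions for (1)(a), (1)(c), (1)(d), (2)(a) and (2)(d) are the same as the paper's: \Cref{C:minimal commutant}, the two clauses of \Cref{C:existence of maximal subalgebras} (in the mixing case \Cref{C:existence of maximal subalgebras2}, with \Cref{L:usemix} converting diffuseness into the relative commutant condition), the finite-index proposition closing Section \ref{S:compressions/amplifications}, and \Cref{C:upgrading rigidity to all the normalizers}. Your outline of (2)(b)--(2)(c) also has the same shape as \Cref{T:generalized dichotomy} and its corollary, though it elides the one step there that takes real work: before any maximality argument can run, one must show that the support projections $vv^{*}$ and $v^{*}v$ of Popa's intertwining isometry actually lie in $Q$ and $P$, which the paper obtains by applying the normalizer upgrade of \Cref{C:upgrading rigidity to all the normalizers} to $f_{0}Q_{0}f_{0}\vee[(f_{0}Q_{0}f_{0})'\cap f_{0}Mf_{0}]$ and its counterpart on the other side.

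The genuine gap is in (1)(b), exactly where you flagged it. Your first pass is correct: the factorization $V_{t}=(v\otimes v^{0})z_{t}$ with $z_{t}\in M\overline{\otimes}M^{0}$, the slicing identity, the $\alpha^{0}$-rigidity of the slice algebra $B_{0}$ via the first inequality of \Cref{P:two sided estimate intertwiner}, and the conclusions $(\E_{P}\otimes\E_{P^{0}}^{\perp})(y)=0=(\E_{P}^{\perp}\otimes\E_{P^{0}})(y)$ all check out. But the ``second pass'' is not an argument: slicing against $P$ and $P^{0}$ is structurally blind to $\left(L^{2}(M)\ominus L^{2}(P)\right)\otimes\left(L^{2}(M^{0})\ominus L^{2}(P^{0})\right)$, the algebra $W^{*}(\{y_{22}\})$ need not contain $P\overline{\otimes}P^{0}$ (so none of the machinery of Section \ref{S:mainproof} applies to it), and no stated estimate forces $y_{22}=0$; nothing in your first pass excludes, say, a rigid algebra generated by $P\overline{\otimes}P^{0}$ together with a single doubly off-diagonal unitary. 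The paper closes this hole in Section \ref{S:tensor those pineapples} with two ingredients absent from your proposal. First, \Cref{T:tensor permanence 2}: the rigid envelope $\mathcal{R}$ of $P\overline{\otimes}P^{0}$ for $\alpha\otimes\alpha^{0}$ is automatically rigid for the \emph{partial} deformation $\alpha\otimes\id$; this is a Pythagoras estimate, $\|(\E_{M}\otimes\E_{M^{0}})(\alpha_{t}\otimes\alpha^{0}_{t})(x)\|_{2}\le\|(\E_{M}\otimes\id)(\alpha_{t}\otimes\id)(x)\|_{2}$, so near-preservation of $\|x\|_{2}$ by the full deformation forces the defect $\|(\alpha_{t}\otimes\id)(x)-(\E_{M}\otimes\id)(\alpha_{t}\otimes\id)(x)\|_{2}$ to be uniformly small, followed by transversality applied to $\alpha\otimes\id$. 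Second, \Cref{C:tensor permanence 1}, proved via the splitting \Cref{L:tensor splitting}: $P\overline{\otimes}M^{0}$ is itself maximal rigid for $\alpha\otimes\id$. This is where the off-diagonal difficulty is genuinely resolved, and it is resolved at the level of operators rather than elements: since the trivial-deformation leg $1\otimes M^{0}$ sits inside any rigid algebra $Q$ containing $P\overline{\otimes}M^{0}$, the projection $\E_{Q}$ commutes with $1\otimes(M^{0}\cup JM^{0}J)$, so Tomita's commutation theorem confines $\E_{Q}$ to $B(L^{2}(M))\overline{\otimes}Z(M^{0})$, and evaluating $\E_{Q}$ on $M\otimes 1$ (after the bootstrapping step handling the center) yields $Q=P\overline{\otimes}M^{0}$. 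Combining, diffuse absorption (\ref{I:diffuse absorption}) of \Cref{C:existence of maximal subalgebras} gives $\mathcal{R}\subseteq P\overline{\otimes}M^{0}$, symmetrically $\mathcal{R}\subseteq M\overline{\otimes}P^{0}$, and intersecting finishes. So the missing lemma you would have to supply is precisely \Cref{L:tensor splitting} (or the partial-deformation rigidity of \Cref{T:tensor permanence 2}); the elementwise slicing cannot be completed on its own. One consolation: your derivation of (1)(e) does not need the general case of (1)(b), since for $M^{0}=P^{0}=M_{n}(\C)$ one has $\E_{P^{0}}^{\perp}=0$, the doubly off-diagonal space vanishes, and your first pass alone already yields $\mathcal{R}\subseteq P\overline{\otimes}M_{n}(\C)$, after which the compression argument of Section \ref{S:compressions/amplifications} applies as you say.
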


 As the reader can observe in the statement of Theorem \ref{T:main properties of max rig intro}, there is substantially more we can say in the case that $L^{2}(\widetilde{M})\ominus L^{2}(M)$ is a mixing $M$-$M$ bimodule. Because of this, we defer the investigation of the mixing situation to Section \ref{S:mixingness}. The rest of the Section will be divided into two subsections: one on the behavior of maximal rigid subalgebras under tensor products, and one on the behavior maximal rigid subalgebras under compressions/amplifications. These two subsections together with the work in Section \ref{S:mixingness} provide a proof of Theorem \ref{T:main properties of max rig intro}.

 \begin{rmk}
Before continuing further, it may be good for the reader to note that many of the results in this section are inspired by an informal analogy between elements of ${\rm MaxRig}(\alpha)$ and Pinsker algebras of measure-preserving dynamical systems.
 For instance, it is known (see \cite{GlasThoWe}) that if we consider two probability measure-preserving actions $G\actson (X_{j},\mu_{j}),j=1,2,$ of an amenable group, then the Pinsker algebra of $G\actson (X_{1}\times X_{2},\mu_{1}\otimes \mu_{2})$ is the product of the Pinsker algebras of $G\actson (X_{1},\mu_{1})$ and $G\actson (X_{2},\mu_{2})$. This has been extended to the case where $G$ is sofic under mild assumptions on the action by the second named author (see \cite{Hayes13}). This result on Pinsker algebras is in analogy with Theorem \ref{T:main properties of max rig intro} (\ref{I:Pinsker Product formula intro}).

 \end{rmk}

\subsection{Tensor products of maximal rigid subalgebras}\label{S:tensor those pineapples}

The goal of this subsection is to prove that, under mild hypotheses, the tensor product of two maximal rigid subalgebras of $M$ is itself maximal rigid. Recall that maximal rigid subalgebras of s-malleable deformations are in many ways analogous to Pinsker algebras in the context of entropy of probability measure-preserving actions, as well as Pinsker algebras in the context of $1$-bounded entropy. Because of this, the proofs in this section follow the method of proof first given in \cite{GlasThoWe} (using some of the modifications given in \cite{Hayes13} in the sofic context).
As in \cite{GlasThoWe, Hayes13}, we start by first handling the case of tensoring a maximal rigid subalgebra with the trivial deformation, and then reducing the general case to this.

The idea is to prove a ``tensor splitting lemma", showing that an algebra decomposes as a tensor product if it is invariant under a sufficiently nontrivial trace-preserving action of a group. In \cite{GlasThoWe, Hayes13} this was done when the action was an \emph{ergodic} measure-preserving action, but we still state the analogous version for general trace-preserving actions with the necessary modifications for dealing with the ``space of centrally ergodic components" (i.e. the elements in the center fixed by the acting group). The following is the tensor splitting lemma we need.

\begin{lem}\label{L:tensor splitting}
Let $(M,\tau_{M}),(N,\tau_{N})$ be two tracial von Neumann algebras, and let $G\cc^{\alpha}(N,\tau_{N})$ be a trace-preserving action of a discrete group $G.$ Suppose that $N\otimes 1\leq Q\leq N\overline{\otimes}M$ and $(\alpha_{g}\otimes \id)(Q)=Q$ for all $g\in G.$  Suppose further that there is a $P\leq M$ so that $Q\cap (Z(N)^{G}\overline{\otimes}M)=Z(N)^{G}\overline{\otimes}P.$ Then $Q=N\overline{\otimes}P.$

\end{lem}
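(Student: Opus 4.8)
The plan is to prove the nontrivial inclusion $Q \subseteq N \overline{\otimes} P$ by disintegrating $Q$ over the center $Z(N)$ and using the $G$-action to control the fibers. The reverse inclusion $N \overline{\otimes} P \subseteq Q$ is immediate: since $1 \in Z(N)^{G}$, the hypothesis gives $1 \otimes P \subseteq Z(N)^{G} \overline{\otimes} P \subseteq Q$, and together with $N \otimes 1 \subseteq Q$ these generate $N \overline{\otimes} P$.

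For the main inclusion, I first observe that $Z(N) \otimes 1 \subseteq N \otimes 1 \subseteq Q$ and that $Z(N) \otimes 1 \subseteq Z(N) \overline{\otimes} Z(M) = Z(N \overline{\otimes} M)$, so $Q$ is a module over the central subalgebra $Z(N) \otimes 1$. After reducing to a separable, countably generated $G$-invariant situation, I would disintegrate over $Z(N) = L^{\infty}(Y,\nu)$, writing $N = \int^{\oplus}_Y N_y \, d\nu(y)$ with $N_y$ factors and $N \overline{\otimes} M = \int^{\oplus}_Y N_y \overline{\otimes} M \, d\nu(y)$. Centrality of $Z(N)\otimes 1$ then forces $Q = \int^{\oplus}_Y Q_y \, d\nu(y)$ with $N_y \otimes 1 \subseteq Q_y \subseteq N_y \overline{\otimes} M$. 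The key structural input is the factor-splitting theorem: since each $N_y$ is a finite factor, $Q_y = N_y \overline{\otimes} P_y$, where $P_y := (N_y \otimes 1)' \cap Q_y \subseteq M$ defines a measurable field of von Neumann subalgebras of $M$.

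Next I would translate the two remaining hypotheses into constraints on the field $y \mapsto P_y$. The invariance $(\alpha_g \otimes \id)(Q) = Q$, combined with the fact that $\alpha_g$ implements the base transformation on $Y$ together with fiber isomorphisms $N_y \to N_{gy}$ that are trivial on the $M$-leg, forces $P_{gy} = P_y$ for a.e. $y$; hence $y \mapsto P_y$ is $G$-invariant and a.e. constant on each ergodic component of $G \curvearrowright (Y,\nu)$. The hypothesis $Q \cap (Z(N)^{G} \overline{\otimes} M) = Z(N)^{G} \overline{\otimes} P$ then pins down this constant: an element of $Z(N)^{G} \overline{\otimes} M$ is exactly a field $y \mapsto 1 \otimes m_y$ that is constant on ergodic components, such a field lies in $Q$ iff $m_y \in P_y$ a.e., and demanding that this set coincide with the fields valued in the constant algebra $P$ forces $P_y = P$ on a.e. ergodic component. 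Since $P_y$ is already constant on each component, we conclude $P_y = P$ for a.e. $y$, whence $Q = \int^{\oplus}_Y N_y \overline{\otimes} P \, d\nu(y) = N \overline{\otimes} P$.

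I expect the main obstacle to be the measure-theoretic bookkeeping of the disintegration rather than any single conceptual step: reducing to a separable, countably generated $G$-invariant situation so that $Y$ is standard and $G \curvearrowright (Y,\nu)$ admits a genuine ergodic decomposition; verifying that $y \mapsto P_y = (N_y \otimes 1)' \cap Q_y$ is a measurable field and that the fiber-wise factor splitting can be carried out measurably in $y$; and making rigorous both the claim that a $G$-invariant measurable field of subalgebras is a.e. constant on ergodic components and that the integrated equality of algebras forces the fiberwise equality $P_y = P$. This is exactly the apparatus underlying the ergodic-theoretic proofs of the Pinsker product formula that this section follows, and adapting it from sub-$\sigma$-algebras to fields of von Neumann subalgebras is the technical heart of the argument.
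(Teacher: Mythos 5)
Your overall strategy (disintegrate over $Z(N)$, split each fiber via the Ge--Kadison splitting theorem since $N_y$ is a factor, then use $G$-invariance and the hypothesis on $Z(N)^G\overline{\otimes}M$ to pin down the field $y\mapsto P_y$) is a genuinely different route from the paper's, and in the case where $N$ and $M$ have separable preduals it can be pushed through, modulo the measurability and selection issues you yourself flag. However, there is a genuine gap: the lemma is stated for \emph{arbitrary} tracial von Neumann algebras and an \emph{arbitrary} discrete group, and that generality is actually used in the paper. In the proof of Corollary \ref{C:tensor permanence 1}, the lemma is invoked with $(N,\tau_N)$ an arbitrary tracial von Neumann algebra and with $G=\Aut(Z(N),\tau)$, which is uncountable. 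Your argument needs direct integral theory (separable predual), the Effros Borel structure and measurable selection (separability of $M$), and an ergodic decomposition for $G\curvearrowright (Y,\nu)$ (countability of $G$, or at least a reduction to a countable subgroup with the same fixed-point algebra, which again uses separability). The opening step "after reducing to a separable, countably generated $G$-invariant situation" is precisely where this breaks: if you replace $N$ by a separable subalgebra $N_0$, there is no control of $Z(N_0)$ --- in general $Z(N_0)\not\subseteq Z(N)$ --- and no control of the fixed-point algebra $Z(N_0)^{G_0}$ relative to $Z(N)^G$; but the hypothesis $Q\cap (Z(N)^{G}\overline{\otimes}M)=Z(N)^{G}\overline{\otimes}P$ is a statement exactly about these objects, so it does not obviously pass to the reduced situation. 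Without that reduction the whole disintegration apparatus is unavailable, and carrying out the reduction is itself a nontrivial L\"owenheim--Skolem-type argument that you would need to supply.

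For contrast, the paper's proof avoids all of this with a short commutant computation: viewing $\E_Q$ as a projection on $L^{2}(N)\otimes L^{2}(M)$, bimodularity over $N\otimes 1$ and invariance under $\alpha_g\otimes\id$ give
\[
\E_{Q}\in \bigl[(N\cup JNJ\cup \{\alpha_{g}:g\in G\})\otimes 1\bigr]'=Z(N)^{G}\overline{\otimes}B(L^{2}(M))
\]
by Tomita's commutation theorem; hence $\E_{Q}(1\otimes a)\in Q\cap (Z(N)^{G}\overline{\otimes}M)=Z(N)^{G}\overline{\otimes}P$ for every $a\in M$, and therefore $Q=\E_{Q}(N\overline{\otimes}M)\subseteq N\overline{\otimes}P$. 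This argument is insensitive to separability of $N$ or $M$ and to the cardinality of $G$, which is exactly what the applications require; in effect it proves the relative, equivariant splitting statement directly rather than quoting the fiberwise splitting theorem. If you want to salvage your approach, you would either have to restrict the lemma to the separable setting and rework the corollaries accordingly, or supply the separable reduction in full --- and the latter is likely to be longer than the paper's entire proof.
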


\begin{proof}
From  the assumptions, it is clear  that $ N\overline{\otimes}P \subseteq Q$, and hence it suffices to show $Q\subseteq N\overline{\otimes} P$. 

For each $g\in G,$ we use $\alpha_{g}$ for the unique continuous extension of $\alpha_{g}$ to a unitary $L^{2}(N)\to L^{2}(N).$
Consider the conditional expectation onto $Q$ as a projection operator $\E_{Q}\in B(L^{2}(N)\otimes L^{2}(M)).$ We then have by Tomita's theorem that
\begin{align*}
    \E_{Q}\in [(N\cup JNJ\cup \{\alpha_{g}:g\in G\})\otimes 1]'&=[Z(N)\cap \alpha(G)']\overline{\otimes}B(L^{2}(M))\\
    &=Z(N)^{G}\overline{\otimes}B(L^{2}(M)).
\end{align*}
Observe that this implies that $\E_{Q}(1\otimes a)\in  Z(N)^{G}\overline{\otimes} M$ for all $a\in M.$ We thus have that
\[\E_{Q}(1\otimes a)\in [Z(N)^{G}\overline{\otimes} M]\cap Q=Z(N)^{G}\overline{\otimes}P\]
for all $a\in M.$ 
This now implies  that $Q=\E_Q(N\overline{\otimes} M)\subseteq N\overline{\otimes} P$. \qedhere
%
%
%
%

\end{proof}

From the preceding lemma, we can easily handle the case of tensoring with a trivial deformation.

\begin{cor}\label{C:tensor permanence 1}
Let  $(M,\tau_{M})$ be a semifinite tracial von Neumann algebra, and $\alpha_{t}\colon \widetilde{M}\to \widetilde{M}$ an s-malleable deformation. Fix a projection $p\in M$ with $\tau(p)<\infty.$ Assume that $P\leq pMp$ has $P'\cap p\widetilde{M}p\subseteq pMp$ and that $P\leq pMp$ is a maximal rigid subalgebra for $\alpha_{t}.$ Fix a tracial von Neumann algebra $(N,\tau_{N}).$ Then $N\overline{\otimes}P$, regarded as a subalgebra of $N\overline{\otimes}pMp,$ is a maximal rigid subalgebra for $\id\otimes\alpha_{t}.$

\end{cor}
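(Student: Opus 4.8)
The plan is to exhibit $N\overline{\otimes}P$ as its own rigid envelope for the deformation $\widetilde{\alpha}_t:=\id\otimes\alpha_t$, following the strategy used for Pinsker factors in \cite{GlasThoWe, Hayes13}: produce a rigid envelope $Q\supseteq N\overline{\otimes}P$, show that $Q$ is invariant under a large automorphism group, and then split it off as a tensor product via \Cref{L:tensor splitting}. Throughout I work inside the finite-trace corner $(1_N\otimes p)(N\overline{\otimes}\widetilde{M})(1_N\otimes p)=N\overline{\otimes}p\widetilde{M}p$, whose ``$M$'' is $N\overline{\otimes}M$.

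First I would record the two facts needed to invoke \Cref{C:existence of maximal subalgebras}. The relative commutant computation is clean: since $N\overline{\otimes}P\supseteq N\otimes 1$, the commutation theorem for tensor products gives
\[
(N\overline{\otimes}P)'\cap(N\overline{\otimes}p\widetilde{M}p)=Z(N)\overline{\otimes}(P'\cap p\widetilde{M}p)\subseteq Z(N)\overline{\otimes}pMp\subseteq N\overline{\otimes}M,
\]
using the hypothesis $P'\cap p\widetilde{M}p\subseteq pMp$. For rigidity of $N\overline{\otimes}P$, rather than estimate $\|(\widetilde{\alpha}_t-\id)|_{N\overline{\otimes}P}\|_{\infty,2}$ directly, I would tensor up the intertwiner from \Cref{P:two sided estimate intertwiner}: taking a partial isometry $v\in\widetilde{M}$ with $v^*v=p$, $vv^*=\alpha_t(p)$, $vy=\alpha_t(y)v$ for all $y\in P$, and $\|v-p\|_2\le 6\|(\alpha_t-\id)|_P\|_{\infty,2}$, the element $V=1_N\otimes v$ satisfies $Vx=\widetilde{\alpha}_t(x)V$ for all $x\in N\overline{\otimes}P$ with $\|V-1_N\otimes p\|_2=\|v-p\|_2$. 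Feeding this back into \Cref{P:two sided estimate intertwiner} shows $N\overline{\otimes}P$ is $\widetilde{\alpha}$-rigid. Hence \Cref{C:existence of maximal subalgebras} furnishes a rigid envelope $Q$ with $N\overline{\otimes}P\subseteq Q\subseteq N\overline{\otimes}pMp$, and it suffices to prove $Q=N\overline{\otimes}P$.

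Next I would show $Q$ is invariant under conjugation by $\mathcal{U}(N)\otimes 1$. For $u\in\mathcal{U}(N)$ the automorphism $\sigma_u=\Ad(u\otimes 1)$ preserves $N\overline{\otimes}pMp$, commutes exactly with $\widetilde{\alpha}_t$ (since $\widetilde{\alpha}_t(u\otimes 1)=u\otimes 1$), and fixes $N\overline{\otimes}P$ setwise. As $\sigma_u(Q)\cap Q\supseteq N\overline{\otimes}P$ has relative commutant contained in $N\overline{\otimes}M$ by the display above, the automorphic absorption statement \Cref{C:existence of maximal subalgebras}(\ref{I:automorphic absorption}) gives $\sigma_u(Q)=Q$. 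Thus $Q$ is invariant under $G=\mathcal{U}(N)$ acting by conjugation on the first factor, for which $Z(N)^G=Z(N)$.

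Finally I would apply \Cref{L:tensor splitting} with this $G$. Its remaining hypothesis is the product formula $Q\cap(Z(N)\overline{\otimes}pMp)=Z(N)\overline{\otimes}P$; the inclusion $\supseteq$ is clear, and this identity forces $Q=N\overline{\otimes}P$. To prove $\subseteq$, set $R=Q\cap(Z(N)\overline{\otimes}pMp)$ and disintegrate over the center $Z(N)=L^\infty(Z,\nu)$, writing $R=\int_Z^{\oplus}R_z\,d\nu(z)$ with $P\subseteq R_z\subseteq pMp$. The goal is that $R_z$ be $\alpha$-rigid for $\nu$-a.e. $z$, for then maximality of $P$ in $pMp$ forces $R_z=P$ a.e., whence $R=Z(N)\overline{\otimes}P$. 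I expect this fiberwise rigidity transfer to be the main obstacle. A measurable selection argument gives $\|(\widetilde{\alpha}_t-\id)|_R\|_{\infty,2}^2=\int_Z\|(\alpha_t-\id)|_{R_z}\|_{\infty,2}^2\,d\nu(z)$, so rigidity of $R$ only yields that the integrand tends to $0$ in $L^1(\nu)$; upgrading this to the a.e. full limit $t\to 0$, rather than merely along a subsequence, is delicate absent monotonicity of $t\mapsto\|(\alpha_t-\id)|_{R_z}\|_{\infty,2}$, and I would handle it as in \cite{GlasThoWe, Hayes13}, using Popa's transversality inequality to control the defect across dyadic scales. In the special case $N$ a factor (so $Z(N)=\C$) the difficulty evaporates: $R=\C\otimes P_0$ with $P_0=\{a\in pMp:1\otimes a\in Q\}\supseteq P$ manifestly $\alpha$-rigid, so $P_0=P$ at once.
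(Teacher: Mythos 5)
Your overall architecture matches the paper's: establish that $N\overline{\otimes}P$ is $\id\otimes\alpha$-rigid (your device of tensoring the intertwiner from \Cref{P:two sided estimate intertwiner} with $1_N$ and feeding $V=1_N\otimes v$ back into that proposition is correct, and is a clean way to get this), take its rigid envelope $Q$, and reduce $Q=N\overline{\otimes}P$ to the product formula $Q\cap(Z(N)\overline{\otimes}pMp)=Z(N)\overline{\otimes}P$ via \Cref{L:tensor splitting}. The genuine gap is exactly where you flag it: the product formula itself. Your group $G=\mathcal{U}(N)$ acts by inner automorphisms, which are \emph{trivial} on $Z(N)$, so $Z(N)^{G}=Z(N)$ and \Cref{L:tensor splitting} demands as a hypothesis precisely the statement you are trying to prove; the $\Ad(u\otimes 1)$-invariance of $Q$ therefore buys nothing. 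Your fallback, disintegrating $R=Q\cap(Z(N)\overline{\otimes}pMp)$ over $Z(N)=L^{\infty}(Z,\nu)$ and proving a.e.\ fiberwise rigidity, is not completed, and the obstruction you name is real: rigidity of $R$ only gives $\int_{Z}\|(\alpha_{t}-\id)|_{R_{z}}\|_{\infty,2}^{2}\,d\nu(z)\to 0$, hence a.e.\ convergence of the fiber defects only along subsequences of $t$; since $t\mapsto\|(\alpha_{t}-\id)|_{R_{z}}\|_{\infty,2}$ need be neither monotone nor continuous (its failure to vanish at $t=0$ is exactly failure of rigidity), subadditivity and transversality across dyadic scales do not close this, and the works you defer to are entropy results containing no such deformation-rigidity statement.

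The paper avoids fibers entirely by running the envelope machinery a second time, one level down, with a group that acts \emph{ergodically} on the relevant center. Let $\widehat{Q}\leq Z(N)\overline{\otimes}pMp$ be the rigid envelope of $R$ taken \emph{inside} $Z(N)\overline{\otimes}pMp$ (it exists because $R\supseteq Z(N)\overline{\otimes}P$, so its relative commutant in $Z(N)\overline{\otimes}p\widetilde{M}p$ lies in $Z(N)\overline{\otimes}pMp$). For every $\Theta\in G:=\Aut(Z(N),\tau)$, the automorphism $\Theta\otimes\id$ preserves $Z(N)\overline{\otimes}pMp$, commutes exactly with $\id\otimes\alpha_{t}$, and carries $Z(N)\overline{\otimes}P$ onto itself, so Corollary \ref{C:existence of maximal subalgebras}(\ref{I:automorphic absorption}) gives $(\Theta\otimes\id)(\widehat{Q})=\widehat{Q}$. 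The point you are missing is that this $G$ has $Z(N)^{G}=\C$, which makes the hypothesis of \Cref{L:tensor splitting} vacuous: defining $\widehat{P}=\{a\in pMp:1\otimes a\in\widehat{Q}\}$, one has $\widehat{Q}\cap(\C\overline{\otimes}pMp)=\C\overline{\otimes}\widehat{P}$ for free, and the lemma yields $\widehat{Q}=Z(N)\overline{\otimes}\widehat{P}$. Since $\widehat{P}$ is $\alpha$-rigid (being a slice of the rigid $\widehat{Q}$) and contains $P$, maximality of $P$ forces $\widehat{P}=P$; hence $R\subseteq\widehat{Q}=Z(N)\overline{\otimes}P$, which is the product formula, and your final application of the splitting lemma then finishes the proof. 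In short: replace inner automorphisms of $N$ by the full trace-preserving automorphism group of $Z(N)$, applied to a rigid envelope formed inside $Z(N)\overline{\otimes}pMp$ --- that single change converts your unproven fiberwise claim into two applications of lemmas you already invoke.
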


\begin{proof}
Let $Q\leq N\overline{\otimes}pMp$ be the rigid envelope of $N\overline{\otimes}P$ with respect to $\id\otimes \alpha_{t}.$  This is possible by Corollary \ref{C:existence of maximal subalgebras} and Tomita's commutation theorem. We prove that $Q=N\overline{\otimes}P$ in two steps.

\emph{Step 1: We show that $Q\cap (Z(N)\overline{\otimes}pMp)=Z(N)\overline{\otimes}P$}

To prove this, let $\widehat{Q}\leq Z(N)\overline{\otimes}pMp$ be the rigid envelope of  $Q\cap (Z(N)\overline{\otimes}pMp).$ It is possible to do this by Corollary \ref{C:existence of maximal subalgebras}. Let $G=\Aut(Z(N),\tau),$ and let $G\actson Z(N)$ in the canonical way. Since $Z(N)$ is abelian, this action is clearly ergodic. Then
\[Z(N)\overline{\otimes}P\leq \widehat{Q}\leq Z(N)\overline{\otimes}pMp,\]
and $(\Theta\otimes \id)(\widehat{Q})=\widehat{Q}$ for all $\Theta\in G,$ by maximality. Hence, by Lemma \ref{L:tensor splitting} we have that $\widehat{Q}=Z(N)\overline{\otimes}\widehat{P},$ where $\widehat{P}$ is such that $Q\cap (1\otimes M)=1\otimes \widehat{P}.$ Then $\widehat{P}$ is $\alpha$-rigid and contains $P$. Thus $\widehat{P}=P$ by maximality of $P$. Since $\widehat{Q}$ contains $Q\cap (Z(N)\overline{\otimes}pMp),$ this proves Step 1.

\emph{Step 2: We prove the corollary.}

Now let $G=\{1\}.$ We then have that
\[N\overline{\otimes}1\leq Q\leq N\overline{\otimes}pMp.\]
By Step 1 and Lemma \ref{L:tensor splitting} we have that $Q=N\overline{\otimes}P,$ so we are done. \qedhere

\end{proof}

As in \cite{GlasThoWe, Hayes13}, we can bootstrap the case of tensoring with the trivial deformation to the general case.

\begin{thm}\label{T:tensor permanence 2}
Let $(M_{j},\tau_{j}),j=1,2$ be two semifinite tracial von Neumann algebras, and $\alpha_{t}^{j}\colon \widetilde{M}_{j}\to \widetilde{M}_{j},j=1,2$ be two s-malleable deformations. Fix $p_{j}\in \mathcal{P}(M_{j})$ with $\tau(p_{j})<\infty$ for $j=1,2.$ For $j=1,2$ let $P_{j}\leq p_{j}M_{j}p_{j}$ be subalgebras so that $P_{j}'\cap p_{j}\widetilde{M}_{j}p_{j}\subseteq p_{j}M_{j}p_{j}.$ Assume moreover that $P_{j}$ is maximal rigid for $\alpha_{t}^{j}$ for $j=1,2.$ Then $P_{1}\overline{\otimes}P_{2},$ regarded as a subalgebra of $(p_{1}\otimes p_{2})(M_{1}\overline{\otimes}M_{2})(p_{1}\otimes p_{2}),$ is maximal rigid for $\alpha_{t}^{1}\otimes \alpha_{t}^{2}.$

\end{thm}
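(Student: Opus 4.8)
The plan is to exhibit $P_1\overline{\otimes}P_2$ as its own rigid envelope. Write $\mathcal{M}=(p_1M_1p_1)\overline{\otimes}(p_2M_2p_2)$. By Tomita's commutation theorem, $(P_1\overline{\otimes}P_2)'\cap\big((p_1\widetilde{M}_1p_1)\overline{\otimes}(p_2\widetilde{M}_2p_2)\big)=(P_1'\cap p_1\widetilde{M}_1p_1)\overline{\otimes}(P_2'\cap p_2\widetilde{M}_2p_2)\subseteq\mathcal{M}$, so by \Cref{C:existence of maximal subalgebras} the rigid envelope $Q$ of $P_1\overline{\otimes}P_2$ with respect to $\alpha^1_t\otimes\alpha^2_t$ exists, and it suffices to prove $Q\subseteq P_1\overline{\otimes}P_2$. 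Applying \Cref{C:tensor permanence 1} in each coordinate, the algebras $R_1:=P_1\overline{\otimes}(p_2M_2p_2)$ and $R_2:=(p_1M_1p_1)\overline{\otimes}P_2$ are maximal rigid for the s-malleable deformations $\alpha^1_t\otimes\id$ and $\id\otimes\alpha^2_t$, respectively. Since a standard slice-map argument gives $R_1\cap R_2=P_1\overline{\otimes}P_2$, the theorem will follow once I show $Q\subseteq R_1$ and $Q\subseteq R_2$.

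The heart of the matter, and the step I expect to be the main obstacle to recognize (though it is short once found), is that rigidity for the product deformation already forces rigidity for each coordinate deformation. Fix $x\in(Q)_1$ and abbreviate $\eta=\|(\alpha^1_t\otimes\alpha^2_t)(x)-x\|_2$. Writing $\E_\mathcal{M}=\E^{(1)}\otimes\E^{(2)}$ for the trace-preserving conditional expectation onto $\mathcal{M}$, where $\E^{(i)}$ carries $p_i\widetilde{M}_ip_i$ onto $p_iM_ip_i$, contractivity of $\E_\mathcal{M}$ together with $\E_\mathcal{M}(x)=x$ gives $\|(\id-\E_\mathcal{M})(\alpha^1_t\otimes\alpha^2_t)(x)\|_2\le\eta$. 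Now use the orthogonal decomposition
\[\id-\E^{(1)}\otimes\E^{(2)}=\big((\id-\E^{(1)})\otimes\id\big)+\big(\E^{(1)}\otimes(\id-\E^{(2)})\big),\]
whose two summands have orthogonal ranges in $L^2$, to deduce $\|\big((\id-\E^{(1)})\otimes\id\big)(\alpha^1_t\otimes\alpha^2_t)(x)\|_2\le\eta$. Because $\id\otimes\alpha^2_t$ is an $L^2$-isometry commuting with $(\id-\E^{(1)})\otimes\id$, this last quantity equals $\|\big((\id-\E^{(1)})\otimes\id\big)(\alpha^1_t\otimes\id)(x)\|_2$, which is precisely the transversality defect of \Cref{thm:PopasTranservality} for the deformation $\alpha^1_t\otimes\id$ of $M_1\overline{\otimes}\widetilde{M}_2$ (note $x\in\mathcal{M}\subseteq M_1\overline{\otimes}\widetilde{M}_2$). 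Transversality then yields $\|(\alpha^1_{2t}\otimes\id)(x)-x\|_2\le2\eta$, so taking the supremum over $(Q)_1$ gives $\|(\alpha^1_{2t}\otimes\id-\id)|_Q\|_{\infty,2}\le2\|(\alpha^1_t\otimes\alpha^2_t-\id)|_Q\|_{\infty,2}$. Hence $Q$ is rigid for $\alpha^1_t\otimes\id$, and symmetrically for $\id\otimes\alpha^2_t$.

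To finish, note $Q\cap R_1\supseteq P_1\overline{\otimes}P_2$, so $(Q\cap R_1)'\cap\big((p_1\widetilde{M}_1p_1)\overline{\otimes}(p_2\widetilde{M}_2p_2)\big)\subseteq(P_1\overline{\otimes}P_2)'\cap\big(\cdots\big)\subseteq\mathcal{M}\subseteq M_1\overline{\otimes}\widetilde{M}_2$. Since $Q$ and $R_1$ are both rigid for $\alpha^1_t\otimes\id$, \Cref{T:preservation on big interesections} shows $Q\vee R_1$ is rigid for $\alpha^1_t\otimes\id$, and maximality of $R_1$ forces $Q\vee R_1=R_1$, i.e.\ $Q\subseteq R_1$; the symmetric argument gives $Q\subseteq R_2$. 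Therefore $Q\subseteq R_1\cap R_2=P_1\overline{\otimes}P_2$, and combined with the obvious reverse inclusion this gives $Q=P_1\overline{\otimes}P_2$, which is thus maximal rigid for $\alpha^1_t\otimes\alpha^2_t$. The only genuinely new ingredient is the transversality computation of the middle paragraph; everything else is bookkeeping with Tomita's theorem, the slice-map identity $R_1\cap R_2=P_1\overline{\otimes}P_2$, and the already-established trivial-tensor case \Cref{C:tensor permanence 1}.
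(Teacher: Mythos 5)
Your proof is correct, and it shares the paper's skeleton: pass to the rigid envelope $Q$ of $P_{1}\overline{\otimes}P_{2}$, show that rigidity of $Q$ for $\alpha^{1}_{t}\otimes\alpha^{2}_{t}$ forces rigidity for the coordinate deformation $\alpha^{1}_{t}\otimes\id$, then combine \Cref{C:tensor permanence 1} with maximality to get $Q\subseteq P_{1}\overline{\otimes}p_{2}M_{2}p_{2}$, and finish by symmetry and the slice-map identity. The genuine difference is in how the transversality defect $\|((\id-\E^{(1)})\otimes\id)(\alpha^{1}_{t}\otimes\id)(x)\|_{2}$ is controlled, and your version is sharper. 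The paper argues indirectly: from $\|(\E_{M_{1}}\otimes\E_{M_{2}})(\alpha^{1}_{t}\otimes\alpha^{2}_{t})(x)\|_{2}\leq\|(\E_{M_{1}}\otimes\id)(\alpha^{1}_{t}\otimes\id)(x)\|_{2}$ it deduces that the partial expectation nearly preserves $\|x\|_{2}$, then converts near-preservation of norm into smallness of the defect via the Pythagorean identity; this costs a square root and yields the corner-dependent bound $\max\left(2\varepsilon,\sqrt{2\varepsilon}\,(\tau(p_{1})\tau(p_{2}))^{1/4}\right)$. You instead split $\id-\E^{(1)}\otimes\E^{(2)}$ into the two summands $(\id-\E^{(1)})\otimes\id$ and $\E^{(1)}\otimes(\id-\E^{(2)})$, whose ranges are orthogonal, and use that $\id\otimes\alpha^{2}_{t}$ is an $L^{2}$-isometry commuting with the first summand; this bounds the defect by $\|(\alpha^{1}_{t}\otimes\alpha^{2}_{t})(x)-x\|_{2}$ itself and gives the linear, corner-independent estimate $\|(\alpha^{1}_{2t}\otimes\id-\id)\big|_{Q}\|_{\infty,2}\leq 2\|(\alpha^{1}_{t}\otimes\alpha^{2}_{t}-\id)\big|_{Q}\|_{\infty,2}$, which is both cleaner and quantitatively stronger. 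Your closing step (apply \Cref{T:preservation on big interesections} to $Q$ and $R_{1}$, then invoke maximality of $R_{1}$) is interchangeable with the paper's appeal to the absorption property (\ref{I:diffuse absorption}) of \Cref{C:existence of maximal subalgebras}. One shared implicit point: like the paper, you assume the rigid envelope of $P_{1}\overline{\otimes}P_{2}$ exists, which requires $P_{1}\overline{\otimes}P_{2}$ to be rigid for the product deformation; this follows from uniform convergence on the unitary group $\mathcal{U}(P_{1})\otimes\mathcal{U}(P_{2})$ together with the standard convex-hull argument, but is not a formal consequence of the hypotheses alone.
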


\begin{proof}
Let $Q\leq p_{1}M_{1}p_{1}\overline{\otimes}p_{2}M_{2}p_{2}$ be the rigid envelope of $P_{1}\overline{\otimes}P_{2}$ with respect to $\alpha_{t}^{1}\otimes\alpha_{t}^{2}.$
By symmetry, it is enough to show the following:

\emph{Claim: $Q\subseteq P_{1}\overline{\otimes}p_{2}M_{2}p_{2}$.}

To prove the claim, note that for all $x\in Q$ we have:
\[\|(\E_{M_{1}}\otimes \E_{M_{2}}) (\alpha_{t}^{1}\otimes \alpha_{t}^{2})(x)\|_{2}\leq \|(\E_{M_{1}}\otimes \id)(\alpha_{t}^{1}\otimes \id)(x)\|_{2}.\]
Let $\varepsilon>0.$ We may find a $t_{0}>0$ so that for all $|t|<t_{0},$ we have
\[\|(\alpha_{t}^{1}\otimes\alpha_{t}^{2}-\id)\big|_{Q}\|_{\infty,2}<\varepsilon.\]
Note that this implies
\[\|(\E_{M_{1}}\otimes \E_{M_{2}}) (\alpha_{t}^{1}\otimes \alpha_{t}^{2})(x)\|_{2}\geq \|x\|_{2}-\varepsilon,\]
for all $x\in (Q)_{1},$ and $|t|<t_{0}.$ Hence we have
\[ \|(\E_{M_{1}}\otimes \id) (\alpha_{t}^{1}\otimes \id)(x)\|_{2}\geq\|x\|_{2}-\varepsilon\]
for all $|t|<t_{0}.$ Thus for all $|t|<t_{0},x\in Q$ with $\|x\|_{\infty}\leq 1$ and $\|x\|_{2}\geq \varepsilon$
\begin{align*}
\|(\alpha_{t}^{1}\otimes \id)(x)-(\E_{M_{1}}\otimes \id) (\alpha_{t}^{1}\otimes \id)(x)\|_{2}^{2}&=\|(\alpha_{t}^{1}\otimes \id)(x)\|_{2}^{2}-\|(\E_{M_{1}}\otimes \id) (\alpha_{t}^{1}\otimes \id)(x)\|_{2}^{2}\\
&\leq 2\varepsilon\|x\|_{2}-\varepsilon^{2}\\
&\leq 2\varepsilon\sqrt{\tau_{1}(p_{1})\tau_{2}(p_{2})}.
\end{align*}
Whereas if $x\in Q,\|x\|_{\infty}\leq 1$ and $\|x\|_{2}<\varepsilon$ we have that
\[\|(\alpha_{t}^{1}\otimes \id)(x)-(\E_{M_{1}}\otimes \id)(\alpha_{t}^{1}\otimes \id)(x)\|_{2}<2\varepsilon.\]
Hence we see that
\[\|(\alpha_{t}^{1}\otimes \id-\E_{M_{1}}\otimes \id)\big|_{Q}\|_{\infty,2}\leq \max(2\varepsilon,\sqrt{2\varepsilon}(\tau(p_{1})\tau(p_{2}))^{1/4}).\]
We now apply the transversality estimate for $\alpha_{t}^{1}\otimes \id:\widetilde{M}_{1}\overline{\otimes}M_{2}\to \widetilde{M}_{1}\overline{\otimes}M_{2}$ to see that $\alpha_{t}^{1}\otimes\id_{M_{2}}$ converges uniformly on $(Q)_{1}.$

Observe that
\[Q'\cap (p_{1}\widetilde{M}_{1}p_{1}\overline{\otimes}p_{2}\widetilde{M}_{2}p_{2})\subseteq (P_{1}\overline{\otimes}P_{2})'\cap (p_{1}\widetilde{M}_{1}p_{1}\overline{\otimes}p_{2}\widetilde{M}_{2}p_{2})\subseteq M_{1}\overline{\otimes}M_{2}.\]
Moreover, since $Q$ and $P_{1}\overline{\otimes}M_{2}$ both contain $P_{1}\overline{\otimes}P_{2},$
it follows by Corollary \ref{C:tensor permanence 1} and Corollary \ref{C:existence of maximal subalgebras} (\ref{I:diffuse absorption}) that $Q\subseteq P_{1}\overline{\otimes}M_{2}.$ \qedhere

\end{proof}

\subsection{Compressions of maximal rigid subalgebras}\label{S:compressions/amplifications}

In this subsection we explain how the property of being maximal rigid behaves under amplifications/compressions. Very roughly, what we will see is that if $N\leq M$ are tracial von Neumann algebras, and $(\widetilde{M},\alpha_{t})$ is an s-malleable deformation of $M,$ then $N^{s}\leq M^{s}$ is maximal rigid ``with respect to $\alpha$" if and only if $N\leq M$ is maximal rigid with respect to $\alpha.$ Of course, this result is not true as stated because $\alpha$ is not defined on $M^{s},$ and if $s$ is not integer, then there is no clear definition for $\alpha^{s}$ as a deformation of $M^{s}.$ Because of this, we will work in the situation that $M$ is semifinite, and $N$ is subalgebra of a finite trace corner. In the above discussion, this amounts to replacing $M$ with $M\overline{\otimes}B(\ell^{2}(\N)),$ and $\widetilde{M}$ with $\widetilde{M}\overline{\otimes}B(\ell^{2}(\N)).$ The advantage of this situation is that it makes it clear what the analogue of $N^{s}\leq M^{s}$ is. We are simply taking corners of $N,$ and either moving between ones of larger or smaller trace.
We start with the following proposition.

\begin{prop}\label{P:amplifying up from corners}
Let $(M,\tau)$ be a semifinite tracial von Neumann algebra, and $\alpha_{t}\colon \widetilde{M}\to \widetilde{M}$ an s-malleable deformation of $M.$ Let $e\in \mathcal{P}(M)$ Let $P\subseteq eMe$ be a von Neumann subalgebra of $eMe,$ and $p$ a projection in $P$ with $\tau(p)<\infty.$ Let $f$ be the central support of $p$ in $P.$ If $pPp$ is $\alpha$-rigid, then $qPq$ is $\alpha$-rigid for any projection $q\in P$ with $q\leq f$ and $\tau(q)<\infty.$
\end{prop}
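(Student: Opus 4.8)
Write $f$ for the central support of $p$ in $P$, and let
$$S=\{\,r\in\mathcal P(P):\ r\le f,\ \tau(r)<\infty,\ rPr\text{ is }\alpha\text{-rigid}\,\}.$$
By hypothesis $p\in S$, and the goal is to show $S$ contains every finite‑trace $q\le f$. Two closure properties of $S$ are elementary. First, $S$ is closed under subprojections: if $r\in S$ and $r_0\le r$ in $P$ then $(r_0Pr_0)_1\subseteq (rPr)_1$, so $\|(\alpha_t-\id)\big|_{r_0Pr_0}\|_{\infty,2}\le\|(\alpha_t-\id)\big|_{rPr}\|_{\infty,2}$. Second, $S$ is closed under transfer by a single partial isometry of $P$: if $v\in P$ satisfies $vv^*=r_0\le r\in S$ and $v^*v=r_1$, then $x\mapsto v^*xv$ is a $*$‑isomorphism carrying $(r_0Pr_0)_1$ onto $(r_1Pr_1)_1$, and writing $\alpha_t(v^*xv)-v^*xv$ as a telescoping sum one gets
$$\|(\alpha_t-\id)\big|_{r_1Pr_1}\|_{\infty,2}\le\|(\alpha_t-\id)\big|_{r_0Pr_0}\|_{\infty,2}+2\|\alpha_t(v)-v\|_2 .$$
Since $\tau(v^*v)=\tau(r_1)<\infty$ we have $v\in M\cap L^2(M)$, so $\|\alpha_t(v)-v\|_2\to0$ by the continuity established earlier; hence $r_1\in S$. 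In particular every $q\precsim p$ in $P$ lies in $S$.

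For a general $q\le f$ the first step is comparison theory. As $q\le f$, its central support in $P$ is $\le f$ (the central support of $p$), so $q$ decomposes as an orthogonal sum $q=\sum_n q_n$ of projections in $P$, each $q_n$ equivalent in $P$ to a subprojection $p_n\le p$ via a partial isometry $w_n\in P$, with $\sum_n\tau(q_n)=\tau(q)<\infty$. The single–partial–isometry transfer disposes of each diagonal piece $q_nPq_n$, but these do not generate $qPq$: the off–diagonal corners $q_nPq_m$ must be controlled as well, and the genuine difficulty is obtaining bounds that are \emph{uniform} across the (in general infinite) family $\{w_n\}$.

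To organize the off‑diagonal terms with clean orthogonality I would pass to the amplification $\widehat M:=M\,\overline\otimes\,B(\ell^2(\mathbb N))$, equipped with the s‑malleable deformation $\alpha\otimes\id$, and form $\hat q=\sum_n q_n\otimes e_{nn}$, $\hat p=\sum_n p_n\otimes e_{nn}$ and $V=\sum_n w_n\otimes e_{nn}\in P\,\overline\otimes\,B(\ell^2)$, a single partial isometry with $V^*V=\hat q$, $VV^*=\hat p$ and $\tau(\hat p)=\tau(\hat q)=\tau(q)<\infty$. Composing $V$ with the shift $\sum_n q_n\otimes e_{1n}$ produces a single partial isometry $U$ with $U^*U=\hat p$ and $UU^*=q\otimes e_{11}$, and the corner cut by $q\otimes e_{11}$ is canonically $qPq$ on which $\alpha\otimes\id$ restricts to $\alpha$. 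Thus the single–partial–isometry transfer reduces everything to showing that the finite‑trace corner $\hat p\,(P\overline\otimes B(\ell^2))\,\hat p$ is $(\alpha\otimes\id)$‑rigid. Using orthogonality of the matrix units, for $y=\sum_{n,m}y_{nm}\otimes e_{nm}$ in this corner one has $\|(\alpha_t\otimes\id)(y)-y\|_2^2=\sum_{n,m}\|\alpha_t(y_{nm})-y_{nm}\|_2^2$, where $y_{nm}\in p_nPp_m\subseteq pPp$ and $\sum_{n,m}\|y_{nm}\|_2^2=\|y\|_2^2\le\tau(q)$.

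The main obstacle is precisely this last estimate. The per‑entry bound supplied by $\|\cdot\|_\infty$–$\|\cdot\|_2$ rigidity of $pPp$, namely $\|\alpha_t(y_{nm})-y_{nm}\|_2\le\|(\alpha_t-\id)\big|_{pPp}\|_{\infty,2}\,\|y_{nm}\|_\infty$, is worthless because the number of blocks is infinite (indeed an adversarial choice of high‑frequency blocks would make the sum stay bounded away from $0$). What must be used instead is the \emph{$\|\cdot\|_2$‑uniform} strengthening of rigidity: there are $C_t\to0$ with $\|\alpha_t(a)-a\|_2\le C_t\|a\|_2$ for all $a\in pPp$, equivalently the generator of $\alpha$ is bounded on $L^2(pPp)$ (the spectral‑gap form of rigidity). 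Granting this, $\sum_{n,m}\|\alpha_t(y_{nm})-y_{nm}\|_2^2\le C_t^2\sum_{n,m}\|y_{nm}\|_2^2\le C_t^2\,\tau(q)\to0$, so the corner is rigid and the transfer finishes the proof. The crux, and the step I expect to require the most care, is therefore to establish (or invoke) that for an s‑malleable deformation rigidity of $pPp$ upgrades from the $\|\cdot\|_\infty$‑ball to the $\|\cdot\|_2$‑ball — this is exactly where the one‑parameter‑group structure and Popa's transversality inequality must enter, and it is what makes rigidity stable under amplification.
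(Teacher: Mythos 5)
The first two-thirds of your proposal are sound: the closure of your set $S$ under subprojections, the transfer of rigidity across a single partial isometry of $P$ (your telescoping estimate, using $\|\alpha_t(v)-v\|_2\to 0$ for finite-trace $v$), and the reduction via the amplification $M\,\overline\otimes\,B(\ell^2(\N))$ are all correct. The fatal problem is the step you yourself flag as the crux: the claim that $\alpha$-rigidity of $pPp$ (uniform convergence on the \emph{operator-norm} unit ball) upgrades to the ``spectral-gap'' form $\|\alpha_t(a)-a\|_2\le C_t\|a\|_2$ for all $a\in pPp$ with $C_t\to 0$. This implication is false, so it can be neither established nor invoked. Counterexample: for each $n$ let $G_n=\Z/2\Z$, let $\pi_n(1)=-1$ acting on $\R$, let $c_n(1)=2^{n/4}$ (a cocycle, since $c_n(1)\in\ker(1+\pi_n(1))$), and let $\alpha^{(n)}$ be the Gaussian s-malleable deformation of $L(G_n)$ from Definition \ref{D:s-mall def cocycle}. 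Set $M=\bigoplus_n L(G_n)$ with the $n$-th summand given weight $2^{-n}$, and $\alpha=\bigoplus_n\alpha^{(n)}$, again an s-malleable deformation. Writing $x=(x_n)_n$ with $x_n=a_n+b_nu_1$ and $\|x\|_\infty\le 1$ (so $|b_n|\le 1$), one computes
\[
\|\alpha_t(x)-x\|_2^2=\sum_n 2^{-n}\,|b_n|^2\cdot 2\bigl(1-e^{-t^2 2^{n/2}}\bigr)\le 2\sum_n 2^{-n}\bigl(1-e^{-t^2 2^{n/2}}\bigr),
\]
which tends to $0$ as $t\to 0$ by dominated convergence; so $M$ itself is $\alpha$-rigid. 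But for the $\|\cdot\|_2$-normalized unitary $a=2^{n/2}(0,\dots,0,u_1,0,\dots)$ one has $\|\alpha_t(a)-a\|_2^2=2\bigl(1-e^{-t^22^{n/2}}\bigr)$, which at $t_n=2^{-n/4}\to 0$ equals $2(1-e^{-1})$. Hence no $C_t\to0$ as you require can exist (and replacing $\Z/2\Z$ by $\Z$ with $\pi_n(k)=(-1)^k$ gives a diffuse counterexample, so no extra standing hypothesis of the proposition rescues the claim).

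Moreover, your reason for abandoning the truncation approach is a misdiagnosis, and the paper's proof is exactly the route you dismissed as ``worthless.'' Choose partial isometries $\{v_i\}_{i\in I}$ in $P$ with $v_i^*v_i\le p$ and $\sum_i v_iv_i^*=q$; for finite $F\subseteq I$ the corner $p_FPp_F$, where $p_F=\sum_{i\in F}v_iv_i^*$, is $\alpha$-rigid by your own single-partial-isometry transfer applied finitely many times. Then for $x\in qPq$ with $\|x\|_\infty\le 1$ and $\|q-p_F\|_2<\varepsilon$,
\[
\|\alpha_t(x)-x\|_2\le \|\alpha_t(qxq)-\alpha_t(p_Fxp_F)\|_2+\|qxq-p_Fxp_F\|_2+\|\alpha_t(p_Fxp_F)-p_Fxp_F\|_2\le 4\varepsilon+\|(\alpha_t-\id)\big|_{p_FPp_F}\|_{\infty,2},
\]
because $\|qxq-p_Fxp_F\|_2\le 2\|q-p_F\|_2\|x\|_\infty$. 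The point is that the truncation error is controlled \emph{uniformly by the operator norm of $x$}: an adversary placing mass in high blocks is simply cut off by $p_F$ at a cost of at most $2\varepsilon\|x\|_\infty$, regardless of which blocks carry the mass. Such an adversary does defeat $L^2$-uniform convergence (as the counterexample above shows), but rigidity is a statement about the $\|\cdot\|_\infty$-ball, and there the argument closes by letting $t\to 0$ and then $\varepsilon\to 0$, with no spectral-gap input whatsoever. In short: your reduction is fine, but the lemma you defer it to is false, while the elementary estimate you rejected is the one that actually finishes the proof.
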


\begin{proof}

We may choose a family $\{v_{i}\}_{i\in I}$ of partial isometries in $P$ so that $v_{i}^{*}v_{i}\leq p,$ and $\sum_{i}v_{i}v_{i}^{*}=q.$ For $F\subseteq I$ finite, let $p_{F}=\sum_{i\in F}v_{i}v_{i}^{*}.$ It is straightforward to show that  $p_{F}Pp_{F}$ is $\alpha$-rigid for all $F\subseteq I$ finite.

Fix $\varepsilon>0,$ and choose a finite $F\subseteq I$ so that $\|p_{F}-q\|_{2}<\varepsilon.$ For $x\in qPq,$ we then have:
\begin{align*}
\|\alpha_{t}(x)-x\|_{2}&\leq \|\alpha_{t}(qxq)-\alpha_{t}(p_{F}xp_{F})\|_{2}+\|qxq-p_{F}xp_{F}\|_{2}+\|\alpha_{t}(p_{F}xp_{F})-p_{F}xp_{F}\|_{2}\\
&\leq 4\varepsilon\|x\|_{\infty}+\|\alpha_{t}(p_{F}xp_{F})-p_{F}xp_{F}\|_{2}.
\end{align*}
Hence,
\[\|(\alpha_{t}-\id)\big|_{qPq}\|_{\infty,2}\leq 4\varepsilon+\|(\alpha_{t}-\id)\big|_{p_{F}xp_{F}}\|_{\infty,2}.\]
Letting $t\to 0$ we see that
\[\limsup_{t\to 0}\|(\alpha_{t}-\id)\big|_{qPq}\|_{\infty,2}\leq 4\varepsilon,\]
and letting $\varepsilon\to 0$ completes the proof. \qedhere

\end{proof}

From Proposition \ref{P:amplifying up from corners}, it is simple to establish the permanence results we want for (generalized) amplifications/compressions of maximal rigid subalgebras.

\begin{cor}\label{C:corners of pineapples}
Let $(M,\tau)$ be a semifinite tracial von Neumann algebra, and $\alpha_{t}\colon \widetilde{M}\to \widetilde{M}$ an s-malleable deformation of $M.$
\begin{enumerate}[(a)]
\item Suppose that $p\in \mathcal{P}(M)$ with $\tau(p)<\infty,$ and let $P\leq pMp$ be  maximal rigid  with respect to $\alpha_{t}.$ Then for any projection $q$ in $P$ we have that $qPq$ (regarded as subalgebra of $pMp$) is a maximal rigid subalgebra with respect to $\alpha_{t}.$ \label{I:compressing down}
\item Suppose that $e,p\in \mathcal{P}(M)$ with $\tau(p)<\infty,$ and $p\leq e.$ Suppose that $P\leq eMe$ and that $pPp$ regarded as a subalgebra of $pMp$ is a maximal rigid subalgebra with respect to $\alpha_{t}.$ Let $f$ be the central support of $p$ in $P.$ Then for every $q\leq f$ with $\tau(q)<\infty,$ we have that $qPq$ is a maximal rigid subalgebra in $qMq$ with respect to $\alpha_{t}.$ \label{I:amplifying up}
\end{enumerate}
\end{cor}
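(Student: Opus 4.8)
The plan is to deduce Corollary \ref{C:corners of pineapples} from Proposition \ref{P:amplifying up from corners} together with the existence of rigid envelopes (Corollary \ref{C:existence of maximal subalgebras}), handling the two parts separately but using the same circle of ideas. For part (\ref{I:compressing down}), suppose $P\leq pMp$ is maximal rigid and $q\in \mathcal{P}(P)$. Proposition \ref{P:amplifying up from corners} (applied with $e=p$ and the roles of the two projections reversed, using that $p$ has central support $1_P$ in $P$) immediately tells us $qPq$ is $\alpha$-rigid. The content is maximality: I would take any $\alpha$-rigid $N$ with $qPq\leq N\leq pMp$ and argue that $N=qPq$. The natural move is to ``amplify $N$ back up'' and compare against $P$. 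Concretely, since $qPq\leq N$, the central support considerations let one reconstruct a rigid subalgebra of $pMp$ containing $P$ from $N$; by maximality of $P$ this forces that reconstructed algebra to be $P$, whence $N\subseteq qPq$ and equality holds.

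For part (\ref{I:amplifying up}), the hypothesis is that $pPp\leq pMp$ is maximal rigid (for $p\leq e$), $f$ is the central support of $p$ in $P$, and $q\leq f$ with $\tau(q)<\infty$. Proposition \ref{P:amplifying up from corners} gives directly that $qPq$ is $\alpha$-rigid, so again the real work is maximality of $qPq$ inside $qMq$. Here I would exploit that $p$ and $q$ have a common central support $f$ in $P$, so $pPp$ and $qPq$ are ``generalized amplifications'' of one another: there is a family of partial isometries in $P$ under which corners over $p$ and corners over $q$ are identified. The strategy is to show that any $\alpha$-rigid intermediate algebra $qPq\leq N\leq qMq$ would, after transporting back to the $p$-corner via these partial isometries and invoking Proposition \ref{P:amplifying up from corners} to preserve rigidity, produce an $\alpha$-rigid algebra strictly containing $pPp$, contradicting maximality.

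I expect the main obstacle to be the maximality direction in both parts, specifically making rigorous the ``transport of rigidity across corners via partial isometries'' and confirming that an intermediate algebra over one corner genuinely yields an intermediate algebra over the other with the inclusions in the right place. The subtlety is that conjugating by a partial isometry $v$ with $v^*v\leq p$, $vv^*\leq q$ only relates compressions of the two algebras, so one must assemble finitely (or countably) many such partial isometries and track that $W^{*}$ of the transported pieces sits correctly between $pPp$ and $pMp$. Proposition \ref{P:amplifying up from corners} is exactly the tool that guarantees rigidity is not lost in this assembly, since it already packages the approximation argument (the $4\varepsilon$ estimate) needed to pass rigidity between corners of differing trace. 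Once that transport is set up cleanly, both maximality statements reduce to the maximality of $P$ (resp.\ $pPp$) that is assumed, so I would organize the proof around that single reduction rather than repeating the convex-hull/transversality machinery.

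\begin{proof}
For (\ref{I:compressing down}), by Proposition \ref{P:amplifying up from corners} applied with the finite-trace projection $p$ and the projection $q\leq 1_P$ (whose central support in $P$ is dominated by $1_P$), we see that $qPq$ is $\alpha$-rigid. Suppose $N\leq pMp$ is $\alpha$-rigid with $qPq\leq N$. Choosing partial isometries $\{v_i\}$ in $P$ with $v_i^*v_i\leq q$ and $\sum_i v_iv_i^* = 1_P$ (possible since $q$ has central support $1_P$ in $P$), Proposition \ref{P:amplifying up from corners} applied to $N$ and these corners shows the algebra $W^*(\{v_i N v_j^*\}_{i,j})$ is $\alpha$-rigid; it contains $P$ and lies in $pMp$, so by maximality it equals $P$, forcing $N=qPq$. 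For (\ref{I:amplifying up}), Proposition \ref{P:amplifying up from corners} gives that $qPq$ is $\alpha$-rigid for every $q\leq f$ with $\tau(q)<\infty$. If $N\leq qMq$ is $\alpha$-rigid with $qPq\leq N$, then transporting $N$ to the $p$-corner by a family of partial isometries in $P$ implementing the identification of $p$- and $q$-corners and again invoking Proposition \ref{P:amplifying up from corners} produces an $\alpha$-rigid subalgebra of $pMp$ containing $pPp$; maximality of $pPp$ forces this to be $pPp$, whence $N=qPq$.
\end{proof}
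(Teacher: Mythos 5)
Your overall strategy --- transferring rigidity between corners via Proposition \ref{P:amplifying up from corners} and reducing maximality of the compressed algebra to the assumed maximality of $P$ (resp.\ $pPp$) --- is the same as the paper's, and the rigidity halves of both parts are handled correctly. The maximality halves, however, rest on a false premise about central supports. In part (\ref{I:compressing down}) you choose partial isometries $v_i\in P$ with $v_i^*v_i\leq q$ and $\sum_i v_iv_i^*=1_P$, asserting this is possible ``since $q$ has central support $1_P$ in $P$.'' That is not a hypothesis and is false in general: if $P=P_1\oplus P_2$ and $q\in P_1$, then the central support $e$ of $q$ in $P$ is at most $1_{P_1}<1_P$; and since every partial isometry $v\in P$ with $v^*v\leq q$ satisfies $vv^*\leq e$, no family of them can have ranges summing to more than $e$. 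Consequently your transported algebra $W^*(\{v_iNv_j^*\}_{i,j})$ contains only $ePe$, not $P$, and the appeal to maximality of $P$ collapses. The paper's proof handles exactly this point: it takes $\sum_i v_iv_i^*=e$, forms $\widehat{Q}=\{x\in eMe: v_i^*xv_j\in Q\ \text{for all } i,j\}$ (which is visibly a von Neumann algebra with $q\widehat{Q}q=Q$), and then adjoins the missing corner, showing that $P(p-e)+\widehat{Q}$ is rigid and contains $P$; maximality is applied to that algebra, and compressing by $q$ finishes. (A smaller point: by the paper's conventions maximality refers to unital inclusions, so the intermediate algebra should be taken as $N\leq qMq$; read literally with $N\leq pMp$ unital, the claim you set out to prove is false --- e.g.\ $qPq\oplus\C(p-q)$ is rigid and contains $qPq$ non-unitally.)

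The same defect is fatal to part (\ref{I:amplifying up}) as you sketch it. The hypotheses give $q\leq f$, the central support of $p$ in $P$, but they do not give that $p$ lies under the central support of $q$ in $P$: with $P=P_1\oplus P_2$, $p=p_1+p_2$ ($p_i\in P_i$ nonzero with full central supports) and $q\in P_1$, one has $q\leq f=1_P$ while the central support of $q$ is orthogonal to $p_2$. Hence there need not exist any family of partial isometries in $P$ with sources under $q$ whose ranges sum to $p$ --- no ``identification of the $p$- and $q$-corners'' exists --- and whatever algebra you do transport cannot contain all of $pPp$, so maximality of $pPp$ cannot be invoked. The paper avoids direct transport altogether: it first proves $(p\vee q)P(p\vee q)$ is maximal rigid in $(p\vee q)M(p\vee q)$ (an intermediate rigid $Q$ is compressed to $pQp$, maximality of $pPp$ gives $pQp=pPp$, and since $p$ has central support $p\vee q$ in $(p\vee q)P(p\vee q)$ this forces $Q=(p\vee q)P(p\vee q)$), and then obtains maximality of $qPq$ in $qMq$ by applying part (\ref{I:compressing down}) to the projection $q\leq p\vee q$ --- precisely because part (\ref{I:compressing down}), once proved in full generality, requires no assumption on the central support of $q$. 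Your argument can be repaired by following this two-step route (or by the $P(p-e)$-correction in a direct transport), but as written the key transport step fails.
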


\begin{proof}

(\ref{I:compressing down}): Suppose that $qPq\leq Q\leq qMq$ and that $\alpha_{t}$ converges uniformly on the unit ball of $Q.$ Let $e$ be the central support of $q$ in $P,$ and let $\{v_{i}\}_{i\in I}$ be partial isometries in $P$ so that $v_{i}^{*}v_{i}\leq q,$ and $\sum_{i}v_{i}v_{i}^{*}=e.$ We may, and will, assume moreover that there is an $i_{0}\in I$ so that $v_{i_{0}}=q.$ Let $\widehat{Q}=\{x\in eMe:v_{i}^{*}xv_{j}\in Q\mbox{ for all $i,j\in I$}\}.$ It is easy to establish that $\widehat{Q}$ is a subalgebra of $eMe,$ and that $q\widehat{Q}q=Q.$ By Proposition \ref{P:amplifying up from corners}, we know that $\widehat{Q}$ is $\alpha$-rigid. Because $e$ is central in $P,$ we know that $P(p-e)+\widehat{Q}$ is a von Neumann subalgebra of $pMp,$ and it is easy to see that  $P(p-e)+\widehat{Q}$ is $\alpha$-rigid. Since $P$ is maximal rigid with respect to $\alpha_{t},$ and $P\subseteq P(p-e)+\widehat{Q},$ it follows that $P=P(p-e)+\widehat{Q}.$ Compressing by $q,$ we have that $qPq=q\widehat{Q}q=Q.$

(\ref{I:amplifying up}): By (\ref{I:compressing down}), it is enough to show that $(p\vee q)P(p\vee q)$ is  maximal rigid in $(p\vee q)M(p\vee q)$ with respect to $\alpha_{t}.$ By Proposition \ref{P:amplifying up from corners}, we know that  $(p\vee q)P(p\vee q)$ is $\alpha$-rigid. Suppose $Q\leq (p\vee q)M(p\vee q)$ is $\alpha$-rigid and contains $(p\vee q)P(p\vee q).$ It is easy to see that $pQp$ is $\alpha$-rigid in $pMp.$ Since $pPp\leq pQp,$ this implies that $pPp=pQp.$ Since $p$ has central support equal to $p\vee q$ in $(p\vee q)P(p\vee q),$ it is easy to see that the equality $pPp=pQp$ and the inclusions $(p\vee q)P(p\vee q)\leq Q\leq (p\vee q)M(p\vee q)$ imply that $(p\vee q)P(p\vee q)=Q.$ \qedhere

\end{proof}

In many situations, we will start with a tracial von Neumann algebra $(M_{0},\tau_{0})$ and an s-malleable deformation $\alpha_{0,t}\colon \widetilde{M_{0}}\to \widetilde{M_{0}}.$ We may then set $M=M_{0}\overline{\otimes}B(\mathcal{H}),$ $\widetilde{M}=\widetilde{M_{0}}\overline{\otimes}B(\mathcal{H}),$ $\alpha_{t}=\alpha_{0,t}\otimes \id,$ $\tau=\tau_{0}\otimes \Tr$ for some infinite dimensional Hilbert space $\mathcal{H}.$ We then get a particularly strong version of maximal rigid subalgebras being  preserved under amplifications: if $P\leq M_{0}$ is maximal rigid with respect to $\alpha_{0,t}$ and if $p\in \mathcal{P}(M)$ with $\tau(p)<\infty,$ then $p(P\overline{\otimes}B(\mathcal{H}))p$ remains maximal rigid in $pMp$ with respect to $\alpha_{t}.$

One possible way to view Proposition \ref{P:amplifying up from corners} is the following. Suppose $p\in \mathcal{P}(M),$ and that $P\leq pMp$ is maximal rigid  with respect to some s-malleable deformation. We can try to find a maximal rigid $\widehat{P}\leq M$ with $p\widehat{P}p=P.$ The difficulty here is that we do not get a unique such $\widehat{P}.$ However, we can classify all possible choices of $\widehat{P}$ assuming that $P$ and $M$ are $\textrm{II}_{1}$-factors.

\begin{prop}\label{P:lifting pineapples from corners}
Let $M$ be a $\textrm{II}_{1}$-factor and $\alpha_{t}\colon \widetilde{M}\to \widetilde{M}$ an s-malleable deformation of $M.$ Let $p\in \mathcal{P}(M)$ be nonzero and $P\leq pMp$ be maximal rigid and diffuse.
\begin{enumerate}[(i)]
\item There is a maximal rigid, diffuse $\widehat{P}\leq M$ with $p\widehat{P}p=P,$  and so that the central support of $p$ in $\widehat{P}$ is $1.$

\label{I:existence of extensions}
\item If $P$ is a factor, if $\widehat{P}_{j}\leq M,j=1,2$ satisfy $p\widehat{P}_{j}p=P,j=1,2,$ and if the central support of $p$ in $\widehat{P}_{j}$ is $1$ for $j=1,2,$ then there is a $u\in \mathcal{U}(M)\cap \{p\}'$ so that $u\widehat{P}_{1}u^{*}=\widehat{P}_{2}$ and $uPu^{*}=P.$ \label{I:unitary conjugation of extensions}
\end{enumerate}

\end{prop}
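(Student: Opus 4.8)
The plan is to handle both parts through an explicit amplification model for the extensions $\widehat P$, after which \Cref{C:corners of pineapples}\,(\ref{I:amplifying up}) does the work in (i). For (i), set $s=\tau(p)$, write $n=\lfloor 1/s\rfloor$ and $r=1-ns\in[0,s)$. Since $M$ is a II$_1$ factor and $P$ is diffuse, I would fix pairwise orthogonal projections $e_1=p,e_2,\dots,e_{n+1}$ of $M$ summing to $1$ with $\tau(e_i)=s$ for $i\le n$ and $\tau(e_{n+1})=r$, a projection $p_{n+1}\in P$ of trace $r$ (and $p_i:=p$ for $i\le n$), and partial isometries $u_i\in M$ with $u_i^*u_i=e_i$, $u_iu_i^*=p_i$, $u_1=p$. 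Define $\widehat P=\{\sum_{i,j} u_i^* a_{ij}u_j : a_{ij}\in p_iPp_j\}$. The relation $u_ju_k^*=\delta_{jk}p_k$ shows this is a $*$-algebra (with $a_{ij}b_{jl}\in p_iPp_l$), and it is weakly closed, being the isomorphic image of the reduced amplification of $P$ cut by $\operatorname{diag}(p_1,\dots,p_{n+1})\in P\otimes M_{n+1}(\C)$; in particular $\widehat P$ is a von Neumann algebra, and it is diffuse because $P$ is.

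Two short computations finish (i). Using $u_1=p=e_1$ one gets $e_1u_i^*=\delta_{i1}p=u_je_1$, whence $p\widehat Pp=\{a_{11}:a_{11}\in pPp\}=P$. For the central support of $p$ in $\widehat P$, note that each $u_i^*$ itself lies in $\widehat P$ (take $a=p$ for $i\le n$, $a=p_{n+1}$ for $i=n+1$, which lie in the relevant corners of $P$), and $u_i^*$ is a partial isometry from $e_1$ (or from $p_{n+1}\le e_1$ when $i=n+1$) onto $e_i$; hence $e_i$ lies under the central support of $e_1$ for every $i$, so that central support is $\sum_i e_i=1$. Now applying \Cref{C:corners of pineapples}\,(\ref{I:amplifying up}) with ambient cut projection $e=1$, distinguished projection $p$, algebra $\widehat P\le M$ whose compression $p\widehat Pp=P$ is maximal rigid, central support $f=1$, and $q=1$, gives that $\widehat P$ is maximal rigid in $M$, as required.

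For (ii), assume $P$ is a factor; then each $\widehat P_j$ is a II$_1$ factor, since $z\mapsto zp$ embeds $Z(\widehat P_j)$ into $Z(p\widehat P_jp)=Z(P)=\C p$ injectively (central support of $p$ is $1$). Fix once and for all projections $p_1=p,p_2,\dots,p_{n+1}\in P$ as above (with $\tau(p_i)=s$ for $i\le n$, forcing $p_i=p$, and $\tau(p_{n+1})=r$). Using factoriality of each $\widehat P_j$, choose pairwise orthogonal $f_k\in\widehat P_1$ and $g_k\in\widehat P_2$ summing to $1$ with $f_1=g_1=p$ and $\tau(f_k)=\tau(g_k)=\tau(p_k)$, and partial isometries $x_k\in\widehat P_1$, $y_k\in\widehat P_2$ with $x_k^*x_k=f_k$, $y_k^*y_k=g_k$ and the \emph{same} range projections $x_kx_k^*=y_ky_k^*=p_k\in P$, normalized so $x_1=y_1=p$. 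As in (i) one checks $\widehat P_1=\{\sum_{k,l}x_k^* a_{kl}x_l : a_{kl}\in p_kPp_l\}$ and likewise for $\widehat P_2$ with the $y_k$. Set $u=\sum_k y_k^* x_k$. Then $u^*u=\sum_k f_k=1$ and $uu^*=\sum_k g_k=1$, so $u\in\mathcal U(M)$; the normalizations $f_1=g_1=p$, $x_1=y_1=p$ give $up=p=pu$, so $u\in\{p\}'$. From $x_m x_k^*=\delta_{mk}p_k$ one computes $ux_k^*=y_k^*$ and $x_lu^*=y_l$, hence $u\big(\sum_{k,l}x_k^*a_{kl}x_l\big)u^*=\sum_{k,l}y_k^*a_{kl}y_l$, giving $u\widehat P_1u^*=\widehat P_2$; and $uPu^*=u\,p\widehat P_1p\,u^*=p\widehat P_2p=P$ since $u$ commutes with $p$.

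The conceptual core, and the step I expect to require the most care, is the matching in (ii): one must choose the two linking systems $(x_k)$ and $(y_k)$ so that their range projections coincide as elements $p_k$ of the common corner $P$, while simultaneously normalizing $f_1=g_1=p$ so that the resulting intertwiner $u$ commutes with $p$. This is exactly where factoriality of $P$ (equivalently of the $\widehat P_j$) is used, since it guarantees partial isometries in $\widehat P_j$ with any prescribed trace onto a chosen projection of $P$; without it the range data in the two extensions could sit in incompatible central summands and no such $u$ need exist. The remaining verifications are the routine matrix-unit identities $u_ju_k^*=\delta_{jk}p_k$ and $x_mx_k^*=\delta_{mk}p_k$, which I would record once and reuse throughout.
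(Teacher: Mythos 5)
Your proof is correct and follows essentially the same route as the paper's: in (i) you build $\widehat P$ as an amplification of $P$ along a system of partial isometries linking a partition of unity of $M$ to projections of $P$ (your explicit span coincides with the paper's implicit $\widehat P=\{x\in M: v_i^*xv_j\in P \text{ for all } i,j\}$) and then invoke Corollary \ref{C:corners of pineapples} (\ref{I:amplifying up}), while in (ii) you use factoriality of the $\widehat P_j$ to produce matching linking systems with common range projections in $P$ normalized at $p$, and your $u=\sum_k y_k^*x_k$ is exactly the paper's $u=\sum_i v_i^{2}(v_i^{1})^{*}$. The only difference is expository: you make explicit (finite index set, weak closedness via the amplification isomorphism, the span description of $\widehat P_j$) the verifications the paper leaves to the reader.
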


\begin{proof}

(\ref{I:existence of extensions}): Since $P$ is diffuse and $M$ is a factor, we may find a collection of nonzero partial isometries $(v_{i})_{i\in I}$ in $M$ so that
\begin{itemize}
    \item $\sum_{i}v_{i}v_{i}^{*}=1,\mbox{ and  } v_{i}^{*}v_{i}\leq p,$
    \item $v_{i}^{*}v_{i}\in pPp,$
    \item there is some $i_{0}\in I$ with $v_{i_{0}}=p.$
\end{itemize}
Let
\[\widehat{P}=\{x\in M:v_{i}^{*}xv_{j}\in P\mbox{ for all $i,j$}\}.\]
We leave it as an exercise to verify to the reader  that this is indeed a von Neumann subalgebra of $M,$ and that $p\widehat{P}p=P.$ Moreover, the central support of $p$ in $\widehat{P}$ is $1$ by construction.  By  Corollary \ref{C:corners of pineapples} (\ref{I:amplifying up}) we know that $\widehat{P}$ is maximal rigid.

(\ref{I:unitary conjugation of extensions}):
Since $P$ is a factor and the central support of $p$ in $\widehat{P}_{j},j=1,2$ is $1$, we know that each $\widehat{P}_{j}$ is a factor. So  for $j=1,2$ we can find a family of partial isometries $(v_{i}^{j})_{i\in I}$ in $\widehat{P}_{j}$ so that
\begin{itemize}
    \item $(v_{i}^{j})^{*}v_{i}^{j}\in P$ for $j=1,2$ and all $i\in I,$
    \item there is an $i_{0}\in I$ with $v_{i_{0}}^{1}=p=v_{i_{0}}^{2},$
    \item $(v_{i}^{1})^{*}v_{i}^{1}=(v_{i}^{2})^{*}v_{i}^{2},$
    \item $\sum_{i\in I}v_{i}^{j}(v_{i}^{j})^{*}=1,$ for $j=1,2.$
\end{itemize}
Set $u=\sum_{i\in I}v_{i}^{2}(v_{i}^{1})^{*},$ it is then easy to see that $u\in \mathcal{U}(M\cap \{p\}'),$  that $u\widehat{P}_{1}u^{*}=\widehat{P}_{2},$ and that $uPu^{*}=P.$ \qedhere

\end{proof}

It is possible to give a nonfactorial version of Proposition \ref{P:lifting pineapples from corners}. However in this situation we need to assume in addition that not only is $P$ diffuse, but that it is ``diffuse over $Z(M)$", (i.e. $P\nprec_M Z(M),$ see Section \ref{S:mixingness} for the precise definitions). The nonfactorial version follows from the same arguments as in Proposition \ref{P:lifting pineapples from corners}, but requires a more delicate usage of the center-valued trace. The central issue is at the beginning of part (\ref{I:existence of extensions}) we have to find a family of partial isometries $(v_{i})_{i\in I}$ so that
\begin{itemize}
    \item $\sum_{i}v_{i}v_{i}^{*}=1,\mbox{ and  } v_{i}^{*}v_{i}\leq p,$
    \item $v_{i}^{*}v_{i}\in pPp,$
    \item there is some $i_{0}\in I$ with $v_{i_{0}}=p.$
\end{itemize}
This is clearly possible when $P$ is diffuse and $M$ is a factor but presents a problem in the general case. One needs to know  that the ``components" of $P$ in the integral decomposition of $M$ over its center are almost everywhere diffuse. This is indeed the case if $P\nprec_M Z(M),$ but proving this is sufficient requires some technical arguments.
In order to simplify the presentation, and since the factorial case is our main concern, we have elected to not provide the proof of this more general situation.

Since it follows from methods similar to \ref{P:amplifying up from corners}, we close with a result showing that rigidity passes up from finite index subalgebras.
Important in the proof is the usage of an orthonormal basis for over algebra over another, as defined by Pimsner-Popa in \cite[Propostion 1.3]{PPEntropy}.

\begin{prop}
Let $(M,\tau)$ be a  semifinite tracial von Neumann algebra and let $\alpha_{t}\colon \widetilde{M}\to \widetilde{M}$ be an s-malleable deformation of $M.$ Let $e\in \mathcal{P}(M)$ with $\tau(e)<\infty.$ Suppose that $N\leq eMe$ is in $\alpha$-rigid and that $Q$ is an intermediate von Neumann subalgebra between $N$ and $eMe.$ If the Jones index of $N$ inside of $Q$ is finite, then $Q$ is $\alpha$-rigid.
\end{prop}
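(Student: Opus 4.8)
The plan is to exploit a finite Pimsner--Popa basis for the finite-index inclusion $N \leq Q$, reducing rigidity of $Q$ to rigidity of $N$ together with the mere continuity of the deformation on finitely many fixed vectors. Since $\tau(e) < \infty$, the algebra $eMe$ is finite, so there is a $\tau$-preserving conditional expectation $\E_N \colon Q \to N$; and because $N \leq Q$ has finite Jones index, there is a finite family $m_1, \dots, m_n \in Q$ with the reconstruction property $x = \sum_{j=1}^{n} m_j\, \E_N(m_j^{*}x)$ for every $x \in Q$ (cf. \cite[Proposition 1.3]{PPEntropy}).

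Given $x \in Q$ with $\|x\| \leq 1$, I would set $a_j = \E_N(m_j^{*}x) \in N$, so that $\|a_j\| \leq \|m_j^{*}x\| \leq \|m_j\|$ as $\E_N$ is a contraction, and then telescope
\begin{align*}
\alpha_t(x) - x = \sum_{j=1}^{n} \big[\alpha_t(m_j)\big(\alpha_t(a_j) - a_j\big) + \big(\alpha_t(m_j) - m_j\big)a_j\big],
\end{align*}
estimating the two groups of summands separately. For the first group, traciality gives $\|\alpha_t(m_j)\| = \|m_j\|$, while the elementary scaling bound $\|\alpha_t(a_j) - a_j\|_2 \leq \|a_j\|\,\|(\alpha_t - \id)\big|_N\|_{\infty,2} \leq \|m_j\|\,\|(\alpha_t - \id)\big|_N\|_{\infty,2}$ uses only $\alpha$-rigidity of $N$; thus this group is bounded by $\big(\sum_j \|m_j\|^2\big)\,\|(\alpha_t - \id)\big|_N\|_{\infty,2}$, which tends to $0$ as $t \to 0$. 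For the second group I would use $\|(\alpha_t(m_j) - m_j)a_j\|_2 \leq \|m_j\|\,\|\alpha_t(m_j) - m_j\|_2$; since each $m_j \in eMe \cap L^2(\widetilde M)$ (again using $\tau(e) < \infty$) and $t \mapsto \alpha_t$ is pointwise $\|\cdot\|_2$-continuous, each $\|\alpha_t(m_j) - m_j\|_2 \to 0$. As both bounds are independent of $x$, summing yields $\|(\alpha_t - \id)\big|_Q\|_{\infty,2} \to 0$, i.e. $Q$ is $\alpha$-rigid.

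The main subtlety is that the basis elements $m_j$ lie only in $Q$ and are \emph{not} known to be $\alpha$-rigid --- indeed their rigidity is exactly what is being established --- so the second group of terms cannot be controlled by any rigidity hypothesis. The resolution, and the reason finite index is precisely the right assumption, is that there are only \emph{finitely many} $m_j$ and they are \emph{fixed} (independent of $x$): for a fixed $L^2$-vector the deformation moves it arbitrarily little by continuity alone, and finiteness permits summing these finitely many vanishing quantities while retaining uniformity over the unit ball of $Q$. Thus rigidity of the coefficients (supplied by $N$) combined with continuity on the finite basis (supplied by the deformation axioms) together suffice.
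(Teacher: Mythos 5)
Your telescoping mechanism is exactly the one the paper uses---rigidity of $N$ controls the coefficients $\E_N(m_j^*x)$, while pointwise $\|\cdot\|_2$-continuity of $\alpha_t$ controls the basis elements---but there is a genuine gap at the very first step: the claim that finite Jones index yields a \emph{finite} family $m_1,\dots,m_n\in Q$ with $x=\sum_{j=1}^n m_j\E_N(m_j^*x)$ for all $x\in Q$. Pimsner and Popa prove this when $N\subseteq Q$ are II$_1$ \emph{factors}, but the proposition concerns arbitrary von Neumann subalgebras $N\leq Q\leq eMe$, and the notion of index in force here (the quantity $\frac{1}{\tau(e)}\sum_j\tau(m_j^*m_j)$ computed from a Pimsner--Popa basis, i.e.\ $\dim_N L^2(Q)$) does not give a finite basis in general. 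Concretely, take $Q=\bigoplus_n M_{k_n}(\C)$ and $N=Z(Q)=\bigoplus_n\C z_n$ with $k_n\to\infty$ and $\tau(z_n)=c_n$ chosen so that $\sum_n k_n^2c_n<\infty$ (say $k_n=n$, $c_n\propto n^{-4}$): the index is finite, yet compressing the reconstruction identity by $z_n$ shows any such family $\{m_j\}$ must linearly span each block $M_{k_n}(\C)$, so no finite family can work. Your second group of terms is then an infinite sum of quantities that each vanish as $t\to 0$ but with no uniform control, and pointwise continuity alone cannot be summed.

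The paper's proof is precisely the repair of this issue. It takes a countable basis $(m_j)_{j=1}^\infty$, uses the finite-index hypothesis only to obtain the summability $\sum_j\tau(m_jm_j^*)<\infty$, and truncates: with $x_K=\sum_{j\le K}m_j\E_N(m_j^*x)$ one proves the tail bound $\|x-x_K\|_2^2\le\|x\|_\infty^2\sum_{j>K}\tau(m_jm_j^*)$, which is uniform over the unit ball of $Q$. This requires a genuinely new estimate, $\|m_j\E_N(m_j^*x)\|_2^2\le\|x\|_\infty^2\,\tau(m_jm_j^*)$, proved by passing to the basic construction $\langle Q,e_N\rangle$ with its canonical trace and using $xe_Nx^*\le\|x\|_\infty^2$. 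Once the tail is discarded, your estimates apply verbatim to the finite sum $x_K$, and letting the truncation parameter grow finishes the proof. So the finite-index hypothesis enters not through finiteness of the basis but through square-summability of its trace norms plus a uniform tail estimate; if you add that step (or restrict the statement to subfactors, where a finite basis does exist), your argument becomes complete.
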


\begin{proof}
Choose a Pimsner-Popa basis $(m_{j})_{j=1}^{\infty}$ for $Q$ over $N.$
 Namely, $m_{j}\in Q$ and satisfy the following:
 \begin{itemize}
     \item $\E_{N}(m_{j}^{*}m_{i})=\delta_{i=j}f_{j}$ for some projections $f_{j}\in \mathcal{P}(N),$
     \item $\xi=\sum_{j}m_{j}\E_{N}(m_{j}^{*}\xi)$ for every $\xi\in L^{2}(Q),$ where the sum converges in $\|\cdot\|_{2}$
 \end{itemize}
 (see e.g. the proof of \cite[Theorem 1.1.6]{PopaCBMS}).
 Then the Jones index of $N$ inside of $Q$ is easily seen to be $\frac{1}{\tau(e)}\sum_{j}\tau(m_{j}^{*}m_{j}),$ so $\sum_{j}\tau(m_{j}m_{j}^{*})<\infty.$

 Let $\varepsilon>0,$ we may choose a $K\in \N$ so that $\sum_{j>K}\tau(m_{j}m_{j}^{*})<\varepsilon^{2}.$ Let $x\in Q,$ and $t\in \R.$ Set $x_{K}=\sum_{j=1}^{K}m_{j}\E_{N}(m_{j}^{*}x).$ We then have that
 \begin{equation}\label{E:same trick as before index}
 \|\alpha_{t}(x)-x\|_{2}\leq \|\alpha_{t}(x_{K})-x_{K}\|_{2}+2\|x-x_{K}\|_{2}.
 \end{equation}
 We start by estimating $\|x-x_{K}\|_{2}.$ We have that
 \begin{equation}\label{E:small pile of garbage}
     \|x-x_{K}\|_{2}^{2}=\sum_{j>K}\|m_{j}\E_{N}(m_{j}^{*}x)\|_{2}^{2}.
 \end{equation}
 For a $j>K,$
 \[\|m_{j}\E_{N}(m_{j}^{*}x)\|_{2}^{2}=\tau(\E_{N}(x^{*}m_{j})m_{j}^{*}m_{j}\E_{N}(m_{j}^{*}x))=\tau(m_{j}\E_{N}(m_{j}^{*}x)\E_{N}(x^{*}m_{j})m_{j}^{*}).\]
 Consider the Jones basic construction $\ip{Q,e_{N}},$ where $e_{N}$ is the Jones projection, and let  $\Tr$ be the natural trace of $\ip{Q,e_{N}}$ which satisfies $\Tr(ae_{N}b)=\frac{1}{\tau(e)}\tau(ab)$ for all $a,b\in Q.$
 The above then shows that
 \begin{align*}
  \|m_{j}\E_{N}(m_{j}^{*}x)\|_{2}^{2}=\tau(e)\Tr(m_{j}e_{N}m_{j}^{*}xe_{N}x^{*}m_{j}e_{N}m_{j}^{*})&\leq \tau(e)\|x\|_{\infty}^{2}\Tr(m_{j}e_{N}m_{j}^{*}m_{j}e_{N}m_{j}^{*})\\
  &=\tau(e)\|x\|_{\infty}^{2}\Tr(m_{j}\E_{N}(m_{j}^{*}m_{j})e_{N}m_{j}^{*})\\
  &=\|x\|_{\infty}^{2}\tau(m_{j}f_{j}m_{j}^{*})\\
  &\leq \|x\|_{\infty}^{2}\tau(m_{j}m_{j}^{*}).
 \end{align*}
Inserting this into (\ref{E:small pile of garbage}) we have  $\|x-x_{K}\|_{2}^{2}\leq \|x\|_{\infty}^{2}\sum_{j>K}\tau(m_{j}m_{j}^{*})<\varepsilon^{2}\|x\|_{\infty}^{2}.$ Applying this estimate with $(\ref{E:same trick as before index}),$
\[\|\alpha_{t}(x)-x\|_{2}\leq 2\varepsilon\|x\|_{\infty}+\|\alpha_{t}(x_{K})-x_{K}\|_{2}.\]
Letting $C_{K}=\max_{1\leq j\leq K}\|m_{j}\|_{\infty},$ it is easy to see that
\begin{align*}\|\alpha_{t}(x_{K})-x_{K}\|_{2}&\leq KC_{K}\max_{1\leq j\leq K}\left(\|\alpha_{t}(m_{j})-m_{j}\|_{2}\|x\|_{\infty}+\|\alpha_{t}(\E_{N}(m_{j}^{*}x))-\E_{N}(m_{j}^{*}x)\|_{2}\right)\\
&\leq KC_{K}\|x\|_{\infty}\max_{1\leq j\leq K}\left(\|\alpha_{t}(m_{j})-m_{j}\|_{2}+C_{K}\|(\alpha_{t}-\id)\big|_{N}\|_{\infty,2}.\right)
\end{align*}
So
\[\|(\alpha_{t}-\id)\big|_{Q}\|_{\infty,2}\leq 2\varepsilon+C_{K}^{2}K\|(\alpha_{t}-\id)\big|_{N}\|_{\infty,2}+ KC_{K}\max_{1\leq j\leq K}\|\alpha_{t}(m_{j})-m_{j}\|_{2}.\]
So
\[\limsup_{t\to 0}\|(\alpha_{t}-\id)\big|_{Q}\|_{\infty,2}\leq 2\varepsilon.\]
As this is true for every $\varepsilon>0,$ we can let $\varepsilon\to 0$ to complete the proof.
\end{proof}

\section{Consequences in the Mixing Setting}\label{S:mixingness}

The results in the preceding sections become more striking and intuitive if we assume that $L^{2}(\widetilde{M})\ominus L^{2}(M)$ is a mixing $M$-$M$ bimodule. This is the case, for example, when $\alpha$ is the s-malleable deformation described in \Cref{S:1-cohomology} associated to the Gaussian action of a mixing  representation. The reason for considering the mixing case is two-fold, with both aspects going back to Popa's key insights in \cite{PopaStrongRigidity}. First,  mixingness automatically grants $Q' \cap \widetilde M \subseteq M$ to any diffuse subalgebra $Q\leq M$.
Second, mixingness allows one to ``boost" certain properties from a diffuse subalgebra $Q \leq M$ to various weak normalizers thereof (\cite[Theorem 4.1, Theorem 4.4]{PopaStrongRigidity}, see also \cite[Section 6]{Va06a} and \cite[Theorem 4.5]{PetersonL2}). 

When the property in question is $\alpha$-rigidity, Peterson \cite[Theorem 4.5]{PetersonL2} showed that in the mixing context one can upgrade rigidity of a diffuse subalgebra to the von Neumann algebra generated by its usual normalizer; however the case of the weaker normalizers introduced by Popa has remained out of reach, one key challenge being their lack of multiplicative group structure. The main result of this section bridges that gap, showing that $\alpha$-rigidity of diffuse $Q \le M$ does indeed imply $\alpha$-rigidity of those weak normalizers of $Q$ (see \Cref{C:upgrading rigidity to all the normalizers} and the discussion directly thereafter).

For simplicity, in this section we will also restrict our attention to the tracial case. We begin by recalling Popa's powerful Intertwining-by-Bimodules Theorem, developed in \cite[Theorem A.1]{PopaL2Betti}, \cite[Lemmas 4 and 5]{PopaFG}, and \cite[Section 2]{PopaStrongRigidity}, and fundamental to deformation/rigidity theory.

\begin{thm}[Popa's Intertwining-by-Bimodules Theorem, {\cite[Section 2]{PopaStrongRigidity}}] \label{T:Popa IBBT}
Let $(M, \tau)$ be a tracial von Neumann algebra, $p, q \in \mathcal{P}(M)$ projections, and $P\leq pMp$, $Q\leq qMq$. Then the following are equivalent:
\begin{enumerate}
\item There exist nonzero projections $p_0 \in P$, $q_0 \in Q$, and a normal unital $*$-homomorphism $\theta: p_0Pp_0 \to q_0Qq_0$, together with a nonzero partial isometry $v\in q_0Mp_0$ such that $\theta(x)v = vx$ for all $x \in p_0Pp_0$.
\item There is no net $(u_n)_{n \in I}$ in $\mathcal{U}(P)$ with $\|\E_Q(qxu_nyq)\|_2 \to 0 \text{ for all } x, y \in M$.\label{I:total mixing over Q interwine}
\item There is a subgroup $G\leq \mathcal{U}(P)$ with $P = G''$ which contains no net $(u_n)_{n \in I}$ satisfying $\|\E_Q(qxu_nyq)\|_2 \to 0 \text{ for all } x, y \in M$. \label{I:mixing over Q}
\item There exists a $ P$-$Q$-sub-bimodule $K$ of $pL^2(M)q $ satisfying $\operatorname{dim}(K_Q)<\infty $.\label{I:FD bimodule}
\end{enumerate}
\end{thm}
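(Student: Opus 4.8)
The plan is to route all four conditions through the \emph{Jones basic construction} $\langle M, e_Q\rangle := (M \cup \{e_Q\})'' = (JQJ)'$, where $e_Q \in B(L^2(M))$ is the orthogonal projection onto $L^2(Q)$ and $\Tr$ is the canonical semifinite trace determined by $\Tr(xe_Qy) = \tau(xy)$ for $x,y \in M$. Since we are in the tracial case, the governing object is the \emph{minimal-norm element}: given a subgroup $G \le \mathcal{U}(P)$ with $G'' = P$, the set $\mathcal{C}_G = \overline{\co}^{\,w}\{ue_Qu^* : u \in G\}$ is convex and weakly compact (each $ue_Qu^*$ is a positive element $\le uu^* = p$ with $\Tr(ue_Qu^*)=\tau(pqp)\le 1$), so it contains a unique element $a$ of minimal $\|\cdot\|_{2,\Tr}$-norm. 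By uniqueness and $G$-invariance of $\mathcal{C}_G$, this $a$ is positive and commutes with $G$, hence with $P = G''$, so that $a \in P' \cap \langle M, e_Q\rangle$ with $\Tr(a) < \infty$. First I would record two standard dictionaries: projections $f \in \langle M, e_Q\rangle$ correspond exactly to right-$Q$-submodules $K = \overline{fL^2(M)}$, with $\Tr(f) = \dim(K_Q)$, and $f \in P' \cap \langle M, e_Q\rangle$ precisely when $K$ is moreover a left $P$-module; and the net condition of item \eqref{I:total mixing over Q interwine} (existence of $(u_n) \subseteq \mathcal{U}(P)$ with $\|\E_Q(qxu_nyq)\|_2 \to 0$ for all $x,y$) translates, via $e_Q\widehat m = \widehat{\E_Q(m)}$, into $u_ne_Qu_n^* \to 0$ weakly in $\langle M, e_Q\rangle$.

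With these in hand the cycle runs as follows. The implication $\eqref{I:total mixing over Q interwine}\Rightarrow\eqref{I:mixing over Q}$ is immediate on taking $G = \mathcal{U}(P)$. For $\eqref{I:mixing over Q}\Rightarrow\eqref{I:FD bimodule}$, run the minimal-norm construction for the given $G$: if $a = 0$ then $0 \in \mathcal{C}_G$, and a standard separation/maximality argument extracts from the vanishing of the minimal norm an actual net $(u_n)$ in $G$ with $u_ne_Qu_n^* \to 0$ weakly, contradicting \eqref{I:mixing over Q}; hence $a \ne 0$. A spectral projection $f = \mathbf{1}_{[c,\infty)}(a)$ with $0 < c < \|a\|_\infty$ is then a nonzero projection in $P' \cap \langle M, e_Q\rangle$ with $\Tr(f) \le \Tr(a)/c < \infty$, and $K := \overline{fL^2(M)} \subseteq pL^2(M)q$ is a $P$-$Q$-subbimodule with $\dim(K_Q) = \Tr(f) < \infty$, which is \eqref{I:FD bimodule}. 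The implication $\eqref{I:FD bimodule}\Rightarrow(1)$ is the structure theory of finite right $Q$-modules: realizing $K$ as a right-$Q$-submodule of a finite direct sum of copies of $L^2(Q)$ turns the commuting left $P$-action into a normal $*$-homomorphism into an amplification of $Q$, and compressing by a suitable projection yields the corner homomorphism $\theta : p_0Pp_0 \to q_0Qq_0$ together with the intertwining partial isometry $v$. Finally, for $(1)\Rightarrow\eqref{I:total mixing over Q interwine}$, from $\theta(x)v = vx$ (equivalently $xv^* = v^*\theta(x)$) one builds a finite right-$Q$-module carrying a left $P$-action, i.e.\ again a finite-trace $f \in P' \cap \langle M, e_Q\rangle$; then for any net $u_n \in \mathcal{U}(P)$ one has $\Tr(f\,u_ne_Qu_n^*) = \Tr(u_n^*fu_n\,e_Q) = \Tr(fe_Q)$, a fixed positive constant, whereas expanding $f$ in $\|\cdot\|_{2,\Tr}$ against finitely many $a_ie_Qb_i$ and invoking $\|\E_Q(b_iu_n)\|_2 \to 0$ forces $\Tr(f\,u_ne_Qu_n^*) \to 0$; this contradiction gives \eqref{I:total mixing over Q interwine}.

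The hard part is the analytic/structural core linking the three faces of intertwining: the (non)vanishing of $a$, the existence of a finite-trace $P$-central projection in $\langle M, e_Q\rangle$, and the honest corner intertwiner $(\theta, v)$. The two delicate points are (i) extracting a genuine net \emph{from the group} $G$ out of the mere fact that $0$ lies in the weakly closed convex hull of the orbit, in step $\eqref{I:mixing over Q}\Rightarrow\eqref{I:FD bimodule}$, and (ii) reconstructing $\theta$ and $v$ from an abstract finite right-$Q$-module while tracking the cut-downs $p_0, q_0$ and central supports, in step $\eqref{I:FD bimodule}\Rightarrow(1)$ — this is precisely where one must certify that the bimodule produced is a genuine $P$-$Q$-bimodule of \emph{finite} right $Q$-dimension and not merely one over a corner $p_0Pp_0$. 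By contrast, once the two dictionaries are installed the implications $\eqref{I:total mixing over Q interwine}\Rightarrow\eqref{I:mixing over Q}$ and $(1)\Rightarrow\eqref{I:total mixing over Q interwine}$ are essentially bookkeeping with the trace $\Tr$.
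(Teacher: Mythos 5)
The paper does not actually prove this theorem: it is quoted as Popa's result, with \cite{Va07} and \cite{BO08} cited for expositions, so your proposal can only be measured against the standard argument. Your skeleton (basic construction, minimal-norm elements, finite-trace projections in $P'\cap\langle M,e_Q\rangle$, the two dictionaries) is the right one, but the central mechanism is flawed: your dictionary asserting that condition (2) translates into ``$u_ne_Qu_n^*\to 0$ weakly'' is false. Computing the pairing gives $\Tr(u_ne_Qu_n^*\,xe_Qy)=\tau(\E_Q(u_n^*x)\E_Q(yu_n))$, so weak vanishing of $u_ne_Qu_n^*$ is equivalent to the \emph{one-sided} condition $\|\E_Q(xu_n)\|_2\to 0$ for all $x\in M$, which is strictly weaker than the two-sided condition $\|\E_Q(qxu_nyq)\|_2\to 0$ appearing in (2) and (3). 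Concretely, take $M=L(\mathbb{F}_2)$ with free generators $a,b$, let $Q=L(\langle a\rangle)$, $p=q=1$, and $P=bQb^*$. All four conditions of the theorem hold (here $P$ and $Q$ are unitarily conjugate, and for any $u\in\mathcal{U}(P)$ one has $\E_Q(b^*ub)=b^*ub$ of $\|\cdot\|_2$-norm one, so no net as in (2) exists). Yet along $u_k=ba^kb^*\in\mathcal{U}(P)$ one checks $\|\E_Q(xu_k)\|_2\to 0$ for every $x$, hence $u_ke_Qu_k^*\to 0$ weakly, and therefore the minimal-norm element of $\overline{\co}^{\,w}\{ue_Qu^*:u\in\mathcal{U}(P)\}$ is $a=0$. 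This breaks your step (3)$\Rightarrow$(4): the vanishing of $a$ produces only a one-sided net, which does not contradict (3), and indeed here $a=0$ while (3) and (4) both hold; the same failure occurs trivially whenever $pq=0$, since then $ue_Qu^*=0$ identically even though $P\prec_M Q$ may hold. The same misconception sinks your step (1)$\Rightarrow$(2): the ``fixed positive constant'' $\Tr(fe_Q)$ can be zero. In the example above the natural witness is $f=be_Qb^*$ (the projection onto the bimodule $bL^2(Q)$, of right $Q$-dimension one), and $\Tr(fe_Q)=\|\E_Q(b)\|_2^2=0$, so no contradiction arises.

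Both steps are repaired by conjugating general positive finite-trace elements of $\langle M,e_Q\rangle$ rather than $e_Q$ itself, which is exactly Popa's argument. For (3)$\Rightarrow$(4): negating (3) yields $\varepsilon>0$ and a finite set $F\subseteq M$ with $\sum_{x,y\in F}\|\E_Q(qxuyq)\|_2^2\geq\varepsilon$ for every $u\in G$; put $T=\sum_{y\in F}(yq)e_Q(yq)^*$ and $S=\sum_{x\in F}(qx)^*e_Q(qx)$, and run your minimal-norm argument on $\overline{\co}^{\,w}\{uTu^*:u\in G\}$. Since $\Tr(uTu^*S)=\sum_{x,y\in F}\|\E_Q(qxuyq)\|_2^2\geq\varepsilon$ and this affine, weakly continuous lower bound passes to the closed convex hull, the minimal-norm element is nonzero; your spectral projection step then goes through verbatim. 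For (1),(4)$\Rightarrow$(2): with $f\in P'\cap\langle M,e_Q\rangle$ a nonzero finite-trace projection satisfying $pfp=f$, use $\Tr(f)=\Tr(fu_n^*fu_n)$ (two copies of $f$, via $u_n^*fu_n=f$), approximate \emph{both} copies by $T=\sum_i x_ie_Qy_i$ in $\|\cdot\|_{2,\Tr}$, and expand: the resulting terms $\tau(\E_Q(y_iu_n^*x_j)\E_Q(y_ju_nx_i))$ tend to zero by Cauchy--Schwarz and the two-sided hypothesis, forcing $\Tr(f)=0$, a contradiction. Your remaining implications --- (2)$\Rightarrow$(3), and the module-theoretic reconstruction in (4)$\Rightarrow$(1) --- are fine as sketched.
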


If the equivalent conditions of the theorem above are met, we say that \emph{a corner of $P$ intertwines into $Q$ inside $M$}, and write $P \prec_M Q$. If $Pp' \prec_M Q$ for every $p' \in P' \cap pMp$, we write $P \prec_M^s Q$.
We direct the reader to \cite[Section 3]{Va07} for a number of basic stability properties of $\prec_M$ and $\prec_M^s$ (see also \cite[Lemma 2.4]{DHI16}), and to \cite[Appendix F]{BO08} for a detailed exposition of the theory.

One of Popa's essential insights in his development of the intertwining techniques above is their natural interplay with the notion of mixingness relative to a subalgebra (\cite[Definition 2.9]{PopaCoc}, \cite[Definition 2.3]{PetersonSinclair}):
\begin{defn} Let $(M, \tau)$ be a tracial von Neumann algebra and $N\leq M$. Let $p\in \mathcal{P}(M)$ and $Q\leq pMp.$ A $Q$-$Q$ bimodule ${}_Q\mathcal{H}_Q$ is \emph{mixing relative to $N$} if any net $(x_n)_{n \in I}$ in $(Q)_1$ with $\|\E_N(yx_nz)\|_2 \to 0$ for all $y, z \in M$ satisfies
\begin{align*}
    \lim_n \sup_{y \in (Q)_1} |\langle x_n\xi y, \eta\rangle| = \lim_n \sup_{y \in (Q)_1} |\langle y\xi x_n, \eta\rangle| = 0 \quad\text{ for all }\quad \xi, \eta \in \mathcal{H}.
\end{align*}
An $M$-$M$ bimodule ${}_M\mathcal{H}_M$ which is mixing relative to $\mathbb{C}$ is simply called \emph{mixing}.
\end{defn}

Popa demonstrated in \cite[Section 3]{PopaStrongRigidity} how the interplay between these two notions leads to control of relative commutants in the presence of mixingness, in our context automatically granting $N' \cap \widetilde M \subseteq M$ to any diffuse subalgebra $N\leq M$. In fact, recalling that $N$ is diffuse if and only if $N \nprec_M \mathbb{C}$, we have the following more general fact, whose short and well known proof we include for the sake of completeness:

\begin{lem}[Cf. {\cite[Section 3]{PopaStrongRigidity}}]\label{L:usemix}
Suppose tracial von Neumann algebras $A < M < \widetilde M$ are such that $L^{2}(\widetilde{M})\ominus L^{2}(M)$ is mixing relative to $A$.

Then for any $q\in \mathcal{P}(M)$ and $Q\leq qMq$, either
\begin{enumerate}
    \item $Q \prec_M A$, or
    \item $Q' \cap q\widetilde{M}q \subseteq qMq$.
\end{enumerate}
\end{lem}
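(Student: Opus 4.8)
The plan is to prove the stated dichotomy by assuming the first alternative fails and deducing the second. So suppose $Q \nprec_M A$. Since $A$ is unital in $M$ (its support projection is $1$), the equivalence of conditions (1) and (2) in Popa's Intertwining-by-Bimodules Theorem (\Cref{T:Popa IBBT}), applied with $P = Q$ and target algebra $A$, shows that the failure of $Q \prec_M A$ produces a net $(u_n)_n$ in $\mathcal{U}(Q)$ with
\[\|\E_A(x u_n y)\|_2 \to 0 \quad\text{for all } x, y \in M.\]
As $(u_n) \subseteq (M)_1$, this is precisely a net of the type governed by the relative-mixing hypothesis on $\mathcal{H} := L^{2}(\widetilde{M}) \ominus L^{2}(M)$.

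Next I would fix $\xi \in Q' \cap q\widetilde{M}q$ and argue that $\xi \in qMq$. Writing $\xi^\perp := \xi - \E_M(\xi)$ for the component of $\xi$ lying in $\mathcal{H}$, it suffices to show $\xi^\perp = 0$: then $\xi = \E_M(\xi) \in M$, and since $q\xi q = \xi$ we get $\xi \in qMq$. The key point is that the map $\eta \mapsto \eta - \E_M(\eta)$ (the orthogonal projection of $L^{2}(\widetilde{M})$ onto $\mathcal{H}$) is $M$-bimodular. Because $\xi$ commutes with $Q$ and satisfies $q\xi = \xi q = \xi$, we have $u_n \xi u_n^* = \xi$ for all $n$; applying the bimodularity of $\E_M$ gives $u_n \E_M(\xi) u_n^* = \E_M(\xi)$, and subtracting yields
\[u_n \xi^\perp u_n^* = \xi^\perp \quad\text{for all } n.\]

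Finally I would feed the net into the relative-mixing hypothesis. Since $\mathcal{H}$ is mixing relative to $A$ and $(u_n)$ satisfies the decay condition above, the definition of relative mixing gives $\sup_{y \in (M)_1} |\langle u_n \xi^\perp y, \xi^\perp\rangle| \to 0$. The elements $u_n^*$ lie in $\mathcal{U}(Q) \subseteq (M)_1$ and are therefore admissible choices of $y$, so that $|\langle u_n \xi^\perp u_n^*, \xi^\perp\rangle| \to 0$. But by the conjugation-invariance just established, $\langle u_n \xi^\perp u_n^*, \xi^\perp\rangle = \langle \xi^\perp, \xi^\perp\rangle = \|\xi^\perp\|_2^2$ for every $n$. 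Hence $\|\xi^\perp\|_2^2 = 0$, so $\xi^\perp = 0$ and $\xi \in qMq$, completing the argument.

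I expect no serious conceptual obstacle, the proof being a direct orchestration of the two foundational tools in the excerpt; the one place demanding care is the correct invocation of \Cref{T:Popa IBBT} with $A$ unital, so that no truncation by a subprojection of $A$ enters the decay condition, together with the verification that each $u_n^*$ is a legitimate test vector in the supremum defining relative mixing. Once the bimodularity of $\E_M$ and the resulting conjugation-invariance of $\xi^\perp$ are in place, these convert the mixing conclusion into the vanishing of $\|\xi^\perp\|_2$ with no further estimates.
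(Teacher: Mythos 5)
Your proof is correct and is essentially the paper's own argument: both extract the net $(u_n)\subseteq\mathcal{U}(Q)$ from the failure of $Q\prec_M A$ via Popa's intertwining theorem, pass to $\delta=x-\E_M(x)\in L^2(\widetilde M)\ominus L^2(M)$, use $Q$-bimodularity of $\E_M$ to see that $\delta$ is fixed under conjugation by the $u_n$, and then test the relative-mixing supremum against $y=u_n^*$ to force $\|\delta\|_2=0$. The only cosmetic difference is that the paper packages the final step as $2\|\delta\|_2^2=\|[\delta,u_n]\|_2^2+2\Re\langle\delta u_n,u_n\delta\rangle$ with the commutator vanishing, whereas you phrase it directly as $\|\delta\|_2^2=\langle u_n\delta u_n^*,\delta\rangle\to 0$; these are the same computation.
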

\begin{proof}
Suppose that $Q\nprec_{M}A.$ Take $x \in Q' \cap q\widetilde{M}q$ and set $\delta = x - \E_M(x)$, noting that $\delta \in Q' \cap q\widetilde{M}q$ as well. Since $Q \nprec_M A$, there is a net $(u_n)_{n \in I}$ in $\mathcal{U}(Q)$ such that $\|\E_A(au_nb)\|_2 \to 0$ for all $a, b \in M$. Then
\begin{align*}
2\|\delta\|_2^2 = \|[\delta, u_n]\|_2^2+2\text{Re}\langle\delta u_n, u_n \delta\rangle
= 2\text{Re}\langle\delta u_n, u_n \delta\rangle \to 0,
\end{align*}
because $L^2(\widetilde M) \ominus L^2(M)$ is mixing relative to $A$. Hence $\delta = 0$ and $x \in M$ as desired.
\end{proof}

We will think of the condition $Q\nprec_M N$ as ``$Q$ is diffuse over $N$". We remark that it may be easier to the reader on first  reading to think of the case $N=\C 1$ throughout this section. In this case every instance of ``mixing over $N$" simply becomes ``mixing", and every instance of $Q\nprec_M N$ becomes ``$Q$ is diffuse."

Because of Lemma \ref{L:usemix}, the relative commutant conditions that occur in the statement of Corollary \ref{C:existence of maximal subalgebras} follow from assuming that the appropriate subalgebras are ``diffuse over $N.$" Because of this, we can significantly simplify the statement of Corollary \ref{C:existence of maximal subalgebras} in the mixing situation.

\begin{cor}\label{C:existence of maximal subalgebras2}
Let $(M,\tau)$ be a tracial von Neumann subalgebra, and $N\leq M.$ Suppose we have an s-malleable deformation $\alpha_{t}\colon \widetilde{M}\to \widetilde{M}$ of $M$ such that $L^{2}(\widetilde{M})\ominus L^{2}(M)$ is mixing over $N.$ Fix a projection $p\in M.$ Suppose $Q\leq pMp,$ that $Q\nprec_M N,$ and that $Q$ is $\alpha$-rigid.  Then there is a unique rigid envelope $P$ of $Q$ in $pMp.$ Further, $P$ has the  following properties:
\begin{enumerate}[(i)]
\item If $B\leq pMp$ has $B\cap P\nprec_M N$,  and $B$ is $\alpha$-rigid, then $B\subseteq P.$ \label{I:diffuse absorption2}
\item Suppose that $\sigma\in \Aut(\widetilde{M}),$ and that $\sigma(pMp)=pMp,$ and $\|(\alpha_{t}\circ \sigma-\sigma\circ \alpha_{t})\big|_{P}\|_{\infty,2}\to_{t\to 0}0.$ If $\sigma(P)\cap P\nprec_M N$, then $\sigma(P)=P.$ \label{I:automorphic absorption2}
\end{enumerate}
\end{cor}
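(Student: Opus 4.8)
The plan is to recognize that Corollary \ref{C:existence of maximal subalgebras2} is obtained from Corollary \ref{C:existence of maximal subalgebras} by feeding in the mixing hypothesis through Lemma \ref{L:usemix}: every relative-commutant condition ``$(\,\cdot\,)' \cap p\widetilde{M}p \subseteq M$'' demanded by the general result will be supplied automatically here by the corresponding ``diffuse over $N$'' condition ``$\,\cdot\, \nprec_M N$''. Throughout I would exploit that we are in the tracial case, so $\tau$ is finite and $\tau(p)<\infty$ holds automatically for every projection $p\in M$; in particular the finite-trace hypothesis of the semifinite results of Section \ref{S:mainproof} is vacuous, and Lemma \ref{L:usemix} applies with $q=p$ to any von Neumann subalgebra of $pMp$.

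First I would establish existence and uniqueness. Applying Lemma \ref{L:usemix} with $A=N$ and $q=p$ to the subalgebra $Q\leq pMp$, the hypothesis $Q\nprec_M N$ excludes the first alternative and forces $Q'\cap p\widetilde{M}p\subseteq pMp\subseteq M$. This is exactly the relative-commutant hypothesis of Corollary \ref{C:existence of maximal subalgebras}, which then produces a rigid envelope $P\leq pMp$ of $Q$. Uniqueness is immediate from Definition \ref{D:rigid envelope}: if $P_1,P_2$ are both rigid envelopes of $Q$, then each is an $\alpha$-rigid subalgebra containing $Q$, so the defining maximality of each gives $P_1\subseteq P_2$ and $P_2\subseteq P_1$, whence $P_1=P_2$.

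For part (\ref{I:diffuse absorption2}), I would apply Lemma \ref{L:usemix} to the subalgebra $B\cap P\leq pMp$ (which is a unital subalgebra of $pMp$, as both $B$ and $P$ contain $p$): since $B\cap P\nprec_M N$, we obtain $(B\cap P)'\cap p\widetilde{M}p\subseteq M$, which is precisely the hypothesis of Corollary \ref{C:existence of maximal subalgebras}(\ref{I:diffuse absorption}) with the role of its generic rigid subalgebra played by $B$; that corollary then yields $B\subseteq P$. For part (\ref{I:automorphic absorption2}), I would first note that $\sigma(p)=p$ since $\sigma$ restricts to an automorphism of $pMp$, so $\sigma(P)\cap P$ is a unital subalgebra of $\sigma(pMp)\cap pMp=pMp$ and Lemma \ref{L:usemix} applies to it; the hypothesis $\sigma(P)\cap P\nprec_M N$ gives $(\sigma(P)\cap P)'\cap p\widetilde{M}p\subseteq M$. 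Combined with the commutation estimate $\|(\alpha_t\circ\sigma-\sigma\circ\alpha_t)|_P\|_{\infty,2}\to 0$, this is exactly what is required to invoke Corollary \ref{C:existence of maximal subalgebras}(\ref{I:automorphic absorption}), concluding $\sigma(P)=P$.

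Since the entire argument is a mechanical substitution of the $\nprec_M N$ hypotheses into the relative-commutant hypotheses via Lemma \ref{L:usemix}, I do not expect a substantial obstacle. The only points requiring genuine care are bookkeeping ones: verifying that each relevant intersection ($Q$, $B\cap P$, and $\sigma(P)\cap P$) really is a unital subalgebra of the corner $pMp$ to which Lemma \ref{L:usemix} applies with $q=p$, and checking that $\sigma$ fixes the unit $p$ of the corner so that $\sigma(P)$ is again a unital subalgebra of $pMp$. Once these are in place, the conclusions are immediate from the general Corollary \ref{C:existence of maximal subalgebras}.
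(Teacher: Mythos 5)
Your proposal is correct and is precisely the paper's own (implicit) argument: the paper derives Corollary \ref{C:existence of maximal subalgebras2} by observing that Lemma \ref{L:usemix} converts each ``$\nprec_M N$'' hypothesis into the corresponding relative-commutant condition ``$(\,\cdot\,)'\cap p\widetilde{M}p\subseteq M$'' required by Corollary \ref{C:existence of maximal subalgebras}, exactly as you do for $Q$, $B\cap P$, and $\sigma(P)\cap P$. Your additional bookkeeping (unitality of the intersections, $\sigma(p)=p$, finiteness of $\tau(p)$ in the tracial case, and uniqueness directly from Definition \ref{D:rigid envelope}) matches what the paper leaves to the reader.
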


As mentioned above, another particularly nice aspect of the mixing situation is that rigidity automatically passes to normalizers, provided that the subalgebra does not interwine into $N.$ That mixingness often allows one to upgrade certain properties to (weak) normalizers is a core idea of Popa \cite[Theorem 4.1, Theorem 4.4]{PopaStrongRigidity}. See also \cite[Section 6]{Va06a} and \cite[Theorem 4.5]{PetersonL2}). 

We give several applications in the mixing context that upgrade rigidity to von Neumann algebras generated by ``weak versions" of the normalizer and show that a maximal rigid subalgebra is automatically \emph{strongly malnormal} in the sense of \cite{PopaWeakInter}. We will in fact show a more general statement, namely that for an $s$-malleable $(\widetilde{M},\alpha,\beta)$ of $(M,\tau),$ for a maximal rigid $P\leq M,$ the $P$-$P$ bimodule structure of $L^{2}(M)\ominus L^{2}(P)$ can be expressed in terms of the $P$-$P$ bimodule structure of $L^{2}(\widetilde{M})\ominus L^{2}(M).$  

Particular versions of ``weak normalizers" we will be concerned with are the one-sided quasi-normalizer, and the wq-normalizer introduced in the introduction, but there are more general ``weak normalizers" where we can also upgrade rigidity to the von Neumann algebras they generate. 
\begin{defn}[cf.\ \cite{PopaStrongRigidity,PopaCohomologyOE,IPP,GalatanPopa}]
Let $(M,\tau)$ be a tracial von Neumann algebra, and $N,Q\leq M$ with $Q\nprec_M N.$ The \emph{wq-normalizer of $Q$ inside of $M$ relative to $N$} is defined to be
\[\mathcal{N}_{M}^{wq}(Q|N)=\{u\in \mathcal{U}(M):uQu^{*}\cap Q\nprec_M N\}.\]
Observe that when $N=\C1,$ this is the usual wq-normalizer $\mathcal{N}_{M}^{wq}(Q)$.
\end{defn}
We also recall the \emph{weak intertwining space} of Popa.
Suppose that $(M,\tau)$ is a tracial von Neumann algebra, and that $B,Q\leq M.$ 
As in \cite{PopaStrongRigidity, PopaInterwineSpace, PopaWeakInter}, if $\xi\in L^{2}(M),$ we let
\[L^{2}(B\xi Q)=\overline{\Span\left\{b\xi a:b\in B,a\in Q\right\}}^{\|\cdot\|_{2}}.\]
In \cite{PopaInterwineSpace, PopaWeakInter}, Popa defined the intertwining space, denoted $I_{M}(B,Q),$ to be the set of $x\in M$ so that $L^{2}(BxQ)$ is finite-dimensional over $Q$ (this was also discussed in \cite{PopaMC}).
In \cite[Section 2.6]{PopaWeakInter}, Popa also defined the \emph{weak intertwining space}, denoted $wI_{M}(B,Q),$ to be
\[wI_{M}(B,Q)=\bigcup_{B_{0}\leq B\textnormal{ diffuse}}I_{M}(B_{0},Q).\]

Naturally, for $N \le M$, $p\in \mathcal{P}(M),$ and $B,Q\leq pMp,$ the space of weak intertwiners in $pMp$ relative to $N$ is given by
\[wI_{pMp}(B,Q \,|\, N)= \bigcup_{\stackrel{B_0 \le B}{B_0 \nprec_M N}} I_{pMp}(B_0, Q).
\]

Our approach to to upgrade rigidity to the von Neumann algebra generated by these weak normalizers will be by directly exploiting the bimodule structure.
Recall that two $Q$-$Q$ bimodules $\mathcal{H},\mathcal{K}$ are \emph{disjoint} if every bounded, $Q$-$Q$ bimodular map $T\colon \mathcal{H}\to \mathcal{K}$ is zero. Given $Q$-$Q$ bimodules $\mathcal{H},\mathcal{K}$
let $\mathcal{H}_{s}$ be the set $\xi\in \mathcal{H}$ so that for every bounded $Q$-$Q$ bimodular $T\colon \mathcal{H}\to \mathcal{K}$ we have that $T(\xi)=0.$ By construction,
\[\mathcal{H}_{s}=\bigcap_{ T\in B(\mathcal{H},\mathcal{K})\textnormal{ $Q$-$Q$ bimodular}}\ker(T).\]
So $\mathcal{H}_{s}$ is a closed $Q$-$Q$ subbimodule of $\mathcal{H}.$ We call $\mathcal{H}_{s}$ the space of \emph{$\mathcal{K}$-singular vectors}. If $\mathcal{K}=L^{2}(Q)\otimes L^{2}(Q)$ 
and $\mathcal{H}=L^{2}(M)$ for a trace-preserving inclusion $Q\leq M,$ then following \cite{Hayes8} we call $\mathcal{H}_{s}$ the \emph{singular subspace of $Q\subseteq M$}, and denote it by $\mathcal{H}_{s}(Q\subseteq M).$ It is a folklore result (cf. \cite[Proposition 3.3]{Hayes8}) that if we set $\mathcal{H}_{a}=\mathcal{H}\cap (\mathcal{H}_{s})^{\perp},$ then $\mathcal{H}_{a}$
isometrically embeds into an infinite direct sum of $\mathcal{K}$ as a $Q$-$Q$ bimodule.

For a tracial von Neumann algebra and $X\subseteq L^{2}(M,\tau),$ we will need to make sense of $W^{*}(X).$ Recall that a closed, densely-defined, unbounded operator $T$ on $L^{2}(M,\tau)$ is \emph{affiliated to $M$} if its graph, regarded as a closed subspace of $L^{2}(M,\tau)\oplus L^{2}(M,\tau),$ is invariant under the diagonal right action of $M$ on $L^{2}(M,\tau)\oplus L^{2}(M,\tau).$ As is well known (see \cite[Proposition 7.2.3]{APBook}) if $T$ is a closed, densely-defined unbounded operator on $L^{2}(M,\tau),$ and $T=U_{T}|T|$ is its polar decomposition, then $T$ is affiliated to $M$ if and only if $U_{T}\in M,$ and $1_{E}(|T|)\in M$ for every Borel $E\subseteq [0,\infty).$ If $X$ is a collection of closed, densely defined operators affiliated to $M$, we let $W^{*}(X)$ be the von Neumann algebra generated $\{U_{T}:T\in X\}\cup \{1_{E}(|T|):T\in X,\mbox{  $E\subseteq \C$ Borel}\}.$  Recall \cite[Theorem 7.3.2]{APBook} that every $\xi\in L^{2}(M,\tau)$ gives rise to a closed, densely-defined operator $L_{\xi}$ on $L^{2}(M,\tau)$ as follows. Let $T_{\xi}$ be the densely-defined operator on $L^{2}(M,\tau)$ whose domain is $M$ and which is defined by $T_{\xi}(x)=\xi x$ for all $x\in M.$ We then let $L_{\xi}$ be the closure of $T_{\xi}.$ In particular, we can make sense of $W^{*}(X)$ for $X\subseteq \mathcal{H}.$

We now show that one can upgrade rigidity of a subalgebra to the von Neumann algebra generated by the space of vectors singular with respect to the orthocomplement bimodule. The proof follows quickly from \Cref{P:two sided estimate intertwiner}.

\begin{thm}\label{T:upgrading rig to the singualr part}
 Let $(M,\tau)$ be a tracial von Neumann algebra, and let $(\widetilde{M},\alpha,\beta)$ be an $s$-malleable deformation of $(M,\tau).$ Suppose that $p$ is a projection in $M$ and that $Q\leq pMp$ is $\alpha$-rigid. 
 
View $L^{2}(p\widetilde{M}p)\ominus L^{2}(pMp),$ $L^{2}(pMp)$ as $Q$-$Q$ bimodules, and let $\mathcal{H}_{s}$ be the space of $L^{2}(p\widetilde{M}p)\ominus L^{2}(pMp)$-singular vectors in $L^{2}(M).$ Then $W^{*}(\mathcal{H}_{s})$ is $\alpha$-rigid.
\end{thm}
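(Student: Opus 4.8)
The plan is to produce, for each small $t$, a single partial isometry close to $p$ that conjugates $\alpha_{t}$ of the \emph{entire} algebra $W^{*}(\mathcal{H}_{s})$ back into $M$, and then to feed this into the first inequality of \Cref{P:two sided estimate intertwiner}. First I would invoke \Cref{P:two sided estimate intertwiner} (which applies since $Q$ is $\alpha$-rigid; we use here that $Q'\cap p\widetilde{M}p\subseteq M$, which holds automatically when $L^{2}(\widetilde{M})\ominus L^{2}(M)$ is mixing and $Q$ is diffuse by \Cref{L:usemix}, the case of interest in this section) to obtain, for each $t$, a partial isometry $v=v_{t}\in\mathcal{V}_{t}$ with $\alpha_{t}(x)v=vx$ for all $x\in Q$ and $\|v-p\|_{2}\leq 6\varepsilon_{t}(Q)$. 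The map $\rho(x)=v^{*}\alpha_{t}(x)v$ is a normal, unital $*$-homomorphism $pMp\to p\widetilde{M}p$ (since $vv^{*}=\alpha_{t}(p)$ absorbs the images of $pMp$), and when extended to $L^{2}$ it gives a contraction $\Psi=\Psi_{t}\colon L^{2}(pMp)\to L^{2}(p\widetilde{M}p)$, $\Psi(\xi)=v^{*}\alpha_{t}(\xi)v$. The key structural point is that $\Psi$ is $Q$-$Q$ bimodular: using $\alpha_{t}(q)v=vq$ together with its adjoint $v^{*}\alpha_{t}(q)=qv^{*}$ for $q\in Q$, one computes directly that $\Psi(q_{1}\xi q_{2})=q_{1}\Psi(\xi)q_{2}$.

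Composing $\Psi$ with the orthogonal projection onto $\mathcal{K}:=L^{2}(p\widetilde{M}p)\ominus L^{2}(pMp)$ (itself a $Q$-$Q$ bimodular projection) yields a bounded $Q$-$Q$ bimodular map $L^{2}(pMp)\to\mathcal{K}$, which by the very definition of the singular subspace must annihilate $\mathcal{H}_{s}$. Hence $\Psi(\mathcal{H}_{s})\subseteq L^{2}(pMp)$; in particular every bounded $\xi\in\mathcal{H}_{s}\cap pMp$ satisfies $v^{*}\alpha_{t}(\xi)v\in p\widetilde{M}p\cap L^{2}(pMp)=pMp\subseteq M$. Since $\rho$ is a normal $*$-homomorphism and $M$ is $\sigma$-weakly closed, the set $B=\{x\in pMp: v^{*}\alpha_{t}(x)v\in M\}$ is a von Neumann subalgebra of $pMp$, and we have just shown $\mathcal{H}_{s}\cap pMp\subseteq B$, so $W^{*}(\mathcal{H}_{s}\cap pMp)\subseteq B$. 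Writing $N:=W^{*}(\mathcal{H}_{s}\cap pMp)$, we then have $v^{*}\alpha_{t}(N)v\subseteq M$, so exactly as in the proof of the first inequality of \Cref{P:two sided estimate intertwiner} — which invokes only Popa's transversality inequality and no relative commutant hypothesis on $N$ — we obtain $\varepsilon_{2t}(N)\leq 4\|v-p\|_{2}\leq 24\varepsilon_{t}(Q)$, which tends to $0$. Thus $N$ is $\alpha$-rigid.

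The main obstacle is the final identification $W^{*}(\mathcal{H}_{s})=W^{*}(\mathcal{H}_{s}\cap pMp)$, i.e.\ that $W^{*}(\mathcal{H}_{s})$ is generated by its \emph{bounded} singular vectors. The difficulty is genuine: $\mathcal{H}_{s}$ is only a $Q$-$Q$ bimodule, and there is no reason for it to be invariant under left or right multiplication by the spectral projections $1_{E}(|L_{\xi}|)\in pMp$ of the affiliated operators that define $W^{*}(\mathcal{H}_{s})$, so one cannot naively truncate a general $\xi\in\mathcal{H}_{s}$ to a bounded singular vector. I would resolve this using the structure theory of the singular subspace from \cite{Hayes8} (in particular the orthogonal splitting $L^{2}(pMp)=\mathcal{H}_{s}\oplus\mathcal{H}_{a}$ with $\mathcal{H}_{a}$ embedding into an infinite amplification of $\mathcal{K}$, recorded in the discussion above): this machinery identifies $\mathcal{H}_{s}$ with the $\|\cdot\|_{2}$-closure of $\mathcal{H}_{s}\cap pMp$, whence $\mathcal{H}_{s}\subseteq L^{2}(B)$ and therefore $W^{*}(\mathcal{H}_{s})\subseteq B$, completing the argument.

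Finally, I would record for context that $Q\subseteq W^{*}(\mathcal{H}_{s})$: for any $Q$-$Q$ bimodular $T\colon L^{2}(pMp)\to\mathcal{K}$ one has $T(q)=qT(p)$ for $q\in Q$ with $T(p)$ a $Q$-central vector of $\mathcal{K}$, and the hypothesis $Q'\cap p\widetilde{M}p\subseteq M$ forces $T(p)\in\mathcal{K}\cap L^{2}(pMp)=\{0\}$, so $L^{2}(Q)\subseteq\mathcal{H}_{s}$. Thus the theorem genuinely upgrades the rigidity of $Q$ to the larger algebra $W^{*}(\mathcal{H}_{s})$.
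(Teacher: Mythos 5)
Your first three steps are correct and coincide with the paper's own proof: the paper packages your $\Psi$ as the $L^{2}$-extension of the trace-preserving automorphism $\Theta_{t}(x)=v^{*}\alpha_{t}(x)v$ of $p\widetilde{M}p$, notes that $(1-\E_{pMp})\circ\Theta_{t}$ is a bounded $Q$-$Q$ bimodular map into $L^{2}(p\widetilde{M}p)\ominus L^{2}(pMp)$, and concludes $\Theta_{t}(\mathcal{H}_{s})\subseteq L^{2}(pMp)$ exactly as you do (your appeal to $Q'\cap p\widetilde{M}p\subseteq M$ to produce $v$ is also implicit in the paper, since $\delta_{t}(Q)\leq 6\varepsilon_{t}(Q)$ in \Cref{P:two sided estimate intertwiner} rests on \Cref{L:intertwining on big sets}). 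The genuine gap is the step you yourself flag as the main obstacle: the claimed identity $\mathcal{H}_{s}=\overline{\mathcal{H}_{s}\cap pMp}^{\|\cdot\|_{2}}$. This does not follow from the splitting $L^{2}(pMp)=\mathcal{H}_{s}\oplus\mathcal{H}_{a}$, which is a purely Hilbert-space decomposition and says nothing about bounded vectors; and the general principle it would require is false: a closed $Q$-$Q$ subbimodule of $L^{2}(pMp)$ can meet $pMp$ only in $0$. For instance, with $Q=L^{\infty}[0,1]_{x}\subseteq M=L^{\infty}([0,1]^{2})$ and $g\in L^{2}[0,1]\setminus L^{\infty}[0,1]$, the closed subbimodule $\{h(x)g(y):h\in L^{2}[0,1]\}$ contains no nonzero element of $M$. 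So density of bounded singular vectors would need an argument specific to the maximality structure of $\mathcal{H}_{s}$; no such statement appears in \cite{Hayes8} (indeed, the reason both \cite{Hayes8} and the present paper define $W^{*}(X)$ for $X\subseteq L^{2}(M,\tau)$ via polar decompositions of affiliated operators is precisely that the singular subspace is not known to be generated by its bounded part). As written, your argument only proves that the possibly smaller algebra $W^{*}(\mathcal{H}_{s}\cap pMp)$ is $\alpha$-rigid.

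The gap is repairable inside your own setup, and the repair is exactly how the paper concludes: bounded vectors are never needed. Since $\rho(x)=v^{*}\alpha_{t}(x)v$ is a normal, unital, trace-preserving $*$-homomorphism of $pMp$ into $p\widetilde{M}p$ (indeed the restriction of an automorphism of $p\widetilde{M}p$), its $L^{2}$-extension $\Psi$ intertwines polar decompositions of affiliated operators: for $\xi\in L^{2}(pMp)$ one checks on the dense domain that $L_{\Psi(\xi)}$ has polar data $U_{L_{\Psi(\xi)}}=\rho(U_{L_{\xi}})$ and $1_{E}(|L_{\Psi(\xi)}|)=\rho(1_{E}(|L_{\xi}|))$ for Borel $E$. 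Now take $\xi\in\mathcal{H}_{s}$: you have shown $\Psi(\xi)\in L^{2}(pMp)$, so $L_{\Psi(\xi)}$ is affiliated to $pMp$ and its polar data lie in $pMp$; by the equivariance just stated, $\rho(U_{L_{\xi}})$ and $\rho(1_{E}(|L_{\xi}|))$ lie in $pMp\subseteq M$, i.e.\ every generator of $W^{*}(\mathcal{H}_{s})$ belongs to your algebra $B$. Hence $v^{*}\alpha_{t}(W^{*}(\mathcal{H}_{s}))v\subseteq M$ directly, and your transversality step (the first inequality of \Cref{P:two sided estimate intertwiner}, which needs no relative commutant hypothesis on $W^{*}(\mathcal{H}_{s})$) gives $\varepsilon_{2t}(W^{*}(\mathcal{H}_{s}))\leq 24\varepsilon_{t}(Q)\to 0$, completing the proof.
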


\begin{proof}
Fix $t \in \R$ and adopt the notations of \Cref{P:two sided estimate intertwiner}. We claim that $\gamma_t(W^{*}(\mathcal{H}_{s})) \le \delta_t(Q)$. To establish this, taking any $v \in \mathcal{V}_t$ with $vx = \alpha_t(x)v$ for all $x \in Q$, we will show that in fact $v^{*}\alpha_{t}(W^{*}(\mathcal{H}_{s}))v\subseteq M$.

For ease of notation, define $\Theta_{t}\in \Aut(p\widetilde{M}p,\tau)$ by $\Theta_{t}(x)=v^{*}\alpha_{t}(x)v.$ Observe that $\Theta_{t}\big|_{Q}=\id\big|_{Q}$ for all $t\in \R.$ Since $\Theta_{t}$ is a trace-preserving $*$-automorphism $p\widetilde{M}p\to p\widetilde{M}p,$ we can extend it to a unitary on $L^{2}(p\widetilde{M}p)$ which we will still denote by $\Theta_{t}.$ By similar remarks, we may regard $\E_{pMp}$ as a projection operator on $L^{2}(p\widetilde{M}p).$ We may thus define an operator \[T_{t}\in B(L^{2}(pMp),L^{2}(p\widetilde{M}p)\ominus L^{2}(pMp))\] by $T_{t}=(1-\E_{pMp})\circ \Theta_{t}.$ By the $pMp$-$pMp$ bimodularity of $\E_{pMp},$ and the fact that $\Theta_{t}\big|_{Q}=\id\big|_{Q},$ we know that $T_{t}$ is $Q$-$Q$ bimodular. 
By definition of $\mathcal{H}_{s}$ we thus see that $T_{t}|_{\mathcal{H}_{s}}=0,$ which implies that $\Theta_{t}(\mathcal{H}_{s})\subseteq L^{2}(pMp).$ Hence $\Theta_{t}(W^{*}(\mathcal{H}_{s}))\subseteq M$ as desired, establishing $\gamma_t(W^{*}(\mathcal{H}_{s})) \le \delta_t(Q)$. Then \Cref{P:two sided estimate intertwiner} gives
\begin{align*}
\varepsilon_{2t}(W^{*}(\mathcal{H}_{s})) \le 4\gamma_t(W^{*}(\mathcal{H}_{s})) \le 4\delta_t(Q) \le 24\varepsilon_t(Q),
\end{align*}
so the $\alpha$-rigidity of $Q$ implies that of $W^{*}(\mathcal{H}_{s})$. \qedhere

\end{proof}

 We now give several corollaries on the structure of maximal rigid algebras in the mixing case.
Recall (see \cite{PopaWeakInter}) that Popa defined $Q \le M$ to be \emph{strongly malnormal in $M$} if $wI_{M}(Q,Q)\subseteq Q$.
Of course, if $N \le M$ and $p\in \mathcal{P}(M)$, then $Q\leq pMp$ is \emph{strongly malnormal in $M$ relative to $N$} if $wI_{pMp}(Q,Q\,|\,N)\subseteq Q$. In \cite{PopaStrongRigidity} (see also \cite{PopaWeakInter}), Popa established a precise and fundamental connection between mixing properties of the inclusion $Q\leq M$ and malnormality of $Q\leq M,$ in the sense of either being strongly malnormal or containing the one-sided quasi normalizer. Exploiting this connection lends us the following corollary. For the proof, we recall that if $(M,\tau)$ is a tracial von Neumann algebra, and $x\in M,$ then we set $\|x\|_{1}=\tau(|x|).$

\begin{cor}\label{C:upgrading rigidity to all the normalizers}
Let $(M,\tau)$ be a tracial von Neumann algebra, and let $(\widetilde{M}.\alpha,\beta)$ be an $s$-malleable deformation of $M.$ 
Suppose that as an $M$-$M$ bimodule, $L^{2}(\widetilde{M})\ominus L^{2}(M)$ is mixing relative to $N \le M$, and let $p$ be a projection in $M$.
\begin{enumerate}[(i)]
\item For every maximal rigid $P\leq pMp$ with we have that $L^{2}(M)\ominus L^{2}(P)$ is mixing relative to $N$ as a $P$-$P$ bimodule. \label{I:max rigid is mixing}
\item If $Q\leq pMp$ is $\alpha$-rigid and $Q\nprec_M N$, then $W^*(wI_{pMp}(Q,Q \,|\, N))$ is $\alpha$-rigid. Hence $W^{*}(\mathcal{N}^{wq}_{pMp}(Q|N))$ and $W^{*}(q^{1}\mathcal{N}_{pMp}(Q))$  are $\alpha$-rigid.\label{I:upgrading to one-sided quasi-normalizer}
\item If $P\leq pMp$ is maximal rigid and $P\nprec_M N,$ then $P$ is strongly malnormal relative to $N.$ \label{I:max rig is strongly malnormal}
\item Suppose that $M\leq \widetilde{M}$ is a coarse inclusion in the sense of Popa \cite{PopaWeakInter}. Then for every maximal rigid $P\leq pMp,$ we have that the inclusion $P\leq pMp$ is coarse. \label{I:max rig is coarse}
\item Suppose that $M\leq \widetilde{M}$ is a coarse inclusion. Then for every diffuse, rigid $Q\leq pMp,$ we have that $W^{*}(\mathcal{H}_{s}(Q\subseteq pMp))$ is $\alpha$-rigid. \label{I:upgrading to the singular subspace}
\end{enumerate}
\end{cor}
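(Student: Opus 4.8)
The plan is to deduce all five parts from \Cref{T:upgrading rig to the singualr part} by identifying the relevant objects with the von Neumann algebra generated by the singular subspace $\mathcal{H}_s$ of the $Q$-$Q$ bimodule $\mathcal{K}:=L^{2}(p\widetilde{M}p)\ominus L^{2}(pMp)$. Since any von Neumann subalgebra of an $\alpha$-rigid algebra is again $\alpha$-rigid (the $\|\cdot\|_{\infty,2}$-supremum only shrinks on a smaller unit ball), in each rigidity assertion it suffices to exhibit the algebra in question as a subalgebra of $W^{*}(\mathcal{H}_s)$. The technical heart of the argument is an \emph{absorption principle} which I would isolate as a lemma: if ${}_Q\mathcal{K}_Q$ is mixing relative to $N$ and $Q_0\le Q$ satisfies $Q_0\nprec_M N$, then $\mathcal{K}$ contains no nonzero $Q_0$-$Q$ subbimodule that is finite-dimensional over $Q$. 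To prove this, choose a net $(u_n)$ in $\mathcal{U}(Q_0)$ with $\|\E_N(yu_nz)\|_2\to 0$ for all $y,z$ (available since $Q_0\nprec_M N$), take $\xi$ in such a subbimodule, expand $u_n\xi\in\mathcal{L}$ in a finite right-$Q$ basis of right-bounded vectors, and apply the mixing relative to $N$ hypothesis to each coefficient (which remains bounded in operator norm since $u_n$ is a left-acting unitary), forcing $\|u_n\xi\|_2\to 0$ and contradicting $\|u_n\xi\|_2=\|\xi\|_2$.

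Granting this, I would treat (ii) and (iii) first. For (ii) I claim $wI_{pMp}(Q,Q\,|\,N)\subseteq\mathcal{H}_s$. Indeed any $x$ in this space lies in $I_{pMp}(Q_0,Q)$ for some $Q_0\le Q$ with $Q_0\nprec_M N$, so $L^{2}(Q_0xQ)$ is finite-dimensional over $Q$; for any bounded $Q$-$Q$ bimodular $T\colon L^{2}(pMp)\to\mathcal{K}$, the image $T(L^{2}(Q_0xQ))=\overline{\Span}\{aT(x)b:a\in Q_0,\,b\in Q\}$ is a $Q_0$-$Q$ subbimodule of $\mathcal{K}$ finite-dimensional over $Q$, hence zero by the absorption lemma, so $T(x)=0$ and $x\in\mathcal{H}_s$. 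Thus $W^{*}(wI_{pMp}(Q,Q\,|\,N))\subseteq W^{*}(\mathcal{H}_s)$ is $\alpha$-rigid. The ``Hence'' follows from the containments $\mathcal{N}^{wq}_{pMp}(Q|N)\subseteq wI_{pMp}(Q,Q\,|\,N)$ and $q^{1}\mathcal{N}_{pMp}(Q)\subseteq wI_{pMp}(Q,Q\,|\,N)$: if $u\in\mathcal{N}^{wq}_{pMp}(Q|N)$ then $Q_0:=uQu^{*}\cap Q\nprec_M N$ and $Q_0u\subseteq uQ$, so $L^{2}(Q_0uQ)\subseteq uL^{2}(Q)$ is one-dimensional over $Q$; and if $x\in q^{1}\mathcal{N}_{pMp}(Q)$ with $Qx\subseteq\sum_j x_jQ$ then $L^{2}(QxQ)\subseteq\overline{\sum_j x_jQ}$ is finite-dimensional over $Q$. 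For (iii), apply (ii) with $Q=P$: the algebra $W^{*}(wI_{pMp}(P,P\,|\,N))$ is $\alpha$-rigid and contains $P$, so maximality of $P$ forces equality, i.e.\ $wI_{pMp}(P,P\,|\,N)\subseteq P$, which is strong malnormality relative to $N$.

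For (i) and (iv) I would run the same maximality argument on the bimodule side. Taking $Q=P$ in \Cref{T:upgrading rig to the singualr part}, first note $L^{2}(P)\subseteq\mathcal{H}_s$: for $P$-$P$ bimodular $T\colon L^{2}(pMp)\to\mathcal{K}$ and $u\in\mathcal{U}(P)$ one has $T(p)=T(upu^{*})=uT(p)u^{*}$, so $T(p)$ is a $P$-central vector in $\mathcal{K}$ and hence $0$ by mixingness relative to $N$ together with $P\nprec_M N$ (cf.\ \Cref{L:usemix}), giving $T|_{L^{2}(P)}=0$. Thus $P\subseteq W^{*}(\mathcal{H}_s)$, and since the latter is $\alpha$-rigid while $P$ is maximal rigid we get $W^{*}(\mathcal{H}_s)=P$ and $\mathcal{H}_s=L^{2}(P)$. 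The folklore embedding \cite[Proposition 3.3]{Hayes8} then realizes $L^{2}(pMp)\ominus L^{2}(P)=\mathcal{H}\cap(\mathcal{H}_s)^{\perp}$ as a $P$-$P$ subbimodule of $\bigoplus_{\infty}\mathcal{K}$; as a subbimodule of a multiple of a mixing (resp.\ coarse) bimodule is again mixing (resp.\ coarse), this yields (i) (resp.\ (iv), once one notes that $\mathcal{K}$ is coarse as a $P$-$P$ bimodule, since the $p$-corner and restriction to $P\le pMp$ of a coarse $M$-$M$ bimodule is coarse). The passage from $L^{2}(pMp)$ to $L^{2}(M)$ in (i) only adds the pieces $(1-p)L^{2}(M)$ and $L^{2}(M)(1-p)$, on which the left or right $P$-action vanishes and which are therefore trivially mixing relative to $N$.

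Finally (v) is immediate: when $M\le\widetilde{M}$ is coarse, $\mathcal{K}$ embeds into a multiple of the coarse $Q$-$Q$ bimodule $\mathcal{K}_0:=L^{2}(Q)\otimes L^{2}(Q)$, so composing any bimodular $T\colon L^{2}(pMp)\to\mathcal{K}$ with this embedding shows every $\mathcal{K}_0$-singular vector is $\mathcal{K}$-singular; hence $\mathcal{H}_s(Q\subseteq pMp)\subseteq\mathcal{H}_s$ and $W^{*}(\mathcal{H}_s(Q\subseteq pMp))\subseteq W^{*}(\mathcal{H}_s)$ is $\alpha$-rigid. The main obstacle is the absorption lemma and its clean interface with the definition of $\mathcal{H}_s$: one must verify that the finite-dimensional-over-$Q$ condition defining the weak intertwiners is precisely what a mixing bimodule cannot accommodate, and handle the one-sided module bases and coefficient bounds carefully. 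Once that is in place, the remaining parts reduce to maximality of $P$ and routine bimodule bookkeeping.
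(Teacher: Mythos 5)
Your proposal is correct, but it organizes the argument differently from the paper, and the difference is worth recording. The paper proves (i) first: maximality of $P$ forces $\mathcal{H}_s\subseteq L^2(P)$, and the folklore embedding of $\mathcal{H}_a=L^2(pMp)\ominus\mathcal{H}_s$ into $\bigoplus_\infty\mathcal{K}$ then gives mixingness of $L^2(pMp)\ominus L^2(P)$; part (ii) is deduced \emph{from} (i) by a $\|\cdot\|_1$ net computation inside $L^2(pMp)\ominus L^2(P)$ (with $P$ the rigid envelope of $Q$) together with a citation of Popa's intertwining theorem \cite[Theorem 3.1]{PopaStrongRigidity}, so the intertwiners land in $P$; and (v) is routed through the rigid envelope and part (iv). You instead hang every part directly off \Cref{T:upgrading rig to the singualr part}: your absorption lemma (a bimodule mixing relative to $N$ contains no nonzero $Q_0$-$Q$ subbimodule finite over $Q$ when $Q_0\nprec_M N$) is proved from scratch in the abstract bimodule setting and yields $wI_{pMp}(Q,Q\,|\,N)\subseteq\mathcal{H}_s$ directly, so your (ii) and (v) need neither part (i) nor the rigid-envelope machinery, and the absorption lemma replaces the appeal to \cite[Theorem 3.1]{PopaStrongRigidity}. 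What your route buys is a flatter, more self-contained logical structure; what the paper's route buys is that its computation lives inside $L^2(M)$, where right-bounded vectors are simply elements of $M$, so the module-basis bookkeeping your lemma requires never appears.

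Two points to tighten. First, in the absorption lemma you must take $\xi$ right-bounded (a basis vector $\eta_1$ suffices): only then are the coefficients $L_{\eta_j}^*\,u_n\,L_\xi$ genuine elements of $Q$, of norm at most $\|L_{\eta_j}\|\,\|L_\xi\|$ uniformly in $n$, to which the mixing hypothesis can be applied; for general $\xi$ the coefficients are merely $L^2$ vectors. Since right-bounded vectors are dense in $\mathcal{L}$ and one nonzero one gives the contradiction, this is cosmetic. Second, your derivation of $L^2(P)\subseteq\mathcal{H}_s$ in (i) and (iv) via vanishing of $P$-central vectors genuinely needs $P\nprec_M N$ (respectively $P$ diffuse in the coarse case); this matches the evidently intended hypothesis of (i) (the statement has a typo after ``with''), but note an alternative that avoids central vectors entirely: the partial isometry $v$ produced in the proof of \Cref{T:upgrading rig to the singualr part} for $Q=P$ satisfies $v^*\alpha_t(x)v=x$ for $x\in P$ as well as $v^*\alpha_t(W^*(\mathcal{H}_s))v\subseteq M$, so $v^*\alpha_t\bigl(P\vee W^*(\mathcal{H}_s)\bigr)v\subseteq M$ and the join is $\alpha$-rigid by the transversality estimate of \Cref{P:two sided estimate intertwiner}; maximality of $P$ then gives $\mathcal{H}_s\subseteq L^2(P)$, which is the only inclusion that (i) and (iv) actually use.
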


\begin{proof}

(\ref{I:max rigid is mixing}): Let $\mathcal{H}_{s}$ be as in the statement of Theorem \ref{T:upgrading rig to the singualr part}. Since $P$ is maximal rigid, we have that $\mathcal{H}_{s}\subseteq L^{2}(P).$ By the remarks preceding Theorem \ref{T:upgrading rig to the singualr part}, we know that  as a $P$-$P$ bimodule
$L^{2}(pMp)\ominus L^{2}(P)$ embeds into an infinite direct sum of $L^{2}(p\widetilde{M}p)\ominus L^{2}(pMp).$
By mixingness of $L^{2}(p\widetilde{M}p)\ominus L^{2}(pMp)$ relative to $N,$ we know that $L^{2}(pMp)\ominus L^{2}(P)$ is a mixing relative to $N$ as a $P$-$P$ bimodule.

(\ref{I:upgrading to one-sided quasi-normalizer}): Letting $P$ be the rigid envelope of $Q$, it is enough to show that for any $Q_0 \le Q$ with $Q_0 \nprec_M N$, we have $I_{pMp}(Q_0, Q) \subseteq P$. 
Fix any such $Q_0$ and any net $(v_{n})_{n}$ in $\mathcal{U}(Q_0)$ with 
\[\lim_{n\to\infty}\|\E_{N}(av_{n}b)\|_{2}=0\mbox{ for all $a,b\in M.$}\]
Then, for all $a,b\in pMp$ with $\E_{P}(a)=0=\E_{P}(b)$ we have, by mixingness of $L^{2}(pMp)\ominus L^{2}(P)$ relative to $N$ as a $P$-$P$ bimodule:
\[0=\lim_{n\to\infty}\sup_{y\in P:\|y\|\leq 1}|\tau(b^{*}v_{n}ay)|=\lim_{n\to\infty}\sup_{y\in P:\|y\|\leq 1}|\tau(\E_{P}(b^{*}v_{n}a)y)|=\lim_{n\to\infty}\|\E_{P}(b^{*}v_{n}a)\|_{1}.\]
So for each $n,$
\[\|\E_{P}(b^{*}v_{n}a)\|_{2}^{2}\leq \|\E_{P}(b^{*}v_{n}a)\|_{1}\|\E_{P}(b^{*}v_{n}a)\|\leq \|\E_{P}(b^{*}v_{n}a)\|_{1}\|a\|\|b\|\to_{n\to\infty}0.\]
Hence $I_{pMp}(Q_0, Q) \subseteq P$ follows by \cite[Theorem 3.1]{PopaStrongRigidity} (see also \cite[Proposition 2.6.3]{PopaWeakInter}, \cite[Lemma D.3]{Va06a}).
The ``hence" part follows, since $q^{1}\mathcal{N}_{pMp}(Q),\mathcal{N}_{pMp}^{wq}(Q|N)\subseteq wI_{pMp}(Q,Q\,|\,N),$ (the inclusion $\mathcal{N}_{pMp}^{wq}(Q|N)\subseteq  wI_{pMp}(Q,Q\,|\,N)$ was noted in \cite{IPP, PopaWeakInter}).

(\ref{I:max rig is strongly malnormal}):  $P\nprec_M N$ implies $P \subseteq W^{*}(wI_{pMp}(P,P\,|\,N))$ which is $
\alpha$-rigid by (\ref{I:upgrading to one-sided quasi-normalizer}). Thus the maximality of $P$ forces $W^{*}(wI_{pMp}(P,P\,|\,N)) \subseteq P$ as desired.


(\ref{I:max rig is coarse}):
 By assumption $L^{2}(\widetilde{M})\ominus L^{2}(M)$ embeds into an infinite direct sum of the coarse $M$-$M$ bimodule. So as $pMp$-$pMp$ bimodules we have that $L^{2}(p\widetilde{M}p)\ominus L^{2}(pMp)$ embeds into an infinite direct sum $L^{2}(pMp)\otimes L^{2}(pMp).$ Since $P\leq pMp,$ if we view $L^{2}(pMp)\otimes L^{2}(pMp)$ as a $P$-$P$ bimodule, then it embeds into an infinite direct sum of the coarse $P$-$P$ bimodule. We may now argue as in part (\ref{I:max rigid is mixing}) to complete the proof.

(\ref{I:upgrading to the singular subspace}): Let $P$ be the rigid envelope of $Q.$ Then
\[\mathcal{H}_{s}(Q\subseteq pMp)\subseteq \mathcal{H}_{s}(P\subseteq pMp)=L^{2}(P),\]
the last equality following by part (\ref{I:max rig is coarse}).
Hence $W^{*}(\mathcal{H}_{s}(Q\subseteq pMp))\subseteq P,$ and thus $W^{*}(\mathcal{H}_{s}(Q\subseteq pMp))$ is $\alpha$-rigid.\qedhere
\end{proof}

We remark that it is direct to establish, 
as in \cite[Theorem 4.5]{PetersonL2}, 
that if $Q$ is $\alpha$-rigid, and $Q\nprec_M N,$ then $\alpha_{t}$ converges uniformly to the identity as $t\to 0$ on $\mathcal{N}_{M}^{wq}(Q|N)$ in $\|\cdot\|_{2}.$ However, this is not enough to imply that $W^{*}(\mathcal{N}_{M}^{wq}(Q|N))$ is $\alpha$-rigid, because \emph{$\mathcal{N}_{M}^{wq}(Q|N)$ is not a group}. So we cannot apply the same convex hull arguments as in \cite[Proposition 5.1]{PopaL2Betti},\cite[Theorem 4.5]{PetersonL2} 
to upgrade uniform convergence on $\mathcal{N}_{M}^{wq}(Q|N)$ to uniform convergence on the unit ball of $W^{*}(\mathcal{N}_{M}^{wq}(Q|N)).$ In fact, the only proof that we know of rigidity of $W^{*}(\mathcal{N}_{M}^{wq}(Q|N))$ uses either the existence of rigid envelopes, or directly uses Theorem \ref{T:upgrading rig to the singualr part}.

Recall the following notion due to Popa \cite[Definition 2.3]{PopaCohomologyOE}: if $H$ is a subgroup of a discrete group $G,$ then $H$ is \emph{$wq$-normal} in $G$ if there is a chain $(G_{\alpha})_{\alpha}$ indexed by ordinals at most  some given ordinal $\gamma,$ such that $G_{\gamma}=G,G_{0}=H,$ and
\begin{itemize}
    \item $G_{\alpha}=\mathcal{N}_{G}^{wq}(G_{\alpha-1})$ if $\alpha$ is a successor ordinal, and
    \item  $G_{\alpha}=\bigcup_{\beta<\alpha}G_{\beta}$ if $\alpha$ is a limit ordinal.
\end{itemize}
Combining Corollary \ref{T:PropT intro} with Corollary \ref{C:upgrading rigidity to all the normalizers}, we obtain that von Neumann algebras of groups which have wq-normal, infinite, property (T) subgroups are rigid with respect to every s-malleable deformation $(\widetilde{M},\alpha,\beta)$ with $L^{2}(\widetilde{M})\ominus L^{2}(M)$ a mixing $M$-$M$ bimodule. Compare with \cite[Corollary 0.2]{PopaCoc} and \cite[Lemma 2.4]{PopaCohomologyOE}.

As an application of Corollary \ref{C:upgrading rigidity to all the normalizers}, we can also say in the mixing situation that if $P,Q$ are maximal rigid subalgebras which are diffuse then no subalgebra of $P$ which is diffuse can intertwine into $Q$ unless $P,Q$ have unitarily conjugate corners.

\begin{thm}\label{T:generalized dichotomy}
Let $(M,\tau)$ be a tracial von Neumann algebra, and $\alpha_{t}\colon \widetilde{M}\to \widetilde{M}$ an s-malleable deformation of $M.$ Suppose that $L^{2}(\widetilde{M})\ominus L^{2}(M)$ is a mixing  as an $M$-$M$ bimodule. Let $p,q$ be projections in $M$ and $Q_{0}\leq qMq$ be $\alpha$-rigid, and $P\leq pMp$ maximal rigid. Suppose that $Q_{0},P$ are diffuse. Let $Q\leq qMq$ be the rigid envelope of $Q_{0}.$ If $Q_{0}\prec_M P,$ then there are nonzero projections $e\in P,f\in Q$ and a $u\in \mathcal{U}(M)$ so that $u(fQf)u^{*}=ePe.$

\end{thm}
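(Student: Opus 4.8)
The plan is to extract an intertwiner from $Q_0 \prec_M P$, pin down its support and range projections inside $Q$ and $P$ respectively, and then exploit that any subalgebra of an $\alpha$-rigid algebra is automatically $\alpha$-rigid in order to promote this partial matching to a genuine conjugacy of corners.

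First I would apply Popa's Intertwining-by-Bimodules Theorem (\Cref{T:Popa IBBT}) to $Q_0 \prec_M P$ to produce nonzero projections $q_1 \in Q_0$, $p_1 \in P$, a normal unital $*$-homomorphism $\psi \colon q_1 Q_0 q_1 \to p_1 P p_1$ (which, after shrinking $q_1$, I may take to have diffuse image), and a nonzero partial isometry $v \in p_1 M q_1$ with $\psi(x)v = vx$ for all $x \in q_1 Q_0 q_1$. As usual this yields $s := v^*v \in (q_1 Q_0 q_1)' \cap q_1 M q_1$, $r := vv^* \in \psi(q_1 Q_0 q_1)' \cap p_1 M p_1$, and $v(q_1 Q_0 q_1)v^* = \psi(q_1 Q_0 q_1)r$. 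The first key point is to \emph{locate} these projections. Since $\psi(q_1 Q_0 q_1)$ is a diffuse subalgebra of $P$ and $L^2(M)\ominus L^2(P)$ is mixing as a $P$–$P$ bimodule (\Cref{C:upgrading rigidity to all the normalizers}(i)), the argument of \Cref{L:usemix} applied to this bimodule forces $\psi(q_1 Q_0 q_1)'\cap p_1 M p_1 \subseteq P$, so $r \in P$. On the other side, $(q_1 Q_0 q_1)' \cap q_1 M q_1$ is itself $\alpha$-rigid: $q_1 Q_0 q_1$ is rigid (a corner of the rigid $Q_0$), its relative commutant in $q_1 \widetilde M q_1$ lies in $q_1 M q_1$ by \Cref{L:usemix}, and \Cref{T:amping up to commutants} makes $q_1 Q_0 q_1 \vee [(q_1 Q_0 q_1)'\cap q_1 M q_1]$ rigid; since $q_1 Q q_1$ is the rigid envelope of $q_1 Q_0 q_1$ in $q_1 M q_1$ (by \Cref{C:corners of pineapples} and uniqueness of envelopes, \Cref{C:existence of maximal subalgebras2}), we get $s \in q_1 Q q_1 \subseteq Q$.

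With $s \in Q$ and $r \in P$ in hand, set $B_0 := v(sQ_0 s)v^* = \psi(q_1 Q_0 q_1)r \subseteq rPr$. The crucial observation is that $B_0$, being a subalgebra of the $\alpha$-rigid $P$, is itself $\alpha$-rigid; hence $rPr$ is the unique rigid envelope of the diffuse rigid $B_0$ in $rMr$, while $sQs$ is the rigid envelope of the isomorphic copy $sQ_0 s$ in $sMs$, and $\Ad(v)$ implements a trace-preserving isomorphism $sMs \to rMr$ carrying $sQ_0 s$ onto $B_0$. The heart of the proof is to upgrade this isomorphism of the diffuse rigid pieces to a conjugacy of corners of the two envelopes $sQs$ and $rPr$. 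To do this I would run a maximality argument over the $\alpha$-rigid subalgebras $D$ with $sQ_0 s \subseteq D \subseteq sQs$ and $vDv^* \subseteq rPr$: this family is nonempty, stable under increasing unions (stability of rigidity along chains, \Cref{C:existence of maximal subalgebras}), and stable under adjoining relative commutants, since $vDv^*$ is diffuse in $P$ so malnormality forces $(vDv^*)'\cap rMr \subseteq rPr$ while \Cref{T:amping up to commutants} absorbs the preimage relative commutant into $D\vee(D'\cap sMs)\subseteq sQs$. Passing to a maximal element and invoking the strong malnormality of \emph{both} $P$ and $Q$ (\Cref{C:upgrading rigidity to all the normalizers}(iii)) to exclude proper containment, one obtains projections $f \le s$ in $Q$, $e \le r$ in $P$, and a partial isometry $w$ (a cutdown of $v$) with $w^*w = f$, $ww^* = e$, and $w(fQf)w^* = ePe$.

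Finally, since $w$ is a partial isometry with $w^*w = f \in Q$ and $ww^* = e \in P$, we have $f \sim e$ in the finite algebra $M$, so the center-valued traces of $f$ and $e$ agree, whence $1-f \sim 1-e$; extending $w$ across these complements produces a unitary $u \in \mathcal{U}(M)$ with $u(fQf)u^* = w(fQf)w^* = ePe$, as required. The main obstacle is exactly the third paragraph: because $\Ad(v)$ does \emph{not} intertwine the deformation $\alpha$, rigidity cannot be transported along it, and the two envelopes must instead be matched piece-by-piece out of the malnormality of $P$ and $Q$. Equivalently, the real content is to show that the intertwining of the diffuse $Q_0$ into the maximal rigid $P$ propagates through the mixing structure to a two-sided intertwining $Q \prec_M P$ and $P \prec_M Q$ of the full rigid envelopes, after which the standard conjugacy argument for mutually intertwining malnormal subalgebras closes the proof.
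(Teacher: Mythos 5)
Your first two paragraphs are essentially correct and track the paper's own proof: locating $s=v^*v$ in $Q$ and $r=vv^*$ in $P$ via rigidity of the algebra generated by $q_1Q_0q_1$ and its relative commutant (\Cref{T:amping up to commutants}), uniqueness of rigid envelopes in the mixing setting (\Cref{C:existence of maximal subalgebras2}), and maximality of corners (\Cref{C:corners of pineapples}) is exactly how the paper proceeds (your use of \Cref{L:usemix} together with \Cref{C:upgrading rigidity to all the normalizers}~(i) for $r\in P$ is a harmless variant). The gap is in your third paragraph, and it stems from a false premise. You assert that ``because $\Ad(v)$ does not intertwine the deformation $\alpha$, rigidity cannot be transported along it.'' This is wrong: conjugation by a \emph{fixed} contraction $v\in M$ does preserve $\alpha$-rigidity, since for $x$ in the unit ball of any subalgebra $B$ one has
\[
\|\alpha_t(vxv^*)-vxv^*\|_2 \;\le\; 2\|\alpha_t(v)-v\|_2+\|\alpha_t(x)-x\|_2,
\]
where the first term tends to $0$ as $t\to 0$ because $v$ is a single element of $M$, and the bound is uniform over $(B)_1$. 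This elementary observation is precisely what the paper uses (``it is easily seen to be rigid''), and it finishes the proof in three lines: $v(sQs)v^*$ is rigid and its intersection with $rPr$ contains the diffuse $B_0$, so since $rPr$ is the rigid envelope of $B_0$, the absorption property \Cref{C:existence of maximal subalgebras2}~(i) gives $v(sQs)v^*\subseteq rPr$; conversely $v^*(rPr)v$ is rigid and contains the maximal rigid $sQs$, forcing $v(sQs)v^*=rPr$; finally extend $v$ to a unitary $u\in\mathcal{U}(M)$ (equivalent projections in a finite von Neumann algebra have equivalent complements). Note this yields the conclusion with $f=s$ and $e=r$; no further cutting down of projections is needed.

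Because you reject this fact, you substitute the Zorn's-lemma scheme, and that scheme does not close. A maximal element $D$ of your family $\{D\ \alpha\text{-rigid}: sQ_0s\subseteq D\subseteq sQs,\ vDv^*\subseteq rPr\}$ can be shown (by your own absorption arguments, and even a normalizer variant) to contain its relative commutant and to be singular, but singular proper subalgebras abound, so nothing forces $D=sQs$; and you never exhibit a mechanism that extracts the projections $f\le s$, $e\le r$ and the partial isometry $w$ with $w(fQf)w^*=ePe$ from such a maximal $D$. The appeal to ``strong malnormality of both $P$ and $Q$ to exclude proper containment'' is not an argument: malnormality constrains intertwiners of diffuse subalgebras, and you do not say to which intertwiner it is applied or how it enlarges $D$. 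Likewise, your closing claim that mutual intertwining $Q\prec_M P$ and $P\prec_M Q$ of malnormal subalgebras yields unitary conjugacy of corners is neither proved by you nor available in the paper, and the mutual intertwining itself is never derived. So the heart of the proof is missing; ironically, the fact you declared to be the obstacle is exactly the fact that fills it.
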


\begin{proof}

The fact that $Q_{0}\prec_M P$ means that there are projections $e_{0}\in P,f_{0}\in Q_{0},$ a unital, normal $\ast$-homomorphism $\theta\colon f_{0}Q_{0}f_{0}\to e_{0}Pe_{0},$ and a nonzero partial isometry $v\in M$ so that:
\begin{itemize}
    \item $xv=v\theta(x)$ for all $x\in f_{0}Q_{0}f_{0},$
    \item $vv^{*}\in (f_{0}Q_{0}f_{0})'\cap f_{0}Mf_{0},$
    \item $v^{*}v\in \theta(f_{0}Q_{0}f_{0})'\cap e_{0}Me_{0}.$
\end{itemize}

Set $f=vv^{*},e=v^{*}v.$ We first claim that $e\in P,f\in Q.$ Let us begin by showing that $f\in Q.$ To prove this, set
\[\widehat{Q}_{0}=f_{0}Q_{0}f_{0}\vee [(f_{0}Q_{0}f_{0})'\cap f_{0}Mf_{0}]\]
Then $\widehat{Q}_{0}\leq W^{*}(N_{f_{0}Mf_{0}}(f_{0}Q_{0}f_{0})),$ so by Corollary \ref{C:upgrading rigidity to all the normalizers} we know that $\widehat{Q}_{0}$ is $\alpha$-rigid. By Corollary \ref{C:corners of pineapples} (\ref{I:compressing down}), $f_{0}Qf_{0}$ is maximal rigid. Since $Q$ is diffuse and $_{M}[L^{2}(\widetilde{M})\ominus L^{2}(M)]_{M}$ is mixing, we know that $\widehat{Q}_{0}$ has a unique rigid envelope. Since $\widehat{Q}_{0}\supseteq f_{0}Q_{0}f_{0},$ the rigid envelope of $\widehat{Q}_{0}$ is the same as the rigid envelope of $f_{0}Q_{0}f_{0}.$  Thus we must have that $\widehat{Q}_{0}\subseteq f_{0}Qf_{0}.$ But $f\in \widehat{Q}_{0}$ by definition, so $f\in Q.$

To see that $e\in P,$ let
\[\widehat{P}_{0}=(\theta(f_{0}Q_{0}f_{0})'\cap e_{0}Me_{0})\vee \theta(f_{0}Q_{0}f_{0}).\]
We first claim that $\widehat{P}_{0}$ is $\alpha$-rigid. Since $\theta(f_{0}Q_{0}f_{0})$ is the image of a diffuse algebra under a nonzero normal homomorphism, we know that $\theta(f_{0}Qf_{0})$ is diffuse. So we can argue as in the first paragraph to see that $\widehat{P}_{0}$ is rigid. But then we can argue again as in the first paragraph to see that $\widehat{P}_{0}\subseteq e_{0}Pe_{0},$ and thus $e\in P.$

Since $v^{*}v=f\in Q,vv^{*}=e\in P,$ it is easy to see that $v^{*}(fQf)v$ is a subalgebra of $eMe,$ and it is easily seen to be rigid. Additionally,
\[v^{*}(fQf)v\cap ePe\geq e\theta(f_{0}Q_{0}f_{0}).\]
As $e\theta(f_{0}Qf_{0})$ is diffuse, we have that $v^{*}(fQf)v\cap ePe$ is diffuse. Since $_{M}[L^{2}(\widetilde{M})\ominus L^{2}(M)]_{M}$ is mixing, we know that $v^{*}(fQf)v$ is contained in a unique rigid envelope. Since $v^{*}(fQf)v\cap ePe$ is diffuse and $ePe$ is maximal rigid by Corollary \ref{C:corners of pineapples} (\ref{I:compressing down}), it follows that the rigid envelope of $v^{*}(fQf)v$ is $ePe.$ So $v^{*}(fQf)v\subseteq ePe.$ But then $fQf\subseteq v(ePe)v^{*},$ and since $v^{*}v=e\in P$ we know that $v(ePe)v^{*}$ is a rigid subalgebra of $fMf.$ Since $fQf$ is maximal rigid, this implies that $v(ePe)v^{*}=fQf.$ Since $M$ is finite, we may find a $u\in \mathcal{U}(M)$ so that $ue=v,$ and then we have that $u(ePe)u^{*}=fQf.$ \qedhere

\end{proof}

We have the following dichotomy for weak intertwining spaces of maximal rigid subalgebras.

\begin{cor}
Let $(M,\tau)$ be a tracial von Neumann algebra, and $\alpha_{t}\colon \widetilde{M}\to \widetilde{M}$ an s-malleable deformation of $M.$ Suppose that $_{M}[L^{2}(\widetilde{M})\ominus L^{2}(M)]_{M}$ is mixing. Let $p,q\in M$ be projections. If $P\leq pMp,Q\leq qMq$ are two diffuse maximal rigid subalgebras of $M,$ then exactly one of the following two occurs:
\begin{itemize}
    \item either there are nonzero projections $e\in P,f\in Q$ and a unitary $u\in \mathcal{U}(M)$ so that $u(ePe)u^{*}=fQf,$ or
    \item $wI_{M}(Q,P)=\{0\}.$
\end{itemize}

\end{cor}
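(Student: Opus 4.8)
The plan is to derive this dichotomy as a two-sided consequence of \Cref{T:generalized dichotomy}: I will show that the two alternatives are mutually exclusive and that at least one of them always holds. Throughout write $q=1_{Q}$ and $p=1_{P}$, and recall that mixingness guarantees that every diffuse subalgebra has a \emph{unique} rigid envelope by \Cref{C:existence of maximal subalgebras2}, so that the maximal rigid $Q$ is automatically the rigid envelope of any diffuse $\alpha$-rigid subalgebra it contains.

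For exhaustiveness I would assume $wI_{M}(Q,P)\neq\{0\}$ and unpack the definition: there is a diffuse $Q_{0}\leq Q$ and a nonzero $x\in I_{M}(Q_{0},P)$, meaning $L^{2}(Q_{0}xP)$ is finite dimensional as a right $P$-module. Since $L^{2}(Q_{0}xP)$ is then a nonzero $Q_{0}$-$P$-subbimodule of $L^{2}(M)$ of finite right $P$-dimension, Popa's Intertwining-by-Bimodules Theorem \Cref{T:Popa IBBT} (condition (\ref{I:FD bimodule}) implies condition (1)) yields $Q_{0}\prec_{M}P$. Now $Q_{0}$ is diffuse and, being a subalgebra of the $\alpha$-rigid algebra $Q$, is itself $\alpha$-rigid, while $Q$ is its rigid envelope. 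Thus \Cref{T:generalized dichotomy} applies to the pair $Q_{0}\leq qMq$ and the maximal rigid $P$, and as $Q_{0}\prec_{M}P$ it produces nonzero projections $e\in P$, $f\in Q$ and $u\in\mathcal{U}(M)$ with $u(fQf)u^{*}=ePe$; replacing $u$ by $u^{*}$ puts this in exactly the form of the first alternative.

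For mutual exclusivity I would run the converse by hand. Suppose the first alternative holds, say $u(ePe)u^{*}=fQf$ for nonzero projections $e\in P$, $f\in Q$ and $u\in\mathcal{U}(M)$. Then $\Ad(u)$ restricts to a $*$-isomorphism $ePe\to fQf$, and a direct check shows that the partial isometry $v=ue$ satisfies $yv=v(u^{*}yu)$ with $u^{*}yu\in ePe$ for all $y\in fQf$, so that $fQf\cdot v\subseteq v(ePe)\subseteq vP$. Consequently $L^{2}(fQf\,v\,P)\subseteq\overline{vP}^{\|\cdot\|_{2}}=vL^{2}(P)$, which has finite right $P$-dimension, whence $v$ is a nonzero element of $I_{M}(fQf,P)$. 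Since $fQf$ is a diffuse (being a corner of the diffuse $Q$) subalgebra of $Q$, this gives $0\neq v\in wI_{M}(Q,P)$, so the second alternative fails. Combined with the previous paragraph, this shows that exactly one of the two alternatives occurs.

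The one delicate point, and the step I expect to need the most care, is unit bookkeeping: the diffuse subalgebra $Q_{0}$ coming from $wI_{M}(Q,P)$ need not be unital in $Q$. If $1_{Q_{0}}=f_{0}<q$, I would instead apply \Cref{T:generalized dichotomy} inside $f_{0}Mf_{0}$, using \Cref{C:corners of pineapples}\,(\ref{I:compressing down}) to identify $f_{0}Qf_{0}$ as the unique rigid envelope of $Q_{0}$; the conjugate corner of $f_{0}Qf_{0}$ so obtained is still a corner of $Q$, leaving the conclusion intact. The remaining verifications—that corners of a diffuse algebra are diffuse and that $vL^{2}(P)$ is finite dimensional over $P$—are routine and I would not belabor them.
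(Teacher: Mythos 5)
Your proposal is correct and takes essentially the same route as the paper: both translate $wI_{M}(Q,P)\ne\{0\}$ into $Q_{0}\prec_{M}P$ for a diffuse $Q_{0}\leq Q$ via Part (4) of \Cref{T:Popa IBBT}, and then invoke \Cref{T:generalized dichotomy} to produce the unitarily conjugate corners. The paper folds the mutual-exclusivity direction into the same ``if and only if'' statement, whereas you verify it by hand with the partial isometry $v=ue$; that computation is exactly what underlies the backward implication of the paper's equivalence, so the content is the same.
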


\begin{proof}
 By Part \ref{I:FD bimodule} of Theorem \ref{T:Popa IBBT}, $wI_{M}(Q,P)\ne \{0\}$ if and only if there is a diffuse $Q_{0}\leq Q$ so that $Q_{0}\prec_M P.$ So Theorem \ref{T:generalized dichotomy} implies that if $wI_{M}(Q,P)\ne \{0\},$ then there are projections $e\in P,f\in Q$ and a unitary $u\in \mathcal{U}(M)$ so that $u(ePe)u^{*}=fQf.$ \qedhere

\end{proof}



\section{Applications to $L^2$-rigidity}

$L^2$-rigidity is a von Neumann algebraic analog of vanishing of first $\ell^2$-Betti number for countable, discrete groups. It is not hard to see that for a countable, discrete group $G$ with positive first $\ell^2$-Betti number, the corresponding group von Neumann algebra $L(G)$ is not $L^2$-rigid. The converse is a well-known open problem in the subject. Nonetheless, there are many well established parallels between $L^2$-rigidity and vanishing of the first $\ell^2$-Betti number. For example, as with $\ell^{2}$-Betti numbers, one can show that $L^{2}$-rigidity of $L(G)$ is invariant under orbit equivalence \cite{PetersonSinclair}. $L^{2}$-rigidity of $L(G)$ implies cocycle superrigidity of Bernoulli shifts, and similarly having positive first $\ell^2$-Betti number is an obstruction to even $\mathbb T$-cocycle superrigidity (see \cite[Section 5]{PetersonSinclair}).

Let $(M,\tau)$ be a tracial von Neumann algebra, and let $\mathcal H$ be an $M$-$M$ bimodule. We assume that there is an antilinear involution $J: \cal H\to \cal H$ such that $J(x\xi y) = y^*J(\xi)x^*$ for all $x,y\in M$ and $\xi\in \mathcal H$, in which case we say that $\mathcal H$ comes equipped with a \emph{ real structure}. Consider an $M$-$M$ bimodule with real structure $(\mathcal H, J)$. Given a closeable, unbounded operator $\delta: L^2(M)\to \mathcal H$, we write $D(\delta)$ for its domain. We say that an unbounded operator $\delta: L^2(M)\to \mathcal H$ is a \emph{closeable, real derivation} if it closeable as an operator, $D(\bar\delta)\cap M$ is an ultraweakly dense $\ast$-subalgebra of $M$ on which $\delta$ acts as a derivation, i.e., \[\bar\delta(xy) = x\bar\delta(y) + \bar\delta(x)y\ \textup{for all}\ x,y\in D(\bar\delta)\cap M,\] and $J\bar\delta(x) = \bar\delta(x^*)$ for all $x\in D(\bar\delta)\cap M$.

For a proof of the following proposition, see \cite[section 2]{PetersonL2}.
\begin{prop} To every derivation there is an associated one-parameter Markov semigroup of trace-preserving u.c.p.\ maps $\vp_t: M\to M$ given by $$\vp_t(x) := \exp(-t\delta^*\overline{\delta}(x))$$ for all $x\in D(\bar\delta)\cap M$. The semigroup is continuous in the topology of pointwise strong convergence.
\end{prop}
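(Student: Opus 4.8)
The plan is to identify $\vp_t$ with the symmetric Markov semigroup canonically associated to the quadratic form built out of $\bar\delta$, and then to extract unitality, complete positivity, trace-preservation, and continuity from the noncommutative Beurling--Deny correspondence. Since $\bar\delta$ is a closed, densely defined operator, the operator $\Delta := \delta^{*}\bar\delta = \bar\delta^{*}\bar\delta$ is positive and self-adjoint on $L^{2}(M)$ by von Neumann's theorem (here $\delta^{*}=\bar\delta^{*}$ because the adjoint of a closeable operator coincides with that of its closure). First I would use the Borel functional calculus to produce $\vp_t = \exp(-t\Delta)$, a strongly continuous semigroup of self-adjoint contractions of $L^{2}(M)$; the associated closed, densely defined, nonnegative quadratic form is $\mathcal{E}[\xi] = \|\bar\delta(\xi)\|_{\mathcal{H}}^{2}$ with form domain $D(\bar\delta)=D(\Delta^{1/2})$.

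The crux is to verify that $\mathcal{E}$ is a \emph{completely Dirichlet form} on the standard form $(L^{2}(M),J)$ of $M$ in the sense of Cipriani. Granting this, the correspondence of Cipriani and Sauvageot between completely Dirichlet forms and semigroups of normal, unital, completely positive, $\tau$-preserving maps, as recalled in \cite[Section 2]{PetersonL2}, identifies the $L^{2}$-semigroup $\exp(-t\Delta)$ with the $L^{2}$-extension of a unique such Markov semigroup on $M$, and this family is exactly $\vp_t$. To establish the Dirichlet property I would combine the derivation (Leibniz) identity with the reality condition $J\bar\delta(x)=\bar\delta(x^{*})$: the latter yields $\mathcal{E}[x^{*}]=\mathcal{E}[x]$ and $\vp_t(x^{*})=\vp_t(x)^{*}$, while the former gives, for every $1$-Lipschitz $f\colon\R\to\R$ with $f(0)=0$ and every self-adjoint $x\in D(\bar\delta)\cap M$, the contraction estimate $\|\bar\delta(f(x))\|_{\mathcal{H}}\le\|\bar\delta(x)\|_{\mathcal{H}}$. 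Applying this to the normal contraction $s\mapsto (s\vee 0)\wedge 1$ gives $\mathcal{E}[(x\vee 0)\wedge 1]\le\mathcal{E}[x]$, which is precisely the Markovian property of the form; running the identical argument with $\bar\delta$ replaced by the amplification $\bar\delta\otimes\id_{M_n(\C)}$ on $M_n(M)$ upgrades this to complete Dirichletness.

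With the correspondence in hand, unitality $\vp_t(1)=1$ is immediate, since a derivation annihilates the identity, so $\bar\delta(1)=0$ and hence $1\in\ker\Delta$; complete positivity and trace-preservation are built into the Markov semigroup produced by the correspondence, the latter reflecting the symmetry of $\mathcal{E}$ with respect to $\tau$. Finally, to obtain the continuity in the pointwise strong topology I would combine the strong continuity of $t\mapsto\exp(-t\Delta)$ on $L^{2}(M)$ with the uniform bound $\|\vp_t\|_{\infty\to\infty}\le 1$ coming from complete positivity and unitality: for $x\in M$ this gives $\|\vp_t(x)-x\|_{2}\to 0$, and since the $\vp_t$ are $\tau$-preserving operator-norm contractions, a density argument identical to the one used in the proposition on continuity of one-parameter automorphism groups earlier in this section promotes $\|\cdot\|_{2}$-convergence to strong convergence.

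The main obstacle is the Lipschitz estimate $\|\bar\delta(f(x))\|_{\mathcal{H}}\le\|f'\|_{\infty}\|\bar\delta(x)\|_{\mathcal{H}}$ for self-adjoint $x$, since a derivation valued in a bimodule carries no naive chain rule. I would establish it by first expanding $\bar\delta$ on polynomials through $\bar\delta(x^{n})=\sum_{k=0}^{n-1}x^{k}\bar\delta(x)x^{n-1-k}$, then passing to an integral (Cauchy-transform) representation of Lipschitz functions and controlling the resulting bimodule-coefficient sums, and finally approximating the non-smooth truncation $s\mapsto(s\vee 0)\wedge 1$ in the appropriate sense by smooth $1$-Lipschitz functions fixing $0$. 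This estimate is the genuinely analytic input; once it is in place, every other assertion of the proposition follows formally from the Cipriani--Sauvageot correspondence and elementary semigroup theory.
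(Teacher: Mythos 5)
The paper gives no argument for this proposition at all: it defers to \cite[Section 2]{PetersonL2}, and the proof found there is exactly the route you take --- realize $\exp(-t\delta^{*}\bar\delta)$ as the semigroup of the quadratic form $\mathcal{E}[\xi]=\|\bar\delta(\xi)\|_{\mathcal{H}}^{2}$, verify that $\mathcal{E}$ is a completely Dirichlet form, and invoke the correspondence of Cipriani--Sauvageot (building on Davies--Lindsay and Sauvageot) to produce the unital, completely positive, trace-preserving maps; your treatment of unitality via $\bar\delta(1)=0$, of trace-preservation via symmetry, and of the upgrade from $\|\cdot\|_{2}$-convergence to pointwise strong convergence are all as in that source. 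So your architecture coincides with the paper's.

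There is, however, one step that would fail as you describe it: the sharp Lipschitz estimate. Bounding the Cauchy/Fourier integral representation termwise gives
\[
\|\bar\delta(f(x))\|_{\mathcal{H}}\;\le\;\Bigl(\tfrac{1}{2\pi}\int_{\R}|\widehat{f'}(\xi)|\,d\xi\Bigr)\,\|\bar\delta(x)\|_{\mathcal{H}},
\]
i.e.\ the Fourier-algebra norm of $f'$ rather than $\|f'\|_{\infty}$. For smooth $1$-Lipschitz approximants of the truncation $s\mapsto (s\vee 0)\wedge 1$, the derivative approximates an indicator function and this Fourier-algebra norm blows up (logarithmically in the smoothing scale), whereas Markovianity of $\mathcal{E}$ requires the constant to be exactly $1$. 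The missing idea is that left and right multiplication by a self-adjoint $x$ are \emph{commuting self-adjoint operators} $L_{x},R_{x}$ on $\mathcal{H}$, so that your polynomial identity reads $\bar\delta(p(x))=p^{[1]}(L_{x},R_{x})\,\bar\delta(x)$, where $p^{[1]}(s,t)=(p(s)-p(t))/(s-t)$ is the divided difference; by closedness of $\bar\delta$ this formula passes to Lipschitz $f$, and since the joint functional calculus of the commuting pair $(L_{x},R_{x})$ is bounded by the supremum norm of the symbol, the bound $\sup_{s\neq t}|f^{[1]}(s,t)|=\operatorname{Lip}(f)$ yields the constant $1$. This Hilbertian double-operator-integral argument is precisely why the estimate holds in the bimodule setting even though operator-norm Lipschitz estimates of this type are false in general, and it is how Davies--Lindsay and Cipriani--Sauvageot actually argue. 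With that replacement (plus the routine extension of Markovianity from the core $D(\bar\delta)\cap M$ to the full form domain by lower semicontinuity), your proof is complete and coincides with the one the paper cites.
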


The following definition essentially appears as Definition 2.13 in \cite{PetersonSinclair} and is a technical modification of \cite[Definition 4.1]{PetersonL2}.

\begin{defn}\label{L2rigid} Let $(M,\tau)$ be a tracial von Neumann algebra, $p\in \mathcal P(M)$, and $Q\leq pMp$. We say that $Q$ is \emph{$L^2$-rigid} if for every tracial inclusion $(M\subseteq N,\tau_N)$ into any tracial von Neumann algebra $N$ and any closeable, real derivation $\delta: L^2(N)\to \cal H$ with ${}_M\cal H_M$ embeddable into a countable direct sum of coarse $M$-$M$-bimodules, for the associated Markov semigroup $(\vp_t)$, $\|(\vp_t-\id)|_Q\|_{\infty,2}\to 0$ as $t\to 0.$

\end{defn}

The Markov semigroup gives a deformation by u.c.p.\ maps, which is suitable in many instances for running a deformation/rigidity argument. However, this deformation was observed to act poorly with respect to algebraic constructs, such as for the case of tensor products of bimodules \cite{SinclairGaussian, ChifanSinclair}, and in those cases it seems necessary with current techniques that the Markov semigroup ``lifts'' in the sense described below to an s-malleable deformation. With this in mind we fix the following terminology.

\begin{defn} Let $(M,\tau_M)$ be a tracial von Neumann algebra, and let $(\vp_t)_{t\geq0}$ be a pointwise-strongly continuous one-parameter semigroup of trace-preserving u.c.p.\ maps. We say that $(\vp_t)$ admits an \emph{s-malleable dilation} $(\widetilde M, \alpha, \beta)$, if $M\subseteq \widetilde M$ is a tracial inclusion of finite von Neumann algebras and $(\alpha_t,\beta)$ is an s-malleable deformation of $\widetilde M$ such that $\vp_{f(t)}(x) =\E_M(\alpha_t(x))$ for all $x\in M$ and $t\in\mathbb R$ for some continuous, open map $f:\mathbb R\to \mathbb R_+$ so that $f(0) = 0$.
\end{defn}

In the case of a group von Neumann algebra $L(G)$ and a derivation arising from a $1$-cocycle: $b: G\to \cal H$, we have that the Markov semigroup is described by the one-parameter semigroup of Schur multipliers $\vp_t(g) := \exp(-t\|b(g)\|^2)$ on $L(G)$, and we have that the $s$-malleable deformation described in Definition \ref{D:s-mall def cocycle}
gives an s-malleable dilation of this Markov semigroup with $f(t)=t^2$.  Working with general derivations and their Markov semigroups is much more challenging, but there is a natural framework for dilation theory in the theory of free stochastic differential equations, \cite{Dabrowski, JRS}. The following theorem is due to Y.\ Dabrowski and appears as a special case of \cite[Theorem 20]{Dabrowski} combined with \cite[Proposition 31]{Dabrowski}.

\begin{thm}[Dabrowski]\label{yoann} Let $M$ be a II$_1$ factor and let $\delta: M\to [L^2(M)\otimes L^2(M)]^{\oplus \infty}$ be a closeable, real derivation. Then $\exp(-t\delta^{*}\overline{\delta})$ admits an s-malleable dilation $(\widetilde M, \alpha,\beta)$ so that $L^2(\bigvee_{t\in\mathbb [0,\infty)} \alpha_t(M))\ominus L^2(M)$ is embeddable as an $M$-$M$ bimodule in a countable direct sum of the coarse $M$-$M$ bimodule.
\end{thm}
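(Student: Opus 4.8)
The plan is to obtain the dilation as a direct application of Dabrowski's theory of stationary solutions to free stochastic differential equations \cite{Dabrowski, JRS}, which furnishes precisely the free-probabilistic ``second quantization'' of a closeable real derivation valued in coarse bimodules. First I would note that the hypotheses on $\delta$ place us exactly in that framework: a coarse $M$-$M$ bimodule $L^2(M)\otimes L^2(M)$ is the bimodule generated by a family that is free from $M$ with amalgamation, so the target $[L^2(M)\otimes L^2(M)]^{\oplus\infty}$ of $\delta$ carries the free independence structure needed to introduce semicircular ``noise'' driving an SDE, and the reality condition $J\bar\delta(x)=\bar\delta(x^*)$ is the self-adjointness condition making that noise a genuine free Brownian motion.

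The core step is then to invoke \cite[Theorem 20]{Dabrowski}, which produces a stationary solution to the free SDE driven by this noise whose infinitesimal generator is $\delta^*\bar\delta$. The solution process generates, together with $M$, a tracial von Neumann algebra $\widetilde{M}\supseteq M$, and the time evolution of the process gives a one-parameter group $(\alpha_t)$ of trace-preserving automorphisms of $\widetilde{M}$; applying $\E_M$ to $\alpha_t(x)$ recovers the Markov semigroup. Writing $\beta$ for the parity (sign-flip) automorphism of the free noise, which fixes $M$, squares to the identity, and conjugates $\alpha_t$ to $\alpha_{-t}$, one verifies the malleability identities and extracts the relation $\varphi_{f(t)}=\E_M\circ\alpha_t$ for a continuous, open time change $f$ with $f(0)=0$. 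This is the exact analogue of the $f(t)=t^2$ appearing for the Gaussian cocycle deformation of Definition \ref{D:s-mall def cocycle}, where $\E_M\alpha_{c,t}(u_g)=\exp(-t^2\|c(g)\|^2)u_g$.

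For the bimodule conclusion I would appeal to \cite[Proposition 31]{Dabrowski}. The added Hilbert space $L^2(\bigvee_{t\ge 0}\alpha_t(M))\ominus L^2(M)$ is assembled from the free Fock space over the coarse target of $\delta$; its $n$-particle components are, as $M$-$M$ bimodules, tensor powers of copies of $L^2(M)\otimes L^2(M)$, hence are again coarse, and their countable direct sum supplies the required embedding into a countable direct sum of the coarse $M$-$M$ bimodule.

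The hard part will be the existence and stationarity of the SDE solution when $\delta$ is only closeable rather than bounded. For bounded real derivations one can write $\widetilde{M}$ down explicitly as a free-product-type algebra and the deformation as an honest rotation (as in the Gaussian picture used in \cite{SinclairGaussian, PetersonSinclair}); in the unbounded case one must instead control the domain of $\bar\delta$ and establish a priori estimates of free Burkholder--Davis--Gundy type to produce a genuine trace-preserving process with the claimed generator. This analytic labor is exactly the content of Dabrowski's construction and is what we cite. Once it is in hand, the verification that our $\varphi_t=\exp(-t\delta^*\bar\delta)$ satisfies his standing hypotheses, and the identification of the time change $f$, are routine, completing the proof of \Cref{yoann}.
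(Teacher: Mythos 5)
Your proposal is correct and takes essentially the same approach as the paper: the paper offers no independent proof of this theorem, stating only that it is a special case of \cite[Theorem 20]{Dabrowski} combined with \cite[Proposition 31]{Dabrowski}, which is exactly the reduction you carry out (with Theorem 20 supplying the stationary free-SDE dilation and Proposition 31 the coarse-bimodule embedding). Your surrounding commentary is consistent with this, the only minor imprecision being the analogy to $f(t)=t^2$: as the paper's remark following the theorem points out, Dabrowski's dilation scales as $f(t)=|t|$, so it is not literally the same time change as in the Gaussian cocycle case.
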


\noindent Note that the dilation of the Schur multipliers $\vp_t$ above obtained from Dabrowski's techniques, which scales for $f(t) = |t|$, is not the same dilation which is obtained as in Definition \ref{D:s-mall def cocycle}.

\begin{defn}\label{defn:approxL2rigid}
Let $(M,\tau)$ be a tracial von Neumann algebra and $\mathcal D = (\widetilde{M}_i,\alpha_i,\beta_i)_{i\in I}$ be a family of s-malleable deformations of $M.$ Let $p\in \mathcal P(M)$ and $Q\leq pMp$. We say that $Q$ is:
\begin{enumerate}[(i)]
    \item \emph{$\mathcal D$-rigid} if $Q$ is $\alpha_i$-rigid for all $i\in I$.
    \item \emph{approximately $\mathcal D$-rigid} if there is a (potentially nonunital) increasing sequence $(Q_{n})_{n}$ so that $Q_{n}\leq p_nQp_n,$ each $Q_{n}$ is $\mathcal D$-rigid, and $Q=\bigvee_{n}Q_{n}.$ We call such a sequence $(Q_{n})_{n}$ a \emph{$\mathcal D$-rigid filtration of $Q$}.
    \item \emph{approximately $L^{2}$-rigid} if there is an increasing sequence $(Q_n)$ with $Q_n\leq p_nMp_n$ $L^2$-rigid for each $n$ and $Q=\bigvee_n Q_n$. We then refer to $(Q_n)_n$ as an \emph{$L^{2}$-rigid filtration} of $Q$.

\end{enumerate}

\end{defn}

\begin{rmk}\label{rmk:DL2} Given a tracial von Neumann algebra $(M,\tau)$, we denote by $\mathcal D_{L^2}$ the family of all s-malleable dilations $(\widetilde M, \alpha, \beta)$ of $M$ whose orthocomplement bimodule $L^2\left(\bigvee_{t\in [0,\infty)}\alpha_{t}(M)\right)\ominus L^2(M)$ embeds in a countable direct sum of coarse $M$-$M$ bimodules. We then have that $Q\leq pMp$ is approximately $L^2$-rigid if and only if it is approximately $\mathcal D_{L^2}$-rigid. Indeed the infinitesimal generator $\delta$ of $\alpha$ for any s-malleable deformation $(\widetilde M, \alpha, \beta)$ belonging to $\mathcal D_{L^2}$ can be seen to be a closeable $L^2$-derivation $\delta: M\to L^2(\widetilde M)$ of $M$ valued in an $M$-$M$ bimodule embeddable into a countable direct sum of the coarse $M$-$M$ bimodule with real structure given by the Tomita conjugation operator $J$ on $\widetilde M$. The Markov semigroup associated with $\delta$ converges uniformly on a set if and only if $\alpha$ does by \cite[Corollary 5.2]{PetersonSinclair}.  Dabrowski's work, stated as Theorem \ref{yoann} above, provides the highly nontrivial converse.

Fix a $(\widetilde{M},\alpha,\beta)$ in $\mathcal D_{L^2}.$ The fact that $L^2(\bigvee_{t\in\mathbb [0,\infty)} \alpha_t(M))\ominus L^2(M)$ embeds into an infinite direct sum of the coarse bimodule implies in particular that it is a mixing $M$-$M$ bimodule. 
This is strong enough of an assumption that we may modify the proof of  Theorem \ref{T:preservation on big interesections} to see that the join of two $\alpha$-rigid von Neumann subalgebras of a corner of $M$ is rigid, provided the algebras have diffuse intersection.
Similarly every diffuse, $\alpha$-rigid von Neumann subalgebra has a rigid envelope, and inductive limits of diffuse, $\alpha$-rigid von Neumann subalgebras are $\alpha$-rigid.

It may appear that in order to obtain the conclusion of Theorem \ref{T:preservation on big interesections}  to any pair of $\alpha$-rigid subalgebras with diffuse intersection, we need to assume that $L^{2}\left(\bigvee_{t\in \R}\alpha_{t}(M)\right)\ominus L^{2}(M)$ is mixing, because as currently written we need to have that \begin{align}Q'\cap p\left(\bigvee_{t\in \R}\alpha_{t}(M)\right)p\subseteq pMp\label{E:commutant}\end{align}
for any $p\in \mathcal{P}(M)$ and any diffuse $Q\leq M$. Assuming that $L^{2}\left(\bigvee_{t\in [0,\infty)}\alpha_{t}(M)\right)\ominus L^{2}(M)$ embeds into an infinite direct sum of the coarse does not, a priori, imply that $L^{2}\left(\bigvee_{t\in \R}\alpha_{t}(M)\right)\ominus L^{2}(M)$ is mixing.

Let us explain briefly why our assumption that $L^{2}\left(\bigvee_{t\in [0,\infty)}\alpha_{t}(M)\right)\ominus L^{2}(M)$ embeds into an infinite direct sum of the coarse is sufficient to obtain the conclusion of Theorem \ref{T:preservation on big interesections} under the assumption that $Q_{1},Q_{2}$ are $\alpha$-rigid with diffuse intersection. The main ingredient in the proof of Theorem \ref{T:preservation on big interesections} is Proposition \ref{L:intertwining on big sets}.  It is easy to see from the  proof of Proposition \ref{L:intertwining on big sets} that the $v$ in the conclusion of Proposition \ref{L:intertwining on big sets} may be required to be in $\bigvee_{s\in [0,\infty)}\alpha_{s}(M)$ if $t>0.$
From here, one can follow the proof of Theorem \ref{T:preservation on big interesections} to see that the conclusion of Theorem \ref{T:preservation on big interesections} holds whenever $p\in \mathcal{P}(M),$ $Q_{j}\leq pMp,j=1,2$ are $\alpha$-rigid, $Q_{1}\cap Q_{2}$ is diffuse, and $L^{2}\left(\bigvee_{t\in [0,\infty)}\alpha_{t}(M)\right)\ominus L^{2}(M)$ is a mixing $M$-$M$ bimodule; indeed since $\|\alpha_t(x) - x\|_2 = \|\alpha_{-t}(x) - x\|_2$ for all $x \in M$, by considering first only $t > 0$, the above remarks show that the containment
\[Q'\cap p\left(\bigvee_{t\in [0, \infty)}\alpha_{t}(M)\right)p\subseteq pMp\]
given by this mixingness (and Lemma \ref{L:usemix}) is sufficient in place of \eqref{E:commutant}.

\end{rmk}

It is easy to see that $Q\leq M$ of the form $Q = A\oplus \left(\bigoplus_n Q_n\right)$ is approximately $L^2$-rigid if $A$ amenable and each $Q_n\leq M$ is $L^2$-rigid. Using Dabrowski's results, we are able to prove by our techniques that possessing such a decomposition completely characterizes approximate $L^2$-rigidity. Clearly we need to only focus on the case that $Q\leq M$ has no amenable direct summand, in which case we have:

\begin{prop}\label{P:approx L2 rigid}
Let $(M,\tau)$ be a tracial von Neumann algebra, $p\in\mathcal P(M)$, and $Q\leq pMp.$ Suppose that $Q$ has no amenable direct summand.

Suppose that $\mathcal D = (\widetilde M_i, \alpha_i,\beta_i)_{i\in I}$ is a family of s-malleable deformations of $M$ such that, for each $i\in I$, $L^{2}\left(\bigvee_{t\in [0,\infty)}\alpha_{i, t}(M)\right)\ominus L^{2}(M)$ is a mixing $M$-$M$ bimodule. Then $Q$ is approximately $\mathcal D$-rigid if and only if it is a direct sum of $\mathcal D$-rigid algebras.

In particular, we have that $Q$ is approximately $L^{2}$-rigid if and only if it is a direct sum of $L^{2}$-rigid algebras.

\end{prop}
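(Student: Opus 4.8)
The plan is to prove the nontrivial (forward) direction, that an approximately $\mathcal D$-rigid $Q$ with no amenable direct summand is a direct sum of $\mathcal D$-rigid algebras; the reverse direction is easy, since a finite direct sum $\bigoplus_{j=1}^{n}Qz_{j}$ of $\mathcal D$-rigid corners is again $\mathcal D$-rigid (for $x$ in its unit ball, $\|(\alpha_{i,t}-\id)(x)\|_{2}\le\sum_{j=1}^{n}\|(\alpha_{i,t}-\id)|_{Qz_{j}}\|_{\infty,2}\to0$), so the partial sums over a central decomposition $1_{Q}=\sum_{j}z_{j}$ furnish a $\mathcal D$-rigid filtration. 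Throughout I would fix a $\mathcal D$-rigid filtration $Q_{n}\uparrow Q$ with $Q_{n}\le p_{n}Qp_{n}$, and consider the set $\mathcal Z$ of central projections $z\in\mathcal P(Z(Q))$ with $Qz$ being $\mathcal D$-rigid. The same estimate shows $\mathcal Z$ is hereditary (a central subprojection of a rigid corner is rigid, by norm-controlled containment of unit balls) and closed under finite joins (decompose $Q(z_{1}\vee z_{2})$ over the partition $z_{1}z_{2},\,z_{1}(1-z_{2}),\,z_{2}(1-z_{1})$). The goal is to show $\sup\mathcal Z=1_{Q}$; a maximal orthogonal family in $\mathcal Z$ then sums to $1_{Q}$ (else a nonzero central corner of $\sup\mathcal Z$ missed by the family lies in $\mathcal Z$ by hereditariness, contradicting maximality), giving the desired decomposition.

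The key step is a corner lemma: for any $N$ and any projection $e\in Q_{N}$ with $eQ_{N}e$ diffuse, the corner $eQe$ is $\mathcal D$-rigid, and consequently $Qf_{e}$ is $\mathcal D$-rigid, where $f_{e}$ is the central support of $e$ in $Q$. For the first assertion I would fix $i\in I$ and use that $B:=eQ_{N}e$ is a diffuse $\alpha_{i}$-rigid algebra in $eMe$ (a corner of $Q_{N}$ by a projection lying \emph{in} $Q_{N}$, so that operator-norm control is retained); by \Cref{C:existence of maximal subalgebras2} (valid here since one-sided mixing already suffices, as explained in \Cref{rmk:DL2}) it has a unique $\alpha_{i}$-rigid envelope $P^{(i)}\le eMe$. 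For $m\ge N$ the corner $eQ_{m}e$ is $\alpha_{i}$-rigid and contains the diffuse rigid $B$, so $eQ_{m}e\subseteq P^{(i)}$ by the diffuse-absorption property \Cref{C:existence of maximal subalgebras2}(i); taking the join over $m$ gives $eQe=\bigvee_{m\ge N}eQ_{m}e\subseteq P^{(i)}$, hence $eQe$ is $\alpha_{i}$-rigid. As $i$ was arbitrary, $eQe$ is $\mathcal D$-rigid, and then \Cref{P:amplifying up from corners} (applied with the algebra $Q$, the distinguished projection $e$, and $q=f_{e}$) upgrades this to $\mathcal D$-rigidity of $Qf_{e}$, so $f_{e}\in\mathcal Z$.

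With the corner lemma in hand I would set $z_{0}=\sup\mathcal Z$ and argue that $Q(1-z_{0})$ is amenable. If not, then since amenability passes to increasing unions and to quotients, some $Q_{N}(1-z_{0})$ is non-amenable, whence the central type $\mathrm{II}_{1}$ projection $y$ of $Q_{N}$ satisfies $y(1-z_{0})\neq0$ (the type $\mathrm I$ quotient $Q_{N}^{\mathrm I}(1-z_{0})$ is amenable). Taking $e'=y\in Q_{N}$, the corner $e'Q_{N}e'=Q_{N}^{\mathrm{II}}$ is diffuse, so the corner lemma gives $f_{e'}\in\mathcal Z$, i.e.\ $f_{e'}\le z_{0}$; but $e'\le f_{e'}$ forces $e'(1-z_{0})=0$, contradicting $y(1-z_{0})\neq0$. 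Hence $Q(1-z_{0})$ is amenable, and the no-amenable-summand hypothesis gives $z_{0}=1_{Q}$, completing the decomposition. The ``in particular'' statement then follows by taking $\mathcal D=\mathcal D_{L^{2}}$ and invoking the equivalence of approximate $L^{2}$-rigidity with approximate $\mathcal D_{L^{2}}$-rigidity from \Cref{rmk:DL2}, which rests on Dabrowski's dilation theorem (\Cref{yoann}).

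I expect the main obstacle to be precisely that rigidity is \emph{not} preserved under compression by a projection lying outside the rigid algebra: cutting $Q_{N}$ by the central projection $1-z_{0}\in Z(Q)\setminus Q_{N}$ destroys the operator-norm control that rigidity requires, and this is exactly what separates approximate rigidity from genuine rigidity. The device that circumvents this is to never compress $Q_{N}$ by $1-z_{0}$, but instead to locate the obstruction inside $Q_{N}$ itself---via its central type $\mathrm{II}_{1}$ projection $y$, which \emph{is} an element of $Q_{N}$ and hence yields a genuine norm-controlled diffuse rigid corner $e'Q_{N}e'$---and only afterward to pass to central supports through the amplification result \Cref{P:amplifying up from corners}.
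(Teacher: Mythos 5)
Your proof is correct, and both directions go through; but your argument for the hard direction is organized genuinely differently from the paper's. The paper works directly with the filtration: it splits each $Q_n$ by the central projection $p_n\in Z(Q_n)$ separating off its amenable part, proves the monotonicity facts $p_n\le p_{n+1}$ and $p_n\to p$ strongly (using that $Q$ has no amenable summand), shows $p_nQp_n=\bigvee_{m\ge n}p_nQ_mp_n$ is $\mathcal D$-rigid as an inductive limit of diffuse rigid algebras via \Cref{cor:defcontrolsubalgebra} and \Cref{rmk:DL2}, passes to the central supports $z_n$ of $p_n$ in $Q$ by \Cref{P:amplifying up from corners}, and then telescopes $Q=z_1Q\oplus\bigoplus_n(z_{n+1}-z_n)Q$. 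You rely on the same two pillars --- inductive limits/envelopes of diffuse rigid subalgebras (which you route through \Cref{C:existence of maximal subalgebras2}, justified by \Cref{rmk:DL2} exactly as the paper justifies its own use of \Cref{cor:defcontrolsubalgebra}) and \Cref{P:amplifying up from corners} --- but you package them as a standalone corner lemma, use the type $\mathrm{I}$/type $\mathrm{II}_1$ central splitting of a single $Q_N$ in place of the paper's amenable/nonamenable splitting, and replace the monotone telescoping construction by the hereditary family $\mathcal Z$, the supremum-plus-contradiction argument, and a maximal orthogonal family. What this buys you: no cross-index bookkeeping at all (you never need to compare the splittings of $Q_n$ and $Q_{n+1}$, which is where the paper spends its effort proving $p_n\le p_{n+1}$ and $p_n\to p$), and a reusable lemma cleanly isolating the one place where rigidity of corners of $Q$ itself is produced. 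What the paper's route buys: it is constructive, yielding an explicit decomposition indexed by the filtration, whereas your maximal-family argument is an exhaustion by Zorn-type maximality (harmless here, since the finite trace forces any orthogonal family of nonzero central projections to be countable anyway). Your closing diagnosis of the key obstacle --- that one may never compress $Q_N$ by a projection outside $Q_N$, and must instead locate a diffuse corner cut by a projection \emph{inside} $Q_N$ before passing to central supports in $Q$ --- is precisely the mechanism the paper's proof also turns on.
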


\begin{proof}

Suppose that $Q$ is a direct sum of $\mathcal D$-rigid algebras. First, suppose that $Q=\bigoplus_{n=1}^{\infty}Q_{n},$ where each $Q_{n}$ is nonzero and $\mathcal D$-rigid. Let
\[B_{n}=\bigoplus_{j=1}^{n}Q_{j}.\]
Then the $B_{n}$ are $\mathcal D$-rigid and clearly form a $\mathcal D$-rigid filtration of $Q.$ If we cannot write $Q$ as such an infinite direct sum, it must be the case that $Q=\bigoplus_{j=1}^{n}Q_{j}$ where each $Q_{j}$ is $\mathcal D$-rigid. This of course implies that $Q$ itself is $\mathcal D$-rigid, and so $Q_{n}=Q$ provides an $\mathcal{D}$-rigid filtration of $Q.$

In the converse direction, suppose that $Q$ is approximately $\mathcal D$-rigid, and let $(Q_{n})_{n}$ be an $\mathcal D$-rigid filtration of $Q.$ For each $n,$ we may choose a projection $p_{n}\in Z(Q_{n})$ so that $p_nQ_{n}$ has no amenable direct summand and $(p - p_n)Q_{n}$ is amenable. Since $Q_{n}\subseteq Q_{n+1},$ we must have that $p_{n}\leq p_{n+1},$ and that $p_{n}Q_{n}\subseteq p_{m}Q_{m}$ for all $m\geq n.$  Also, since $Q$ has no amenable direct summand, we must have that $p_{n}\to p$ in the strong operator topology. Since $p_{n}Q_{n}$ has no amenable direct summand, it is diffuse. We also have that $p_{n}Q_{m}p_{n}\geq Q_n$ is $\mathcal D$-rigid, whence $(p_{n}Q_{m}p_{n})_{m\geq n}$ is an increasing sequence of diffuse, $\mathcal D$-rigid algebras. From Corollary \ref{cor:defcontrolsubalgebra} (which, following from Theorem \ref{T:preservation on big interesections}, applies under these hypotheses by Remark \ref{rmk:DL2}), it follows that
\[\bigvee_{m\geq n}p_{n}Q_{m}p_{n}\]
is $\mathcal D$-rigid. But this inductive limit is easily seen to be $p_{n}Qp_{n},$ whence $p_{n}Qp_{n}$ is $\mathcal D$-rigid. Let $z_{n}$ be the central support of $p_{n}$ in $Q.$ By Proposition \ref{P:amplifying up from corners} we know that $z_{n}Q$ is $\mathcal D$-rigid. Additionally, we have that the $z_{n}$'s are increasing and tend to $p.$ So
\[Q=z_{1}Q\oplus \bigoplus_{n=1}^{\infty}(z_{n+1}-z_{n})Q,\]
 thus $Q$ is a direct sum of $\mathcal D$-rigid algebras.

 The final part of the proposition now follows from Remark \ref{rmk:DL2}.\qedhere
\end{proof}

The notion of approximate $L^2$-rigidity captures in the tracial von Neumann algebra setting the vanishing of \emph{reduced} first $\ell^2$-cohomology for discrete groups, which is meant to address the technical issue that amenable tracial von Neumann algebras are not $L^2$-rigid. It is well-known that for nonamenable groups vanishing of reduced $\ell^2$-cohomology is the same as vanishing of the first $\ell^2$-Betti number by Guichardet's theorem \cite[Proposition 2.12.2]{BHV}. (See also \cite[Corollary 2.4]{PetersonThom}.) In this way Proposition \ref{P:approx L2 rigid} may be seen as an analog of Guichardet's theorem for approximate $L^2$-rigidity, showing that it essentially coincides with $L^2$-rigidity for tracial von Neumann algebras with no amenable direct summand.

It follows from the work of Peterson and Thom, \cite[Theorem 2.2]{PetersonThom}, that for any countable, discrete group $G$ and any two subgroups $H_1,H_2< G$ with $|H_1\cap H_2|=\infty$ it holds that $H_1\vee H_2$ has vanishing first reduced $\ell^2$-cohomology if both $H_1$ and $H_2$ do. Motivated by this result, we conjecture the analogous statement should still hold in the II$_1$ factor case, to whit:

\begin{conj}\label{conj:approxL2rigid} Let $M$ be a II$_1$ factor and $Q_1,Q_2\leq M$ such that $Q_i\leq M$ is approximately $L^2$-rigid for $i=1,2$. If $Q_1\cap Q_2$ is diffuse, then $Q_1\vee Q_2\leq M$ is approximately $L^2$-rigid.

\end{conj}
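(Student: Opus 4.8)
The plan is to reduce the statement to the family $\mathcal{D}_{L^2}$ of s-malleable dilations and then to mimic, at the level of von Neumann algebras, the reduced-cohomology argument of Peterson--Thom \cite{PetersonThom}. By Remark \ref{rmk:DL2}, $Q_1\vee Q_2$ is approximately $L^2$-rigid if and only if it is approximately $\mathcal{D}_{L^2}$-rigid, and for every $(\widetilde{M},\alpha,\beta)\in\mathcal{D}_{L^2}$ the bimodule $L^2\!\left(\bigvee_{t\ge 0}\alpha_t(M)\right)\ominus L^2(M)$ embeds into a direct sum of coarse bimodules, hence is mixing. Consequently, by Lemma \ref{L:usemix} together with the discussion in Remark \ref{rmk:DL2}, the relative commutant hypotheses in Theorem \ref{T:preservation on big interesections} are automatic for diffuse subalgebras, so that form of the theorem applies: if $B_1,B_2\le pMp$ are $\alpha$-rigid with $B_1\cap B_2$ diffuse, then $B_1\vee B_2$ is $\alpha$-rigid. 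In particular the \emph{genuinely rigid} case is immediate: if $Q_1,Q_2$ are themselves $L^2$-rigid (equivalently $\mathcal{D}_{L^2}$-rigid) and $Q_1\cap Q_2$ is diffuse, then applying this to each $\alpha\in\mathcal{D}_{L^2}$ shows $Q_1\vee Q_2$ is $\alpha$-rigid for every such $\alpha$, i.e.\ $L^2$-rigid. All the content of the conjecture therefore lies in the genuinely \emph{approximate} case.

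For the approximate case I would first invoke Proposition \ref{P:approx L2 rigid} to write $Q_i=A_i\oplus\bigoplus_k R_{i,k}$ with $A_i$ amenable and each $R_{i,k}$ $L^2$-rigid, and fix $L^2$-rigid filtrations $(Q_{1,n})_n$, $(Q_{2,n})_n$ of $Q_1,Q_2$ (each $Q_{i,n}$ being $\mathcal{D}_{L^2}$-rigid). The goal is to manufacture a single increasing sequence of diffuse $L^2$-rigid subalgebras exhausting $Q_1\vee Q_2$. The guiding principle is that the diffuse intersection $D=Q_1\cap Q_2$ should let one repeatedly apply the uniform estimate of Theorem \ref{T:preservation on big interesections} (in the form of Corollary \ref{cor:defcontrolsubalgebra}) to glue a piece of the filtration of $Q_1$ to a piece of the filtration of $Q_2$: morally, if $B_1\le Q_1$ and $B_2\le Q_2$ are $L^2$-rigid and their intersection captures a diffuse part of $D$, then $B_1\vee B_2$ is again $L^2$-rigid, and letting $B_1,B_2$ grow along the filtrations should produce the desired exhaustion.

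The step I expect to be the main obstacle is \emph{localizing the diffuseness of $D$ to the filtrations}. Theorem \ref{T:preservation on big interesections} requires the two algebras being joined to be honestly $\alpha$-rigid and to have diffuse intersection; but while $Q_1\cap Q_2$ is diffuse, there is no reason for the intersection $Q_{1,n}\cap Q_{2,m}$ of filtration pieces to be diffuse, nor for $D$ to contain a diffuse $L^2$-rigid subalgebra that could serve as a common anchor. The worst case is when $Q_1\cap Q_2$ is diffuse yet amenable --- the von Neumann analogue of $H_1\cap H_2$ being infinite amenable in Peterson--Thom \cite{PetersonThom} --- for then $D$ has \emph{no} diffuse $L^2$-rigid subalgebra at all, and the uniform estimate of Theorem \ref{T:preservation on big interesections} cannot be invoked at the level of the pieces. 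This gap is precisely the one separating reduced from ordinary first $\ell^2$-cohomology, and it is what forced Peterson--Thom to argue inside the ring of affiliated operators with harmonic cocycles rather than with honest coboundaries.

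To overcome this I would abandon the anchor approach in the amenable-intersection case and argue quantitatively, with a fixed $\alpha\in\mathcal{D}_{L^2}$, along the lines of Proposition \ref{P:two sided estimate intertwiner}. The idea is to run the transversality/intertwining argument \emph{relative to $D$}: for each $n$ the rigidity of $Q_{i,n}$ produces partial isometries $v_{i,n}\in\mathcal{V}_t$ nearly implementing $\alpha_t$ on $Q_{i,n}$, and since $D$ is diffuse one has $D'\cap\widetilde{M}\subseteq M$, so the two families of implementing isometries differ by elements of $M$ on the common part indexed by $D$. The task is then to show these approximate implementations can be chosen \emph{compatibly} across $Q_1$ and $Q_2$ --- a von Neumann-algebraic incarnation of choosing approximating vectors for a cocycle simultaneously on $H_1$ and $H_2$ --- so as to recover control of $\|(\alpha_t-\id)\big|_{Q_{1,n}\vee Q_{2,n}}\|_{\infty,2}$ that is uniform in $n$ and survives the inductive limit (as in Proposition \ref{P:amplifying up from corners}). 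Establishing this compatibility, and its uniformity over $\alpha\in\mathcal{D}_{L^2}$ --- which is what upgrades $\alpha$-rigidity of the pieces to approximate $L^2$-rigidity of the whole --- is, in my view, the crux of the conjecture and the place where genuinely new input beyond Theorem \ref{T:preservation on big interesections} is required.
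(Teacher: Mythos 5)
You should know at the outset that there is no proof of this statement in the paper to compare yours against: it is stated as Conjecture \ref{conj:approxL2rigid} and explicitly left open, with the paper noting that a positive solution would imply the Peterson--Thom conjecture on amenable subalgebras of $L(\mathbb{F}_2)$. Your proposal is likewise not a proof, and to your credit you say so. The parts you do establish are correct and are exactly what the paper's results yield: the reduction to $\mathcal{D}_{L^2}$-rigidity via Remark \ref{rmk:DL2}, and the observation that when $Q_1,Q_2$ are genuinely $L^2$-rigid with diffuse intersection, Theorem \ref{T:preservation on big interesections} (in the form made applicable by the mixingness discussion of Remark \ref{rmk:DL2} and Lemma \ref{L:usemix}) gives $\alpha$-rigidity of $Q_1\vee Q_2$ for every dilation in $\mathcal{D}_{L^2}$, hence $L^2$-rigidity. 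You have also located the obstruction in the right place: once either $Q_i$ is only approximately $L^2$-rigid, the diffuseness of $D=Q_1\cap Q_2$ need not be witnessed by any pair of filtration pieces $Q_{1,n}\cap Q_{2,m}$, and when $D$ is diffuse but amenable there is no diffuse $\mathcal{D}_{L^2}$-rigid anchor in the intersection at all; by \cite[Remark 4.2.4]{PetersonL2}, this amenable-intersection case inside $L(\mathbb{F}_2)$ \emph{is} the Peterson--Thom conjecture, so no rearrangement of Theorem \ref{T:preservation on big interesections}, Corollary \ref{cor:defcontrolsubalgebra}, or Proposition \ref{P:approx L2 rigid} can close it.

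One concrete error in your final paragraph deserves flagging, because it is precisely where the approach breaks. You assert that since $D$ is diffuse, $D'\cap\widetilde{M}\subseteq M$, ``so the two families of implementing isometries differ by elements of $M$ on the common part indexed by $D$.'' This does not follow. In the proof of Theorem \ref{T:preservation on big interesections}, the product $v^{*}w$ lands in $M$ because $v$ intertwines $\alpha_t$ on $Q_1$ and $w$ on $Q_2$, so $v^{*}w$ commutes with $Q_1\cap Q_2$, and \emph{that} algebra is diffuse. In your setting, $v_{1,n}$ intertwines only on $Q_{1,n}$ and $v_{2,n}$ only on $Q_{2,n}$, so $v_{1,n}^{*}v_{2,n}$ commutes only with $Q_{1,n}\cap Q_{2,n}$, which may be finite-dimensional or trivial; neither isometry intertwines $\alpha_t$ on $D$, so the relative commutant of $D$ is irrelevant and there is no mechanism forcing $v_{1,n}^{*}v_{2,n}\in M$. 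The ``compatibility'' you then ask for is not a technical refinement of Proposition \ref{L:intertwining on big sets} --- whose isometries arise from averaging over unitaries of a \emph{rigid} algebra, which $D$ is not --- but is the entire content of the conjecture. So your overall assessment stands: the rigid-times-rigid case follows from the paper, and everything beyond it requires input that neither the paper nor your proposal supplies.
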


We note that it follows by  \cite[Remark 4.2.4]{PetersonL2} that any approximately $L^2$-rigid subalgebra of $L(\mathbb F_2)$ is amenable. Thus a positive solution to Conjecture \ref{conj:approxL2rigid} would imply a positive solution of the following conjecture of Peterson and Thom (see the remarks after \cite[Proposition 7.7]{PetersonThom}):

\begin{conj}[Peterson and Thom] If $Q_1,Q_2\leq L(\mathbb F_2)$ are amenable and $Q_1\cap Q_2$ is diffuse, then $Q_1\vee Q_2$ is amenable.

\end{conj}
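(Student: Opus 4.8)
The plan is to deduce this conjecture from Conjecture~\ref{conj:approxL2rigid} through the notion of approximate $L^2$-rigidity, as indicated in the paragraph preceding the statement. First I would note that any amenable $Q\leq L(\mathbb{F}_2)$ is approximately $L^2$-rigid: by the remark preceding Proposition~\ref{P:approx L2 rigid}, any subalgebra of the form $A\oplus\bigoplus_n Q_n$ with $A$ amenable and each $Q_n$ being $L^2$-rigid is approximately $L^2$-rigid, and taking the trivial decomposition consisting of a single amenable summand shows that amenability alone suffices. Hence both $Q_1$ and $Q_2$ are approximately $L^2$-rigid. Since $Q_1\cap Q_2$ is diffuse, Conjecture~\ref{conj:approxL2rigid} would then yield that $Q_1\vee Q_2$ is approximately $L^2$-rigid. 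Finally, by \cite[Remark 4.2.4]{PetersonL2} every approximately $L^2$-rigid subalgebra of $L(\mathbb{F}_2)$ is amenable, so $Q_1\vee Q_2$ is amenable, as desired.

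This reduces the statement entirely to Conjecture~\ref{conj:approxL2rigid}, which is where the genuine difficulty lies and which I would attack with the machinery developed in this paper. By Remark~\ref{rmk:DL2}, approximate $L^2$-rigidity coincides with approximate $\mathcal{D}_{L^2}$-rigidity, and for each deformation in $\mathcal{D}_{L^2}$ the orthocomplement bimodule is mixing; consequently Theorem~\ref{T:preservation on big interesections} (and hence Corollary~\ref{cor:defcontrolsubalgebra}) applies, giving for a single such deformation that the join of two rigid subalgebras of a common corner with diffuse intersection is again rigid, with a uniform estimate. The strategy would be to fix $L^2$-rigid filtrations $(Q_{1,n})_n$ and $(Q_{2,n})_n$ of $Q_1$ and $Q_2$, build from them an $L^2$-rigid filtration of $Q_1\vee Q_2$ by applying the join estimate stage by stage, and then pass to inductive limits and central supports exactly as in the proof of Proposition~\ref{P:approx L2 rigid}.

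The hard part will be arranging that the pieces of the two filtrations have \emph{diffuse} intersection at finite stages. The hypothesis supplies only that the limit $Q_1\cap Q_2$ is diffuse, and in general there is no reason that $Q_{1,n}\cap Q_{2,m}$ is diffuse, or even nontrivial, for any finite $n$ and $m$, since diffuseness of the intersection may emerge only in the limit. As Theorem~\ref{T:preservation on big interesections} requires a diffuse common subalgebra of the two rigid algebras being joined, one must first realign the filtrations, passing to suitable corners of $Q_1\cap Q_2$ and lifting diffuse $L^2$-rigid subalgebras through both filtrations simultaneously, before the join estimate can be invoked. Controlling this alignment, which amounts to showing that approximate $L^2$-rigidity descends to the relevant diffuse subalgebras of $Q_1\cap Q_2$, is the essential obstacle and precisely the reason the statement remains only conjectural at present.
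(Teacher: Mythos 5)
Your reduction is exactly the paper's own reasoning: amenable subalgebras of $L(\mathbb{F}_2)$ are approximately $L^2$-rigid (take the trivial decomposition $A=Q$ in the remark preceding Proposition \ref{P:approx L2 rigid}), Conjecture \ref{conj:approxL2rigid} would then make $Q_1\vee Q_2$ approximately $L^2$-rigid, and \cite[Remark 4.2.4]{PetersonL2} converts that back into amenability. The paper records this statement only as a conjecture precisely because Conjecture \ref{conj:approxL2rigid} is open, and you correctly identify both this and the genuine obstruction to proving it (finite stages of the two $L^2$-rigid filtrations need not intersect diffusely, so Theorem \ref{T:preservation on big interesections} cannot be applied stagewise), so your proposal faithfully reproduces the paper's conditional argument.
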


\bibliographystyle{abbrv}
\bibliography{pineapple}

\begin{thebibliography}{10}

\bibitem{APBook}
C.~Anantharaman and S.~Popa.
\newblock An introduction to {$II_1$} factors.
\newblock {\em book in progress}, 2016.

\bibitem{BHV}
B.~Bekka, P.~de~la Harpe, and A.~Valette.
\newblock {\em Kazhdan's property ({T})}, volume~11 of {\em New Mathematical
  Monographs}.
\newblock Cambridge University Press, Cambridge, 2008.

\bibitem{BO08}
N.~P. Brown and N.~Ozawa.
\newblock {\em {$C\sp *$}-algebras and finite-dimensional approximations},
  volume~88 of {\em Graduate Studies in Mathematics}.
\newblock American Mathematical Society, Providence, RI, 2008.

\bibitem{ChifanSinclair}
I.~Chifan and T.~Sinclair.
\newblock On the structural theory of {${\rm II}_1$} factors of negatively
  curved groups.
\newblock {\em Ann. Sci. \'{E}c. Norm. Sup\'{e}r. (4)}, 46(1):1--33 (2013),
  2013.

\bibitem{Dabrowski}
Y.~Dabrowski.
\newblock A non-commutative path space approach to stationary free stochastic
  differential equations.
\newblock {\em arXiv:1006.4351}, 2010.

\bibitem{DabrowskiIoana}
Y.~Dabrowski and A.~Ioana.
\newblock Unbounded derivations, free dilations, and indecomposability results
  for {$\rm II_1$} factors.
\newblock {\em Trans. Amer. Math. Soc.}, 368(7):4525--4560, 2016.

\bibitem{DHI16}
D.~Drimbe, D.~J. Hoff, and A.~Ioana.
\newblock Prime {${\rm II}_1$} factors arising from irreducible lattices in
  products of rank one simple {L}ie groups.
\newblock J. Reine Angew. Math., to appear, arXiv:1611.02209.

\bibitem{Gab1}
D.~Gaboriau.
\newblock Co\^{u}t des relations d'\'{e}quivalence et des groupes.
\newblock {\em Invent. Math.}, 139(1):41--98, 2000.

\bibitem{GalatanPopa}
A.~Galatan and S.~Popa.
\newblock Smooth bimodules and cohomology of {$\rm II_1$} factors.
\newblock {\em J. Inst. Math. Jussieu}, 16(1):155--187, 2017.

\bibitem{GlasThoWe}
E.~Glasner, J.-P. Thouvenot, and B.~Weiss.
\newblock Entropy theory without a past.
\newblock {\em Ergodic Theory Dynam. Systems}, 20(5):1355--1370, 2000.

\bibitem{Hayes13}
B.~Hayes.
\newblock Relative entropy and the {P}insker product formula for sofic groups.
\newblock {\em Groups Geom. Dyn.}
\newblock to appear.

\bibitem{Hayes8}
B.~Hayes.
\newblock 1-bounded entropy and regularity problems in von {N}eumann algebras.
\newblock {\em Int. Math. Res. Not. IMRN}, (1):57--137, 2018.

\bibitem{Io17c}
A.~Ioana.
\newblock Rigidity for von neumann algebras.
\newblock Delivered at the 2018 Proceedings of the {I}nternational {C}ongress
  of {M}athematicians. Preprint arXiv:1712.00151.

\bibitem{AdrianL2Betti}
A.~Ioana.
\newblock Compact actions and uniqueness of the group measure space
  decomposition of {${\rm II}_1$} factors.
\newblock {\em J. Funct. Anal.}, 262(10):4525--4533, 2012.

\bibitem{Io12b}
A.~Ioana.
\newblock Classification and rigidity for von {N}eumann algebras.
\newblock In {\em European {C}ongress of {M}athematics}, pages 601--625. Eur.
  Math. Soc., Z\"urich, 2014.

\bibitem{IPP}
A.~Ioana, J.~Peterson, and S.~Popa.
\newblock Amalgamated free products of weakly rigid factors and calculation of
  their symmetry groups.
\newblock {\em Acta Math.}, 200(1):85--153, 2008.

\bibitem{IzumiLongoPopa}
M.~Izumi, R.~Longo, and S.~Popa.
\newblock A {G}alois correspondence for compact groups of automorphisms of von
  {N}eumann algebras with a generalization to {K}ac algebras.
\newblock {\em J. Funct. Anal.}, 155(1):25--63, 1998.

\bibitem{JRS}
M.~Junge, E.~Ricard, and D.~Shlyakhtenko.
\newblock Private communication.

\bibitem{PetersonL2}
J.~Peterson.
\newblock {$L^2$}-rigidity in von {N}eumann algebras.
\newblock {\em Invent. Math.}, 175(2):417--433, 2009.

\bibitem{PetersonSinclair}
J.~Peterson and T.~Sinclair.
\newblock On cocycle superrigidity for {G}aussian actions.
\newblock {\em Ergodic Theory Dynam. Systems}, 32(1):249--272, 2012.

\bibitem{PetersonThom}
J.~Peterson and A.~Thom.
\newblock Group cocycles and the ring of affiliated operators.
\newblock {\em Invent. Math.}, 185(3):561--592, 2011.

\bibitem{PPEntropy}
M.~Pimsner and S.~Popa.
\newblock Entropy and index for subfactors.
\newblock {\em Ann. Sci. \'{E}cole Norm. Sup. (4)}, 19(1):57--106, 1986.

\bibitem{PopaWeakInter}
S.~Popa.
\newblock Coarse decomposition of $\textrm{II}_{1}$ factors.
\newblock {\em arXiv:1811.11016}.

\bibitem{PopaInterwineSpace}
S.~Popa.
\newblock Constructing masas with prescribed properties.
\newblock {\em Kyoto J. Math.}
\newblock to appear.

\bibitem{PopaOrthoPairs}
S.~Popa.
\newblock Orthogonal pairs of {$\ast $}-subalgebras in finite von {N}eumann
  algebras.
\newblock {\em J. Operator Theory}, 9(2):253--268, 1983.

\bibitem{PopaCBMS}
S.~Popa.
\newblock {\em Classification of subfactors and their endomorphisms}, volume~86
  of {\em CBMS Regional Conference Series in Mathematics}.
\newblock Published for the Conference Board of the Mathematical Sciences,
  Washington, DC; by the American Mathematical Society, Providence, RI, 1995.

\bibitem{PopaFG}
S.~Popa.
\newblock On the fundamental group of type {$\rm II_1$} factors.
\newblock {\em Proc. Natl. Acad. Sci. USA}, 101(3):723--726, 2004.

\bibitem{PopaL2Betti}
S.~Popa.
\newblock On a class of type {${\rm II}_1$} factors with {B}etti numbers
  invariants.
\newblock {\em Ann. of Math. (2)}, 163(3):809--899, 2006.

\bibitem{PopaCohomologyOE}
S.~Popa.
\newblock Some computations of 1-cohomology groups and construction of
  non-orbit-equivalent actions.
\newblock {\em J. Inst. Math. Jussieu}, 5(2):309--332, 2006.

\bibitem{Po01a}
S.~Popa.
\newblock Some rigidity results for non-commutative {B}ernoulli shifts.
\newblock {\em J. Funct. Anal.}, 230(2):273--328, 2006.

\bibitem{PopaStrongRigidity}
S.~Popa.
\newblock Strong rigidity of {$\rm II_1$} factors arising from malleable
  actions of {$w$}-rigid groups. {I}.
\newblock {\em Invent. Math.}, 165(2):369--408, 2006.

\bibitem{PopaStrongRigidtyII}
S.~Popa.
\newblock Strong rigidity of {$\rm II_1$} factors arising from malleable
  actions of {$w$}-rigid groups. {II}.
\newblock {\em Invent. Math.}, 165(2):409--451, 2006.

\bibitem{PopaCoc}
S.~Popa.
\newblock Cocycle and orbit equivalence superrigidity for malleable actions of
  {$w$}-rigid groups.
\newblock {\em Invent. Math.}, 170(2):243--295, 2007.

\bibitem{PopaICM}
S.~Popa.
\newblock Deformation and rigidity for group actions and von {N}eumann
  algebras.
\newblock In {\em International {C}ongress of {M}athematicians. {V}ol. {I}},
  pages 445--477. Eur. Math. Soc., Z\"{u}rich, 2007.

\bibitem{PopaSG}
S.~Popa.
\newblock On the superrigidity of malleable actions with spectral gap.
\newblock {\em J. Amer. Math. Soc.}, 21(4):981--1000, 2008.

\bibitem{PopaMC}
S.~Popa.
\newblock Deformation-rigidty theory.
\newblock {\em NCGOA mini-course, Vanderbilt University.}, May 2005.

\bibitem{SorinStefaanGeneralizedBern}
S.~Popa and S.~Vaes.
\newblock Strong rigidity of generalized {B}ernoulli actions and computations
  of their symmetry groups.
\newblock {\em Adv. Math.}, 217(2):833--872, 2008.

\bibitem{PopaVaesFree}
S.~Popa and S.~Vaes.
\newblock Unique {C}artan decomposition for {$\rm II_1$} factors arising from
  arbitrary actions of free groups.
\newblock {\em Acta Math.}, 212(1):141--198, 2014.

\bibitem{PopaVaesHyp}
S.~Popa and S.~Vaes.
\newblock Unique {C}artan decomposition for {$\rm II_{1}$} factors arising from
  arbitrary actions of hyperbolic groups.
\newblock {\em J. Reine Angew. Math.}, 694:215--239, 2014.

\bibitem{SinclairGaussian}
T.~Sinclair.
\newblock Strong solidity of group factors from lattices in {${\rm SO}(n,1)$}
  and {${\rm SU}(n,1)$}.
\newblock {\em J. Funct. Anal.}, 260(11):3209--3221, 2011.

\bibitem{Va06a}
S.~Vaes.
\newblock Rigidity results for {B}ernoulli actions and their von {N}eumann
  algebras (after {S}orin {P}opa).
\newblock {\em Ast\'erisque}, (311):Exp. No. 961, viii, 237--294, 2007.
\newblock S{\'e}minaire Bourbaki. Vol. 2005/2006.

\bibitem{Va07}
S.~Vaes.
\newblock Explicit computations of all finite index bimodules for a family of
  {${\rm II}\sb 1$} factors.
\newblock {\em Ann. Sci. \'Ec. Norm. Sup\'er. (4)}, 41(5):743--788, 2008.

\bibitem{Va10a}
S.~Vaes.
\newblock Rigidity for von {N}eumann algebras and their invariants.
\newblock In {\em Proceedings of the {I}nternational {C}ongress of
  {M}athematicians. {V}olume {III}}, pages 1624--1650. Hindustan Book Agency,
  New Delhi, 2010.

\bibitem{Va10b}
S.~Vaes.
\newblock One-cohomology and the uniqueness of the group measure space
  decomposition of a {${\rm II}\sb 1$} factor.
\newblock {\em Math. Ann.}, 355(2):661--696, 2013.

\end{thebibliography}

\end{document}